\newcommand*{\A}{\mathbb{A}}
\newcommand*{\C}{\mathbb{C}}
\newcommand*{\R}{\mathbb{R}}
\newcommand*{\Q}{\mathbb{Q}}
\newcommand*{\Z}{\mathbb{Z}}
\newcommand*{\h}{\mathrm{ht}\,}
\renewcommand*{\geq}{\geqslant}
\renewcommand*{\leq}{\leqslant}
\newtheoremstyle{erdfn}
  {}
  {}
  {\itshape}
  {}
  {\bfseries}
  {}
  { }
  {}
\newtheoremstyle{erthm}
  {}
  {}
  {\itshape}
  {}
  {\bfseries}
  {}
  { }
  {}
\newtheoremstyle{errem}
  {}
  {}
  {}
  {}
  {\bfseries}
  {}
  { }
  {}
\theoremstyle{erthm}
\newtheorem{theorem}{Theorem}[section]
\newtheorem{corollary}[theorem]{Corollary}
\newtheorem{proposition}[theorem]{Proposition}
\newtheorem{lemma}[theorem]{Lemma}
\theoremstyle{erdfn}
\newtheorem{definition}[theorem]{Definition}
\theoremstyle{errem}
\newtheorem{remark}{Remark}
\numberwithin{equation}{section}
\title[On the zeros of Weng zeta functions for Chevalley groups]%
      {On the zeros of Weng zeta functions \\ for Chevalley groups}
\author[H. Ki]{Haseo Ki}
\address{Department of mathematics, Yonsei University, Seoul, 120-749, Korea\\
and Korea Institute for Advanced Study, Seoul, Korea}
\email{haseo@yonsei.ac.kr}
\author[Y. Komori]{Yasushi Komori}
\address{
Department of Mathematics, Rikkyo University, Nishi-Ikebukuro,
Toshima-ku, Tokyo 171-8501, Japan} \email{komori@rikkyo.ac.jp}
\author[M. Suzuki]{Masatoshi Suzuki}
\address{
Department of Mathematics, Tokyo Institute of Technology,
2-12-1 Ookayama, Meguro-ku, Tokyo 152-8551, Japan}
\email{msuzuki@math.titech.ac.jp}
\subjclass[2000]{11M41} \keywords{Chevalley group, Langlands'
Eisenstein series, Periods, Riemann hypothesis, Weng's zeta
function}
\begin{abstract}
We prove that all but finitely many zeros of Weng's zeta function
for a Chevalley group defined over $\Q$ are simple and on the
critical line under some reasonable geometric hypothesis.
\end{abstract}
\begin{document}

\section{Introduction}

Weng zeta functions are new objects in the theory of zeta functions 
arose from the theory of (periods of) Eisenstein series of reductive
algebraic groups $G$ defined over $\Q$. They are defined for every
standard maximal parabolic subgroup $P$ of $G$. Roughly Weng zeta
functions are linear combinations of products of the complete
Riemann zeta-function with rational function coefficients, but they
have amazing structures that may come from their origin. We start
from the most simple example.
\medskip

Let $\zeta(s)$ be the Riemann zeta function, and let
\begin{align}
\hat{\zeta}(s) &=\pi^{-s/2} \Gamma(s/2) \zeta(s).
\end{align}
Let $G=SL(2)$,
$P=B=P_{1,1}=\{\bigl( \begin{smallmatrix}\ast&\ast\\0&\ast\end{smallmatrix} \bigr)\} \subset G$ and $K=SO(2)$.
(In this case, the Borel subgroup $B$ coincides with the maximal parabolic subgroup $P$.)
Let $\mathcal H$ be the upper half plane.
Usual non-holomorphic Eisenstein series
attached to this pair $(G,P)$ is defined by the series
\[
E(s,z)=\sum_{\gamma \in P({\Z})\backslash G({\Z})} \Im\,(\gamma z)^s \quad
\]
for $z \in {\mathcal H}$, ${\rm Re}(s)>1$, and then it is continued meromorphically to the whole complex $\C$.

To understand the integral
\[
\int_{\mathcal F} E(s,z) \, d\mu(z)
\]
in a suitable sense is a fundamental problem
for the theory of harmonic analysis on $SL(2,{\Z})\backslash\mathcal H$,
where $\mathcal F$ is a standard fundamental domain of
the action of $G({\Z})=SL(2,{\Z})$ on $\mathcal H$.
Usually the above integral is studied by using the analytic truncation
\[
\Lambda^\tau E(s,z)
=
\begin{cases}
E(s,z), & y \leq e^\tau,\,z \in {\mathcal F}, \\
E(s,z)-(y^s+\frac{\hat{\zeta}(2s-1)}{\hat{\zeta}(2s)}\,y^{1-s}), & y > e^\tau,\,z \in {\mathcal F}.
\end{cases}
\]
For every $\tau \geq 0$ we have
\[
\int_{\mathcal F} \Lambda^\tau E(s,z) \, d\mu(z) =
\frac{e^{\tau(s-1)}}{s-1}-\frac{e^{-\tau s}}{s}\frac{\hat{\zeta}(2s-1)}{\hat{\zeta}(2s)}.
\]
(Therefore $\int_{\mathcal F} E(s,z) \, d\mu(z)=0$ if $0<\Re(s)<1$.)
Multiplying by $s(s-1)\hat{\zeta}(2s)$ on both sides, we define
\[
\aligned
Z(s,\tau)& = s(s-1) \cdot \hat{\zeta}(2s) \cdot
\int_{\mathcal F} \Lambda^\tau E(s,z) \, d\mu(z)  \\
& = e^{\tau(s-1)} \, s \, \hat{\zeta}(2s) -  e^{-\tau s} \, (s-1) \,\hat{\zeta}(2s-1).
\endaligned
\]
for $\tau \geq 0$. The functional equation of $E(z,s)$ derives $Z(s,\tau)=Z(1-s,\tau)$ automatically,
but we mention that the functional equation of $E(z,s)$
coming from the symmetry of the Weyl group of $SL(2)$.

A remarkable fact is that all zeros of $Z(s,\tau)$ lie on the line $\Re(s)=1/2$
and simple for every fixed $\tau \geq 0$ (see \cite{MR2142130, MR2220265}).
Weng zeta functions are of a generalization of the specialization of $Z(s,\tau)$ for $\tau=0$.
\medskip

Let $G$ be a connected semisimple algebraic group defined and split over $\Q$
and $T$ a ($\Q$-split) maximal torus of $G$.
Let $\Phi$ be the root system with respect to $(G,T)$.
Fix a Borel subgroup $B$ of $G$ containing $T$.
Then it determines the fundamental system $\Delta$ of $\Phi$.
Let $X^\ast(T)$ be the group of characters of $T$ defined over $\Q$
that is a free module of rank  $r = {\rm dim}\, T$.
Let ${\frak a}_0^\ast = X^\ast(T) \otimes {\R}$ and ${\frak a}_0 = {\rm Hom}(X^\ast(T), {\R})$.
Then ${\frak a}_0$ and ${\frak a}_0^\ast$ are real vector spaces of dimension $r$.
The root system $\Phi$ is a finite subset of $X^\ast(T)$,
therefore it is canonically embedded in ${\frak a}_0^\ast$.
For every simple root $\alpha \in \Delta$,
we have the coroot $\alpha^\vee \in {\frak a}_0$.

Let $E(\lambda,g) = E^{G/B}({\bf 1}, \lambda,g)$ be Langlands'
Eisenstein series for $\lambda \in {\frak a}_0^\ast \otimes \C
\simeq {\C}^r$ and $g \in G({\A})$.
Let $\tau \in {\frak a}_0$ be a sufficiently regular point in the sense of \cite{MR0558260},
that is, $\alpha(\tau)$ is positive and large enough for all $\alpha \in \Phi$.
Then, by using Arthur's truncation
operators $\Lambda^\tau$ acting on a space of continuous functions on $G({\Q})\backslash G({\A})^{1}$
(see \cite{MR0518111, MR0558260}),
we obtain the well-defined integral
\[
\int_{G({\Q}) \backslash G({\A})^{1}} \Lambda^\tau E(\lambda,g) \, dg
\]
which is often called the period of $E(\lambda,g)$.
The integral can be calculated explicitly
by using the root system $\Phi$ and its Weyl group $W$ of $\Phi$:
\begin{equation} \label{0427_1}
\aligned
& \int_{G({\Q}) \backslash G({\A})^{1}} \Lambda^\tau E(\lambda,g) \, dg \\
& \quad = v \sum_{w \in W} e^{\langle w\lambda - \rho, \tau \rangle}
\prod_{\alpha \in \Delta}
\frac{1}{\langle w \lambda - \rho, \alpha^\vee \rangle}
M(w,\lambda),
\endaligned
\end{equation}
where $v$ is the volume of $\{\sum_{\alpha \in \Delta} a_\alpha \alpha^\vee \,|\, 0 \leq a_\alpha <1 \}$,
$\rho$ is the half sum of all positive roots and
$M(w,\lambda)$ are usual (unnormalized) intertwining operators
$M(w,\lambda): I_B(\lambda) \to I_B(w \lambda)$ for $\lambda \in {\frak a}_0^\ast \otimes {\C}$ and $w \in W$.
If $G$ is a split semisimple group, we have
\[
M(w,\lambda)=\prod_{\alpha \in \Phi_w}
\frac{\hat{\zeta}(\langle \lambda, \alpha^\vee \rangle )}{\hat{\zeta}(\langle \lambda, \alpha^\vee \rangle +1)},
\]
where $\Phi_w=\Phi^+ \cap w^{-1}\Phi^-$
(see Chapter 5 of Langlands ~\cite{MR0419366}, Jacquet-Lapid-Rogawski ~\cite{MR1625060},
and Weng ~\cite{We3}).
\medskip

Note that we assumed that $G$ is split.
Standing on the above formula for the period of $E(\lambda,g)$, Weng introduced the ``period''
$\omega_{\Q, B}^{(G,T)}:{\frak a}_{0}^\ast \otimes \C \simeq {\C}^r \to {\C}$ by
\[
\aligned
\omega_{\Q, B}^{(G,T)}(\lambda)
=\sum_{w \in W}
\prod_{\alpha \in \Delta}
\frac{1}{\langle w \lambda - \rho, \alpha^\vee \rangle}
\prod_{\alpha \in \Phi_w}
\frac{\hat{\zeta}(\langle \lambda, \alpha^\vee \rangle)}{\hat{\zeta}(\langle \lambda, \alpha^\vee \rangle +1)}.
\endaligned
\]
In this definition, we ignored the volume $v$ and took $\tau=0$
in the right hand side of \eqref{0427_1}. 
We do not know $\tau=0$ is sufficiently regular or not in general.
Therefore we should note that $\omega_{\Q, B}^{(G,T)}$ may not be a period of Eisenstein series in a rigorous meaning.
However, we will find that $\omega_{\Q, B}^{(G,T)}$ itself is an interesting object apart from its origin.
For instance, a priori, there is no obvious reason for the restriction $\tau=0$.
However it is known that the point $\tau=0$ has a quite special meaning
for the distribution of zeros of Weng zeta functions
by studying $\tau$-version $\omega_{\Q, B}^{(SL(3),T)}(\lambda;\tau)$.
More precisely, a Weng zeta function corresponding to $\omega_{\Q, B}^{(SL(3),T)}(\lambda;\tau)$ also satisfies a functional equation
for a well-chosen $\tau$ including $\tau=0$, but it may not satisfy the Riemann hypothesis unless $\tau=0$ (see {\bf A.4} of \cite{We3} for details).
Probably, the most intrinsic reason for $\tau=0$ is the notion of stability which
makes a connection between $\omega_{\Q, B}^{(G,T)}(\lambda)$ and the theory of semi-stable principal $G$-lattices.
One of the consequences of such connection is formulated as the parabolic reduction conjecture
(\cite[Conjecture 10 and 11]{We7} or \cite[section 3]{We6})
which consists of two basic relations between total volumes of fundamental domains of reductive algebraic groups
and semi-stable volumes corresponding to parabolic subgroups.
It is a natural and deep conjecture for structures of algebraic groups and is proved for $G=SL(n)$
by Siegel, Kontsevich--Soibelman and Weng~\cite[Section 2.1]{We6}.
In particular, \eqref{0427_1} is justified for $\tau=0$ by the parabolic reduction conjecture
if $G=SL(n)$ (\cite[Section 2.1]{We7}).

The second part of the parabolic reduction conjecture claims that
the semi-stable volume can be written as ``an alternating sum'' of
total volumes associated with simple Levi factors of parabolic subgroups,
and it brings about the volume hypothesis \eqref{VF} in Section \ref{section_06}
which is a keystone of the main result Theorem \ref{thm101} below.
The first part of the parabolic reduction conjecture
is a formula of total volume by semi-stable volumes and
it is a generalization of classical results of Siegel and Langlands.
\medskip

>From the point of view for the theory of periods of automorphic functions,
it is natural to consider more general ``period''
by replacing $\hat{\zeta}$ by (completed) $L$-functions
attached to Dirichlet characters or automorphic representations.
However we restrict our interest only on the above $\omega_{\Q, B}^{(G,T)}$
for the simplicity of arguments.
\medskip

By definition, the period $\omega_{\Q, B}^{(G,T)}(\lambda)$ has a large symmetry with respect to the Weyl group,
but it is an $r$-variable function in general ($r = {\rm dim}\, T$).
A priori there is no obvious way to derive a single-variable function
from $\omega_{\Q, B}^{(G,T)}(\lambda)$ preserving the symmetry of the Weyl group.
A discovery of Weng was that
there is a canonical way to derive a (conjecturally) nice single-variable function
from $\omega_{\Q, B}^{(G,T)}(\lambda)$
if we use a standard maximal parabolic subgroup of $G$.
The key fact is the bijection
\[
\aligned
\Delta  = \{\alpha_1, \cdots ,\alpha_\ell \}
\quad & \to \quad \{\text{maximal parabolic subgroups $P$}\} \\
 \alpha_j \qquad \quad & \mapsto \hspace{3cm}  P_j
\endaligned
\]
We denote by $\alpha_P$
the simple root corresponding to the standard maximal parabolic subgroup $P$.
Following Weng, we define
\[
\omega_{\Q,P/B}^{(G,T)}(s)
= \underset{\lambda}{\rm Res}~\omega_{\Q, B}^{(G,T)}(\lambda)
\quad (s = \langle \lambda-\rho,\alpha_P^\vee \rangle),
\]
where $\underset{\lambda}{\rm Res}$ means taking residues along $(r -1)$ many hyperplanes
\[
\langle \lambda - \rho, \beta^\vee \rangle =0, \quad \beta \in \Delta\setminus\{\alpha_P \}.
\]
Then zeta functions for $((G,T);P/B)/{\Q}$ are defined by
\[
\hat{\zeta}_{P}(s):=\hat{\zeta}_{\Q, P/B}^{(G,T)}(s) := \prod_{k,h} \hat{\zeta}(ks+h) \cdot \omega_{\Q,P/B}^{(G,T)}(s),
\]
where $\prod_{k,h}\hat{\zeta}(ks+h)$ is the ``minimal'' finite product of $\hat{\zeta}(ks+h)$
in the sense that $\hat{\zeta}_{P}(s)$ has no $\hat{\zeta}(as+b)$ or $\hat{\zeta}(c)$ in its denominators.

\begin{remark}
The functions
$\omega_{\Q,P/B}^{(G,T)}(\lambda)$, $\omega_{\Q,P/B}^{(G,T)}(s)$, $\hat{\zeta}_{P}(s)$ are written as
$\omega_{\Q}^{G}(\lambda)$, $\omega_{\Q}^{G/P}(s)$, $\xi_{{\Q},\, o}^{G/P}(s)$ respectively in \cite{We3} and \cite{Ko}.
\end{remark}

Here we mention several examples of $\hat{\zeta}_P(s)$.
Let $P_{n_1,n_2} \subset SL(n)$ be the maximal parabolic subgroup attached to $n=n_1+n_2$.
Then we find that
\[
\hat{\zeta}_{\Q, P_{n-1,1}}^{SL(n)}(s) = \hat{\zeta}_{\Q, P_{1,n-1}}^{SL(n)}(s)
 = \sum_{h=1}^n \frac{Q_{h}(s)}{P_h(s)}\cdot \hat{\zeta}(s+h),
\]
where $P_h$, $Q_h$ are polynomials satisfying ${\rm deg}\,P_h \leq n-1$, ${\rm deg}\,Q_h < {\rm deg}\,P_h$
(see also section \ref{section_04}).
For example,
\[
\hat{\zeta}_{\Q, P_{1,1}}^{SL(2)}(s) = \frac{\hat{\zeta}(s+2)}{s}-\frac{\hat{\zeta}(s+1)}{s+2},
\]
\[
\aligned
 \hat{\zeta}_{\Q, P_{2,1}}^{SL(3)}(s) 
& = \left( \frac{\hat{\zeta}(2)}{s}-\frac{1}{2(s+1)}\right)\hat{\zeta}(s+3)
-\frac{\hat{\zeta}(s+2) }{s(s+3)}
-\left( \frac{\hat{\zeta}(2)}{s+3}-\frac{1}{2(s+2)}\right)\hat{\zeta}(s+1).
\endaligned
\]
In the following examples, each term of $\hat{\zeta}_P(s)$
consists of the product of several $\hat{\zeta}(s)$:
\[
\aligned
\hat{\zeta}_{\Q,\, P_{2,2}}^{SL(4)}&(s) =
R_1(s)\,\hat{\zeta}(s+3)\hat{\zeta}(s+4) + R_2(s)\,\hat{\zeta}(s+3)^2 \\
+ &\, R_3(s)\,\hat{\zeta}(s+1)\hat{\zeta}(s+2)
+R_4(s)\,\hat{\zeta}(s+2)^2 + R_5(s)\,\hat{\zeta}(s+2)\hat{\zeta}(s+3), 
\endaligned
\]
\[
\aligned
\hat{\zeta}_{\Q,\, P_{\rm long}}^{Sp(4)}(s) & =
R_1(s)\,\hat{\zeta}(s+3)\hat{\zeta}(2s+4) 
+ R_2(s)\,\hat{\zeta}(s+2)\hat{\zeta}(2s+4) \\
& \quad + R_3(s)\,\hat{\zeta}(s+2)\hat{\zeta}(2s+3)  
+ R_4(s)\,\hat{\zeta}(s+1)\hat{\zeta}(2s+3), 
\endaligned
\]
\[
\aligned
\hat{\zeta}_{\Q,\,P_{\rm long}}^{G_2}&(s) =
R_1(s)\,\hat{\zeta}(s+3)\hat{\zeta}(2s+4)\hat{\zeta}(3s+6)
+ R_2(s)\,\hat{\zeta}(s+1)\hat{\zeta}(2s+3)\hat{\zeta}(3s+4) \\
& + R_3(s)\,\hat{\zeta}(s+2)\hat{\zeta}(2s+4)\hat{\zeta}(3s+5)
+ R_4(s)\,\hat{\zeta}(s+2)\hat{\zeta}(2s+3)\hat{\zeta}(3s+5) \\
&+ R_5(s)\,\hat{\zeta}(s+2)\hat{\zeta}(2s+3)\hat{\zeta}(3s+4)
+ R_6(s)\,\hat{\zeta}(s+2)\hat{\zeta}(2s+4)\hat{\zeta}(3s+6),
\endaligned
\]
where $R_i$'s are certain rational functions. For more examples, see
Appendix A of \cite{We3}, 
but note that a changing of variable $s \mapsto as+b$ is necessary in order to be consistent with the notation in this paper 
and the right hand sides of (7) and (8) in \cite[\S A.2]{We3} should be exchanged.
More generally, if $G=SL(n)$ and $P=P_{n-1,1}$, under certain normalization,
the zeta function $\hat{\zeta}_{P}(s)$ is equal to the rank $n$ zeta function
over the rational number field $\Q$  which was introduced in \cite{MR2258077} as a generalization of Dedekind zeta functions
and Artin-Weil zeta functions
from a geometric point of view (see \cite[Section 1]{We6} and also section \ref{section_04}).
\medskip

By definition, $\hat{\zeta}_{P}(s)$ are meromorphic functions in $\C$ having only finitely many poles.
In addition Weng observed that $\hat{\zeta}_{P}(s)$ satisfy standard functional equations
for several examples of $((G,T);P/B)$,
and conjectured that it holds for general pairs (if $G$ is a classical semisimple group at least):
\medskip

\noindent
{\bf Conjecture 1} There exists $c=c((G,T);P/B) \in \Q$ such that
${\hat{\zeta}_{P}(-c-s)=\hat{\zeta}_{P}(s)}$.
\medskip

The conjectural functional equation derives the corresponding Riemann hypothesis:
\medskip

\noindent
{\bf Conjecture 2}
All zeros of $\hat{\zeta}_{P}(s)$ lie on the critical line ${\Re}(s)=-c/2$.
\medskip

As for Conjecture 1, initially, Weng proved it for
$SL(n)~(n=2,3,4,5)$, $Sp(4)$, $SO(8)$ and $G_2$. Successively,
H.~Kim-Weng proved the case of $(SL(n), P_{n-1,1})$ for arbitrary $n
\geq 2$ (unpublished). Finally, the second author established the
conjectural functional equations of $\hat{\zeta}_{P}(s)$ for general
$((G,T);P/B)$, and determined the value $c((G,T);P/B)$ explicitly in \cite{Ko}.
As for Conjecture 2, its validity was known for ten
examples of $((G,T);P/B)$, namely, $G=SL(n)$ $(n=2,3,4,5)$, $Sp(4)$,
$G_2$ and their arbitrary maximal parabolic subgroup $P$, 
and further, the simplicity of zeros was also known for these pairs
(\cite{MR2142130, Ki09, MR2220265, MR2310295, MR2488589,
MR2482116}). This is surprising, because a linear combination of
zeta functions has a lot of off-line zeros in general even if it has
a functional equation (e.g. \cite{MR1763856, MR2059482, MR1220477}).

Roughly, the known proof of Conjecture 2 for the above special cases
consists of two parts. The first one is to show that all but
finitely many zeros of $\hat{\zeta}_{P}(s)$ lie on the critical
line, and the second one is to remove the possibility of off-line
zeros. Methods for these two parts have different nature. 
We may say that the first part is of the problem for the zeros of higher
position, and the other part is of the problem for low-lying zeros.
The latter problem is difficult and interesting than the former one
as well as in the theory of classical zeta/$L$-functions. 
In fact, though the proof of the first part for the above ten examples were improved
and unified in ~\cite{Ki09}, unfortunately we still need a numerical
computation for the latter part. In the present paper, we prove the
following result for the first part of Conjecture 2 by generalizing
the method of ~\cite{Ki09} and by using the theory of ~\cite{Ko}:

\begin{theorem}[Weak Riemann Hypothesis]\label{thm101}
Let $G$ be a Chevalley group defined over $\Q$,
in other words, $G$ is a connected semisimple algebraic group defined over $\Q$
endowed with a maximal $(\Q$-$)$split torus $T$.
Let $B$ be a Borel subgroup of $G$ containing $T$.
Let $P$ be a maximal parabolic subgroup of $G$ defined over $\Q$ containing $B$.

Assume the volume hypothesis \eqref{VF} in Section {\rm \ref{section_06}} below if $G \not=SL(n)$.

Then all but finitely many zeros of $\hat{\zeta}_{\Q,P/B}^{(G,T)}(s)$ are simple and on the critical line
of its functional equation.
\end{theorem}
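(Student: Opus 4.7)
The plan is to extend the approach of \cite{Ki09} to general Chevalley groups, using the structural results of \cite{Ko} as input. The strategy is to reduce the statement to an asymptotic comparison between $\hat{\zeta}_P(s)$ and a single dominant summand on the critical line, and then to match a sign-change count on the line against a total zero count obtained from the functional equation via the argument principle.

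First I would exploit the functional equation $\hat{\zeta}_P(-c-s) = \hat{\zeta}_P(s)$ and the explicit Weyl-group pairing given in \cite{Ko} to rewrite
\[
\hat{\zeta}_P(s) = F(s) + F(-c-s),
\]
where $F(s)$ is a finite sum of terms of the form $R(s)\,\prod_j \hat{\zeta}(a_j s + b_j)$ indexed by a distinguished half of the Weyl group, with $R$ rational. This makes $\hat{\zeta}_P(-c/2 + it) = 2\,\Re F(-c/2 + it)$ real on the critical line. I would then isolate a single dominant summand $F_0(s)$: by Stirling's formula, each product $\prod_j \hat{\zeta}(a_j s + b_j)$ has a definite polynomial growth rate in $|t|$ along vertical strips, and $F_0$ would be chosen as the summand of strictly largest such rate. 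Komori's combinatorial description of the shift vectors $(a_j,b_j)$ attached to each $w\in W$ is what should enable such a selection. The target estimate is
\[
F(s) = F_0(s)\bigl(1 + o(1)\bigr)
\]
as $|t|\to\infty$, uniformly for $|\Re(s) + c/2| \leq R$ and any fixed $R > 0$.

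Granting this, I would count sign changes on the critical line. Writing $F_0(-c/2+it) = |F_0(-c/2+it)|\,e^{i\theta(t)}$ and applying Stirling to the gamma content of $F_0$ gives $\theta'(t) = C\log|t| + O(1)$ for a positive constant $C$; hence $\Re F_0(-c/2+it)$ produces $\frac{C}{\pi}T\log T + O(T)$ sign changes in $0\leq t\leq T$, each of which forces an odd-order zero of $\hat{\zeta}_P$ on the critical line. On the other hand, for $\Re(s)\geq 1+\epsilon$ the factors $\hat{\zeta}(a_j s + b_j)$ are bounded away from zero, so $\hat{\zeta}_P$ has only finitely many zeros to the right of some vertical line, and by the functional equation only finitely many to its left. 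Littlewood's lemma applied to the rectangle $[-c/2-R,\,-c/2+R]\times[-T,T]$, with the boundary integral evaluated via Stirling on the gamma content of $F_0$, gives the same asymptotic $\frac{C}{\pi}T\log T + O(T)$ for the total zero count in the rectangle. Matching the two counts forces all but $O(1)$ zeros with $|t|\leq T$ to be simple and on the critical line, and letting $T\to\infty$ completes the proof.

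The hardest step will be proving the uniform domination $F = F_0(1+o(1))$ on a full neighborhood of the critical line. The various Weyl-group summands are products of $\hat{\zeta}$ at close but distinct arguments, so their growth rates differ only by explicit powers of $\log|t|$ multiplied by polynomial factors, and isolating the strictly largest one requires a delicate analysis of Komori's description of $\Phi_w$ together with the pole pattern of $\omega_{\Q,P/B}^{(G,T)}(\lambda)$. A secondary issue is that the rational coefficients $R(s)$ may carry poles near the critical line: these are designed to cancel against the ``minimal'' factor $\prod_{k,h}\hat{\zeta}(ks+h)$ in the definition of $\hat{\zeta}_P(s)$, but the cancellation must be tracked quantitatively so as not to spoil the domination estimate.
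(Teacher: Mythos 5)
Your first two steps---decomposing $\hat{\zeta}_P(s) = F(s) + F(-c-s)$ and isolating a dominant summand $F_0$ via the combinatorics of $\cite{Ko}$ together with Stirling---are essentially the same as the paper's (there, $\xi_p = \varepsilon_p \pm \varepsilon_p(-c_p-\cdot)$ with the dominant piece $Q_p^\ddagger X_p^\ddagger$ isolated by Propositions \ref{prop_0605}--\ref{prop_0607}). The gap is in your third step.

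You propose to match a sign-change count on the critical line, $\frac{C}{\pi} T\log T + O(T)$, against an argument-principle count of all zeros, $\frac{C}{\pi} T\log T + O(T)$, and conclude that ``all but $O(1)$'' zeros are simple and on the line. This conclusion does not follow: both counts carry error terms of order $T$, so their difference is only bounded by $O(T)$, not $O(1)$. Indeed the paper's own zero-count (Proposition \ref{prop_0701}) gives $N(T;\sigma_{\mathrm L}) = C_1 T\log T + C_2 T + O(\log T)$ and $N(T;+\infty) = C_1 T\log T + C_3 T + O(\log^2 T)$, with \emph{a priori} no reason for $C_2 = C_3$ and no control on the $O(T)$ discrepancy at the level of counting alone. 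Density-style sign-change arguments of this type can at best yield ``a positive proportion'' or ``almost all'' zeros on the line; they cannot by themselves give ``all but finitely many.''

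What makes the paper's argument close is a structural, non-asymptotic ingredient that your proposal lacks: de Bruijn's product-formula lemma (Proposition \ref{prop_0703}, from \cite{Ki09}). After establishing that $\varepsilon_p(s)$ has only finitely many zeros in $\Re(s) \geq -c_p/2$ \emph{and} lies in a zero-free region $\Re(s) \leq -\kappa\log(|\Im s|+10)$ on the left (Proposition \ref{prop_0609}---a step your plan omits and which is needed for the Hadamard factorization in Proposition \ref{prop_0702} and for the realness of the exponent $\alpha$ there), one writes $W_p(z) = \varepsilon_p(-c_p/2+iz)$ as $h(z) e^{\alpha z}\prod_n (1-z/\rho_n)(1+z/\bar\rho_n)$ with $\alpha\in\R$, $\Im\rho_n > 0$, and $h$ a polynomial with exactly $N < \infty$ lower-half-plane zeros. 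De Bruijn's lemma then asserts that $W_p(z) \pm \overline{W_p(\bar z)}$ has \emph{at most $N$} pairs of non-real zeros---an exact bound, not an asymptotic one. This is the mechanism that converts ``finitely many zeros of $\varepsilon_p$ to the right of the critical line'' into ``finitely many off-line zeros of $\xi_p$,'' and it has no counterpart in the sign-change/Littlewood comparison you set up. If you want to rescue the counting approach you would have to improve both error terms to $O(1)$, which is hopeless given the unconditional inputs available.
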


\begin{remark}
It is expected that $\hat{\zeta}_{\Q,P/B}^{(G,T)}(s)$ has no zeros
outside the critical line as well as in known cases $G=SL(n)$ $(n=2,3,4,5)$, $Sp(4)$, $G_2$
that correspond to root systems of type $A_n$ ($n=1,2,3,4$), $C_2 (\simeq B_2)$ and $G_2$.
However we have no idea how to prove it in general.
\end{remark}

\begin{remark}
Similar results hold if we replace $\hat{\zeta}(s)$ by a suitable (completed) $L$-function,
because we use only standard analytic properties of $\hat{\zeta}(s)$ in the proof.
\end{remark}

\begin{remark}
The volume hypothesis \eqref{VF} is essential for our proof of Theorem \ref{thm101}
and is not a technical hypothesis.
In fact, it is a part of the parabolic reduction conjecture as mentioned above.
Unfortunately, to the best of the authors' knowledge,
the volume hypothesis has not been yet proved except for the case $G=SL(n)$ in spite of its significance,
but we believe that it is natural and reasonable one.
Therefore, we leave it for other work.
Conversely, Theorem \ref{thm101} shows
a relation between the Riemann hypothesis
and the deep structure of algebraic group
like the parabolic reduction conjecture.
\end{remark}

We note that the known proof of Conjecture 2 about ten examples
mentioned just before Theorem \ref{thm101} depends on explicit formulas
of $\hat{\zeta}_{\Q,P/B}^{(G,T)}(s)$ presented by Weng (see \cite{We3}).
On the other hand, the method of the proof of the
functional equation in \cite{Ko} is completely abstract, and hence
it does not depend on an individual root system. In this paper, we
will prove the above main theorem in such a way that we refine the
proof of the Riemann hypothesis of $\hat{\zeta}_{\Q,P/B}^{(G,T)}(s)$
demonstrated in \cite{Ki09, MR2220265, MR2310295, MR2488589, MR2482116}
by using terms of the abstract root system as well as in the
proof of the functional equations in \cite{Ko}.
\medskip

The paper is organized as follows. In section 2, we rewrite zeta
functions $\hat{\zeta}_{\Q,P/B}^{(G,T)}(s)$ in terms of abstract
root systems, and define zeta functions more rigorously. In section
3, we sketch the outline of the proof of Theorem \ref{thm101}. In
section 4, we briefly review the story of the proof of Theorem
\ref{thm101} restricting on the most simple cases
$(SL(n),P_{n-1,1})$. In sections 5 to 7, we carry out the scheme of
section 3 without proofs of lemmas and propositions. The section
that the proof of each lemma or proposition is accomplished is
mentioned here. In section 8, we prepare further notations and
auxiliary lemmas for the proofs of results in sections 5 to 7.
Finally, in sections 9 to 15, we fill the details of proofs of
lemmas and propositions in sections 5 to 7, and complete the proof of
Theorem \ref{thm101}.

%

\subsection*{Acknowledgements}
The authors would like to thank Lin Weng for his valuable comments on the previous version of the paper.
The authors also thank referees for their comments and suggestions. 
The first named author was supported by the National Research Foundation of Korea (NRF) grant funded by the Korea government (MSIP)(No. 2014R1A2A2A01002549).
The third named author was supported by KAKENHI (Grant-in-Aid for Young Scientists (B)) No. 21740004 and No. 25800007.

%

%
\section{Definition of Weng zeta functions for $(G,P)$}
%
%

\subsection{Root system and the Weyl group}

Let $V$ be an $r$-dimensional real vector space equipped with an inner product $\langle \cdot, \cdot \rangle$.
Let $\Phi \subset V$ be a (reduced) root system of rank $r$
and $\Delta=\{\alpha_1,\cdots,\alpha_r\}$, its fundamental system.
Let $\alpha^\vee=2\alpha/\langle \alpha,\alpha \rangle$ be the coroot associated with $\alpha \in \Phi$.
Let $\Lambda=\{\lambda_1,\cdots,\lambda_r\}$ be the set of fundamental weights
satisfying $\langle \lambda_i,\alpha_j^\vee \rangle = \delta_{ij}$.
Let $\Phi^+$ be the corresponding positive system of $\Phi$
and $\Phi^-=-\Phi^+$ so that $\Phi = \Phi^+ \cup \Phi^-$.
Let
\[
\rho=\frac{1}{2}\sum_{\alpha \in \Phi^+} \alpha =\sum_{j=1}^r \lambda_j
\]
be the Weyl vector.
Let $\h\alpha^\vee=\langle \rho,\alpha^\vee \rangle$ be the height of $\alpha^\vee$.

Let $W$ be the Weyl group generated by simple reflections $\sigma_j=\sigma_{\alpha_j}:V\to V$
attached to simple roots $\alpha_j \in \Delta$.
We denote  the identity of $W$ by ${\rm id}$.
For $w \in W$, we put
\[
\Phi_w=\Phi^+ \cap w^{-1} \Phi^-
\]
and let $l(w)=\vert \Phi_w \vert$ be the length of $w$.
Let $w_0$ be the longest element of $W$.
Then we have $w_0^2={\rm id}$, $w_0 \Delta=-\Delta$ and $w_0 \Phi^+=\Phi^-$.

We fix an integer $p$ with $1 \leq p \leq r$.
Let $\Phi_p$ be the root system normal to the fundamental weight $\lambda_p$.
A fundamental system of $\Phi_p$ is given by $\Delta_p = \Delta \setminus \{ \alpha_p \}$.
Let $\Phi_p^+=\Phi^+ \cap \Phi_p ~(\subset \Phi^+)$ be the corresponding positive system of $\Phi_p$.
Then $\Phi_p = \Phi_p^+ \cup \Phi_p^-$ with $\Phi_p^-=-\Phi_p^+$.
Let
\[
\rho_p=\frac{1}{2}\sum_{\alpha \in \Phi_p^+} \alpha.
\]
Let $W_p$ be the subgroup of $W$
generated by simple reflections $\{\sigma_j=\sigma_{\alpha_j} \,|\, \alpha_j \in \Delta_p\}$.
Let $w_p$ be the longest element of $W_p$.
Then $w_p^2={\rm id}$, $w_p\Delta_p=-\Delta_p$ and $w_p \Phi_p^+=\Phi_p^-$.

\begin{definition}
Define the subset ${\frak W}_p$ of $W$ by
\[
{\mathfrak W}_p:=\{w \in W ~|~ \Delta_p \subset w^{-1}(\Delta \cup \Phi^{-})\}.
\]
\end{definition}
\noindent
Clearly ${\rm id}$, $w_0$, $w_p$ belong to ${\frak W}_p$. The condition
\begin{equation}\label{0201}
\Delta_p \subset w^{-1}(\Delta \cup \Phi^{-})
\end{equation}
plays an important role in several places of the proof of Theorem \ref{thm101}.
\medskip

To describe the functional equation,
we introduce the constants
\begin{equation}\label{0202}
c_p = 2 \, \langle \lambda_p - \rho_p, \alpha_p^\vee \rangle
\end{equation}
as well as ~\cite[section 2]{Ko}.
Then $c_p$ is a positive integer for every $1 \leq p \leq r$.

\subsection{Definition of Weng zeta functions in terms of abstract root system} \hfill \\

Let $G$ be a connected semisimple algebraic group and let $\frak g$
be its Lie algebra of $G$. Let $T$ be a maximal torus of $G$, and
let ${\frak g}_\alpha=\{X \in {\frak g} \,|\, {\rm
Ad}(t)X=\alpha(t)X \}$ for each character $\alpha \in X^\ast(T)$.
Then the set $\Phi=\Phi(G,T) = \{ \alpha \in X^\ast(T) \,|\, {\frak
g}_\alpha \not=\emptyset \}$ is finite, and makes a root system (in
the vector space $X^\ast(T) \otimes \R$). Conversely, for a given
root system $\Phi$, there exists a connected semisimple algebraic
group $G=G(\Phi)$ defined over a prime field having $\Phi$ as its
root system with respect to a split maximal torus $T$ of $G$ by the
fundamental theorem of Chevalley. The group $G(\Phi)$ is called a
Chevalley group of type $\Phi$ (or split group, since it has a
maximal torus which is split over the prime field).
\medskip

Therefore we can deal with Weng zeta functions for Chevalley groups defined over $\Q$
by using the language of abstract root systems only.
Now we define Weng zeta functions again in terms of abstract root systems.
\medskip

Let $\zeta(s)$, $\hat{\zeta}(s)$ as above, and let
\begin{align}
\xi(s) &=s(s-1)\hat{\zeta}(s) = s(s-1)\pi^{-s/2}\Gamma(s/2) \zeta(s).
\end{align}
Note that Weng \cite{We3} and Komori~\cite{Ko} use the notation $\xi(s)$
to indicate our $\hat{\zeta}(s)$
as well as Langlands~\cite{MR0419366} et al.

\begin{definition}[Periods for $(\Phi,\Delta)$] \label{def_0201}
Let $\Phi$ be an irreducible root system and
let $\Delta$ be a fundamental system of $\Phi$.
For $\lambda \in V$, we define
\[
\omega_\Delta^{\Phi}(\lambda)=
\sum_{w \in W}
\prod_{\alpha \in \Delta} \frac{1}{\langle w\lambda-\rho,\alpha^\vee \rangle}
\prod_{\alpha \in \Phi_w} \frac{\hat{\zeta}(\langle \lambda, \alpha^\vee \rangle )}{\hat{\zeta}(\langle \lambda, \alpha^\vee \rangle+1)}.
\]
Here we understand that the second product equals $1$ if $\Phi_w=\Phi^+ \cap w^{-1} \Phi^{-}=\emptyset$.
\end{definition}

\begin{definition}[Periods for $(\Phi,\Delta,p)$] \label{def_0202}
Let $\Phi$ be an irreducible root system of rank $r$
with a fundamental system $\Delta=\{\alpha_1,\cdots,\alpha_r\}$.
Let $1 \leq p \leq r$.
Take the coordinate of $V$ as
\[
\lambda = \sum_{j=1}^{r}(1+s_j)\lambda_j=\rho+\sum_{j=1}^{r} s_j \lambda_j
\quad (\lambda \in V)
\]
so that $\langle \lambda-\rho,\alpha^\vee \rangle=\sum_{j=1}^{r} a_j s_j$
for $\alpha^\vee=\sum_{j=1}^r a_j \alpha_j^\vee$,
and write $\Delta_p=\{\beta_1,\cdots,\beta_{r-1}\}$.
Then we define $\omega_p:{\C} \to{\C}$ by
\begin{equation}\label{e:omegap}
\aligned \omega_p(s)=\omega_{\Delta,p}^{\Phi}(s) & =
\underset{\langle \lambda-\rho,\beta_1^\vee \rangle=0}{\rm Res}
\cdots \underset{\langle \lambda-\rho,\beta_{r-1}^\vee
\rangle=0}{\rm Res}
 \omega_\Delta^{\Phi}(\lambda) \\
& = \underset{s_1=0}{\rm Res} \cdots \underset{s_{p-1}=0}{\rm Res}
\quad \underset{s_{p+1}=0}{\rm Res} \cdots \underset{s_r=0}{\rm
Res}~\omega_\Delta^{\Phi}(\lambda),
\endaligned
\end{equation}
where the variable $s_p$ is written as $s$.
\end{definition}
\begin{remark}
The function $\omega_p(s)$ is well defined,
since it does not depend on the ordering of the set of simple roots $\Delta_p$ by Proposition 2.2 of \cite{Ko}.
\end{remark}

\begin{remark}
Let $G=G(\Phi)$ be a Chevalley group, and let $B$ a Borel subgroup
containing the maximal split torus $T$. Let $P~(\supset B)$ be a
maximal parabolic subgroup of $G$ corresponding to the simple root
$\alpha_p$. Then functions $\omega_{\Delta}^\Phi(\lambda)$ and
$\omega_{\Delta,p}^{\Phi}(s)$ are the periods
$\omega_{\Q,B}^{(G,T)}(\lambda)$ and $\omega_{\Q,P/B}^{(G,T)}(s)$
respectively.
\end{remark}
\begin{definition}\label{def_0203}
For $w \in {\frak W}_p$ and $(k,h)\in{\Z}^2$, we define
\[\aligned
N_{p,w}(k,h) & =\sharp \{\alpha \in w^{-1}\Phi^- \,|\, \langle \lambda_p, \alpha^\vee \rangle=k,~\h\alpha^\vee=h \}, \\
N_{p}(k,h) & =\sharp \{\alpha \in \Phi \,|\, \langle \lambda_p, \alpha^\vee \rangle=k,~\h\alpha^\vee=h \}
\endaligned\]
and
\[
M_p(k,h)=\underset{w \in {\frak W}_p}{\rm max}
\bigl( N_{p,w}(k,h-1)-N_{p,w}(k,h) \bigr).
\]
\end{definition}
If $k \geq 1$ or $h \geq 1$, we have
\[\aligned
N_{p,w}(k,h)&=\sharp \{\alpha \in \Phi_w \,|\, \langle \lambda_p, \alpha^\vee \rangle=k,~\h\alpha^\vee=h \}, \\
N_{p}(k,h)&=\sharp \{\alpha \in \Phi^+ \,|\, \langle \lambda_p, \alpha^\vee \rangle=k,~\h\alpha^\vee=h \},
\endaligned\]
because a root $\alpha \in \Phi$ is either $\alpha \in \Phi^+$ or $\alpha \in \Phi^-$.

\begin{definition}[Weng zeta function for $(\Phi,\Delta, p)$] \label{def_0204}
Using the above notation, we define
\[
\hat{\zeta}_p(s)
= \hat{\zeta}_{\Delta,p}^{\,\Phi}(s)
= \omega_p(s) \prod_{k=0}^\infty \prod_{h=2}^\infty \hat{\zeta}(ks+h)^{M_p(k,h)}.
\]
\end{definition}
\begin{remark}
Note that $M_p(k,h)=0$ except for finitely many pairs of integers $(k,h)$.
The function $\hat{\zeta}_p(s)$
coincides with the zeta function $\xi_{{\Q},o}^{G/P}(s)$ of \cite{We3}
if $\Phi$ is the root system attached to $(G,T)$
and $\alpha_p$ corresponds to the maximal parabolic $P$.
Moreover, we find that
the numbers $I(G/P)$ and $J(G/P)$ defined in \cite[section 2]{We3} are given by
\[
I(G/P)=\sum_{(k,h) \in {\Z}^2,\,k\not=0} M_p(k,h),
\quad J(G/P)=\sum_{h \in {\Z}}M_p(0,h),
\]
because the product
$\prod_{k=0}^\infty \prod_{h=2}^\infty \hat{\zeta}(ks+h)^{M_p(k,h)}$
is minimal in the sense that it eliminates all $\hat{\zeta}(as+b)$ and $\hat{\zeta}(c)$
appearing in the denominators of $\omega_{p}(s)$ (\cite[Theorem 2.3, section 5]{Ko}).
\end{remark}

\begin{remark}
We will have $M_p(k,h)=N_{p,w_0}(k,h-1) - N_{p,w_0}(k,h)$ in Corollary \ref{cor_0807} below.
\end{remark}

\begin{theorem}[Komori, Theorem 2.4 of \cite{Ko}] \label{prop_0205}
Let $c_p$ be the constant defined in \eqref{0202}.
Then $\hat{\zeta}_p(s)$ satisfies the functional equation
\[
\hat{\zeta}_p(s)=\hat{\zeta}_p(-c_p-s).
\]
\end{theorem}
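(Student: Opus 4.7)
My strategy is to derive the functional equation of the one-variable function $\hat{\zeta}_p(s)$ from a stronger symmetry of the multi-variable period $\omega_\Delta^\Phi(\lambda)$, established using the manifest Weyl-group symmetry built into its definition together with the functional equation $\hat{\zeta}(s)=\hat{\zeta}(1-s)$ of the completed Riemann zeta function.

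The central step is to prove an identity of the form $\omega_\Delta^\Phi(\lambda)=\omega_\Delta^\Phi(\lambda^{\ast})$ for a suitable affine involution on $V$; the natural candidate is $\lambda^{\ast} := -w_0\lambda$, which fixes $\rho$ since $w_0\rho=-\rho$. To verify this, I would reindex the Weyl sum by $w \mapsto ww_0$. Under this reindexing, $\Phi_{ww_0}$ becomes essentially the complement of $\Phi_w$ inside $\Phi^+$, so the factors $\hat{\zeta}(\langle\lambda,\alpha^\vee\rangle)/\hat{\zeta}(\langle\lambda,\alpha^\vee\rangle+1)$ indexed by $\Phi_{ww_0}$ are paired, via $\hat{\zeta}(s+1)=\hat{\zeta}(-s)$ and the substitution $\lambda\mapsto -w_0\lambda$, with those indexed by $\Phi_w$ at the original $\lambda$. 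The denominator $\prod_{\alpha\in\Delta}\langle w\lambda-\rho,\alpha^\vee\rangle^{-1}$ transforms compatibly because $-w_0$ acts on $\Delta$ as a diagram automorphism, and collecting all the signs and shifts should produce the desired multi-variable symmetry.

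Next I would descend to $\omega_p(s)$. Writing $\lambda = \rho + \sum_j s_j \lambda_j$, the involution $\lambda \mapsto -w_0 \lambda$ becomes an affine map on the $s$-coordinates: it permutes the indices via the opposition involution $-w_0$, reverses signs on the fixed directions, and contributes a translation whose projection through $\alpha_p^\vee$ is precisely $c_p = 2\langle \lambda_p-\rho_p,\alpha_p^\vee\rangle$ after comparing $\rho$ with $\rho_p$ on $\alpha_p^\vee$. Since the iterated residue at $s_j=0$ ($j\neq p$) is independent of the order of the residues (Proposition 2.2 of \cite{Ko}) and $-w_0$ preserves $\Delta_p$ set-theoretically, this symmetry descends to $\omega_p(-c_p-s)=\omega_p(s)$. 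Finally, for the product $\prod_{k,h}\hat{\zeta}(ks+h)^{M_p(k,h)}$ to be compatible with the shift $s \mapsto -c_p-s$, I would use the forward description $M_p(k,h)=N_{p,w_0}(k,h-1)-N_{p,w_0}(k,h)$ stated in the remark following Definition \ref{def_0204}, together with the action of $w_0$ on roots, to deduce the combinatorial identity $M_p(k,h)=M_p(k,kc_p+2-h)$.

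The main obstacle I anticipate is controlling the first step at the level of individual Weyl summands: each summand is separately singular on the hyperplanes defining $\omega_p$, so one cannot match term by term but must combine the contributions of $w$ and $ww_0$ before any specialization, treating the full sum as a meromorphic function on $V\otimes\C$. A further subtlety is reconciling the signs produced by the functional equation of $\hat{\zeta}$ with the transformation law of $\prod_{\alpha\in\Delta}\langle w\lambda-\rho,\alpha^\vee\rangle^{-1}$ under the combined substitution and reindexing — this bookkeeping is where the constant $c_p$ ultimately gets pinned down, and it must be carried out uniformly for all irreducible root systems, including those with a non-trivial diagram automorphism.
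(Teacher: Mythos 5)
Your candidate involution $\lambda^{\ast}=-w_0\lambda$ is indeed a symmetry of $\omega_\Delta^\Phi$, but it is the \emph{wrong} one: $-w_0$ is the opposition (diagram) automorphism of $\Phi$, it fixes $\rho$, preserves $\Delta$, $\Phi^+$, and the heights, so $\omega_\Delta^\Phi(-w_0\lambda)=\omega_\Delta^\Phi(\lambda)$ is the trivial diagram symmetry. Your description of its action in the $s$-coordinates is mistaken: since $-w_0\rho=\rho$, the map $\lambda=\rho+\sum_j s_j\lambda_j\mapsto -w_0\lambda=\rho+\sum_j s_j\lambda_{\sigma(j)}$ is a \emph{pure permutation} of the coordinates, with no sign reversal and, crucially, no translation. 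Taking residues therefore yields $\omega_{\Delta,p}^\Phi(s)=\omega_{\Delta,\sigma(p)}^\Phi(s)$ (as in the paper's observation $\hat{\zeta}_{\Q,P_{n-1,1}}^{SL(n)}=\hat{\zeta}_{\Q,P_{1,n-1}}^{SL(n)}$), not $\omega_p(-c_p-s)=\omega_p(s)$. The constant $c_p$ never appears in this symmetry, so it cannot produce the functional equation you want.

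Your verification step also does not close: if you substitute $\lambda\to-w_0\lambda$ and reindex $w\mapsto ww_0$, then $\Phi_{ww_0}=-w_0(\Phi^+\setminus\Phi_w)$, so after the change of variable $\gamma=-w_0\alpha$ the $\hat{\zeta}$-factors end up indexed by the \emph{complement} $\Phi^+\setminus\Phi_{w'}$ rather than by $\Phi_{w'}$, and the denominator picks up $\langle -w'\lambda-\rho,\alpha^\vee\rangle$ rather than $\langle w'\lambda-\rho,\alpha^\vee\rangle$. The functional equation $\hat{\zeta}(s)=\hat{\zeta}(1-s)$ does not change the index set, so these mismatches persist; what is really happening is that the period transforms with a nontrivial cocycle, $\omega_\Delta^\Phi(u\lambda)=M(u,\lambda)^{-1}\omega_\Delta^\Phi(\lambda)$ for $u\in W$, and the residual factor $M(w_0,\lambda)=\prod_{\alpha\in\Phi^+}\hat{\zeta}(\langle\lambda,\alpha^\vee\rangle)/\hat{\zeta}(\langle\lambda,\alpha^\vee\rangle+1)$ is not $1$.

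The argument in \cite{Ko} (which this paper invokes and then reuses in Lemma \ref{lem_0503}) works \emph{after} taking the residues, not before. One first rewrites $\omega_p(s)$ as a sum over the much smaller set $\mathfrak{W}_p=\{w\in W : \Delta_p\subset w^{-1}(\Delta\cup\Phi^-)\}$ (formula (2.8) of \cite{Ko}), and then uses the involution $w\mapsto w_0 w w_p$ \emph{of} $\mathfrak{W}_p$ together with $\hat{\zeta}(s)=\hat{\zeta}(1-s)$ to match each summand at $s$ with the summand for $w_0ww_p$ at $-c_p-s$; Lemma \ref{lem_0812} is exactly what pins down $c_p$ through $w_p\alpha_p^\vee$. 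This is where the affine shift by $c_p$ enters. If you want to carry out the proof yourself, the place to start is this post-residue sum over $\mathfrak{W}_p$ and the involution $w\mapsto w_0ww_p$, not a pre-residue symmetry of $\omega_\Delta^\Phi$.
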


\begin{remark}
See Appendix 2 for table of numbers $c_p$.
The numbers $c_p$ can be interpreted geometrically
in terms of the first Chern class of tangent bundles on a homogeneous space
or an index of a homogeneous space.
See Appendix 3 for details.
\end{remark}

Now the reformulation of Theorem \ref{thm101} in terms of $\hat{\zeta}_p(s)$ is obvious,
thus we omit such a direct reformulation of Theorem \ref{thm101}.
See Theorem \ref{thm_0703} and Theorem \ref{thm_0704}
that are reformulations of Theorem \ref{thm101} in terms of entire functions $\xi_p(s)$ defined below.

%
\section{Outline of the proof} \label{section_03}
%

Define the entire function
$\xi_p(s)=Q(s)\, \hat{\zeta}_p(s)$
by multiplying a suitable  polynomial.
\medskip

\noindent
\textbf{1.} At first we construct an entire function $\varepsilon_p(s)$ satisfying
\[
(\star) \quad\quad  \xi_p(s)=\varepsilon_p(s) \pm \varepsilon_p(-c_p-s).
\]
Here the sign $\pm$ depends on the degree of $Q(s)$. The formula
$(\star)$ plays a central role on the current line of the proof.
More precisely, Theorem \ref{thm101} is reduced to a study of the
location of zeros of $\varepsilon_p(s)$ by $(\star)$, and
fortunately, it is less hard to investigate the zeros of
$\varepsilon_p(s)$ than $\hat{\zeta}_p(s)$. A kind of the formula
$(\star)$ was used in every (known) proof of the Riemann hypothesis
of $\hat{\zeta}_p(s)$ for $G=SL(n)$ ($n=2,3,4,5$), $Sp(4)$, $G_2$.
This step is described more precisely in section \ref{section_05}.
\medskip

\noindent
\textbf{2.} Successively we investigate the location of the zeros of $\varepsilon_p(s)$.
The aim of this second step is to show that
\begin{enumerate}
\item[(i)]  the number of zeros of $\varepsilon_p(s)$ in $\Re(s) \geq -c_p/2$ is finitely many,
\item[(ii)] in a left half-plane, $\varepsilon_p(s)$ has no zero in a region  $\Re(s) \leq -\kappa \log(|{\Im}(s)|+10)$.
\end{enumerate}
This step is described more precisely in section \ref{section_06}.
\medskip

\noindent
\textbf{3.} Finally we prove Theorem \ref{thm101} by using the results of the second step.
The main tool of this step is the Hadamard product formula of $\varepsilon_p(s)$.
Essentially this part is a modification of the method which
was established in \cite{Ki09} by the first author.
This step is described more precisely in section \ref{section_07}.
\medskip

The most essential part of our proof of Theorem \ref{thm101} is in
the second step. To complete the proof of the second step, we need a
detailed study of a  structure of $\Z$-grading of root systems
$\Phi$ (or ${\rm Lie}(G)$) induced from fundamental weights $\lambda_p$. 
We review the flow from the first step to the second
step in the next section by taking up the cases of
$(SL(n),P_{n-1,1})$ as an example.

The first step is established by an algebraic way via the general
theory of root systems and Weyl groups attached to maximal parabolic
subgroups $P$. The argument of the final step is achieved by a
purely analytic way. Frequently, basic analytic properties of
$\zeta(s)$ will be used in demonstrating Theorem
\ref{thm101}.

%
\section{Cases of $(SL(n),P_{n-1,1})$} \label{section_04}
%
%
Let $P=P_{n-1,1}=\Bigl\{ \Bigl( \begin{smallmatrix} A & B \\ 0 & D \end{smallmatrix} \Bigr) \in SL(n) \, |\, A \in GL(n-1),\, D \in GL(1) \Bigr\}$ be the standard maximal parabolic subgroup of $G=SL(n)$
attached to the partition $n=(n-1)+1$.
For these special cases, Theorem \ref{thm101} was established by Weng (unpublished)
after the work of the first author~\cite{Ki09}.
As a review  of the proof of general cases,
we sketch the proof of Theorem \ref{thm101} for these special cases
restricting it on the first and the second step.
\medskip

Let $B$ be the standard Borel subgroup consisting of upper triangular matrices,
and let $T$ be the standard split torus in $B$,
that is, the torus consisting of diagonal matrices.
The root system $\Phi$ associated with $T$ is of type $A_{n-1}$,
and is realized as
$\Phi^+=\{ e_{i} - e_{j} \,|\, 1 \leq i < j \leq n \}$,
where   $\{e_i \,|\, 1 \leq i \leq n\}$ is the standard orthonormal basis of ${\R}^n$.
The set of simple roots $\Delta$ associated with $B$ is
$\Delta=\{\alpha_1:=e_1-e_2, \cdots, \alpha_{n-1}:=e_{n-1}-e_{n}\}$,
and the half sum of positive roots is
\[
\rho=\frac{1}{2}\bigl( (n-1)e_1 + (n-3)e_2 + \cdots - (n-3)e_{n-1} - (n-1)e_n \bigr).
\]
The Weyl group $W$ is identified with the symmetric group on $n$ letters $S_n$
by the convention $w(e_{i}-e_{j})=e_{w(i)}-e_{w(j)}$ ($w \in S_n$).
The longest element $w_0$ of $W$ is given by the permutation $\Bigl(\begin{smallmatrix} 1 & 2 & \cdots & n \\ n & n-1 & \cdots & 1\end{smallmatrix}\Bigr)$,
and $-w_0\rho=\rho$.
The maximal parabolic subgroup $P=P_{n-1,1}$ corresponds to the simple root $\alpha_{n-1}=e_{n-1}-e_{n}$,
and has the Levi decomposition $P=MN$ with $M \simeq GL(n-1)$. We have
\[
\aligned
\Phi_P^+ &= \Phi_{n-1}^+ = \{ e_{i} - e_{j} \,|\, 1 \leq i < j \leq n-1 \}, \quad
\Delta_P = \Delta_{n-1} =\{\alpha_{1}, \cdots, \alpha_{n-2}\}, \\
\rho_P &= \rho_{n-1} = \frac{1}{2}\bigl( (n-2)e_1 + (n-4)e_2 + \cdots - (n-4)e_{n-2} - (n-2)e_{n-1} \bigr),
\endaligned
\]
and the fundamental weight corresponding to $P$ is
\[
\lambda_P=\lambda_{n-1}=
\frac{1}{n}(e_1+ \cdots +e_{n-1}-(n-1)e_n).
\]
The Weyl group $W_P=W_{n-1}$ is the subgroup of $W$
which corresponds to the symmetric group on $(n-1)$ letters $\{1,2,\cdots, n-1 \}$.
The longest element $w_P=w_{n-1}$ of $W_P$ is given by the permutation
$\Bigl(\begin{smallmatrix} 1 & 2 & \cdots & n-1 \\ n-1 & n-2 & \cdots & 1\end{smallmatrix}\Bigr)$,
and $-w_P \rho_P = \rho_P$.
\medskip

Comparing with general cases, structures of $(\Phi^+ \setminus
\Phi_P^+) \cap w^{-1}\Phi^{\pm}$ and $\Phi_P^+ \cap
w^{-1}\Phi^{\pm}$ $(w \in W)$ in the present case are rather simple,
therefore it is not hard to find that $\prod_{h=2}^{n-1}
\hat{\zeta}(h) \cdot \hat{\zeta}(s+n)$ is the minimal product of
zeta functions and zeta values which eliminates all zeta functions
and zeta values in the denominator of each term of
$\omega_{\Q,P/B}^{(G,T)}(s)$. Hence we have
\[
\hat{\zeta}_P(s) = \omega_{\Q,P/B}^{(G,T)}(s) \, \prod_{h=2}^{n-1} \hat{\zeta}(h) \cdot \hat{\zeta}(s+n).
\]
More precisely, we obtain
\[
\hat{\zeta}_P(s) = \sum_{h=1}^{n} R_h(s)\, \hat{\zeta}(s+h)
\]
with rational functions
\[
R_h(s) = \sum_{{w \in {\frak W}_P}\atop{|(\Phi^+ \setminus \Phi_P^+) \cap w^{-1} \Phi^{+}|=h-1}} C_w
\prod_{\alpha \in (w^{-1}\Delta) \setminus \Phi_P}
\frac{1}{\langle \lambda_P, \alpha^\vee \rangle s + \h\alpha^\vee -1 },
\]
where ${\frak W}_P = \{ w \in W \, |\, w\Delta_P \subset \Delta \cup \Phi^- \}$ and
\[
C_w =
\prod_{\alpha \in (w^{-1}\Delta) \cap (\Phi_P \setminus \Delta_P)} \frac{1}{\h\alpha^\vee -1 }
\quad \hat{\zeta}(2)^{|\Delta_P^+ \cap w^{-1}\Phi^{+}|} \!\!\!\!
\prod_{\alpha \in (\Phi_P^+ \setminus \Delta_P) \cap w^{-1}\Phi^{+}}
\frac{\hat{\zeta}(\h\alpha^\vee +1) }{\hat{\zeta}(\h\alpha^\vee)}.
\]
The constant $c_p$ appearing in the functional equation \eqref{0202}
is calculated as follows:
\[
c_P=2\langle \lambda_P - \rho_P, \alpha_P^\vee \rangle =  n.
\]
We have $R_h(-n-s)=R_h(s)$ for every $1 \leq h \leq n$
by considering the involution $w \mapsto w_0ww_P$ of ${\frak W}_P$ according to \cite[section 4]{Ko}.
Hence we obtain the functional equation
\[
\hat{\zeta}_P(s) = \hat{\zeta}_P(-n-s).
\]
by using the functional equation of $\hat{\zeta}(s)$.
Define
\[
\xi_P(s) = \prod_{k=0}^n (s+k) \cdot \hat{\zeta}_P(s).
\]
Then we find that $\xi_P(s)$ is an entire function,
and each
\[
P_h(s):= \frac{1}{(s+h)(s+h-1)}R_h(s)\prod_{k=0}^n (s+k) \quad (1 \leq h \leq n)
\] is a polynomial.
Set
\[
\varepsilon_P(s) =
\begin{cases}
\displaystyle{\sum_{h=n/2}^{n} P_h(s) \xi(s+h)}, & \text{$n$ is even}, \\
\displaystyle{\sum_{h=(n+3)/2}^{n} P_h(s) \xi(s+h) + \frac{1}{2}P_{\frac{n+1}{2}}(s) \xi\left(s+\frac{n+1}{2}\right)}, & \text{$n$ is odd}
\end{cases}
\]
with $\xi(s)=s(s-1)\hat{\zeta}(s)$. Then we have
\[
\xi_P(s) = \varepsilon_P(s) + (-1)^{n-1} \varepsilon_P(-n-s)
\]
by the functional equation of $\hat{\zeta}_P(s)$. This is the
formula $(\star)$ of section \ref{section_03}.

By definition, we have
\[
\varepsilon_P(s) =
\begin{cases}
P_2(s) \xi(s+2), \qquad \text{$n=2$}, & \\
\displaystyle{P_n(s) \xi(s+n)\left( 1 + \sum_{h=n/2}^{n-1} \frac{P_h(s)}{P_n(s)} \frac{\xi(s+h)}{\xi(s+n)}\right)}, & \\
\qquad \text{$n > 2$ is even}, \\
\displaystyle{P_n(s) \xi(s+n)\left(1 + \sum_{h=(n+3)/2}^{n-1} \frac{P_h(s)}{P_n(s)} \frac{\xi(s+h)}{\xi(s+n)}
+ \frac{1}{2}\frac{P_{\frac{n+1}{2}}(s)}{P_n(s)} \frac{\xi\left(s+\frac{n+1}{2}\right)}{\xi(s+n)} \right)}, & \\
\qquad \text{$n$ is odd.}
\end{cases}
\]
It is well known that $\xi(s)$ has no zeros in the right half plane $\Re(s) \geq 1$.
Thus $\xi(s+n)$ has no zeros in the right half plane $\Re(s) \geq -n/2$.
Furthermore $|\xi(s+h)/\xi(s+n)|<1$ for $\Re(s) > -(h+n-1)/2$ (see Lemma \ref{lem901} below).
Therefore, if
\begin{equation}\label{150128_01}
\deg P_n(s) \geq \deg P_h(s) + 1 \quad \text{for every $1<h<n$},
\end{equation}
we can conclude that the number of zeros of $\varepsilon_P(s)$ in
the right half plane $\Re(s) \geq -n/2$ is finitely many. This is {\bf 2} (i) of
section \ref{section_03}. It is not hard to find that $\deg P_h(s)
\leq n-3$ for every $1<h<n$ ($n \geq 3$) (see Lemma \ref{lem1001}).
Therefore, we have inequality \eqref{150128_01} if $\deg P_n(s) = n-2$.
However, it is not trivial to prove $\deg P_n(s) = n-2$ for general $n \geq 2$.
In fact, by definition of $P_n(s)$, it is equivalent to the nonvanishing of the
$\Q$-linear combination of products of special values of
$\hat{\zeta}(s)$:
\[
\sum_{{w \in {\frak W}_P}\atop{|(\Phi^+ \setminus \Phi_P^+) \cap w^{-1} \Phi^{+}|=n-1}} C_w
\prod_{{\alpha \in (w^{-1}\Delta) \setminus \Phi_P}\atop{|(w^{-1}\Delta) \setminus \Phi_P|=1}}
\frac{1}{\langle \lambda_P, \alpha^\vee \rangle} \not=0,
\]
and it is highly nontrivial.
Fortunately, in the case of $SL(n)$, this problem were solved
by using the volume formula of \cite[section 4.7]{MR2310297}
(see the volume hypothesis \eqref{VF} in section \ref{section_06} and the proof of Lemma \ref{lem1003} in section \ref{section_10}). 
In contrast with {\bf 2} (i), {\bf 2} (ii) 
is provided easily by using the Stirling formula for the gamma function.

%
%
\section{The first step of the proof of Theorem \ref{thm101}} \label{section_05}
%
%
\subsection{A modification to entire functions}

The zeta function $\hat{\zeta}_{\Delta,p}^{\Phi}(s)$ is meromorphic on $\C$
and has finitely many poles.
In order to prove Theorem \ref{thm101},
we consider the entire function
$\xi_{\Delta,p}^{\Phi}(s)$ which is a polynomial multiple of $\hat{\zeta}_{\Delta,p}^{\Phi}(s)$.
\medskip

At first, we recall formula (2.8) of ~\cite{Ko}:
\[
\aligned
\omega_{\Delta,p}^{\Phi}(s)
& = \sum_{w \in {\frak W}_p}
\prod_{\alpha \in (w^{-1}\Delta) \setminus \Delta_p}
\frac{1}{\langle \lambda_p, \alpha^\vee \rangle s + \h\alpha^\vee -1 } \\
& \times
\prod_{\alpha \in \Phi_w \setminus \Delta_p}
\hat{\zeta}(\langle \lambda_p,\alpha^\vee \rangle s + \h\alpha^\vee)
\prod_{\alpha \in (-\Phi_w)}
\hat{\zeta}(\langle \lambda_p,\alpha^\vee \rangle s + \h\alpha^\vee )^{-1}.
\endaligned
\]
Here we understand that each product equals $1$ if its index set is empty.
\begin{definition} \label{def_0501}
We define the product of zeta functions
\[
F_p(s):=\prod_{\alpha \in \Phi^{-}}
\hat{\zeta}(\langle \lambda_p, \alpha^\vee \rangle s + \h\alpha^\vee),
\]
and the meromorphic function
\[
Z_p(s):=F_p(s)\,\omega_{\Delta,p}^{\Phi}(s).
\]
\end{definition}
Then we have
\[
Z_p(s) = \sum_{w \in {\mathfrak W}_p} f_{p,w}(s) \, g_{p,w}(s),
\]
with rational functions $f_{p,w}$ and products of zeta functions $g_{p,w}$:
\begin{align*}
f_{p,w}(s) &= \prod_{\alpha \in (w^{-1} \Delta)\setminus \Delta_p}
\frac{1}{\langle \lambda_p, \alpha^\vee \rangle s + \h \alpha^\vee -1}, \\
g_{p,w}(s) &= \prod_{\alpha \in (w^{-1}\Phi^-) \setminus \Delta_p}
\hat{\zeta}(\langle \lambda_p, \alpha^\vee \rangle s + \h \alpha^\vee),
\end{align*}
since
$\Phi^- \setminus \Phi_w = \Phi^- \cap w^{-1}\Phi^{-}$
and $(\Phi_w \setminus \Delta_p) \cup (\Phi^- \setminus \Phi_w) =
(w^{-1}\Phi^-)\setminus \Delta_p$.
\medskip

We modify the above formula of $g_{p,w}$ so that all coefficients
$\langle \lambda_p, \alpha^\vee \rangle$ of $s$ in $g_{p,w}$ will be
nonnegative integers. We see that $g_{p,w}(s)$ is
\[
\aligned & \prod_{\alpha \in (w^{-1}\Phi^{-})\, \cap \Phi^{-}}
\hat{\zeta}(\langle \lambda_p, \alpha^\vee \rangle s + {\rm
ht}\,\alpha^\vee) \prod_{\alpha \in ((w^{-1}\Phi^{-})\, \cap
\Phi^{+})\setminus \Delta_p}
\hat{\zeta}(\langle \lambda_p, \alpha^\vee \rangle s + {\rm ht}\,\alpha^\vee) \\
 = &
\prod_{\alpha \in (w^{-1}\Phi^{-})\, \cap \Phi^{-}}
\hat{\zeta}(1 - \langle \lambda_p, \alpha^\vee \rangle s - {\rm ht}\,\alpha^\vee)
\prod_{\alpha \in ((w^{-1}\Phi^{-})\, \cap \Phi^{+})\setminus \Delta_p}
\hat{\zeta}(\langle \lambda_p, \alpha^\vee \rangle s + {\rm ht}\,\alpha^\vee) \\
 = &
\prod_{\alpha \in (w^{-1}\Phi^{-})\, \cap \Phi^{-}}
\hat{\zeta}(\langle \lambda_p, -\alpha^\vee \rangle s + {\rm ht}\,(-\alpha^\vee)+1)
\prod_{\alpha \in ((w^{-1}\Phi^{-})\, \cap \Phi^{+})\setminus \Delta_p}
\hat{\zeta}(\langle \lambda_p, \alpha^\vee \rangle s + {\rm ht}\,\alpha^\vee) \\
 = &
\prod_{\alpha \in (w^{-1}\Phi^{+})\, \cap \Phi^{+}}
\hat{\zeta}(\langle \lambda_p, \alpha^\vee \rangle s + {\rm ht}\, \alpha^\vee +1)
\prod_{\alpha \in ((w^{-1}\Phi^{-})\, \cap \Phi^{+})\setminus \Delta_p}
\hat{\zeta}(\langle \lambda_p, \alpha^\vee \rangle s + {\rm ht}\,\alpha^\vee)
\endaligned
\]
for each $w \in {\mathfrak W}_p$ by using the functional equation
$\hat{\zeta}(s)=\hat{\zeta}(1-s)$. Then all coefficients $\langle
\lambda_p, \alpha^\vee \rangle$ in the last line are nonnegative
integers:
\begin{equation}\label{0501}
g_{p,w}(s)  =
\prod_{\alpha \in \Phi_{w} \setminus \Delta_p}
\hat{\zeta}(\langle \lambda_p, \alpha^\vee \rangle s + {\rm ht}\,\alpha^\vee)
\prod_{\alpha \in \Phi^{+} \setminus \Phi_{w}}
\hat{\zeta}(\langle \lambda_p, \alpha^\vee \rangle s + {\rm ht}\, \alpha^\vee +1)
\end{equation}
for each $w \in {\mathfrak W}_p$.
Note that $\Phi^+ \setminus \Phi_w = \Phi^+ \cap w^{-1}\Phi^+$.
Define
\begin{equation}\label{e:delta}
\delta_{\alpha,w}=
\begin{cases}
1 & \text{if}~\alpha \in w^{-1}\Phi^{+}, \\
0 & \text{if}~\alpha \in w^{-1}\Phi^{-}.
\end{cases}
\end{equation}
Then formula \eqref{0501} is written as
\[
g_{p,w}(s) = \hat{\zeta}(2)^{|\Delta_p \cap w^{-1}\Phi^+|}
\prod_{\alpha \in \Phi^{+}\setminus \Delta_p}
\hat{\zeta}(\langle \lambda_p, \alpha^\vee \rangle s + {\rm ht}\, \alpha^\vee +\delta_{\alpha,w}).
\]
We define entire functions $\tilde{X}_{p,w}(s)$ by replacing $\hat{\zeta}(s)$ of $g_{p,w}$ by $\xi(s)$:
\begin{equation}\label{0502}
\tilde{X}_{p,w}(s) =
\xi(2)^{|\Delta_p \cap w^{-1}\Phi^+|}
\prod_{\alpha \in \Phi^{+}\setminus \Delta_p}
\xi(\langle \lambda_p, \alpha^\vee \rangle s + {\rm ht}\, \alpha^\vee +\delta_{\alpha,w}).
\end{equation}
Obviously, we have
\begin{multline}
\tilde{X}_{p,w}(s) =2^{|\Delta_p \cap w^{-1}\Phi^+|} g_{p,w}(s)\\
\times \prod_{\alpha \in \Phi^{+}\setminus \Delta_p} \Bigl(\langle
\lambda_p, \alpha^\vee \rangle s + {\rm ht}\, \alpha^\vee
+\delta_{\alpha,w} \Bigr) \Bigl(\langle \lambda_p, \alpha^\vee
\rangle s + {\rm ht}\, \alpha^\vee +\delta_{\alpha,w} -1 \Bigr).
\end{multline}
Define the polynomial $Q_p(s)$ by
\[
\aligned
Q_p(s) & =
\prod_{w \in {\mathfrak W}_p}
\left[  2^{|\Delta_p \cap w^{-1}\Phi^+|}
\prod_{\alpha \in (w^{-1} \Delta)\setminus \Delta_p}
\Bigl( \langle \lambda_p, \alpha^\vee \rangle s + \h \alpha^\vee -1 \Bigr)
\right. \\
& \qquad \qquad \times \left.
\prod_{\alpha \in \Phi^{+} \setminus \Delta_p}
\Bigl(\langle \lambda_p, \alpha^\vee \rangle s + {\rm ht}\, \alpha^\vee +\delta_{\alpha,w} \Bigr)
\Bigl(\langle \lambda_p, \alpha^\vee \rangle s + {\rm ht}\, \alpha^\vee +\delta_{\alpha,w} -1 \Bigr)
\right]. 
\endaligned
\]
\begin{definition}\label{def_0502}
Under the above notation, we define
\[
X_p(s):= Q_p(s) Z_{p}(s).
\]
\end{definition}
By definitions $X_p(s)$ is an entire function having the form
\[
X_p(s) = \sum_{w \in {\mathfrak W}_p}
\tilde{Q}_{p,w}(s) \, \tilde{X}_{p,w}(s),
\]
where $\tilde{Q}_{p,w}(s)$ are polynomials given by
\begin{equation*}
\aligned
\tilde{Q}_{p,w}(s)
& = \prod_{w\not= v \in {\mathfrak W}_p}
\left[  2^{|\Delta_p \cap v^{-1}\Phi^+|}
\prod_{\alpha \in (v^{-1} \Delta)\setminus \Delta_p}
\Bigl( \langle \lambda_p, \alpha^\vee \rangle s + \h \alpha^\vee -1 \Bigr)
\right. \\
& \qquad \qquad \times \left.
\prod_{\alpha \in \Phi^{+} \setminus \Delta_p}
\Bigl(\langle \lambda_p, \alpha^\vee \rangle s + {\rm ht}\, \alpha^\vee +\delta_{\alpha,v} \Bigr)
\Bigl(\langle \lambda_p, \alpha^\vee \rangle s + {\rm ht}\, \alpha^\vee +\delta_{\alpha,v} -1 \Bigr)
\right].
\endaligned
\end{equation*}
\begin{remark}
Note that the polynomial $Q_p(s)$ is not minimal in the sense that the polynomial of the lowest degree such that $Q(s)Z_p(s)$ has no poles.
In fact $\tilde{Q}_{p,w}(s)$ ($w\in {\mathfrak W}_p$) has a lot of common divisors
as well as $\tilde{X}_{p,w}(s)$.
\end{remark}

Now we modify $\tilde{X}_{p,w}(s)$ and $\tilde{Q}_{p,w}(s)$ a little.
Define
\begin{equation}\label{0502_1}
C_{p,w}=  \xi(2)^{|\Delta_p \cap w^{-1}\Phi^+|}
\prod_{\alpha \in \Phi_p^{+}\setminus \Delta_p}
\xi({\rm ht}\, \alpha^\vee +\delta_{\alpha,w}),
\end{equation}
\begin{equation}\label{0502_2}
X_{p,w}(s) =
\prod_{\alpha \in \Phi^{+}\setminus \Phi_p^+}
\xi(\langle \lambda_p, \alpha^\vee \rangle s + {\rm ht}\, \alpha^\vee +\delta_{\alpha,w})
\end{equation}
and
\begin{equation}\label{0502_3}
Q_{p,w}(s)=C_{p,w}\tilde{Q}_{p,w}(s).
\end{equation}
Then we have
\begin{equation*}
\tilde{X}_{p,w}(s)=C_{p,w}X_{p,w}(s)
\end{equation*}
and
\[
X_p(s) = \sum_{w \in {\frak W}_p} Q_{p,w}(s)X_{p,w}(s).
\]%
\begin{lemma}\label{lem_0503}
Let $c_p$ be the number defined in \eqref{0202}.
Let $w_0$ (resp. $w_p$) be the longest element of $W$ (resp. $W_p$).
Then the functional equations
\begin{align*}
\tilde{X}_{p,w}(-c_p-s) &=  \tilde{X}_{p,w_0ww_p}(s), \\
\tilde{Q}_{p,w}(-c_p-s) &= \epsilon_p \, \tilde{Q}_{p,w_0ww_p}(s), \\
Q_p(-c_p-s) &= \epsilon_p \, Q_p(s),
\end{align*}
hold for every $w \in {\frak W}_p$
for a suitable choice of sign $\epsilon_p \in \{\pm 1\}$
depending only on $p$.
In particular we have the functional equation
\[
X_p(-c_p-s) = \epsilon_p X_p(s).
\]
\end{lemma}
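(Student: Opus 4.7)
The strategy is to construct an explicit involution $\phi\colon \Phi^+\to\Phi^+$ that, together with the involution $w \mapsto w':=w_0 w w_p$ of $\mathfrak{W}_p$ (established in \cite{Ko}), pairs the factors on the two sides of each claimed identity via $\xi(z)=\xi(1-z)$. The key root-theoretic input is the decomposition
\[
\rho \,=\, \rho_p+\frac{c_p}{2}\,\lambda_p,
\]
obtained by noting that $\rho-\rho_p=\tfrac12\sum_{\alpha\in\Phi^+\setminus\Phi_p^+}\alpha$ is $W_p$-invariant (since $W_p$ fixes $\lambda_p$ and hence permutes $\Phi^+\setminus\Phi_p^+$), so it is a scalar multiple of $\lambda_p$; pairing with $\alpha_p^\vee$ and invoking \eqref{0202} determines the scalar to be $c_p/2$. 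Then define $\phi(\alpha)=w_p\alpha$ for $\alpha\in\Phi^+\setminus\Phi_p^+$ and $\phi(\alpha)=-w_p\alpha$ for $\alpha\in\Phi_p^+$. Since $-w_p$ is the Dynkin involution of $\Phi_p$ (in particular preserving $\Delta_p$) while $w_p$ permutes $\Phi^+\setminus\Phi_p^+$, $\phi$ is an involution of $\Phi^+$ that restricts to involutions on each of $\Delta_p$, $\Phi_p^+\setminus\Delta_p$, and $\Phi^+\setminus\Phi_p^+$ separately.

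From $\rho=\rho_p+(c_p/2)\lambda_p$ and $w_p\rho=-\rho_p+(c_p/2)\lambda_p$, together with $w_p^2=\mathrm{id}$, a direct computation yields three core identities: (i) $\langle\lambda_p,\phi(\alpha)^\vee\rangle=\langle\lambda_p,\alpha^\vee\rangle$ for all $\alpha\in\Phi^+$; (ii) $\h\phi(\alpha)^\vee+\h\alpha^\vee=c_p\langle\lambda_p,\alpha^\vee\rangle$ for $\alpha\in\Phi^+\setminus\Phi_p^+$ and $\h\phi(\alpha)^\vee=\h\alpha^\vee$ for $\alpha\in\Phi_p^+$; (iii) using $(w')^{-1}\Phi^\pm=w_pw^{-1}\Phi^\mp$, $\delta_{\phi(\alpha),w'}=1-\delta_{\alpha,w}$ for $\alpha\in\Phi^+\setminus\Phi_p^+$ and $\delta_{\phi(\alpha),w'}=\delta_{\alpha,w}$ for $\alpha\in\Phi_p^+$. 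As a byproduct, $|\Delta_p\cap w^{-1}\Phi^+|=|\Delta_p\cap(w')^{-1}\Phi^+|$. The functional equation $\tilde{X}_{p,w}(-c_p-s)=\tilde{X}_{p,w'}(s)$ is then a factor-by-factor verification: substitute $s\mapsto -c_p-s$ in each $\xi$-factor of $\tilde{X}_{p,w}(s)$, apply $\xi(z)=\xi(1-z)$ to flip the argument, and reindex the product via $\alpha\mapsto\phi(\alpha)$; the three identities force each transformed factor to coincide with its counterpart in $\tilde{X}_{p,w'}(s)$ (in the case $\alpha\in\Phi_p^+\setminus\Delta_p$ the factor is constant and $\xi(z)=\xi(1-z)$ closes the match), while the $\xi(2)$-powers agree by the byproduct.

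The polynomial identity $\tilde{Q}_{p,w}(-c_p-s)=\epsilon_p\,\tilde{Q}_{p,w'}(s)$ is treated by the same mechanism at the polynomial level, block by block. Since $v\mapsto v':=w_0vw_p$ exchanges $\mathfrak{W}_p\setminus\{w\}$ with $\mathfrak{W}_p\setminus\{w'\}$ bijectively, it suffices to identify each $v$-block with the $v'$-block: the bijection $\alpha\mapsto -w_p\alpha$ sends $(v^{-1}\Delta)\setminus\Delta_p$ onto $((v')^{-1}\Delta)\setminus\Delta_p$ and matches each linear factor $\langle\lambda_p,\alpha^\vee\rangle s+\h\alpha^\vee-1$ evaluated at $-c_p-s$ with the corresponding $v'$-factor; meanwhile $\phi$ matches each pair of quadratic factors $(\langle\lambda_p,\alpha^\vee\rangle s+\h\alpha^\vee+\delta_{\alpha,v})(\ldots-1)$ over $\Phi^+\setminus\Delta_p$ with its $v'$-counterpart (the two $-1$'s pulled from each pair cancel, so the quadratic blocks contribute no sign). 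A parity count of the residual sign-flips from the linear pieces then produces a sign $\epsilon_p\in\{\pm 1\}$ depending only on $(\Phi,\Delta,p)$. Multiplying these identities over $w\in\mathfrak{W}_p$ gives $Q_p(-c_p-s)=\epsilon_p Q_p(s)$, and summing $\tilde{Q}_{p,w}\tilde{X}_{p,w}$ over $w$ and reindexing via $w\leftrightarrow w'$ yields $X_p(-c_p-s)=\epsilon_p X_p(s)$.

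The chief obstacle is the clean formulation and uniform verification of the three identities above---particularly the $\delta$-identity (iii), for which one must chase the interplay between $\mathfrak{W}_p$, $w\mapsto w_0ww_p$, and $w_p$---together with the sign accounting for the polynomial pieces. All remaining steps are mechanical once the root-theoretic input $\rho=\rho_p+(c_p/2)\lambda_p$ and the involution $\phi$ are in hand.
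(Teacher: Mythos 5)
The paper's proof of Lemma 5.3 is a one-line citation of Lemma 5.3 of Komori \cite{Ko}; you instead give a self-contained argument, which is almost certainly a reconstruction of the cited result. Your scaffolding is sound and I have checked it in detail: the $W_p$-invariance of $\rho-\rho_p=\tfrac12\sum_{\alpha\in\Phi^+\setminus\Phi_p^+}\alpha$ correctly forces $\rho=\rho_p+\tfrac{c_p}{2}\lambda_p$; the map $\phi$ is a well-defined involution of $\Phi^+$ (since $w_p$ permutes $\Phi^+\setminus\Phi_p^+$ and $-w_p$ is the Dynkin involution on $\Phi_p^+$); and your three identities (i)--(iii) all hold, with (iii) following from $(w_0ww_p)^{-1}\Phi^+=w_p(w^{-1}\Phi^-)$. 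The factor-by-factor matching for $\tilde X_{p,w}$ and the block-by-block matching for $\tilde Q_{p,w}$ then go through exactly as you describe, and you correctly note that the quadratic pair $A(A-1)$ is sent to $(1-A)(-A)=A(A-1)$ so those blocks are sign-neutral.

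One small inaccuracy worth flagging: you write that ``a parity count of the residual sign-flips from the linear pieces then produces a sign $\epsilon_p$,'' but in fact there are no residual sign-flips from the linear pieces either. Under the bijection $\alpha\mapsto -w_p\alpha$ from $(v^{-1}\Delta)\setminus\Delta_p$ to $((w_0vw_p)^{-1}\Delta)\setminus\Delta_p$, one has $\langle\lambda_p,(-w_p\alpha)^\vee\rangle=-\langle\lambda_p,\alpha^\vee\rangle$ and $\h(-w_p\alpha)^\vee=\h\alpha^\vee-c_p\langle\lambda_p,\alpha^\vee\rangle$, so the linear factor $\langle\lambda_p,\alpha^\vee\rangle s+\h\alpha^\vee-1$ at $s\mapsto -c_p-s$ equals the $(-w_p\alpha)$-factor at $s$ on the nose, without a sign. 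Consequently $\epsilon_p=+1$ for the particular $\tilde Q_{p,w}$ and $Q_p$ defined in this paper; the lemma's phrasing ``a suitable choice of sign $\epsilon_p\in\{\pm1\}$'' of course allows this. (Any nontrivial sign in the ultimate functional equation of $\xi_p$ then comes from the separate division by $R_p(s)$ and $D_p(s)$, not from this lemma.) This is cosmetic and does not affect the validity of your argument; as a self-contained substitute for the black-box citation, what you have written is correct.
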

\begin{proof}
This is a consequence of Lemma 5.3 of ~\cite{Ko}.
\end{proof}
\subsection{Decomposition of $X_p(s)$}
The aim of this part is a construction of an entire function $E_p(s)$
satisfying
\[
X_p(s)=E_p(s) + \epsilon_p E_p(-c_p-s),
\]
where $\epsilon_p$ is the sign of Lemma \ref{lem_0503}.
\smallskip

\begin{definition}\label{def_0504}
For $w \in {\mathfrak W}_p$, we define the number $l_p(w)$ by
\begin{equation*}
l_p(w)
= \sum_{\alpha \in \Phi^{+} \setminus \Phi_{p}^+} (1 - \delta_{\alpha,w})
= |(\Phi^+ \setminus \Phi_{p}^+) \cap w^{-1}\Phi^{-}|.
\end{equation*}
\end{definition}
\begin{remark}
Compare $l_p(w)$ with the length
$l(w)=|\Phi^{+} \cap w^{-1}\Phi^{-}|$ of $w \in W$.
\end{remark}

\begin{lemma}\label{lem_0505}
Let $w_p$ be the longest element of $W_p$. Then we have
\[
l_p(w)+l_p(w_0 w w_p) = |\Phi^+ \setminus \Phi_p^+|
\]
for every $w \in {\frak W}_p$.
\end{lemma}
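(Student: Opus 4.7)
The plan is to rewrite $l_p(w_0 w w_p)$ as $|(\Phi^+\setminus\Phi_p^+)\cap w^{-1}\Phi^+|$, after which the identity becomes the obvious partition
\[
(\Phi^+\setminus\Phi_p^+) = \big((\Phi^+\setminus\Phi_p^+)\cap w^{-1}\Phi^+\big) \sqcup \big((\Phi^+\setminus\Phi_p^+)\cap w^{-1}\Phi^-\big).
\]

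The first step is purely formal: since $w_0$ and $w_p$ are involutions, $(w_0 w w_p)^{-1} = w_p w^{-1} w_0$, and $w_0\Phi^- = \Phi^+$, so
\[
(w_0 w w_p)^{-1}\Phi^- \;=\; w_p w^{-1} \Phi^+ .
\]
Hence $l_p(w_0 w w_p) = |(\Phi^+\setminus\Phi_p^+) \cap w_p w^{-1}\Phi^+|$.

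The second step, which is the small geometric input, is to show that $w_p$ permutes $\Phi^+\setminus\Phi_p^+$. Membership of $\alpha$ in $\Phi^+\setminus\Phi_p^+$ is equivalent to $\langle\lambda_p,\alpha^\vee\rangle > 0$: indeed $\langle\lambda_p,\alpha_j^\vee\rangle = \delta_{pj}$, so when $\alpha = \sum_j n_j\alpha_j$ this pairing equals the coefficient $n_p$, which is positive exactly on $\Phi^+\setminus\Phi_p^+$ (and negative on $-(\Phi^+\setminus\Phi_p^+)$, zero on $\Phi_p$). Now $w_p\in W_p$ is a product of reflections through roots of $\Phi_p$, all orthogonal to $\lambda_p$, hence $w_p\lambda_p = \lambda_p$. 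Therefore $\langle\lambda_p,(w_p\alpha)^\vee\rangle = \langle w_p\lambda_p,\alpha^\vee\rangle = \langle\lambda_p,\alpha^\vee\rangle$, and $w_p$ preserves the sign of this pairing. So $w_p(\Phi^+\setminus\Phi_p^+) = \Phi^+\setminus\Phi_p^+$.

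Using this, a root $\alpha$ lies in $(\Phi^+\setminus\Phi_p^+)\cap w_p w^{-1}\Phi^+$ iff $\alpha = w_p\beta$ for some $\beta\in w^{-1}\Phi^+$ with $w_p\beta \in \Phi^+\setminus\Phi_p^+$, and by Step 2 this last condition is equivalent to $\beta \in \Phi^+\setminus\Phi_p^+$. Since $w_p$ is a bijection on $\Phi^+\setminus\Phi_p^+$, we conclude
\[
|(\Phi^+\setminus\Phi_p^+)\cap w_p w^{-1}\Phi^+| \;=\; |(\Phi^+\setminus\Phi_p^+)\cap w^{-1}\Phi^+|,
\]
i.e.\ $l_p(w_0 w w_p) = |(\Phi^+\setminus\Phi_p^+) \cap w^{-1}\Phi^+|$. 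Adding to $l_p(w) = |(\Phi^+\setminus\Phi_p^+)\cap w^{-1}\Phi^-|$ and using the disjoint-union decomposition noted at the start yields the claim. No real obstacle arises; the only substantive point is the stabilization of $\lambda_p$ by $w_p$, which is immediate from $W_p$ being generated by simple reflections in $\Delta_p = \Delta\setminus\{\alpha_p\}$. (The hypothesis $w\in\mathfrak{W}_p$ is not even needed for this identity.)
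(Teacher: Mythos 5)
Your proof is correct and follows essentially the same route as the paper: compute $(w_0ww_p)^{-1}\Phi^- = w_pw^{-1}\Phi^+$, use that $w_p$ stabilizes $\Phi^+\setminus\Phi_p^+$ to drop the $w_p$, and then add the two pieces of the partition of $\Phi^+\setminus\Phi_p^+$. The only difference is that you spell out why $w_p$ preserves $\Phi^+\setminus\Phi_p^+$ (via $w_p\lambda_p=\lambda_p$), a fact the paper treats as implicit, and you rightly observe that $w\in\mathfrak{W}_p$ plays no role.
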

\begin{proof} We have
\[
|(\Phi^+ \setminus \Phi_p^+)\cap(w_0ww_p)^{-1} \Phi^-|
= |w_p(\Phi^+ \setminus \Phi_p^+) \cap w^{-1} \Phi^+|
= |(\Phi^+ \setminus \Phi_p^+) \cap w^{-1} \Phi^+|.
\]
Hence we obtain
\[
\aligned
l_p(w)+l_p(w_0 w w_p)
& = |(\Phi^+ \setminus \Phi_p^+)\cap w^{-1} \Phi^-| + |(\Phi^+ \setminus \Phi_p^+)\cap(w_0ww_p)^{-1} \Phi^-| \\
& = |(\Phi^+ \setminus \Phi_p^+)\cap w^{-1} \Phi^-| + |(\Phi^+ \setminus \Phi_p^+)\cap w^{-1} \Phi^+|
  = |\Phi^+ \setminus \Phi_p^+|
\endaligned
\]
for every $w \in {\frak W}_p$.
\end{proof}
\begin{definition} \label{def_0506}
We divide the subset ${\frak W}_p$ of $W$ into three parts:
\[
{\mathfrak W}_p = {\frak W}_{p}^{+} ~\amalg~  {\frak W}_{p}^{-} ~\amalg~ {\frak W}_{p}^{0},
\]
where $\amalg$ means the disjoint union, and
\begin{equation*}
\aligned
{\frak W}_{p}^{+}
&:= \{ w \in {\mathfrak W}_p ~|~ l_p(w) < |\Phi^+ \setminus \Phi_p^+|/2 \,\}, \\
{\frak W}_{p}^{-}
&:= \{ w \in {\mathfrak W}_p ~|~ l_p(w) > |\Phi^+ \setminus \Phi_p^+|/2 \,\}, \\
{\frak W}_{p}^{0}
&:= \{ w \in {\mathfrak W}_p ~|~ l_p(w) = |\Phi^+ \setminus \Phi_p^+|/2 \,\}. \\
\endaligned
\end{equation*}
\end{definition} 
Note that ${\frak W}_{p}^{0}=\emptyset$ is possible, and
\[
w_0 \,{\frak W}_{p}^{-}\, w_p = {\frak W}_{p}^{+}, \qquad w_0 \,{\frak W}_{p}^{0}\, w_p = {\frak W}_{p}^{0}.
\]
by Lemma \ref{lem_0505}.
\begin{definition}\label{def_0507} Define
\begin{equation}
E_p(s) = \sum_{w \in {\frak W}_{p}^{+}} Q_{p,w}(s)X_{p,w}(s)
+
\frac{1}{2} \sum_{w \in {\frak W}_{p}^{0}}
Q_{p,w}(s)X_{p,w}(s).
\end{equation}
Here we understand that the second sum equals zero
if the subset ${\frak W}_{p}^{0}$ is empty.
\end{definition}

\begin{proposition}\label{prop_0508}
We have
\begin{equation}
X_p(s) = E_p(s) + \epsilon_p E_p(-c_p-s),
\end{equation}
where $\epsilon_p$ is the sign of Lemma \ref{lem_0503}.
\end{proposition}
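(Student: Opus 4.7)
The plan is to derive the identity by a direct rearrangement of the sum defining $X_p(s)$, using the involution $w \mapsto w_0 w w_p$ of $\mathfrak{W}_p$ together with the functional equations from Lemma~\ref{lem_0503}.

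First I would observe that the product $Q_{p,w}(s)X_{p,w}(s)$ agrees with $\tilde{Q}_{p,w}(s)\tilde{X}_{p,w}(s)$: since $\tilde{X}_{p,w} = C_{p,w}X_{p,w}$ and $Q_{p,w} = C_{p,w}\tilde{Q}_{p,w}$ by \eqref{0502_1}--\eqref{0502_3}, the constant $C_{p,w}$ multiplies both sides of the identity to give the same expression. Applying Lemma~\ref{lem_0503} to the tilde versions then yields the key identity
\begin{equation*}
Q_{p,w}(-c_p-s)\,X_{p,w}(-c_p-s) \;=\; \epsilon_p\,Q_{p,\,w_0 w w_p}(s)\,X_{p,\,w_0 w w_p}(s)
\end{equation*}
for every $w \in \mathfrak{W}_p$.

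Next I would split the sum defining $X_p(s)$ according to the decomposition $\mathfrak{W}_p = \mathfrak{W}_p^+ \amalg \mathfrak{W}_p^- \amalg \mathfrak{W}_p^0$ from Definition~\ref{def_0506}. Lemma~\ref{lem_0505} shows that $w \mapsto w_0 w w_p$ induces a bijection $\mathfrak{W}_p^- \to \mathfrak{W}_p^+$ and an involution of $\mathfrak{W}_p^0$. So substituting $v = w_0 w w_p$ in the sum over $\mathfrak{W}_p^-$ and applying the displayed identity above gives
\begin{equation*}
\sum_{w \in \mathfrak{W}_p^-} Q_{p,w}(s)X_{p,w}(s) \;=\; \epsilon_p \sum_{v \in \mathfrak{W}_p^+} Q_{p,v}(-c_p-s)X_{p,v}(-c_p-s).
\end{equation*}
For the self-dual part $\mathfrak{W}_p^0$, I would write $\sum_{w \in \mathfrak{W}_p^0}$ as $\tfrac12 \sum + \tfrac12 \sum$ and apply the involution to only the second half, producing
\begin{equation*}
\sum_{w \in \mathfrak{W}_p^0} Q_{p,w}(s)X_{p,w}(s) \;=\; \tfrac12\!\sum_{w \in \mathfrak{W}_p^0} Q_{p,w}(s)X_{p,w}(s) \;+\; \tfrac{\epsilon_p}{2}\!\sum_{w \in \mathfrak{W}_p^0} Q_{p,w}(-c_p-s)X_{p,w}(-c_p-s).
\end{equation*}

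Finally I would simply add the three pieces and recognize the answer as $E_p(s) + \epsilon_p E_p(-c_p-s)$ in view of Definition~\ref{def_0507}. There is no real obstacle: the entire argument is combinatorial bookkeeping once Lemma~\ref{lem_0503} and the fact that $w_0(\cdot)w_p$ permutes the three subsets of $\mathfrak{W}_p$ as stated after Definition~\ref{def_0506} are in hand. The only subtlety is the factor $\tfrac12$ on $\mathfrak{W}_p^0$, which is exactly engineered so that pairing $w$ with its image under the involution does not double-count the fixed points of the involution on $\mathfrak{W}_p^0$.
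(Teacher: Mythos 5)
Your proof is correct and is precisely the argument the paper has in mind when it dismisses the proposition as ``obvious by the above definitions and lemmas'': you correctly observe that $Q_{p,w}X_{p,w}=\tilde Q_{p,w}\tilde X_{p,w}$, invoke Lemma~\ref{lem_0503} to get the key identity, and then split $\mathfrak W_p$ into $\mathfrak W_p^+\amalg\mathfrak W_p^-\amalg\mathfrak W_p^0$ using the involution $w\mapsto w_0ww_p$, with the $\tfrac12$ factor handling the fixed locus $\mathfrak W_p^0$.
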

\begin{proof} This is obvious by the above definitions and lemmas. \end{proof}

The decomposition of Proposition \ref{prop_0508} is useful for our
proof of Theorem \ref{thm101} (the weak Riemann hypothesis). However
we do not know whether this choice of $E_p(s)$ is best possible for
the Riemann hypothesis for $\hat{\zeta}_P(s)$.

%
%
\subsection{Reduction of $X_p(s)$}
%
%
By definition of $Q_{p,w}(s)$ and $X_{p,w}(s)$,
component terms $Q_{p,w}(s)X_{p,w}(s)$ of $X_p(s)$
have a lot of ``common factors''.
Now we define ``the greatest common divisor'' of these terms.

\begin{definition}\label{def_0509}
Define
\[
\aligned
D_p(s) &= \prod_{k=1}^\infty \prod_{h=2}^\infty \xi(ks+h)^{N_p(k,h-1)-M_p(k,h)}, \\
R_p(s) &= {\rm g.c.d}\,\{Q_{p,w}(s) ~|~ w \in {\mathfrak W}_p\},
\endaligned
\]
where ``${\rm g.c.d}$'' means the monic polynomial of the maximal degree
which divides all polynomials $Q_{p,w}(s)$ in the polynomial ring ${\C}[s]$.
\end{definition}
\begin{remark} The above definition of $D_p(s)$
is different from the one of \cite[(5.36)]{Ko},
since we use the notation
$\xi(s)=s(s-1)\pi^{-s/2}\Gamma(s/2)\zeta(s)$
in this article.
\end{remark}
\begin{definition}\label{def_0510}
Define
\[
\aligned
\xi_p(s) & = \frac{X_p(s)}{R_p(s)D_p(s)}, \qquad
\varepsilon_p(s) & = \frac{E_p(s)}{R_p(s)D_p(s)}.
\endaligned
\]
\end{definition}
We have
\[
\xi_p(s) = \varepsilon_p(s) \pm \varepsilon_p(-c_p-s).
\]
The entire function $\xi_p(s)$ equals the zeta function
$\hat{\zeta}_p(s)$ times a polynomial. Hence Theorem \ref{thm101} is
equivalent to the statement that all but finitely many zeros of
$\xi_p(s)$ lie on the line $\Re(s)=-c_p/2$.

%
%
\section{The second step of the proof Theorem \ref{thm101}} \label{section_06}
%
%
Roughly the second step is a consequence of the result that
$X_{p, \rm id}(s)$ is dominant in $\Re(s)\geq -c_p/2$ and
$X_{p, \rm id}(s)/D_p(s)$ has no zeros in $\Re(s) \geq -c_p/2$,
since
\[
E_p(s)
= \Bigl[ \sum_{w \in {\mathfrak W}_p^{\ddagger}} Q_w(s) \Bigr] \cdot X_{p,\rm id}(s) \cdot
\Bigl[~
1 +
\!\!\!\!\!\! \sum_{{w \in {\frak W}_p \setminus {\mathfrak W}_p^{\ddagger}}\atop{2 l_p(w) \leq |\Phi^+ \setminus \Phi_p^+|}}
R_w(s) \frac{X_{p,w}(s)}{X_{p, \rm id}(s)}
~\Bigr],
\]
where ${\mathfrak W}_p^{\ddagger}$ is a subset of ${\frak W}_p$
defined below and $R_w(s)$ are some rational functions. The analytic
behavior of $E_p(s)$ in the right half-plane  $\Re(s) > -c_p/2$ is
less difficult than that of the left.

\subsection{Behavior of $E_p(s)$ in a right half-plane}

In this part we construct the dominant term of $\varepsilon_{p}(s)$
in a right half-plane via $E_p(s)$ of Definition \ref{def_0507}.

\begin{lemma} \label{lem_0601}
Let $\{v_1={\rm id}, v_2, \cdots,v_l\}$ be the left minimal coset representative of $W_p$:
\[
W = \bigsqcup_{j=1}^{l} v_jW_p.
\]
Then $l_p(w)$ depends only on the coset $v_jW_p$.
\end{lemma}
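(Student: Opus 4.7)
The plan is to show that for any $u\in W_p$, the set $(\Phi^+\setminus\Phi_p^+)\cap (v_j u)^{-1}\Phi^-$ has the same cardinality as $(\Phi^+\setminus\Phi_p^+)\cap v_j^{-1}\Phi^-$, which will prove that $l_p(v_j u) = l_p(v_j)$ and hence depends only on the coset $v_j W_p$.

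The pivotal observation is that the parabolic Weyl subgroup $W_p$ preserves the set $\Phi^+\setminus\Phi_p^+$ of positive roots not lying in the subsystem $\Phi_p$. To see this, I would write any $\beta\in\Phi^+\setminus\Phi_p^+$ as $\beta=\sum_i c_i\alpha_i$ with all $c_i\geq 0$ and $c_p\geq 1$ (this is the characterization of positive roots outside the $\alpha_p$-parabolic subsystem). The generators $\sigma_{\alpha_i}$ of $W_p$ satisfy $i\neq p$, and $\sigma_{\alpha_i}(\beta) = \beta - \langle \beta,\alpha_i^\vee\rangle\alpha_i$ modifies only the coefficient of $\alpha_i$, leaving the $\alpha_p$-coefficient equal to $c_p\geq 1$. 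Hence $\sigma_{\alpha_i}(\beta)$ still has positive $\alpha_p$-coefficient, forcing it to be a positive root outside $\Phi_p^+$. Iterating, $u(\Phi^+\setminus\Phi_p^+) = \Phi^+\setminus\Phi_p^+$ for every $u\in W_p$.

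Given this stability, I would build the bijection explicitly: the map $\alpha\mapsto u(\alpha)$ sends $(\Phi^+\setminus\Phi_p^+)\cap (v_j u)^{-1}\Phi^-$ into $(\Phi^+\setminus\Phi_p^+)\cap v_j^{-1}\Phi^-$. Indeed, if $\alpha$ lies in the first set then $u(\alpha)\in\Phi^+\setminus\Phi_p^+$ by the stability just established, while $v_j(u(\alpha))=(v_ju)(\alpha)\in\Phi^-$ by hypothesis, so $u(\alpha)\in v_j^{-1}\Phi^-$. The inverse map $\beta\mapsto u^{-1}(\beta)$ (with $u^{-1}\in W_p$ as well) provides the reverse direction by the same argument. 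Since both maps are well-defined and mutually inverse, the two sets are in bijection and hence equicardinal, which is exactly the required equality $l_p(v_j u)=l_p(v_j)$.

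The only non-trivial ingredient here is the stability of $\Phi^+\setminus\Phi_p^+$ under $W_p$, and I expect no real obstacle beyond writing that argument cleanly; this is a standard fact about the action of a standard parabolic subgroup's Weyl group on the complement of its root subsystem, and the rest is a formal bijection argument using that $u^{-1}\in W_p$ whenever $u\in W_p$.
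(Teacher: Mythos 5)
Your argument is correct and is essentially the same as the paper's: the paper's one-line chain of equalities passes from $|w(\Phi^+\setminus\Phi_p^+)\cap\Phi^-|$ to $|v_j(\Phi^+\setminus\Phi_p^+)\cap\Phi^-|$ precisely because $u(\Phi^+\setminus\Phi_p^+)=\Phi^+\setminus\Phi_p^+$ for $u\in W_p$, which is the stability fact you prove explicitly. You have simply spelled out the implicit step and phrased it as a bijection rather than a chain of cardinality identities.
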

\begin{proof} 
For $w \in v_j W_p $, we have
$l_p(w)= |(\Phi^+ \setminus \Phi_p^+)\cap w^{-1} \Phi^-|
= |w (\Phi^+ \setminus \Phi_p^+) \cap \Phi^-|
= |v_j (\Phi^+ \setminus \Phi_p^+) \cap \Phi^-|
= |(\Phi^+ \setminus \Phi_p^+) \cap v_j^{-1} \Phi^-|
=l_p(v_j)$. 
\end{proof}
\begin{lemma} \label{lem_0602}
Let $w_0$ be the longest element of $W$. We have
\[
l_p({\rm id})=0, \qquad
l_p(w_0)=|\Phi^{+} \setminus \Phi_{p}^{+}|.
\]
In other words the minimum value and the maximum value of $l_p(w)$
are attained by the identity element $\rm id$ and  the longest element $w_0$ respectively. 
\end{lemma}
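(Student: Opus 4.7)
The plan is to prove both identities directly from the definition $l_p(w) = |(\Phi^+ \setminus \Phi_p^+) \cap w^{-1}\Phi^-|$, and then observe that the second assertion (minimum and maximum) follows from trivial bounds on the cardinality.

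First I would handle $w = \mathrm{id}$. Since $\mathrm{id}^{-1}\Phi^- = \Phi^-$ and $\Phi^+ \cap \Phi^- = \emptyset$, we immediately get
\[
l_p(\mathrm{id}) = |(\Phi^+ \setminus \Phi_p^+) \cap \Phi^-| = 0.
\]
For $w = w_0$, I would use the standard facts recalled in Section 2: $w_0^2 = \mathrm{id}$ and $w_0 \Phi^+ = \Phi^-$, which give $w_0^{-1}\Phi^- = w_0 \Phi^- = \Phi^+$. Hence
\[
l_p(w_0) = |(\Phi^+ \setminus \Phi_p^+) \cap \Phi^+| = |\Phi^+ \setminus \Phi_p^+|.
\]

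Next, I would observe that both $\mathrm{id}$ and $w_0$ indeed belong to $\mathfrak{W}_p$ (this was already noted in the paragraph following the definition of $\mathfrak{W}_p$ in Section 2), so the computed values are attained inside $\mathfrak{W}_p$. The extremality claim is then automatic: for any $w \in \mathfrak{W}_p$, the set $(\Phi^+ \setminus \Phi_p^+) \cap w^{-1}\Phi^-$ is a subset of $\Phi^+ \setminus \Phi_p^+$, which forces $0 \leq l_p(w) \leq |\Phi^+ \setminus \Phi_p^+|$. Combined with the two explicit computations above, $\mathrm{id}$ realizes the minimum and $w_0$ realizes the maximum.

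There is essentially no obstacle in this lemma; it is a bookkeeping statement about the definition of $l_p$, and the only substantive input is the standard property $w_0 \Phi^+ = \Phi^-$ of the longest element, together with membership of $\mathrm{id}$ and $w_0$ in $\mathfrak{W}_p$, both of which have already been recorded earlier in the paper.
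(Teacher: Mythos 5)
Your proof is correct and follows essentially the same direct computation as the paper, unwinding the definition of $l_p(w)$ at $w=\mathrm{id}$ and $w=w_0$ using $w_0\Phi^+ = \Phi^-$; the paper simply states these two evaluations without spelling out the trivial-bound remark or the membership of $\mathrm{id}$, $w_0$ in $\mathfrak{W}_p$, both of which you correctly note were established earlier.
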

\begin{proof} We have
$l_p({\rm id})=|(\Phi^+ \setminus \Phi_p^+)\cap \Phi^-| =0$
and $l_p(w_0)=|(\Phi^+ \setminus \Phi_p^+) \cap \Phi^+|
= |\Phi^+ \setminus \Phi_p^+|$ by definition of $l_p(w)$.
\end{proof}
Note that from Lemmas \ref{lem_0601} and \ref{lem_0602}, $w \in W_p$ implies $l_p(w)=0$. 
\begin{lemma}\label{lem_0603}
Define
\begin{equation*}
{\frak W}_{p}^{\ddagger}
=\{
w \in {\frak W}_{p} ~|~
l_p(w)=0
\}. 
\end{equation*}
Then ${\rm id} \in {\frak W}_{p}^{\ddagger}$ and
\[
{\frak W}_{p}^{\ddagger}=\{
w \in {\frak W}_{p} ~|~
(\Phi^{+} \setminus \Phi_{p}^+) \cap w^{-1}\Phi^{+} = (\Phi^{+} \setminus \Phi_{p}^+) \}.
\]
In particular $X_{p,w}(s)=X_{p,{\rm id}}(s)$ for every $w \in {\frak W}_{p}^{\ddagger}$.
\end{lemma}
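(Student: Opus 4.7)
The proof is essentially a direct unwinding of the definitions, with no real obstacle. I plan to break it into the three assertions (identity membership, the reformulation, and the equality of $X_{p,w}$) and handle each by a short computation using Definitions~\ref{def_0504}, \ref{def_0506} and formula~\eqref{0502_2}.

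First, I would verify that ${\rm id}\in{\frak W}_p$, which is immediate since $\Delta_p\subset\Delta\subset\Delta\cup\Phi^-$, and then appeal directly to Lemma~\ref{lem_0602} (already stated) to get $l_p({\rm id})=0$, proving ${\rm id}\in{\frak W}_p^{\ddagger}$.

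Next, for the reformulation of ${\frak W}_p^{\ddagger}$, the key observation is that the positive system decomposes as
\[
\Phi^+\setminus\Phi_p^+ = \bigl((\Phi^+\setminus\Phi_p^+)\cap w^{-1}\Phi^+\bigr) \sqcup \bigl((\Phi^+\setminus\Phi_p^+)\cap w^{-1}\Phi^-\bigr),
\]
since $\Phi=\Phi^+\sqcup\Phi^-$ and $w^{-1}\Phi^{\pm}$ partition $\Phi$. By Definition~\ref{def_0504}, $l_p(w)=|(\Phi^+\setminus\Phi_p^+)\cap w^{-1}\Phi^-|$, so $l_p(w)=0$ if and only if the second piece of the decomposition is empty, i.e.\ $(\Phi^+\setminus\Phi_p^+)\cap w^{-1}\Phi^+ = \Phi^+\setminus\Phi_p^+$. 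This gives the claimed description of ${\frak W}_p^{\ddagger}$.

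Finally, for the equality $X_{p,w}(s)=X_{p,{\rm id}}(s)$ when $w\in{\frak W}_p^{\ddagger}$, I would compare the factorizations given by \eqref{0502_2}. By definition \eqref{e:delta}, $\delta_{\alpha,w}=1$ exactly when $\alpha\in w^{-1}\Phi^+$. The preceding reformulation shows that for every $\alpha\in\Phi^+\setminus\Phi_p^+$ we have $\alpha\in w^{-1}\Phi^+$, so $\delta_{\alpha,w}=1$. The same conclusion holds trivially for $w={\rm id}$, since $\alpha\in\Phi^+$ implies $\alpha\in{\rm id}^{-1}\Phi^+$. Therefore the product defining $X_{p,w}(s)$ runs over $\Phi^+\setminus\Phi_p^+$ with each $\delta_{\alpha,w}$ replaced by $1$, matching $X_{p,{\rm id}}(s)$ term by term.

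There is no real difficulty here; the only thing to be slightly careful about is to justify the set partition using $w^{-1}\Phi^+\sqcup w^{-1}\Phi^-=\Phi$ rather than confusing $\Phi^\pm$ with $w^{-1}\Phi^\pm$. Once the reformulation is in place, the statement about $X_{p,w}$ follows immediately from the definition of $\delta_{\alpha,w}$.
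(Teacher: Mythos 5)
Your proof is correct and is exactly the unwinding of definitions that the paper's one-line justification (``This lemma immediately follows from Definition~\ref{def_0504} and \eqref{0502}'') has in mind: the partition of $\Phi^+\setminus\Phi_p^+$ by $w^{-1}\Phi^\pm$ reformulates $l_p(w)=0$, and then $\delta_{\alpha,w}=1$ for all $\alpha\in\Phi^+\setminus\Phi_p^+$ matches the factors of $X_{p,w}$ with those of $X_{p,\mathrm{id}}$ via \eqref{0502_2}. Your citation of Lemma~\ref{lem_0602} for $l_p(\mathrm{id})=0$ is a harmless shortcut; the approach matches the paper's.
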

\begin{proof} This lemma immediately follows from Definition \ref{def_0504} and \eqref{e:delta}. \end{proof}
By Lemmas \ref{lem_0601}, \ref{lem_0602} and \ref{lem_0603}
\[
{\frak W}_{p}^{\ddagger} = ( W_p \amalg v_2 W_p \amalg \cdots \amalg v_m W_p )  \cap {\frak W}_{p}^{\ddagger},
\]
for some $1 \leq m < l$ ($=|W/W_p|$). 
(In fact, $m=1$ because from the proof of Lemma \ref{lem1002}, we have ${\frak W}_{p}^{\ddagger} \subset W_p$.)
\begin{definition}[the dominant term in a right half-plane] \label{def_0604}
We define
\[
X_{p}^{\ddagger}(s) := X_{p,{\rm id}}(s), \qquad
Q_{p}^{\ddagger}(s) :=
\sum_{w \in {\frak W}_{p}^{\ddagger}} Q_{p,w}(s).
\]
\end{definition}
By definition
\[
\aligned
X_{p}^{\ddagger}(s) = X_{p,{\rm id}}(s)
= \prod_{\alpha \in \Phi^{+} \setminus \Phi_p^+} \xi(\langle \lambda_p, \alpha^\vee \rangle s + \h\alpha^\vee + 1)
=
\prod_{k=1}^{\infty} \prod_{h=2}^{\infty} \xi(ks+h)^{N_p(k,h-1)}.
\endaligned
\]
The following two propositions assert that
\[
Q_{p}^{\ddagger}(s) \, X_{p}^{\ddagger}(s)
=
\Bigl( \sum_{w \in {\frak W}_{p}^{\ddagger}} Q_{p,w}(s) \Bigr)
\prod_{\alpha \in \Phi^{+} \setminus \Phi_p^+} \xi(\langle \lambda_p, \alpha^\vee \rangle s + \h\alpha^\vee + 1)
\]
is the dominant term of $E_p(s)$ in a right half-plane.
\medskip

Now we introduce an important hypothesis.
\medskip

\noindent
{\bf Volume Hypothesis.} We have
\begin{equation} \label{VF}
\underset{\lambda=\rho_p}{\rm Res}  \,
\omega_{\Delta_p}^{\Phi_p}(\lambda) >0
\quad
\text{for every $1\leq p \leq r$.}
\end{equation}

As in the name of the hypothesis,
residues $\underset{\lambda=\rho_p}{\rm Res}  \,\omega_{\Delta_p}^{\Phi_p}(\lambda)$
are expected to be volumes of certain subsets of a fundamental domain of $G(\Q) \backslash G(\A)^1$ for $G=G(\Phi)$.
The volume hypothesis is a consequence of the second part of the parabolic reduction conjecture
 \cite[Section 2.3, Conjecture 11]{We7} (or \cite[Section3, Conjecture 2]{We6})
that was proved in the case $G=SL(n)$ by using the theory of Eisenstein series (\cite[section 4.7]{MR2310297}).
This is a quite important piece of Theorem \ref{thm101}.
In fact, the following result plays an essential role in the proof of Theorem \ref{thm101},
but is not proved without \eqref{VF} at present
(see the proof of Lemma \ref{lem1003} in section \ref{section_10}).
\begin{proposition}\label{prop_0605}
Assume the volume hypothesis \eqref{VF}. Then, we have
\[
\deg Q_{p}^{\ddagger}(s) \geq \deg Q_{p,w}(s) + 1
\]
for every $w \in ({\frak W}_{p}^{+} \cup {\frak W}_{p}^{0}) \setminus {\frak W}_{p}^{\ddagger}$.
\end{proposition}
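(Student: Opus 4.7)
The plan is to compute the degrees in $s$ of the individual polynomials $Q_{p,w}(s)$ and then control leading-coefficient cancellation in $Q_p^{\ddagger}(s)$. Since $Q_{p,w}(s)=C_{p,w}\tilde Q_{p,w}(s)$ and $\tilde Q_{p,w}(s)\cdot\text{factor}_w(s)=Q_p(s)$, where $\text{factor}_w$ denotes the single $v=w$ piece of the product defining $Q_p$, we have $\deg Q_{p,w}=\deg Q_p-\deg\text{factor}_w$. A linear piece $\langle\lambda_p,\alpha^\vee\rangle s+\text{const}$ contributes to the degree only when $\langle\lambda_p,\alpha^\vee\rangle\ne 0$, i.e.\ $\alpha\notin\Phi_p$. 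The double product over $\alpha\in\Phi^+\setminus\Delta_p$ inside $\text{factor}_w$ contributes $2|\Phi^+\setminus\Phi_p^+|$ uniformly in $w$, so the whole $w$-dependence reduces to
\[
k_w := |\{\alpha\in(w^{-1}\Delta)\setminus\Delta_p:\langle\lambda_p,\alpha^\vee\rangle\ne 0\}|.
\]

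The next step is to recognise $k_w$ as a coset invariant. Since any $\alpha\in\Delta_p$ automatically has $\langle\lambda_p,\alpha^\vee\rangle=0$, removing $\Delta_p$ from the index set is vacuous, so
\[
k_w=|\{j:\langle w\lambda_p,\alpha_j^\vee\rangle\ne 0\}|,
\]
the support size of $w\lambda_p$ in the fundamental weight basis, which is constant on each coset $wW_p$ by $W_p$-stability of $\lambda_p$. For $w\in\mathfrak{W}_p^{\ddagger}$ the hypothesis $l_p(w)=0$ (equivalently $\Phi_w\subset\Phi_p^+$) forces the minimal representative of $wW_p$ to lie in $W_p$ itself: any reduced expression for it uses only simple reflections indexed by $\Delta_p$, since each inversion root is simple and lies in $\Phi_p^+$. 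Hence $w\lambda_p=\lambda_p$ and $k_w=1$. For $w\in(\mathfrak{W}_p^{+}\cup\mathfrak{W}_p^{0})\setminus\mathfrak{W}_p^{\ddagger}$, the condition $0<l_p(w)\leq|\Phi^+\setminus\Phi_p^+|/2$ rules out $w\lambda_p$ being a signed fundamental weight (if $w\lambda_p=\epsilon\lambda_{p'}$, then dominance of $\lambda_{p'}$ forces $w$ to send $\Phi^+\setminus\Phi_p^+$ entirely into $\Phi^\epsilon$, giving $l_p(w)\in\{0,|\Phi^+\setminus\Phi_p^+|\}$), which yields $k_w\geq 2$ and hence $\deg Q_{p,w}\leq\deg Q_{p,\id}-1$.

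The remaining step, which I expect to be the main obstacle, is to show that $Q_p^{\ddagger}(s)=\sum_{w\in\mathfrak{W}_p^{\ddagger}}Q_{p,w}(s)$ actually attains the top degree $\deg Q_{p,\id}$, i.e.\ that its leading coefficients do not cancel. Each leading coefficient is an explicit product of inner products $\langle\lambda_p,\alpha^\vee\rangle$ together with positive factors from $C_{p,w}$ (values of $\xi$); these products may carry mixed signs, so cancellation must be ruled out by a structural identity rather than by pointwise positivity. I would pursue this by expressing the sum of leading coefficients as a volume computation on a fundamental domain for the $\mathfrak{W}_p^{\ddagger}$-action, in the spirit of the identity from \cite[Section 4.7]{MR2310297} used in the $SL(n)$ case (see Lemma \ref{lem1003} in Section \ref{section_10}). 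Positivity of the resulting volume gives $\deg Q_p^{\ddagger}\geq\deg Q_{p,\id}$, completing the inequality $\deg Q_p^{\ddagger}\geq\deg Q_{p,w}+1$ for every $w\in(\mathfrak{W}_p^{+}\cup\mathfrak{W}_p^{0})\setminus\mathfrak{W}_p^{\ddagger}$.
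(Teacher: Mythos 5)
Your degree count is correct: $\deg Q_{p,w}=\deg Q_p-2\,\abs{\Phi^+\setminus\Phi_p^+}-k_w$ with $k_w=\abs{(w^{-1}\Delta)\setminus\Phi_p}$, so the whole $w$-dependence is in $k_w$, and you need $k_w=1$ on ${\frak W}_p^\ddagger$ and $k_w\geq 2$ on $({\frak W}_p^+\cup{\frak W}_p^0)\setminus{\frak W}_p^\ddagger$. The dominance argument you give for the second point is a legitimate and arguably cleaner route than the paper's Lemma~\ref{lem1001}, which proceeds by a multi-page case analysis on whether $w\alpha_p\in\Phi^\pm$ and which piece of $\Delta$ the distinguished simple root $\alpha_w$ belongs to: if $k_w=1$ then $w\lambda_p=c\lambda_{p'}$ for a nonzero integer $c$ (not necessarily $\pm 1$ as you wrote, but only the sign of $c$ enters), and the pairing $\langle\lambda_{p'},w\alpha^\vee\rangle=c^{-1}\langle\lambda_p,\alpha^\vee\rangle$ then forces $w(\Phi^+\setminus\Phi_p^+)$ to lie wholly in $\Phi^{\mathrm{sgn}(c)}$, so $l_p(w)\in\{0,\abs{\Phi^+\setminus\Phi_p^+}\}$, contradicting $0<l_p(w)\leq\abs{\Phi^+\setminus\Phi_p^+}/2$.

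The genuine gap is the non-cancellation step, which you correctly flag as the main obstacle but then only describe rather than prove. Saying you ``would pursue this by expressing the sum of leading coefficients as a volume computation on a fundamental domain'' is a statement of the intended answer, not a derivation. This is precisely the content of Lemma~\ref{lem1003} in the paper, and it is the technical core of the proposition: after Lemma~\ref{lem1002} identifies ${\frak W}_p^\ddagger$ with $\{w\in W_p : \Delta_p\subset w^{-1}(\Delta_p\cup\Phi_p^-)\}$, one must carry out a nontrivial residue manipulation showing that the sum of leading coefficients equals $\prod_{\alpha\in\Phi_p^+}\hat{\zeta}(\h\alpha^\vee+1)\cdot\underset{\lambda=\rho_p}{\rm Res}\,\omega_{\Delta_p}^{\Phi_p}(\lambda)$ --- i.e., recognize the ${\frak W}_p^\ddagger$-sum as the value of an explicit residue of the rank-$(r-1)$ period attached to the Levi root system $\Phi_p$ --- and only then invoke the positivity supplied by the volume formula of \cite[Section 4.7]{MR2310297}. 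Without establishing that identity, nothing rules out cancellation among the (mixed-sign, zeta-value-weighted) leading terms, and the inequality $\deg Q_p^\ddagger\geq\deg Q_{p,\rm id}$ remains unproved.
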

\begin{proposition}\label{prop_0606}
We have
\[
\left\vert \frac{X_{p,w}(s)}{X_{p}^{\ddagger}(s)} \right\vert < 1
\quad \text{for} \quad
\Re(s) > -\frac{c_p}{2}
\]
and
\[
\left\vert \frac{X_{p,w}(s)}{X_{p}^{\ddagger}(s)} \right\vert \leq 1
\quad \text{for} \quad
\Re(s) = -\frac{c_p}{2}
\]
for every $w \in {\frak W}_{p} \setminus {\frak W}_{p}^{\ddagger}$.
\end{proposition}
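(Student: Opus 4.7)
The plan is to reduce $|X_{p,w}(s)/X_p^\ddagger(s)|$ to a product of $\xi$-ratios indexed by $S_w := (\Phi^+\setminus\Phi_p^+)\cap w^{-1}\Phi^-$ and to exploit the symmetry of $|\xi(\sigma+it)|$ about the line $\sigma=1/2$. Using that $\delta_{\alpha,\mathrm{id}}=1$ for every $\alpha\in\Phi^+$, all factors with $\delta_{\alpha,w}=1$ cancel between numerator and denominator, yielding
\[
\frac{X_{p,w}(s)}{X_p^{\ddagger}(s)} = \prod_{\alpha \in S_w} \frac{\xi(\langle \lambda_p,\alpha^\vee\rangle s + \h\alpha^\vee)}{\xi(\langle \lambda_p,\alpha^\vee\rangle s + \h\alpha^\vee+1)},
\]
with $S_w\neq\emptyset$ since $w\notin\mathfrak{W}_p^{\ddagger}$. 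I would then invoke a convexity/monotonicity lemma for $|\xi(\sigma+it)|$ (presumably the Lemma~9.1 already referenced in \S\ref{section_04}), asserting that for fixed $t$, $|\xi(\sigma+it)|$ is strictly increasing in $|\sigma-1/2|$. Termwise, this gives $|\xi(a+it)/\xi(b+it)|\leq 1$ whenever $a<b$ and $a+b\geq 1$, strict if $a+b>1$. Hence each factor above is $\leq 1$ on $\Re(s)\geq -\h\alpha^\vee/\langle\lambda_p,\alpha^\vee\rangle$, which already disposes of those $\alpha\in S_w$ with $2\h\alpha^\vee \geq c_p\langle \lambda_p,\alpha^\vee\rangle$.

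The subtle case is $\alpha\in S_w$ with $2\h\alpha^\vee < c_p\langle\lambda_p,\alpha^\vee\rangle$, where the single factor strictly exceeds $1$ on the critical line. To neutralise these I would use the involution $w_p$ restricted to $\Phi^+\setminus\Phi_p^+$. From $\langle\rho-\rho_p,\alpha_j^\vee\rangle=\delta_{jp}\,c_p/2$ on $\Delta$ we get $\rho=\rho_p+(c_p/2)\lambda_p$; combined with $w_p\rho_p=-\rho_p$ and $w_p\lambda_p=\lambda_p$ this gives
\[
\langle \lambda_p,(w_p\alpha)^\vee\rangle = \langle \lambda_p,\alpha^\vee\rangle, \qquad \h(w_p\alpha)^\vee = c_p \langle \lambda_p,\alpha^\vee\rangle - \h\alpha^\vee,
\]
so $w_p$ exchanges low-height with high-height roots at each fixed value of $\langle\lambda_p,\alpha^\vee\rangle$. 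The contribution of a $w_p$-pair $\{\alpha,w_p\alpha\}\subset S_w$ to the ratio is
\[
\left|\frac{\xi(\langle\lambda_p,\alpha^\vee\rangle s + \h\alpha^\vee)\,\xi(\langle\lambda_p,\alpha^\vee\rangle s + c_p\langle\lambda_p,\alpha^\vee\rangle - \h\alpha^\vee)}{\xi(\langle\lambda_p,\alpha^\vee\rangle s + \h\alpha^\vee + 1)\,\xi(\langle\lambda_p,\alpha^\vee\rangle s + c_p\langle\lambda_p,\alpha^\vee\rangle - \h\alpha^\vee + 1)}\right|,
\]
which equals $1$ on $\Re(s)=-c_p/2$ by the functional equation $\xi(z)=\xi(1-z)$ (the four arguments form a $z\leftrightarrow 1-z$ mirror-symmetric quadruple), and is $<1$ for $\Re(s)>-c_p/2$ by the same monotonicity lemma applied pairwise.

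The crux, and the chief obstacle I anticipate, is the combinatorial lemma: for every $w\in\mathfrak{W}_p$, if $\alpha\in S_w$ satisfies $2\h\alpha^\vee < c_p\langle\lambda_p,\alpha^\vee\rangle$, then $w_p\alpha\in S_w$ as well. My approach is to derive this from the defining constraint $\Delta_p\subset w^{-1}(\Delta\cup\Phi^-)$ by taking a reduced expression $w_p=\sigma_{j_1}\cdots\sigma_{j_l}$ in $W_p$ and tracking the sign of $w\,\sigma_{j_1}\cdots\sigma_{j_k}\alpha$ along root strings in $\Phi_p$; the $\mathfrak{W}_p$-constraint forbids $w$ from lifting the low member of a $W_p$-orbit out of $\Phi^-$ asymmetrically, so the high partner $w_p\alpha$ must accompany $\alpha$ in $S_w$. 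Granted this lemma, $S_w$ decomposes into $w_p$-pairs (each $\leq 1$ in modulus, strict off the critical line), $w_p$-fixed singletons with $2\h\alpha^\vee=c_p\langle\lambda_p,\alpha^\vee\rangle$ (the equality case of the per-term bound), and unpaired roots with $2\h\alpha^\vee>c_p\langle\lambda_p,\alpha^\vee\rangle$ (strict per-term bound). Since $S_w$ is non-empty at least one factor is strictly less than $1$ whenever $\Re(s)>-c_p/2$, yielding Proposition~\ref{prop_0606}.
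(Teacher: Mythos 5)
Your reduction of $X_{p,w}/X_p^\ddagger$ to $\prod_{\alpha\in S_w}\xi(\langle\lambda_p,\alpha^\vee\rangle s+\h\alpha^\vee)/\xi(\langle\lambda_p,\alpha^\vee\rangle s+\h\alpha^\vee+1)$ agrees with the paper's formula \eqref{0902}, and the identities $\langle\lambda_p,(w_p\alpha)^\vee\rangle=\langle\lambda_p,\alpha^\vee\rangle$, $\h(w_p\alpha)^\vee=c_p\langle\lambda_p,\alpha^\vee\rangle-\h\alpha^\vee$ are correct. But the rest of the argument breaks down at both the combinatorial and the analytic step. The combinatorial lemma you flag as the crux is in fact \emph{false}. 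Take $\Phi$ of type $A_6$, $p=4$ (so $c_p=7$), and let $w\in W\simeq S_7$ be the $7$-cycle $i\mapsto i+1\pmod 7$. One checks $w\alpha_j\in\Delta$ for $j\in\{1,2,3,5\}$ and $w\alpha_6\in\Phi^-$, so $w\in\mathfrak{W}_p$, and $S_w=\{e_i-e_7:1\le i\le 4\}$. The low root $\alpha=e_4-e_7$ (height $3<c_p/2$) lies in $S_w$, yet $w_p=(1\,4)(2\,3)(5\,7)$ sends it to $e_1-e_5$, and $w(e_1-e_5)=e_2-e_6\in\Phi^+$, so $w_p\alpha\notin S_w$. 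Thus $S_w$ does not decompose into $w_p$-pairs plus level-$c_p/2$ singletons plus unpaired high roots; the structural fact that is actually true (Lemma~\ref{lem_0801} plus Lemma~\ref{lem_0815}) is that $S_w$ meets each chain $L_m(k)$ in a \emph{terminal segment of consecutive heights}, and $w_p$ in general permutes the chains rather than pairing within $S_w$.

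Even if the pairing lemma were repaired, the analytic claim that a $w_p$-pair contributes modulus $\le 1$ for $\Re(s)>-c_p/2$ is also false. For $a<b$ with $a+b=kc_p$ and $b\ge a+2$, the function $\xi(ks+a+1)$ vanishes at points with $\Re(s)\in\bigl(-(a+1)/k,\,-a/k\bigr)$, an interval contained in $\Re(s)>-c_p/2$ since $a<kc_p/2$, while $\xi(ks+a)$, $\xi(ks+b)$, $\xi(ks+b+1)$ do not vanish there; hence $\bigl|\xi(ks+a)\xi(ks+b)/\bigl(\xi(ks+a+1)\xi(ks+b+1)\bigr)\bigr|$ is \emph{unbounded} on $\Re(s)>-c_p/2$. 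Correspondingly, neither matching of the four $\xi$-factors produces two ratios each covered by Lemma~\ref{lem901} on this half-plane, so ``the monotonicity lemma applied pairwise'' cannot yield the bound. What rescues the small-height factors in the paper's proof is precisely the upward closure you would need to discard: because every intermediate height along the chain containing $\alpha$ also appears in $S_w$, the product over $S_w\cap L_m(k)$ telescopes to $\xi(ks+h_{m,w}(k))/\xi(ks+h_m(k)+1)$, the dangerous intermediate factors cancel, and Lemma~\ref{lem901} applies once per chain with $h_{m,w}(k)+h_m(k)\ge kc_p$ supplied by Lemma~\ref{lem_0815}(5). This chain-telescoping, not a $w_p$-pairing, is the essential mechanism.
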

The dominant term $X_p^{\ddagger}(s)$ has the following property.
\begin{proposition}\label{prop_0607} Let $D_p(s)$ be the function defined in Definition \ref{def_0509}.
Then
\[
\frac{X_{p}^{\ddagger}(s)}{D_p(s)} \not=0
\quad \text{for} \quad
\Re(s) \geq -\frac{c_p}{2}.
\]
Furthermore, there exists a positive function $\delta(t)$ defined on the real line
satisfying $\delta(t)\log |t|\to\infty$ $(|t|\to\infty)$
such that $X_{p}^{\ddagger}(s)/D_p(s)$
has no zero in $\Re(s) \geq -c_p/2 - \delta(t)$.
\end{proposition}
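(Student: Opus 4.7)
The plan is to express $X_p^{\ddagger}(s)/D_p(s)$ as an explicit product of $\xi$-factors and locate its zeros using classical analytic properties of $\zeta$. Since $\delta_{\alpha,\mathrm{id}}=1$ for every $\alpha\in\Phi^+$, comparing \eqref{0502_2} for $X_{p,\mathrm{id}}(s)$ with Definition~\ref{def_0509} yields
\begin{equation*}
\frac{X_p^{\ddagger}(s)}{D_p(s)} \;=\; \prod_{k\geq 1,\,h\geq 2} \xi(ks+h)^{M_p(k,h)}.
\end{equation*}
Thus a zero of $X_p^{\ddagger}/D_p$ at $s_0$ forces $\xi(ks_0+h)=0$ for some $(k,h)$ with $M_p(k,h)>0$. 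Since $\xi$ is entire with zeros only in the open critical strip $0<\Re(w)<1$ (using $\xi(0)=\xi(1)=1$ and the classical nonvanishing of $\zeta$ on $\Re(w)=0,1$), any such $s_0$ lies in $-h/k<\Re(s_0)<(1-h)/k$.

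The combinatorial heart of the proof is the implication
\begin{equation*}
(\star) \qquad M_p(k,h)>0 \ \Longrightarrow\ h\geq 1+\tfrac{kc_p}{2}.
\end{equation*}
Granting $(\star)$, we have $(1-h)/k\leq -c_p/2$, so all zeros of $\xi(ks+h)$ lie in $\Re(s)<-c_p/2$; in the boundary case $h=1+kc_p/2$, points of $\Re(s)=-c_p/2$ correspond to $\Re(ks+h)=1$, where $\xi$ is nonvanishing, so the closed half-plane $\Re(s)\geq -c_p/2$ is zero-free. To prove $(\star)$, I would first invoke the identity $M_p(k,h)=\max(0,\,N_p(k,h-1)-N_p(k,h))$, a consequence of Corollary~\ref{cor_0807}, reducing $(\star)$ to showing that $N_p(k,\cdot)$ is nondecreasing on $[1,\,kc_p/2]$. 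The key structural input is the identity $\rho-\rho_p=(c_p/2)\lambda_p$ (which follows because $\rho-\rho_p$ is orthogonal to $\Delta_p$ and pairs to $c_p/2$ with $\alpha_p^\vee$), together with the action of $w_p$: via $w_p\rho=\rho-2\rho_p$ one computes $\h(w_p\alpha)^\vee=kc_p-\h\alpha^\vee$ on each level set $\{\alpha\in\Phi^+:\langle\lambda_p,\alpha^\vee\rangle=k\}$, whence $N_p(k,h)=N_p(k,kc_p-h)$. Combined with unimodality of the height distribution within each level, this symmetry forces the required monotonicity.

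For the second assertion (a strip to the left of $-c_p/2$), I would invoke a classical zero-free region of the form $\zeta(\sigma+it)\neq 0$ for $\sigma>1-\eta(|t|)$ with $\eta(t)\log|t|\to\infty$; Littlewood's region $\eta(t)=c_0\log\log|t|/\log|t|$ already suffices. Via the substitution $w=ks+h$ and $(\star)$, each factor $\xi(ks+h)$ with the extremal $h=1+kc_p/2$ becomes nonvanishing for $\Re(s)>-c_p/2-\eta(k|\Im(s)|)/k$, while factors with strictly larger $h$ are zero-free in a strictly larger region. Taking the minimum over the finitely many $k$ with some $M_p(k,h)>0$ produces a positive $\delta(t)$ satisfying $\delta(t)\log|t|\to\infty$.

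The main obstacle is $(\star)$, and within it the unimodality of $N_p(k,\cdot)$. The symmetry $N_p(k,h)=N_p(k,kc_p-h)$ from the $w_p$-action is direct, but showing that the sequence genuinely peaks at the midpoint rather than exhibiting multiple peaks requires a finer analysis of the $\Z$-grading of $\Phi$ by $\lambda_p$---for instance, viewing each level piece as a module over the Levi and invoking the structure of its weight distribution. This is expected to be handled by the auxiliary lemmas of section~8.
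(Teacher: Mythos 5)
Your proposal is correct and follows essentially the same route as the paper: you express $X_p^\ddagger/D_p$ as $\prod_{k,h}\xi(ks+h)^{M_p(k,h)}$, reduce nonvanishing to the implication $M_p(k,h)>0\Rightarrow h\geq 1+kc_p/2$ (which is the paper's Lemma~\ref{lem_0806}, itself resting on the symmetry and unimodality of $h\mapsto N_p(k,h)$ from Lemma~\ref{lem_0803}, due to Manivel), and then invoke the nonvanishing of $\xi$ in $\Re\geq 1$ together with a classical zero-free region to obtain the $\delta(t)$-strip. The paper's proof is the same, with the only cosmetic difference that it writes the product with the restricted range $h>(kc_p+1)/2$ from the outset and delegates the $\delta(t)$ statement to the remark citing the Vinogradov--Korobov region.
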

\begin{remark}
By a known result ~\cite[p.135]{MR882550} obtained by Vinogradov's
method, we could take $\delta(t)$ such that
\[
\delta(t)\ll\frac{1}{(\log t)^{2/3}(\log\log t)^{1/3}}
\]
as $t \to \infty$.
\end{remark}

We prove Proposition \ref{prop_0605} in section \ref{section_10},
and Propositions \ref{prop_0606} and \ref{prop_0607} in section
\ref{section_09}. These three propositions derive the following
result.
\begin{proposition}\label{prop_0608}
The number of zeros of $\varepsilon_p(s)$ lying in right half-plane
$\Re(s) \geq -c_p/2$ is finitely many at most.
Furthermore, there exists a positive function $\delta(t)$ on the real line
satisfying $\delta(t)\log |t|\to\infty$ $(|t|\to\infty)$
such that the number of zeros of $\varepsilon_p(s)$
in $\Re(s) \geq -c_p/2 - \delta(t)$ is finitely many at most.
\end{proposition}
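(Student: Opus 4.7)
The plan is to derive Proposition \ref{prop_0608} by isolating the dominant term $Q_p^{\ddagger}(s)X_p^{\ddagger}(s)$ in $E_p(s)$ and showing that the remaining factor stays bounded away from $0$ in the region of interest.

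First I would rewrite $E_p(s)$. Lemma \ref{lem_0603} gives $X_{p,w}(s) = X_p^{\ddagger}(s)$ for every $w \in {\frak W}_p^{\ddagger}$, and since $l_p(w)=0$ together with $\Phi^+\setminus\Phi_p^+\neq\emptyset$ forces ${\frak W}_p^{\ddagger}\subset {\frak W}_p^+$, the definition of $E_p(s)$ rewrites as
\[
E_p(s) = Q_p^{\ddagger}(s)\, X_p^{\ddagger}(s) \bigl[\, 1 + S_p(s) \,\bigr],
\]
where
\[
S_p(s) = \sum_{w \in ({\frak W}_p^+ \cup {\frak W}_p^0)\setminus{\frak W}_p^{\ddagger}} c_w\,\frac{Q_{p,w}(s)}{Q_p^{\ddagger}(s)}\,\frac{X_{p,w}(s)}{X_p^{\ddagger}(s)},
\]
with $c_w = 1$ for $w \in {\frak W}_p^+$ and $c_w = 1/2$ for $w \in {\frak W}_p^0$. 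Consequently the zeros of $\varepsilon_p(s) = E_p(s)/(R_p(s)D_p(s))$ are contained in the union of the zero sets of $Q_p^{\ddagger}(s)/R_p(s)$ (finitely many, as $Q_p^\ddagger$ is a polynomial), of $X_p^{\ddagger}(s)/D_p(s)$, and of $1 + S_p(s)$.

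Next I would estimate $|S_p(s)|$ on $\Re(s) \geq -c_p/2$. Proposition \ref{prop_0605} gives $\deg (Q_{p,w}/Q_p^{\ddagger}) \leq -1$, so each such rational coefficient is $O(|s|^{-1})$ as $|s| \to \infty$. Proposition \ref{prop_0606} gives $|X_{p,w}(s)/X_p^{\ddagger}(s)| \leq 1$ throughout $\Re(s) \geq -c_p/2$. Combining, $|S_p(s)| \leq C/|s|$ in this half-plane, so $|S_p(s)| < 1/2$ outside some bounded disk $\{|s| \leq R\}$; thus $1+S_p$ contributes at most finitely many zeros in $\Re(s) \geq -c_p/2$. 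Together with Proposition \ref{prop_0607}, which asserts that $X_p^{\ddagger}/D_p$ is zero-free on this whole closed half-plane, we conclude that $\varepsilon_p$ has at most finitely many zeros there.

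For the extension to $\Re(s) \geq -c_p/2 - \delta(t)$, the same factorization works with two reinforcements. Proposition \ref{prop_0607} already supplies a zero-free strip of width $\delta(t)$ with $\delta(t)\log|t|\to\infty$, so only an upgrade of Proposition \ref{prop_0606} into this strip is needed. By continuity of $\log |X_{p,w}(s)/X_p^{\ddagger}(s)|$ in $\Re(s)$, using Stirling asymptotics for the gamma factors and the fact that the arguments $ks+h+\delta_{\alpha,w}$ remain in the analyticity domain of $\log\xi$ on the strip (again by Proposition \ref{prop_0607}), the bound $|X_{p,w}/X_p^{\ddagger}|\leq 1$ degrades only to $1+o(1)$ after shifting $\Re(s)$ leftward by $\delta(t)$. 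Since $|Q_{p,w}/Q_p^{\ddagger}| = O(|s|^{-1})$ is unchanged, the estimate $|S_p| < 1/2$ persists for $|s|$ large in the strip, and the same conclusion follows. The main obstacle is therefore concentrated entirely in Proposition \ref{prop_0607}, whose quantitative strength reflects the classical Vinogradov-type zero-free region for $\zeta(s)$; once that is in hand, the present statement reduces to a clean dominant-term argument.
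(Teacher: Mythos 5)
Your proof follows essentially the same route as the paper: write $E_p(s)=Q_p^{\ddagger}(s)X_p^{\ddagger}(s)\bigl[1+r_p(s)\bigr]$, deduce from Proposition \ref{prop_0605} that each rational coefficient $Q_{p,w}/Q_p^{\ddagger}$ decays at infinity, deduce from Proposition \ref{prop_0606} that $|X_{p,w}/X_p^{\ddagger}|\leq 1$ on $\Re(s)\geq -c_p/2$, conclude the correction factor stays away from $0$ outside a bounded set, and then invoke Proposition \ref{prop_0607} for $X_p^{\ddagger}/D_p$. That is precisely what the paper does (with the bounded set named $\mathfrak{D}_p$), so the first assertion of the proposition is handled identically.

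One small caution about your treatment of the strip $-c_p/2-\delta(t)\leq\Re(s)<-c_p/2$. You claim the bound $|X_{p,w}/X_p^{\ddagger}|\leq 1$ degrades only to $1+o(1)$ there. In fact, for the factors $\xi(ks+a)/\xi(ks+b+1)$ whose symmetry line is exactly $\Re(s)=-c_p/2$, the argument $ks+b+1$ is pushed to $\Re=1-k\delta(t)$, i.e.\ just inside the critical strip, and the Vinogradov zero-free region only gives a bound of shape $|\zeta(ks+b+1)|^{-1}\ll (\log|t|)^{O(1)}$ there, not $1+o(1)$. So the correct statement is that $|X_{p,w}/X_p^{\ddagger}|$ can grow polylogarithmically on the strip. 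This does not break your argument: combined with $|Q_{p,w}/Q_p^{\ddagger}|=O(|s|^{-1})$ from Proposition \ref{prop_0605}, the product is still $O\bigl((\log|s|)^{O(1)}/|s|\bigr)=o(1)$, so $|S_p(s)|<1/2$ for $|s|$ large in the strip and the factor $1+S_p$ contributes at most finitely many zeros. The paper is even terser on this step (it simply cites \eqref{0602} and Proposition \ref{prop_0607}), so your proposal actually gives more detail than the source; just replace the ``$1+o(1)$'' intermediate claim by the weaker polylogarithmic bound, which is what the zero-free region actually yields and which is all you need.
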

\begin{proof}
We have
\begin{equation}\label{0602_1}
E_p(s)
= Q_{p}^{\ddagger}(s) X_{p}^{\ddagger}(s)[ 1 + r_p(s)]
\end{equation}
with
\[r_p(s)  = \sum_{w \in {\frak W}_{p}^{+} \setminus {\frak W}_{p}^{\ddagger}}
\frac{Q_{p,w}(s)}{Q_{p}^{\ddagger}(s)} \cdot \frac{X_{p,w}(s)}{X_{p}^{\ddagger}(s)}
+ \quad \frac{1}{2} \sum_{w \in {\frak W}_{p}^{0}}
\frac{Q_{p,w}(s)}{Q_{p}^{\ddagger}(s)} \cdot \frac{X_{p,w}(s)}{X_{p}^{\ddagger}(s)}
\]
Define
\[
{\frak D}_p=\Bigl\{
 s \in {\C} ~\Bigl|~
\sum_{w \in {\frak W}_{p}^{+} \setminus {\frak W}_{p}^{\ddagger}}
\left\vert \frac{Q_{p,w}(s)}{Q_{p}^{\ddagger}(s)} \right\vert +
\frac{1}{2} \sum_{w \in {\frak W}_{p}^{0}} \left\vert
\frac{Q_{p,w}(s)}{Q_{p}^{\ddagger}(s)} \right\vert \geq 1 \Bigr\}.
\]
Then ${\frak D}_p$ is a bounded region in $\C$ by Proposition \ref{prop_0605}.
Therefore we have
\begin{equation}\label{0602}
|r_p(s)|<1 \quad \text{for} \quad
\Re(s) \geq -\frac{c_p}{2} ~\text{with}~ s \not\in {\frak D}_p,
\end{equation}
by Proposition \ref{prop_0606}.
Hence Proposition \ref{prop_0607} and  \eqref{0602} implies Proposition \ref{prop_0608}.
\end{proof}

The final proposition of the second step is about the zero-free
region of $E_p(s)$ on a left half-plane.
\begin{proposition}\label{prop_0609}
There exists a positive real number $\kappa$ such that $E_p(s)$ has no zeros
in the region $\Re(s) \leq -\kappa \log(|\Im(s)|+10)$.
\end{proposition}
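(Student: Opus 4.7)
The plan is to use Stirling's formula together with the functional equation $\xi(u)=\xi(1-u)$ to locate, for $\kappa$ large, a dominant term of $E_p(s)$ in the region $\Re(s) \leq -\kappa\log(|\Im(s)|+10)$ that is manifestly non-vanishing. The remark in the $(SL(n),P_{n-1,1})$ discussion (``{\bf 2} (ii) is provided easily by using the Stirling formula'') signals that this step is genuinely routine once the Stirling estimates are set up, in contrast to the dominance analysis needed for the right half-plane.

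For $\Re(s) \leq -\kappa\log(|\Im(s)|+10)$ with $\kappa$ large relative to the heights $h_\alpha$, I would first apply the functional equation to each factor of $X_{p,w}(s)=\prod_{\alpha\in\Phi^+\setminus\Phi_p^+}\xi(k_\alpha s+h_\alpha+\delta_{\alpha,w})$, writing it as $\xi(1-k_\alpha s-h_\alpha-\delta_{\alpha,w})$ whose argument has large positive real part. Then $\zeta$ at these points equals $1+O(2^{-c\kappa\log(|\Im(s)|+10)})$, and Stirling gives a precise asymptotic for the $\Gamma$ factor. Using the elementary consequence $\xi(u+1)/\xi(u)\sim\sqrt{u/(2\pi)}$ as $\Re(u)\to\infty$, one computes $|X_{p,w}(s)|/|X_{p,\mathrm{id}}(s)|\sim C_w|s|^{l_p(w)/2}$ up to slowly varying factors. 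Combined with the polynomial factor $Q_{p,w}(s)$, this identifies a dominant index $w^*\in{\frak W}_p^+\cup{\frak W}_p^0$ maximizing $\deg Q_{p,w}+l_p(w)/2$.

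With $w^*$ in hand, write $E_p(s)=Q_{p,w^*}(s)X_{p,w^*}(s)(1+\eta_p(s))$, where $\eta_p(s)$ is a finite sum whose terms are each bounded by $C|s|^{-\delta}$ for some $\delta>0$ throughout the region. Each $\xi$-factor of $X_{p,w^*}(s)$ is non-vanishing for $\Re(s)$ sufficiently negative (its zeros all lying in a bounded strip), so $X_{p,w^*}(s)$ is zero-free in the far left, and $Q_{p,w^*}$ has only finitely many zeros. Choosing $\kappa$ large enough to guarantee $|\eta_p(s)|<1/2$, and enlarging $\kappa$ further to push the region past the finitely many zeros of $Q_{p,w^*}$, then yields $E_p(s)\neq 0$ throughout $\Re(s)\leq -\kappa\log(|\Im(s)|+10)$.

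The main obstacle is the potential non-uniqueness of $w^*$: when ${\frak W}_p^0\neq\emptyset$, several indices may share the maximum of $\deg Q_{p,w}+l_p(w)/2$, and one must verify that the sum of their leading contributions does not cancel identically. I expect this to be handled via the involution $w\mapsto w_0 w w_p$, which preserves ${\frak W}_p^0$ by Lemma~\ref{lem_0505} and under which $Q_{p,w}X_{p,w}$ transforms according to Lemma~\ref{lem_0503}; this produces a structural symmetry of the leading sum that prevents identical vanishing for $|s|$ large in the region, thus completing the argument.
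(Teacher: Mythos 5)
Your overall strategy — apply $\xi(u)=\xi(1-u)$ and Stirling, read off a dominant term, and show the rest is $o(1)$ — is the same one the paper uses, and your computation $\abs{X_{p,w}(s)}/\abs{X_{p,\mathrm{id}}(s)}\sim C_w\abs{s}^{l_p(w)/2}$ matches the paper's exponent $A_{p,w}=l_p(w)$ from \eqref{1101}. However, the step that you flag as "the main obstacle" is where the gap really is, and the proposed resolution via the involution does not work. The involution $w\mapsto w_0ww_p$ sends ${\frak W}_p^+$ into ${\frak W}_p^-$ (Lemma \ref{lem_0505}), which lies \emph{outside} the index set ${\frak W}_p^+\cup{\frak W}_p^0$ over which $E_p(s)$ is summed; it is a symmetry between $E_p(s)$ and $\epsilon_p E_p(-c_p-s)$, i.e.\ of $X_p(s)$ as in Proposition \ref{prop_0508}, not an internal symmetry of $E_p(s)$. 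On ${\frak W}_p^0$, where it does restrict, it provides no positivity or sign information that would rule out cancellation among the tied indices, so the claim that it "prevents identical vanishing" is unsupported and the argument is incomplete as stated.

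The paper avoids picking a single dominant $w^*$ altogether. After the Stirling expansion, it organizes
\[
Q_p^\ddagger(s)+V_p(s)=\sum_{\mu\geq 1}\mu^{s}\,\mathcal{Q}_\mu(s^{1/2})\Bigl(1+\textstyle\sum_{k}c_k(\mu)s^{-k/2}+O(\abs{s}^{-(n+1)/2})\Bigr),
\]
collecting \emph{all} $w$-contributions at each "frequency" $\mu$ (coming from the Dirichlet tail $1/\zeta$) into a single polynomial $\mathcal{Q}_\mu$ in $s^{1/2}$. It then takes $\mu_0$ to be the smallest index for which $\deg\mathcal{Q}_{\mu_0}$ equals the maximum $M=\max_\mu\deg\mathcal{Q}_\mu$, and shows the $\mu_0$-term prevails in the region $\Re(s)\leq-\kappa\log(\abs{\Im(s)}+10)$ once $\kappa$ is large. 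Any cancellation you worry about is already absorbed in forming $\mathcal{Q}_\mu$: if it conspired to kill every $\mathcal{Q}_\mu$, then $\varepsilon_p\equiv 0$, which is false, so $\mathcal{Q}_{\mu_0}$ is a fixed nonzero polynomial, hence nonvanishing for $\abs{s}$ large. To repair your proof along the same lines you would replace "pick $w^*$" with "form the aggregated polynomial coefficient of the slowest-decaying exponential and note that it is not the zero polynomial because $\varepsilon_p\not\equiv 0$"; the involution plays no role here.
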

This will be proved in section \ref{section_11} by using the
Stirling formula. Combining Propositions \ref{prop_0608} and
\ref{prop_0609}, we find that all but finitely many zeros of
$\varepsilon_p(s)$ lie in the region
\[
\left\{s=\sigma+it\in\C \,\left| -\kappa \log(|t|+10) < \sigma < -\frac{c_p}{2} \right.\right\}.
\]

%
%
\section{The third step of the proof Theorem \ref{thm101}} \label{section_07}
%
%

The final step of the proof of (the front half of) Theorem
\ref{thm101} consists of three parts. The first one is about the
number of zeros of $\varepsilon_p(s)$ in a given region. The second
one is the Hadamard factorization formula of $\varepsilon_p(s)$. The
third one is an application of a result of de
Bruijn~\cite[p.215]{MR0037351} (see also Lemma 3.1 of~\cite{Ki09})
which was established by the first author in ~\cite{Ki09}.

\begin{proposition}\label{prop_0701} Let $T>1$, and $\sigma>c_p/2$.
Denote by $N(T;\sigma)$
the number of zeros of ${\varepsilon}_p(s)$
in the region
\[
-\sigma < \Re(s) <-c_p/2-\delta(t), \quad 0<\Im(s)<T.
\]
Then there exist a positive number $\sigma_{\rm L}>0$ such that
\[
N(T;\sigma_{\rm L}) = C_1\, T \log T+ C_2\, T + O(\log T)
\]
for some positive real number $C_1>0$ and real number $C_2$, and
\[
N(T;+\infty)  = C_1\, T \log T+ C_3\, T + O(\log^2 T)
\]
for some real number $C_3$. 
\end{proposition}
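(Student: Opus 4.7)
The plan is to compute $N(T;\sigma)$ via the argument principle applied to $\varepsilon_p(s)$ on the rectangular contour $\partial\mathcal R(T,\sigma)$ whose right edge is the curve $\Re(s)=-c_p/2-\delta(\Im s)$ from Proposition \ref{prop_0607}, whose left edge is the vertical line $\Re(s)=-\sigma$, and whose horizontal edges lie on $\Im(s)=0$ and $\Im(s)=T$ (with a small indentation around the bounded exceptional set $\mathfrak D_p$ appearing in \eqref{0602}). Then
\[
2\pi N(T;\sigma)=\Delta_{\partial\mathcal R}\arg\varepsilon_p(s),
\]
and the variation on the base is $O(1)$, so the work lies on the top and the two vertical sides.

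The central device is the factorisation
\[
\varepsilon_p(s)=\frac{Q_p^{\ddagger}(s)}{R_p(s)}\prod_{k,h}\xi(ks+h)^{M_p(k,h)}(1+r_p(s))
\]
flowing from Definitions \ref{def_0509}, \ref{def_0510} and from \eqref{0602_1}. The rational factor $Q_p^{\ddagger}/R_p$ contributes only $O(\log T)$ to the argument change on every side. On the top segment $\Im(s)=T$ between $\Re(s)=-\sigma$ and $\Re(s)=-c_p/2-\delta(T)$, Stirling's formula applied to each $\Gamma((ks+h)/2)$ inside $\xi(ks+h)$, combined with the standard bound $\log\zeta(\sigma+iT)=O(\log T)$ uniformly for $\sigma$ in a bounded set, yields
\[
\Delta\arg\xi(ks+h)=\frac{kT}{2}\log\frac{kT}{2\pi}-\frac{kT}{2}+O(\log T);
\]
summing over $(k,h)$ with weight $M_p(k,h)$ produces the main terms and identifies $C_1=\frac{1}{2\pi}\sum_{k,h}kM_p(k,h)$.

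On the right edge, Propositions \ref{prop_0606} and \ref{prop_0607} give $|r_p(s)|<1$ off the bounded set $\mathfrak D_p$, so $\arg(1+r_p)$ has bounded pointwise variation outside $\mathfrak D_p$; a Backlund-type argument (bounding sign changes of $\Re(1+r_p(s))$ via Jensen's inequality on small disks, using that $1+r_p$ is a finite $\C$-linear combination of ratios of shifted $\xi$'s) contributes $O(\log T)$. The remaining $\prod\xi^{M_p(k,h)}$ piece likewise contributes $O(\log T)$, since Proposition \ref{prop_0607} keeps this edge uniformly away from the zeros of each $\xi(ks+h)$.

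The main obstacle is the left edge $\Re(s)=-\sigma_L$, where $\varepsilon_p$ has no obvious dominant factor. Here I would invoke the functional equation $\xi_p(s)=\varepsilon_p(s)+\epsilon_p\varepsilon_p(-c_p-s)$ of Proposition \ref{prop_0508}: for $\sigma_L$ large enough, the reflected point $-c_p-s$ on this line lies in the right half-plane $\Re\geq\sigma_L-c_p$ controlled by Propositions \ref{prop_0605} and \ref{prop_0606}, whence $\varepsilon_p(-c_p-s)$ dominates and $\varepsilon_p(s)=-\epsilon_p\varepsilon_p(-c_p-s)(1+o(1))$ uniformly for $0<\Im(s)<T$. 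The argument variation on the left edge thus transfers to the line $\Re(s)=\sigma_L-c_p$ in the right half-plane and is $O(\log T)$ by Stirling again, yielding the first asymptotic. To pass from fixed $\sigma_L$ to $N(T;+\infty)$, I would confine any further zeros via Proposition \ref{prop_0609} to the strip $-\kappa\log(|t|+10)<\Re(s)<-\sigma_L$ and apply Jensen's formula in a chain of $O(T/\log T)$ disks of radius $\asymp\log T$ covering this strip, using the Hadamard order-one growth bound for $\varepsilon_p$ (to be developed in the next subsection) to estimate $\log|\varepsilon_p|$ on the boundaries; this yields $O(\log^2 T)$ extra zeros and hence the second asymptotic.
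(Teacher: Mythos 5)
Your overall plan (argument principle on a rectangular contour, with polynomial growth bounds to control the argument variation) is the right general strategy and matches the paper in outline, but two steps fail as written.

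The more serious defect is the left edge $\Re(s)=-\sigma_{\rm L}$. You observe correctly that the reflected point $-c_p-s$ lies deep in the right half-plane and that $\varepsilon_p(-c_p-s)$ dominates $\varepsilon_p(s)$ there, but you then conclude $\varepsilon_p(s)=-\epsilon_p\varepsilon_p(-c_p-s)(1+o(1))$, which is the opposite of what the dominance gives you. From $\xi_p(s)=\varepsilon_p(s)+\epsilon_p\varepsilon_p(-c_p-s)$ with $\varepsilon_p(s)=o(\varepsilon_p(-c_p-s))$ one gets $\xi_p(s)=\epsilon_p\varepsilon_p(-c_p-s)(1+o(1))$: it is $\xi_p$, not $\varepsilon_p$, that inherits the phase of the reflected dominant term, and $\varepsilon_p(s)$ is the small difference of two nearly equal large quantities whose argument the functional equation alone cannot determine. (Concretely, for $SL(2)$ one has $\varepsilon_P(s)=\xi(s+2)=\xi(-1-s)$ and $\varepsilon_P(-2-s)=\xi(-s)$, and on $\Re(s)=-\sigma_{\rm L}$ the Stirling ratio $|\xi(-1-s)/\xi(-s)|\asymp|t|^{-1/2}\to0$, so the two are not asymptotic.) The paper gets the needed control on the left edge by a direct Stirling expansion of $E_p$ (using $\xi(ks+h)=\xi(1-ks-h)$ term-by-term), carried out in the proof of Proposition \ref{prop_0609}; this yields $Q_p^{\ddagger}(s)+V_p(s)={\mathcal Q}_{\mu_0}(s^{1/2})\mu_0^s(1+g(s))$ with $|g(-\sigma_{\rm L}+it)|<1$ for large $|t|$ (inequality \eqref{21}), and then Lemma \ref{lem_1201} supplies the $O(\log T)$ and $O(\log^2T)$ bounds via a Backlund-type argument with the polynomial bounds $L^*(s)\ll T^M$ and $L^*(s)\ll T^{\kappa^*\log T}$ on the relevant strips.

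A second, smaller defect: you attribute the main term $C_1T\log T$ to the argument variation on the top segment $\Im(s)=T$. That horizontal segment has bounded length, and $\partial_\sigma\arg\Gamma((ks+h)/2)=O(\log T)$ there, so the argument variation of each $\xi(ks+h)$ on the top is $O(\log T)$, not $\frac{kT}{2}\log\frac{kT}{2\pi}$. The main term comes from the two nearly vertical edges, where Stirling applied along $\Im(s)=t$ from $0$ to $T$ produces the $\sim\frac{kt}{2}\log\frac{k|t|}{2}$ growth; the contributions of the two edges add rather than cancel because the critical line $\Re(s)=(1/2-h)/k$ of $\xi(ks+h)$ lies strictly inside the strip whenever $M_p(k,h)>0$ by Lemma \ref{lem_0806}. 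In the paper's proof these are the $\Delta_1\arg$ and $\Delta_3\arg$ contributions, while the top contributes only the $\Delta_2\arg L^*(s)=O(\log T)$ correction.
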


Thus we have $N(T;+\infty)-N(T;\sigma_{\rm L})=O(T)$.
Also, we can justify this by applying and modifying the method in
~\cite[p.230]{MR882550}.
This will be proved in section \ref{section_12}.

\begin{proposition}\label{prop_0702}
Define
\[
W_p(z) = \varepsilon_p(-c_p/2+iz).
\]
Then it has the product formula
\begin{equation}\label{0701}
W_p(z)= \omega \,e^{\alpha z}V(z)W_1(z)W_2(z),
\end{equation}
where $\omega$ is a nonzero real number, $\alpha$ is a real number,
$V(z)$ is a polynomial having no zeros in $\Im(z) >0$
except for purely imaginary zeros,
\[
\aligned
W_1(z)&=\prod_{n=1}^{\infty}
\left[\left(1-\frac{z}{\rho_n}\right)\left(1+\frac{z}{\bar{\rho}_n}\right) \right], \\
W_2(z)&=\prod_{n=1}^{\infty}
\left[\left(1-\frac{z}{\eta_n}\right)\left(1+\frac{z}{\bar{\eta}_n}\right) \right]
\endaligned
\]
with $\Re(\rho_n) > 0$, $\Re(\eta_n) > 0$ and
$0<\delta(t) < \Im(\rho_n) <\sigma_{\rm L}+1< \Im(\eta_n)<\kappa \log(\Re(\eta_n)+10)$
for every $n\geq 1$.
Here $\delta(t)$ is the function of Proposition \ref{prop_0608},
$\kappa$ is the positive number of Proposition \ref{prop_0609},
and $\sigma_{\rm L}$ is the positive number of Proposition \ref{prop_0701}.
The products $W_1$ and $W_2$ converge uniformly on every compact subset in $\C$.
\end{proposition}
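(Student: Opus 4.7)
The plan is to derive the product representation from Hadamard's factorization theorem applied to $W_p$, combined with the zero-free regions of Propositions~\ref{prop_0608} and~\ref{prop_0609}, the symmetry $\overline{W_p(z)}=W_p(-\bar z)$, and the zero count in Proposition~\ref{prop_0701}. First I would check that $\varepsilon_p$, and hence $W_p(z)=\varepsilon_p(-c_p/2+iz)$, is entire of order at most $1$. The numerator $E_p$ is a finite sum of polynomial multiples of finite products of $\xi(ks+h)$, and $\xi(s)=s(s-1)\pi^{-s/2}\Gamma(s/2)\zeta(s)$ grows at rate $|s|^{|s|/2+O(1)}$ by Stirling, giving $E_p$ order $1$; the denominator $R_p(s)D_p(s)$ is a polynomial times a finite product of $\xi$-values, and Definition~\ref{def_0509} is arranged so that the zero divisor of $R_p D_p$ is dominated by that of $E_p$ (the $\xi$-level analogue of the minimality in Definition~\ref{def_0204}), so the quotient is entire of order $\leq 1$. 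Hadamard's theorem then yields
\[
W_p(z) = e^{a+bz}\, z^{m_0} \prod_n \left(1-\frac{z}{z_n}\right)\exp\!\left(\frac{z}{z_n}\right),
\]
where $\{z_n\}$ enumerates the non-zero zeros of $W_p$ and $m_0\geq 0$ is the multiplicity at the origin.

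Translating $s=-c_p/2+iz$ converts Proposition~\ref{prop_0608} into the statement that $W_p$ has only finitely many zeros with $\Im z\leq \delta(\Re z)$, and Proposition~\ref{prop_0609} into the absence of zeros with $\Im z\geq \kappa\log(|\Re z|+10)-c_p/2$. Because $\varepsilon_p$ has real Taylor coefficients on the real axis (each $\xi$-factor and each $Q_{p,w}$ is real-valued there), $W_p$ satisfies $\overline{W_p(z)}=W_p(-\bar z)$, so zeros pair as $(z_0, -\bar z_0)$ with common imaginary part. I would partition the non-zero zeros into: (a) finitely many exceptional zeros outside the main strip, which together with $z^{m_0}$ form the polynomial $V(z)$; (b) zeros with $\Re z>0$ and $\delta(t)<\Im z<\sigma_{\rm L}+1$, labelled $\rho_n$; and (c) zeros with $\Re z>0$ and $\sigma_{\rm L}+1<\Im z<\kappa\log(|\Re z|+10)$, labelled $\eta_n$; the conjugate partners $-\bar\rho_n$, $-\bar\eta_n$ produce the $(1+z/\bar\rho_n)$ and $(1+z/\bar\eta_n)$ factors. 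Exploiting the non-canonical nature of $\delta(t)$ in Proposition~\ref{prop_0608}, I would shrink $\delta$ near the finite set of $\Re z$-values where non-imaginary upper-half-plane exceptionals would arise, sweeping those into group (b) and leaving $V$ with only purely imaginary zeros in $\Im z>0$.

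For convergence of $W_1, W_2$ at genus $0$ I would bound $\sum 1/|\rho_n|^2$ and $\sum 1/|\eta_n|^2$: since $\Im\rho_n<\sigma_{\rm L}+1$ we have $|\rho_n|\asymp|\Re\rho_n|$, and Proposition~\ref{prop_0701} gives $\#\{n:|\Re\rho_n|\leq T\}\sim C_1 T\log T$, so $|\rho_n|\gtrsim n/\log n$ and the sum converges; for $\eta_n$ the count $N(T;+\infty)-N(T;\sigma_{\rm L})=O(T)$ makes convergence easier. The pair identity $(1-z/\rho)(1+z/\bar\rho)=1+(2i\,\Im\rho/|\rho|^2)z-z^2/|\rho|^2$ makes genus-$0$ convergence explicit on compacta, and after pair-grouping the Hadamard compensators $e^{z/z_n}$ collapse into a single $e^{bz}$, yielding the prefactor $\omega e^{\alpha z}$. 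Reality of $\omega=W_p(0)$ and of $\alpha$ follows from the real-coefficient property of $\varepsilon_p$ together with the conjugate pairing, which cancels imaginary contributions to the exponent. The main obstacle is the bookkeeping: rigorously verifying that $R_p D_p$ divides $E_p$ in the ring of entire functions (so that $\varepsilon_p$ is genuinely entire of order $\leq 1$) and arranging the partition so that $V$ has only purely imaginary zeros in $\Im z>0$.
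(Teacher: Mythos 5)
Your overall structure — Hadamard factorization of order $\leq 1$, grouping zeros into conjugate pairs $(z_0, -\bar z_0)$ coming from the realness of $\varepsilon_p$ on $\R$, partitioning by the zero-free regions of Propositions~\ref{prop_0608} and~\ref{prop_0609}, and controlling $\sum 1/|\rho_n|^2$, $\sum 1/|\eta_n|^2$ via Proposition~\ref{prop_0701} — is the right skeleton and matches the paper's. But your treatment of the reality of the exponent $\alpha$ is wrong in a way that conceals the hardest part of the argument.

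You assert that ``conjugate pairing cancels imaginary contributions to the exponent.'' In fact the pairing does the opposite: each grouped compensator contributes $\exp(z/\rho_n - z/\bar\rho_n) = \exp\bigl(-2iz\,\Im(\rho_n)/|\rho_n|^2\bigr)$, and the symmetry $\overline{W_p(\bar z)} = W_p(-z)$ (which is the only functional symmetry $\varepsilon_p$ alone satisfies — there is no self-symmetry of $\varepsilon_p$, only $\xi_p(s) = \varepsilon_p(s) + \epsilon_p\varepsilon_p(-c_p-s)$) forces the Hadamard exponent $b$ to satisfy $\bar b = -b$. Thus the assembled $\alpha = b + \sum(1/\rho_n - 1/\bar\rho_n) + \sum(1/\eta_n - 1/\bar\eta_n)$ is a priori \emph{purely imaginary}, not real. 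The content of the proposition is precisely that this purely imaginary number vanishes, and that is not a formal consequence of pairing. The paper proves $\Im(\alpha)=0$ by comparing two asymptotics for $\log\bigl|W_p(-iy)/W_p(iy)\bigr| = \log\bigl|\varepsilon_p(-c_p/2+y)/\varepsilon_p(-c_p/2-y)\bigr|$ as $y\to+\infty$: on the one hand the Hadamard product gives $2y\,\Im(\alpha) + O(\log y)$ (using the zero counts of Proposition~\ref{prop_0701} and the strip constraints), and on the other hand a Stirling-based asymptotic expansion of the ratio, combining $X_{p,w}/X_p^{\ddagger}$ and $X_{p,w}(-c_p-s)/X_{p,w}(s)$, yields $Ay^m(1+o(1))$ with $A\neq 0$, $m\geq 0$. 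That second asymptotic rests on a nontrivial root-theoretic computation, namely $A'_{p,w} = |\Phi^+\setminus\Phi_p^+| - 2\,l_p(w) \geq 0$ for $w\in\mathfrak W_p^+\cup\mathfrak W_p^0$ together with the nonvanishing in Lemma~\ref{lem1003}, neither of which appears in your sketch. Without this, the reality of $\alpha$ — which the paper explicitly flags as crucial for the application of Proposition~\ref{prop_0703} — is unproved, and the proposition fails.

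A secondary but real issue: your claim that the genus-$0$ form $\prod(1-z/\rho_n)(1+z/\bar\rho_n)$ converges on compacta and that the Hadamard compensators ``collapse into a single $e^{bz}$'' needs the estimate $\sum\bigl|1/\rho_n - 1/\bar\rho_n\bigr| < \infty$ (and likewise for $\eta_n$), not merely $\sum 1/|\rho_n|^2<\infty$. The paper establishes this using $\Im\rho_n < \sigma_{\rm L}+1$ and $\Im\eta_n < \kappa\log(\Re\eta_n+10)$ respectively; the $\eta_n$ case requires the logarithmic bound and is not ``easier,'' it is exactly what the strip constraint $\Im\eta_n<\kappa\log(\Re\eta_n+10)$ is for. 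You should make this absolute-convergence step explicit.
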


This will be proved in section \ref{section_13} by using Proposition \ref{prop_0701}.

\begin{proposition}[Proposition 3.1 of \cite{Ki09}]\label{prop_0703}
Let $W(z)$ be a function in $\C$.
Suppose that $W(z)$ has the product factorization
\[
W(z) = h(z) \, e^{\alpha z} \prod_{n=1}^{\infty}
\left[\left(1-\frac{z}{\rho_n}\right)\left(1+\frac{z}{\bar{\rho}_n}\right) \right],
\]
where $h(z)$ is a nonzero polynomial having $N$ many zeros counted with multiplicity
in the lower half-plane, $\alpha \in {\R}$, $\Im(\rho_n) > 0$ $(n = 1,2,\cdots)$,
and the product converges uniformly in any compact subset of $\C$.
Then, $W(z) + \overline{W(\bar{z})}$ and $W(z) - \overline{W(\bar{z})}$ has at most $N$
pair of conjugate complex zeros counted with multiplicity.
\end{proposition}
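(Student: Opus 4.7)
The plan is to reduce the counting of non-real zeros to a counting problem in the open upper half plane $\mathbb{H}:=\{\Im z>0\}$, and then to exploit a Blaschke-type factorization of the ratio $\Phi(z):=W(z)/W^*(z)$, where $W^*(z):=\overline{W(\bar z)}$ (and analogously $h^*(z):=\overline{h(\bar z)}$, etc.). From the identity $(W\pm W^*)^*=\pm(W\pm W^*)$, each of $W+W^*$ and $W-W^*$ is real-valued (respectively, purely imaginary) on $\mathbb{R}$, so its non-real zeros appear in complex-conjugate pairs. It therefore suffices to show that each of $W+W^*$ and $W-W^*$ has at most $N$ zeros in $\mathbb{H}$, counted with multiplicity.

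Because $\alpha\in\mathbb{R}$, the exponential factor $e^{\alpha z}$ appears identically in $W$ and $W^*$ and cancels in the ratio, yielding
\[
\Phi(z)=\frac{h(z)}{h^*(z)}\cdot B(z),\qquad B(z):=\prod_{n=1}^{\infty}\frac{(1-z/\rho_n)(1+z/\bar\rho_n)}{(1-z/\bar\rho_n)(1+z/\rho_n)}.
\]
Each factor of $B$ has its two zeros ($\rho_n$ and $-\bar\rho_n$) in $\mathbb{H}$ and its two poles in the lower half plane, and is unimodular on $\mathbb{R}$; by the maximum principle it has modulus strictly less than one in $\mathbb{H}$, and the uniform convergence of the product gives $|B(z)|\leq 1$ on $\overline{\mathbb{H}}$. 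Next split $h=h_-\,h_\circ$, where $h_-$ has degree $N$ and collects precisely the open lower half plane zeros of $h$, while $h_\circ$ carries the remaining zeros (which lie in $\overline{\mathbb{H}}$). Then $h_\circ/h_\circ^*$ is a finite Blaschke product with $|\cdot|\leq 1$ on $\overline{\mathbb{H}}$, and $b(z):=h_-^*(z)/h_-(z)$ is a finite Blaschke product of degree exactly $N$ whose zeros in $\mathbb{H}$ are the conjugates of the open lower half plane zeros of $h$. Putting $\Psi:=B\cdot(h_\circ/h_\circ^*)$, we have $\Phi=\Psi/b$ with $|\Psi(z)|\leq 1$ on $\overline{\mathbb{H}}$. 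From $W\pm W^*=W^*(b\pm\Psi)/b$ and the fact that $W^*$ and $b$ share the same $N$ zeros in $\mathbb{H}$ (with matching multiplicities, since these come from $h_-^*$ alone), one concludes that the zeros of $W\pm W^*$ in $\mathbb{H}$ coincide, with multiplicity, with those of $\Psi\pm b$ in $\mathbb{H}$.

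The main obstacle is the final step of bounding the number of zeros of $\Psi\pm b$ in $\mathbb{H}$ by $N$. A priori $\Psi$ is only an inner function in $H^\infty(\mathbb{H})$, so the equation $\Psi=\mp b$ could in principle have infinitely many solutions. The resolution, due to de Bruijn \cite[p.215]{MR0037351} and refined into the precise form needed here in \cite{Ki09}, is to apply the argument principle to $\Psi\pm b$ on a large half-disc $\{|z|<R,\,\Im z>0\}$. On $\mathbb{R}$ both $\Psi$ and $b$ are unimodular, so the argument change along the real diameter can be computed explicitly and has a finite limit as $R\to\infty$; on the semicircular arc the contribution of $\Psi$ is asymptotically trivial because the factors of $B$ and of $h_\circ/h_\circ^*$ approach their values at infinity, while the finite Blaschke product $b$ contributes a net rotation of $2\pi N$. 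This forces at most $N$ zeros of $\Psi\pm b$ in $\mathbb{H}$, completing the proof.
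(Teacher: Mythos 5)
The paper does not prove Proposition \ref{prop_0703}; it is quoted verbatim as Proposition 3.1 of \cite{Ki09} and used as a black box, so there is no in-paper argument against which to check your proposal. Your preparatory reductions are sound and match the de Bruijn--Ki framework: passing to the open upper half-plane, cancelling $e^{\alpha z}$ using $\alpha\in\R$, splitting $h=h_-\,h_\circ$, and showing that the zeros of $W\pm\overline{W(\bar z)}$ with $\Im z>0$ coincide (with multiplicity) with those of $\Psi\pm b$.

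The last paragraph, however, is exactly where the proposition lives, and it contains a genuine gap. The $2\pi N$ winding of the degree-$N$ Blaschke product $b$ is \emph{not} picked up on the semicircular arc $|z|=R$, $\Im z>0$: there $b(Re^{i\theta})$ tends to a unimodular constant, so its arc contribution is $o(1)$, and the full $2\pi N$ is accumulated on the real diameter. More seriously, $|\Psi|=|b|=1$ on $\R$, so Rouch\'e does not apply directly and $\Psi+b$ may vanish on $\R$; moreover $\Psi$ contains the \emph{infinite} Blaschke product $B$, whose argument along $\R$ can increase without bound and for which $B(Re^{i\theta})$ need not tend to a limit as $R\to\infty$. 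Hence the assertion that the total boundary argument variation of $\Psi\pm b$ is $2\pi N+o(1)$ is not justified by what you write. Closing this requires the perturbation/limiting device that is the actual content of de Bruijn's lemma and of \cite{Ki09} --- for instance, compare with $\Psi+(1+\epsilon)b$, where $|(1+\epsilon)b|>|\Psi|$ on $\R$ and Rouch\'e does apply, then let $\epsilon\to 0^{+}$ while using the convergence of the product to keep zeros from escaping to infinity or landing on $\R$ in the limit. Your outline correctly names this as the main obstacle and correctly attributes the resolution, but it does not carry it out, and the description of how the resolution works is inaccurate in the two respects noted above.
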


Now we achieve the following goal which is an immediate consequence
of Propositions \ref{prop_0702} and \ref{prop_0703}. Note that the
realness of the exponent $\alpha$ of Proposition \ref{prop_0702} is
crucial.

\begin{theorem}[Weak Riemann Hypothesis for $\xi_{p}$]\label{thm_0703}
Assume the volume hypothesis \eqref{VF}.
Then, there exists a bounded region ${\frak B}_p$ such that
all zeros of $\xi_{p}(s)$ outside ${\frak B}_p$
lie on the line $\Re(s)=-c_p/2$.
\end{theorem}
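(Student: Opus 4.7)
The plan is to reduce Theorem \ref{thm_0703} to the combination of Proposition \ref{prop_0702} (Hadamard factorization of $\varepsilon_p$) and Proposition \ref{prop_0703} (the de Bruijn-type zero-counting lemma), via a change of variables that turns the functional equation of $\xi_p$ into the symmetry required by Proposition \ref{prop_0703}.

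First I would translate to the $z$-variable by setting $s=-c_p/2+iz$, so that the conjectural critical line $\Re(s)=-c_p/2$ becomes the real axis $\Im(z)=0$. Write $W_p(z)=\varepsilon_p(-c_p/2+iz)$ as in Proposition \ref{prop_0702}. Because $\varepsilon_p(s)$ is built from $\hat{\zeta}(s)$ and polynomials with real coefficients, it satisfies $\overline{\varepsilon_p(\bar{s})}=\varepsilon_p(s)$, and hence
\[
\varepsilon_p(-c_p-s)=\varepsilon_p(-c_p/2-iz)=\overline{\varepsilon_p(-c_p/2+i\bar{z})}=\overline{W_p(\bar{z})}.
\]
Combining this with the decomposition $\xi_p(s)=\varepsilon_p(s)+\epsilon_p\,\varepsilon_p(-c_p-s)$ from Definition \ref{def_0510} and Proposition \ref{prop_0508} gives $\xi_p(-c_p/2+iz)=W_p(z)+\epsilon_p\,\overline{W_p(\bar{z})}$, with $\epsilon_p\in\{\pm 1\}$.

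Next I would feed this into Proposition \ref{prop_0702}. Both $W_1(z)$ and $W_2(z)$ are of the form $\prod_n(1-z/\zeta_n)(1+z/\bar\zeta_n)$ with $\Im(\zeta_n)>0$ and converge uniformly on compact subsets of $\mathbb{C}$, so their product $W_1(z)W_2(z)$ is an infinite product of the same shape with all zeros in $\{\Im(z)>0\}$. Setting $h(z):=\omega\,V(z)$, which is a polynomial of finite degree (the non-real zeros of $V$ are purely imaginary by hypothesis, so there are only finitely many zeros in the lower half-plane), we obtain
\[
W_p(z)=h(z)\,e^{\alpha z}\prod_{n\ge 1}\Bigl(1-\tfrac{z}{\zeta_n}\Bigr)\Bigl(1+\tfrac{z}{\bar\zeta_n}\Bigr),
\]
with $\alpha\in\mathbb{R}$ and all $\Im(\zeta_n)>0$. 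This is exactly the form required by Proposition \ref{prop_0703}. Let $N$ denote the (finite) number of zeros of $h$ in the lower half-plane. Proposition \ref{prop_0703} applied with either sign then gives that $W_p(z)\pm\overline{W_p(\bar z)}$ has at most $N$ pairs of conjugate complex zeros counted with multiplicity. Translating back via $s=-c_p/2+iz$, this means $\xi_p(s)$ has at most $2N$ zeros off the line $\Re(s)=-c_p/2$. Enclosing this finite set in a large disk yields the desired bounded region $\mathfrak{B}_p$.

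The main obstacle in this scheme is not the present synthesis step but the two inputs it rests on. The realness of the exponent $\alpha$ in Proposition \ref{prop_0702} is what makes Proposition \ref{prop_0703} applicable, and its verification depends on the precise asymptotic zero-count in Proposition \ref{prop_0701} together with the left zero-free region of Proposition \ref{prop_0609}. Once $W_p$ has been placed in the stated factored form with $\alpha\in\mathbb{R}$, the de Bruijn-style argument behind Proposition \ref{prop_0703} is purely formal, and the derivation of Theorem \ref{thm_0703} is a clean bookkeeping step. Note that Proposition \ref{prop_0703} gives no information about zeros on the real axis $\Im(z)=0$, i.e.\ on the critical line itself, which is appropriate since the theorem only concerns off-line zeros.
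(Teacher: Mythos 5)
Your proposal is correct and follows essentially the same route as the paper, which presents Theorem \ref{thm_0703} as an immediate consequence of Propositions \ref{prop_0702} and \ref{prop_0703} and notes that the realness of $\alpha$ is the crucial input. You have merely made explicit the change of variables $s=-c_p/2+iz$, the identity $\xi_p(-c_p/2+iz)=W_p(z)+\epsilon_p\,\overline{W_p(\bar z)}$, and the bookkeeping needed to invoke Proposition \ref{prop_0703}, which the paper leaves implicit.
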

By studying of the behavior of the argument of $\varepsilon_p(-c_p/2 + it)$ ($t>0$),
we obtain the following additional result.
\begin{theorem}[Simple zeros of $\xi_{p}$]\label{thm_0704}
Assume the volume hypothesis \eqref{VF}.
Then, there exists a bounded region ${\frak B}_p^\prime (\supset {\frak B}_p)$
such that all zeros of $\xi_{p}(s)$ outside ${\frak B}_p^\prime$
lie on the line $\Re(s)=-c_p/2$ and simple.
\end{theorem}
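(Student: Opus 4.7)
The plan is to combine the Hadamard factorization of Proposition \ref{prop_0702} with the functional-equation decomposition $\xi_p(s)=\varepsilon_p(s)+\epsilon_p\varepsilon_p(-c_p-s)$ restricted to the critical line, and to deduce simplicity from strict monotonicity of the argument of $W_p$ on the real axis. Because $\varepsilon_p(s)$ is built from $\xi(ks+h)$'s and polynomials with real coefficients, one has $\varepsilon_p(\bar s)=\overline{\varepsilon_p(s)}$, which on the level of $W_p(z)=\varepsilon_p(-c_p/2+iz)$ translates into $W_p(-t)=\overline{W_p(t)}$ for $t\in\R$. Substituting $s=-c_p/2+it$ into the decomposition therefore gives
\[
\xi_p(-c_p/2+it)=W_p(t)+\epsilon_p\overline{W_p(t)},
\]
which equals $2\Re W_p(t)$ or $2i\,\Im W_p(t)$ according to the sign $\epsilon_p\in\{\pm 1\}$ of Lemma \ref{lem_0503}.

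Next I would observe that, outside a bounded interval of $t$, the factor $W_p(t)$ does not vanish: $W_1$ and $W_2$ have zeros only at the $\rho_n,\eta_n,-\bar\rho_n,-\bar\eta_n$, all of strictly positive imaginary modulus, while $V$ is a polynomial with only finitely many real zeros. On such a region we may write $W_p(t)=R(t)e^{i\theta(t)}$ with $R(t)=|W_p(t)|>0$ and a smooth branch $\theta(t)$. The critical-line zeros of $\xi_p$ then correspond to $\cos\theta(t)=0$ (or $\sin\theta(t)=0$), and the identity
\[
\frac{d}{dt}\bigl[R(t)\cos\theta(t)\bigr]\bigg|_{t=t_0}=-R(t_0)\,\theta'(t_0)\sin\theta(t_0),\qquad |\sin\theta(t_0)|=1,
\]
reduces simplicity of $\xi_p$ at $t_0$ to the single condition $\theta'(t_0)\neq 0$.

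The central step is to establish $\theta'(t)>0$ for all sufficiently large $|t|$ by logarithmic differentiation of the product formula. The factor $e^{\alpha z}$ contributes nothing to $\theta'(t)=\Im(W_p'/W_p)(t)$ precisely because $\alpha\in\R$ (this is the use of the realness emphasized after Theorem \ref{thm_0703}); the polynomial $V(z)$ contributes $\Im(V'/V)(t)=O(1/|t|)$; and each term coming from $W_1$ and $W_2$ is the strictly positive quantity
\[
\Im\rho_n\Bigl(\tfrac{1}{|t-\rho_n|^2}+\tfrac{1}{|t+\bar\rho_n|^2}\Bigr)>0
\]
(and likewise for $\eta_n$). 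So $\theta'(t)$ is the sum of an $O(1/|t|)$ piece and two nonnegative infinite sums. The zero-counting estimate of Proposition \ref{prop_0701} supplies at least $\gg\log|t|$ zeros $\rho_n$ with $\Re\rho_n\in[t-1,t+1]$, and the uniform bounds $\delta(|t|)\leq\Im\rho_n<\sigma_{\rm L}+1$ from Proposition \ref{prop_0702} then give $\theta'(t)\gg\delta(|t|)\log|t|\to\infty$.

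The main obstacle will be the quantitative short-interval lower bound on the local zero density: Proposition \ref{prop_0701} only supplies the global count $C_1T\log T+C_2T+O(\log T)$, and one has to extract a lower bound on the number of $\rho_n$ in bounded windows $[t-1,t+1]$, in parallel with the classical short-interval zero-counting lemmas used for $\zeta(s)$; the positivity of each individual summand and the uniform imaginary-part bounds are exactly what is needed to rule out cancellation. Once $\theta'(t)>0$ is secured outside a bounded interval $|t|\leq T_0$, Theorem \ref{thm_0703} places all but finitely many zeros of $\xi_p$ on the critical line, and one may enlarge ${\frak B}_p$ to a bounded ${\frak B}_p'\supset{\frak B}_p$ absorbing the finitely many off-line zeros, the finitely many real zeros of $V$, and the exceptional $|t|\leq T_0$; outside ${\frak B}_p'$ the zeros of $\xi_p$ lie on the critical line and are simple.
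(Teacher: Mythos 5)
Your overall strategy (reduce simplicity to $\theta'(t)\neq 0$ for $\theta(t)=\arg W_p(t)$, note that $\alpha\in\R$ kills the exponential contribution, and try to get positivity) is in the right spirit, but your route through the Hadamard product is genuinely different from the paper's, and it has a gap you yourself flag without closing.

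The crux is the short-interval lower bound. From Proposition \ref{prop_0701} you have only the \emph{global} count $N(T;\sigma_{\rm L})=C_1T\log T+C_2T+O(\log T)$. Differencing this at $T$ and $T+1$ gives $C_1\log T+O(\log T)$, which is not a positive lower bound on the number of $\rho_n$ in $[t-1,t+1]$ unless one has extra information about the error term (a Littlewood-type $\int S(t)\,dt=O(\log t)$ bound or similar). Without at least one $\rho_n$ with $|t-\Re\rho_n|$ bounded, your logarithmic derivative
\[
\theta'(t)=\Im\frac{V'}{V}(t)+\sum_n\Im\rho_n\Bigl(\tfrac{1}{|t-\rho_n|^2}+\tfrac{1}{|t+\bar\rho_n|^2}\Bigr)+\sum_n\Im\eta_n\Bigl(\tfrac{1}{|t-\eta_n|^2}+\tfrac{1}{|t+\bar\eta_n|^2}\Bigr)
\]
is only bounded below by $O(1/|t|)$ (the two infinite sums are nonnegative but not quantitatively bounded away from $0$), and $\Im(V'/V)(t)$ can be negative of order $1/|t|$. ``Positivity of each summand'' rules out cancellation within the sums, but does not by itself rule out the sums being negligibly small; that is precisely what the missing short-interval density is needed for.

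The paper's proof avoids this entirely. It does not differentiate the Hadamard product. Instead it uses the dominance established in Propositions \ref{prop_0605} and \ref{prop_0606} to write $\varepsilon_p(s)=\frac{Q_p^\ddagger(s)}{R_p(s)}\frac{X_p^\ddagger(s)}{D_p(s)}(1+o(1))$ on $\Re s=-c_p/2$, and then computes $\theta'(t)$ by logarithmic differentiation of this \emph{explicit} dominant term: the Stirling formula on the gamma factors of $X_p^\ddagger/D_p$ (see \eqref{0903}) produces the $\kappa\log t$ main term, and the zeta factors contribute $o(\log t)$ since $\zeta'/\zeta(1+it)=O(\log t/\log\log t)$ (needed when $kc_p$ is even). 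This gives $\theta'(t)=\kappa\log t\,(1+o(1))>0$ directly, with no zero-density input. If you wish to salvage your product-formula route, you would first have to prove a short-interval zero-counting lemma for $\varepsilon_p$; as written, the argument is incomplete.
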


This will be proved in section \ref{section_14}. Theorems
\ref{thm_0703} and \ref{thm_0704} imply the main result Theorem
\ref{thm101} because of Chevalley's fundamental theorem.

%
%
\section{Preliminaries for proof of Propositions \ref{prop_0606} and \ref{prop_0607}} \label{section_08}
%
%
In this section, we prepare several lemmas for the proof of
Propositions \ref{prop_0606} and \ref{prop_0607}. Indeed, Lemmas
\ref{lem_0801}, \ref{lem_0804} and \ref{lem_0815} will play an important role for
it. The condition \eqref{0201} in Definition \ref{0201} is essential
for Lemma \ref{lem_0801}.
\medskip

For integers $k$ and $l$, we define
\[
\aligned
\Sigma_p(k)&=\{\alpha \in \Phi \,|\, \langle \lambda_p, \alpha^\vee \rangle=k  \}, \\
\Sigma_p(k,h)&=\{\alpha \in \Phi \,|\, \langle \lambda_p, \alpha^\vee \rangle=k, ~\h \alpha^\vee = h  \}.
\endaligned
\]
They are not empty for finitely many $(k,h)$.
The set of positive roots $\Phi^+$ is decomposed into the disjoint union
\[
\Phi^+ \setminus \Phi_p^+= \bigsqcup_{k=1}^{\infty} \Sigma_p(k), \quad
\Sigma_p(k)=\bigsqcup_{h=1}^{\infty} \Sigma_p(k,h).
\]
By definition, $N_p(k,h)$ is the cardinality of $\Sigma_p(k,h)$.

\begin{lemma}\label{lem_0801}
Let $w \in {\frak W}_p$.
Let $\alpha$ be a positive root.
Suppose that $\alpha \in (\Phi^+ \setminus \Phi_p^+) \cap w^{-1} \Phi^{-}$
and $\alpha+\alpha_j \in \Phi^+ \setminus \Phi_p^+$ for some $\alpha_j \in \Delta_p$.
Then $\alpha+\alpha_j \in (\Phi^+ \setminus \Phi_p^+) \cap w^{-1} \Phi^{-}$.
\end{lemma}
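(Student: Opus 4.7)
The plan is to combine the defining property of $\mathfrak{W}_p$, which says $w\alpha_j \in \Delta \cup \Phi^-$ for every $\alpha_j \in \Delta_p$, with the observation that $w(\alpha+\alpha_j)$ is automatically a root (since $\alpha+\alpha_j$ is one by hypothesis). I would then split into two cases depending on where $w\alpha_j$ lies.

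In the first case, $w\alpha_j \in \Phi^-$. Since also $w\alpha \in \Phi^-$, the sum $w(\alpha+\alpha_j) = w\alpha + w\alpha_j$ is a sum of two negative roots. A sum of two negative roots, whenever it happens to be a root, is again negative; since we know $w(\alpha+\alpha_j) \in \Phi$, we get $w(\alpha+\alpha_j) \in \Phi^-$ as required.

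In the second case, $w\alpha_j \in \Delta$, say $w\alpha_j = \alpha_k$. Set $\beta := -w\alpha \in \Phi^+$, so $w(\alpha+\alpha_j) = -\beta + \alpha_k$. Suppose for contradiction that this lies in $\Phi^+$. Writing $\beta = \sum_i n_i \alpha_i$ with $n_i \geq 0$, the coefficient of $\alpha_i$ in $\alpha_k - \beta$ equals $-n_i$ for $i \neq k$; being a positive root, $\alpha_k - \beta$ must have nonnegative coefficients, forcing $n_i = 0$ for all $i \neq k$. Then $\beta = n_k \alpha_k$ is a root, hence $\beta = \alpha_k$, whence $w(\alpha+\alpha_j) = 0$, contradicting that it is a root. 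Therefore $w(\alpha+\alpha_j) \in \Phi^-$.

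I expect Case 2 to be the slightly subtle one, but it is a short and standard positive-roots argument; the genuine content of the lemma lies in extracting the dichotomy $w\alpha_j \in \Delta \cup \Phi^-$ from condition \eqref{0201}, without which Case 2 simply would not be available. Both cases together give $w(\alpha+\alpha_j) \in \Phi^-$, and combined with the hypothesis $\alpha+\alpha_j \in \Phi^+ \setminus \Phi_p^+$, this yields $\alpha+\alpha_j \in (\Phi^+ \setminus \Phi_p^+) \cap w^{-1}\Phi^-$.
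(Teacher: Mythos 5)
Your proof is correct and follows essentially the same route as the paper: split on whether $w\alpha_j$ lies in $\Phi^-$ or in $\Delta$ (the dichotomy coming precisely from condition \eqref{0201}), and conclude $w(\alpha+\alpha_j) \in \Phi^-$ in each case. Your Case 2 argument is in fact more explicit than the paper's, which simply asserts that $w\alpha_j \in \Delta$ together with $w\alpha \in \Phi^-$ makes $w\alpha + w\alpha_j \in \Phi^+$ impossible, without spelling out the coefficient comparison and the reduced-root-system fact that you invoke to rule out $\beta = n_k\alpha_k$ with $n_k \ne 1$.
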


\noindent
\textbf{Note.} Besides $\alpha \in (\Phi^+ \setminus \Phi_p^+) \cap w^{-1} \Phi^{-}$
and $\alpha+\alpha_p \in \Phi^+ \setminus \Phi_p^+$
does not imply
$\alpha+\alpha_p \in (\Phi^+ \setminus \Phi_p^+) \cap w^{-1} \Phi^{-}$
in general.

\begin{proof}
It suffices to show that $w(\alpha+\alpha_j) \in \Phi^-$ under the assumption.
By the assumption and the definition of ${\frak W}_p$,
we have $w\alpha \in \Phi^{-}$ and $w\alpha_j \in \Delta \cup \Phi^{-}$.
If $w\alpha_j \in \Phi^{-}$, we have $w(\alpha+\alpha_j) \in \Phi^{-}$,
since $\alpha+\alpha_j$ is a root.
If $w\alpha_j \in \Delta$, we also have $w\alpha+w\alpha_j \in \Phi^{-}$.
In fact, it is impossible that $w\alpha_j \in \Delta$ and $w\alpha+w\alpha_j \in \Phi^{+}$,
since $w\alpha \in \Phi^{-}$.
In each case, we have $w(\alpha+\alpha_j) \in \Phi^-$.
\end{proof}

\begin{lemma}\label{lem_0802}
Let $\alpha$ be a positive root in $\Phi^+ \setminus \Phi_p^+$.
Suppose that $\langle \rho_p,  \alpha^\vee \rangle <0$. Then there
exists $\alpha_j \in \Delta_p$ such that $\alpha^\vee+\alpha_j^\vee
\in (\Phi^+ \setminus \Phi_p^+)^\vee$. In particular, there exists
$\alpha_j \in \Delta_p$ such that $\alpha^\vee+\alpha_j^\vee \in
((\Phi^+ \setminus \Phi_p^+) \cap w^{-1} \Phi^{-})^\vee$, if
$\langle \rho_p, \alpha^\vee \rangle <0$ for $\alpha \in (\Phi^+
\setminus \Phi_p^+) \cap w^{-1} \Phi^{-}$.
\end{lemma}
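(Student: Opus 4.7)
The plan is to extract from the hypothesis a simple root $\alpha_j\in\Delta_p$ satisfying $\langle\alpha,\alpha_j\rangle<0$; this single inequality will then deliver both that $\alpha^\vee+\alpha_j^\vee$ is a coroot and, in the second clause, that its associated root lies in $w^{-1}\Phi^-$. Since every element of $\Phi_p^+$ is a non-negative integer combination of the simple roots in $\Delta_p$, the vector $\rho_p=\frac{1}{2}\sum_{\beta\in\Phi_p^+}\beta$ is itself a non-negative combination of $\Delta_p$. The hypothesis $\langle\rho_p,\alpha^\vee\rangle<0$ therefore forces at least one $\alpha_j\in\Delta_p$ with $\langle\alpha_j,\alpha^\vee\rangle<0$, equivalently $\langle\alpha,\alpha_j\rangle<0$.

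Next I invoke the standard fact that two roots with $\langle\alpha,\beta\rangle<0$ sum to another root, applied both to $\Phi$ and to its dual $\Phi^\vee$ (the signs of $\langle\alpha,\alpha_j\rangle$ and $\langle\alpha^\vee,\alpha_j^\vee\rangle$ coincide). This yields that $\alpha^\vee+\alpha_j^\vee$ is a coroot, say $\gamma^\vee$ with $\gamma\in\Phi$. Because $\alpha^\vee$ and $\alpha_j^\vee$ are both non-negative combinations of simple coroots, so is $\gamma^\vee$; hence $\gamma\in\Phi^+$. Moreover $\langle\lambda_p,\gamma^\vee\rangle=\langle\lambda_p,\alpha^\vee\rangle+\langle\lambda_p,\alpha_j^\vee\rangle=\langle\lambda_p,\alpha^\vee\rangle\neq0$, using $\alpha_j\in\Delta_p$ and $\alpha\notin\Phi_p^+$, so $\gamma\notin\Phi_p$. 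This establishes the first assertion $\alpha^\vee+\alpha_j^\vee\in(\Phi^+\setminus\Phi_p^+)^\vee$.

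For the ``in particular'' clause I need $w\gamma\in\Phi^-$ under the extra hypothesis $\alpha\in w^{-1}\Phi^-$. Applying $w$ to $\gamma^\vee=\alpha^\vee+\alpha_j^\vee$ gives $(w\gamma)^\vee=(w\alpha)^\vee+(w\alpha_j)^\vee$, in which $(w\alpha)^\vee$ is a non-positive combination of simple coroots since $w\alpha\in\Phi^-$, and $w\alpha_j\in\Delta\cup\Phi^-$ by definition of ${\frak W}_p$. If $w\alpha_j\in\Phi^-$ both summands are non-positive and their sum is a negative coroot, forcing $w\gamma\in\Phi^-$. The delicate case is $w\alpha_j=\alpha_{k_0}\in\Delta$: supposing $w\gamma\in\Phi^+$ for contradiction, all coordinates of $(w\gamma)^\vee$ in the simple-coroot basis must be $\geq0$, which combined with the non-positivity of those of $(w\alpha)^\vee$ and the identity $(w\alpha_j)^\vee=\alpha_{k_0}^\vee$ forces $(w\alpha)^\vee$ to be supported only on $\alpha_{k_0}^\vee$; since $\Phi$ is reduced, the only such negative coroot is $-\alpha_{k_0}^\vee$, whence $(w\gamma)^\vee=0$, absurd. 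This coordinate argument in the simple-coroot basis is the only real subtlety; the remaining steps are immediate from the definitions and standard root-system facts.
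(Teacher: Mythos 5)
Your proof is correct and follows essentially the same strategy as the paper: expressing $\rho_p$ as a positive $\Delta_p$-combination to extract $\alpha_j$ with $\langle\alpha_j,\alpha^\vee\rangle<0$, invoking the standard fact that roots (coroots) with negative inner product sum to a root (coroot), and checking the $\alpha_p^\vee$-coefficient to place the sum in $(\Phi^+\setminus\Phi_p^+)^\vee$. For the second clause the paper simply cites Lemma~\ref{lem_0801}; you instead reproduce its $w\alpha_j\in\Delta\cup\Phi^-$ case analysis directly in the coroot basis, which is if anything slightly more careful since $\gamma^\vee=\alpha^\vee+\alpha_j^\vee$ need not equal $(\alpha+\alpha_j)^\vee$ in non-simply-laced types, so the literal statement of Lemma~\ref{lem_0801} about $\alpha+\alpha_j$ has to be transposed to the dual side anyway.
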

\begin{proof}
By $\langle \rho_p, \alpha^\vee \rangle<0$,
there exists $\alpha_j \in \Delta_p$ such that
$\langle \alpha_j,\alpha^\vee \rangle<0$,
since
\[
2\rho_p
= \sum_{\beta \in \Phi_p^+} \beta
= \sum_{{j=1}\atop{j\not=p}}^{r} n_j \alpha_j \quad (n_j \in {\Z}_{>0}).
\]
Thus $\langle \alpha_j^\vee,\alpha^\vee \rangle<0$ by
$\alpha_j^\vee=2\alpha_j/\langle \alpha_j,\alpha_j\rangle$. Hence
$\alpha^\vee + \alpha_j^\vee$ is a positive root in $\Phi^\vee$,
(\cite[\S9]{MR499562}). Moreover $\alpha^\vee + \alpha_j^\vee$
is a positive root in $(\Phi^+ \setminus \Phi_p^+)^\vee$, since
$\alpha^\vee \in (\Phi^+ \setminus \Phi_{p}^+)^\vee$ and
$\alpha_p^\vee \in \Phi_p^+$. The second statement follows from the
first statement and Lemma \ref{lem_0801}.
\end{proof}

\begin{lemma}\label{lem_0803} Let $k \geq 1$.
Then
\begin{enumerate}
\item $N_p(k,h)=N_p(k,kc_p-h)$ for every $h \geq 1$, and
\item $N_p(k,h) \leq N_p(k,h+1)$ if $2h+1 \leq kc_p$.
\end{enumerate}
\end{lemma}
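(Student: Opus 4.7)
The plan is to deduce both statements from the identity
\[
\rho - \rho_p = \frac{c_p}{2}\lambda_p,
\]
by constructing an explicit involution for (1) and invoking $\mathfrak{sl}_2$-representation theory in the Langlands dual Lie algebra for (2). The identity itself is verified by pairing with the coroot basis: $\langle \rho - \rho_p, \alpha_j^\vee\rangle = 1 - 1 = 0$ for $\alpha_j \in \Delta_p$ by the Weyl vector property of $\rho_p$, while $\langle \rho - \rho_p, \alpha_p^\vee\rangle = 1 - \langle \rho_p, \alpha_p^\vee\rangle = c_p/2$ by \eqref{0202}. As a consequence, for every $\alpha \in \Sigma_p(k,h)$,
\[
\langle \rho_p, \alpha^\vee\rangle = h - \frac{kc_p}{2}.
\]

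For (1), I would use the longest element $w_p$ of $W_p$. Each simple reflection $\sigma_j$ with $\alpha_j \in \Delta_p$ fixes $\lambda_p$ (since $\langle \lambda_p, \alpha_j^\vee\rangle = 0$) and preserves the $\alpha_p$-coefficient of every root, so $w_p$ permutes $\Sigma_p(k)$. Using $w_p\rho_p = -\rho_p$, the displayed identity yields $w_p\rho + \rho = c_p\lambda_p$, and combined with $(w_p\alpha)^\vee = w_p\alpha^\vee$ and $w_p^{-1}=w_p$ this gives
\[
\h(w_p\alpha)^\vee + \h\alpha^\vee = \langle w_p\rho + \rho, \alpha^\vee\rangle = c_p\langle \lambda_p, \alpha^\vee\rangle = kc_p.
\]
Hence $w_p$ restricts to a bijection $\Sigma_p(k,h) \to \Sigma_p(k, kc_p - h)$, which is (1).

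For (2), I would pass to the Langlands dual Lie algebra $\mathfrak{g}^\vee$, whose Cartan is $\mathfrak{t}$ and whose root spaces $\mathfrak{g}^\vee_{\alpha^\vee}$ are indexed by $\alpha \in \Phi$. Its Levi $\mathfrak{l}_p^\vee$ attached to $\Delta_p^\vee$ admits a principal $\mathfrak{sl}_2$-triple $(E,H,F)$ with Cartan element $H = 2\rho_p$: indeed $\alpha_j^\vee(H) = 2\langle \rho_p, \alpha_j^\vee\rangle = 2$ for every $\alpha_j \in \Delta_p$, the defining property of the principal $H$. The subspace
\[
V_k := \bigoplus_{\alpha \in \Sigma_p(k)} \mathfrak{g}^\vee_{\alpha^\vee}
\]
is stable under $\mathfrak{l}_p^\vee$, because bracketing with $e_{\pm \alpha_j^\vee}$ for $\alpha_j \in \Delta_p$ preserves the pairing $\langle \lambda_p, \cdot\rangle = k$, so $V_k$ is a finite-dimensional $\mathfrak{sl}_2$-module with integer $H$-weights. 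For $\alpha \in \Sigma_p(k,h)$ the $H$-eigenvalue on the one-dimensional space $\mathfrak{g}^\vee_{\alpha^\vee}$ is $2\langle \rho_p, \alpha^\vee\rangle = 2h - kc_p$, so the weight-$\mu$ subspace $V_k[\mu]$ has dimension $N_p(k,(\mu+kc_p)/2)$. The classical fact that in any finite-dimensional $\mathfrak{sl}_2$-module the raising operator $E$ is injective on every weight space of non-positive weight yields $\dim V_k[\mu] \leq \dim V_k[\mu + 2]$ for $\mu \leq -1$, which translates into $N_p(k,h) \leq N_p(k, h+1)$ whenever $2h + 1 \leq kc_p$. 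This is (2); as a bonus, the symmetry $\dim V_k[\mu] = \dim V_k[-\mu]$ recovers (1).

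The main obstacle is part (2). In the non-simply-laced case no Cartan element of $\mathfrak{g}$ itself produces the eigenvalues $2\langle \rho_p, \alpha^\vee\rangle$ on $\mathfrak{g}_\alpha$, since $\alpha \mapsto \langle \rho_p, \alpha^\vee\rangle$ fails to be linear in $\alpha$ when root lengths vary; this forces the passage to $\mathfrak{g}^\vee$, where the analogous eigenvalues on $\mathfrak{g}^\vee_{\alpha^\vee}$ are linear in $\alpha^\vee$. A purely combinatorial alternative based on Lemma~\ref{lem_0802} would aim to build an injection $\Sigma_p(k,h) \hookrightarrow \Sigma_p(k, h+1)$ by selecting for each $\alpha$ a canonical $\alpha_j \in \Delta_p$ with $\alpha^\vee + \alpha_j^\vee \in (\Phi^+ \setminus \Phi_p^+)^\vee$; naive selection rules (e.g. smallest index) do not obviously yield injectivity, and the true reason such an injection exists is precisely the $\mathfrak{sl}_2$-raising argument above.
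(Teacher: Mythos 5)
Your proof is correct, and it is substantially more informative than what the paper gives: the paper's entire argument for Lemma~\ref{lem_0803} is a citation to Proposition~1 of Manivel and Lemma~4.3(1) of Komori, with no details supplied. Your reconstruction is the standard one. Part~(1) via the involution $w_p$ is clean, and hinges on exactly the identity $\rho-\rho_p=(c_p/2)\lambda_p$ that the paper itself invokes (in the proof of Lemma~\ref{lem_0813}, quoting Lemma~4.1 of Komori). Part~(2) via the principal $\mathfrak{sl}_2$ of the dual Levi $\mathfrak{l}_p^\vee$, with Cartan element $2\rho_p$ acting on $\bigoplus_{\alpha\in\Sigma_p(k)}\mathfrak{g}^\vee_{\alpha^\vee}$ with eigenvalue $2\langle\rho_p,\alpha^\vee\rangle=2h-kc_p$, is precisely the mechanism behind Manivel's ``canonical strip'' unimodality, so in substance you have reproduced the argument that the citation points to rather than found an independent route. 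Your remark about why one must pass to the Langlands dual in the non-simply-laced case is accurate and worth stating: $\alpha\mapsto\langle\rho_p,\alpha^\vee\rangle$ is not linear in $\alpha$ when root lengths vary, but it is linear in $\alpha^\vee$.

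One small imprecision: you state that $E$ is injective on every weight space of \emph{non-positive} weight. In a finite-dimensional $\mathfrak{sl}_2$-module with a trivial summand $V(0)$, the raising operator kills the weight-$0$ space, so injectivity can fail at $\mu=0$. The correct statement is injectivity for strictly negative weight $\mu$. Your application only uses $\mu=2h-kc_p\leq -1$, so the conclusion $N_p(k,h)\leq N_p(k,h+1)$ for $2h+1\leq kc_p$ is unaffected, but the general fact should be stated for $\mu<0$.
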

\begin{proof}
See Proposition 1 of \cite{Man}.
See also Lemma 4.3 (1) of~\cite{Ko} for (1).
\end{proof}

\begin{lemma}\label{lem_0804} Let $k$ and $h$ be positive integers.
Write
\[
\Sigma_p(k,h)=\{\beta_1,\cdots,\beta_N\}
\quad (N=N_p(k,h))
\]
if $\Sigma_p(k,h)$ is not empty.
Suppose that $k \geq 1$ and $2h+1 \leq kc_p$.
Then there exists simple roots
$\alpha_{j_1},\cdots,\alpha_{j_N}$ (not necessary distinct)
such that
\begin{itemize}
\item $\alpha_{j_n} \in \Delta_p ~(1\leq n \leq N)$,
\item $\beta_n^\vee + \alpha_{j_n}^\vee \in (\Phi^+ \setminus \Phi_p^+)^\vee ~(1\leq n \leq N)$, and
\item $\beta_m^\vee + \alpha_{j_m}^\vee \not= \beta_n^\vee + \alpha_{j_n}^\vee~(m\not=n)$.
\end{itemize}
\end{lemma}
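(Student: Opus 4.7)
My plan is to apply Hall's marriage theorem to the bipartite graph $\mathcal{G}$ with vertex sets $A:=\Sigma_p(k,h)$ and $B:=\Sigma_p(k,h+1)$, where $\beta\sim\gamma$ iff $\gamma^\vee-\beta^\vee\in\Delta_p^\vee$; a matching of $\mathcal{G}$ saturating $A$ is precisely the assignment required by the lemma.

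As a preliminary step I would first record the identity $\rho=\rho_p+(c_p/2)\lambda_p$, which follows from the maximality of the parabolic: $\langle\rho-\rho_p,\alpha_j^\vee\rangle=1-1=0$ for every $j\ne p$ forces $\rho-\rho_p\in\R\lambda_p$, and the coefficient is pinned to $c_p/2$ by pairing with $\alpha_p^\vee$ and invoking \eqref{0202}. Consequently $\langle\rho_p,\beta^\vee\rangle=h-kc_p/2\leq-1/2$ for every $\beta\in\Sigma_p(k,h)$ under the hypothesis $2h+1\leq kc_p$, so Lemma \ref{lem_0802} guarantees that every vertex of $A$ has at least one neighbor in $\mathcal{G}$. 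Observe also that if $\gamma^\vee=\beta^\vee+\alpha_j^\vee$ for $\beta\in\Sigma_p(k,h)$ and $\alpha_j\in\Delta_p$, then $\langle\lambda_p,\gamma^\vee\rangle=k$ and $\h\gamma^\vee=h+1$, so that $\gamma\in\Sigma_p(k,h+1)$ automatically; thus indeed the shift lands in $B$.

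The main step is the verification of Hall's condition, for which I would argue via the principal $\mathfrak{sl}_2$-triple inside the dual Levi. In the dual Lie algebra $\mathfrak{g}^\vee$ (root system $\Phi^\vee$, simple coroots $\alpha_j^\vee$), the $\lambda_p$-grading produces the slice $\mathfrak{g}^\vee_{(k)}=\bigoplus_{\beta\in\Sigma_p(k)}\mathfrak{g}^\vee_{\beta^\vee}$, a multiplicity-free representation of the dual Levi subalgebra $\mathfrak{l}_p^\vee$ whose simple coroots are $\Delta_p^\vee$. Choosing a principal nilpotent $e=\sum_{\alpha_j\in\Delta_p}c_je_{\alpha_j^\vee}$ (with all $c_j\ne 0$) in the semisimple part of $\mathfrak{l}_p^\vee$, it fits into an $\mathfrak{sl}_2$-triple $(e,f,H)$ with $H=2\rho_p$, and the $H$-weight of $\beta^\vee$ equals $2\langle\rho_p,\beta^\vee\rangle=2h-kc_p$, strictly negative by hypothesis. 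Decomposing $\mathfrak{g}^\vee_{(k)}$ into irreducible $\mathfrak{sl}_2$-subrepresentations and noting that the kernel of $e$ on each irreducible is the highest-weight line (of non-negative $H$-weight), the raising operator restricts to an injection $e:\mathfrak{g}^\vee_{(k)}|_{H=2h-kc_p}\hookrightarrow\mathfrak{g}^\vee_{(k)}|_{H=2h-kc_p+2}$. Its matrix in the weight bases $\{v_{\beta^\vee}\}_{\beta\in\Sigma_p(k,h)}$ and $\{v_{\gamma^\vee}\}_{\gamma\in\Sigma_p(k,h+1)}$ has entry $c_jN_{j,\beta}$ in position $(\gamma^\vee,\beta^\vee)$ whenever $\gamma^\vee=\beta^\vee+\alpha_j^\vee$ (with $N_{j,\beta}\ne 0$ the standard Lie-bracket structure constant) and zero otherwise, so its nonzero pattern is exactly the biadjacency matrix of $\mathcal{G}$. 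Full column rank $N_p(k,h)$ then produces, via the Frobenius-König theorem (matrix rank bounded by the size of a maximum matching in its support), a matching of $\mathcal{G}$ saturating $A$, which is the desired assignment $\beta_n\mapsto\alpha_{j_n}$ with $\beta_m^\vee+\alpha_{j_m}^\vee\ne\beta_n^\vee+\alpha_{j_n}^\vee$ for $m\ne n$.

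The chief technical point I expect to dwell on is the setup of the principal $\mathfrak{sl}_2$-triple inside the semisimple part of a possibly reducible $\mathfrak{l}_p^\vee$---one picks a principal nilpotent in each simple factor so that every $c_j\ne 0$, and observes that the combined Cartan element is still $H=2\rho_p$ because each factor contributes its own half-sum of positive coroots---together with the identification of $H$ as an element of $\mathfrak{h}^\vee$ under $\mathfrak{h}\cong\mathfrak{h}^\vee$ induced by the inner product. A more self-contained combinatorial alternative would verify Hall's condition by a direct augmenting-path argument, but this appears to require a substantially more intricate analysis of coroot strings $\{\beta^\vee+n\alpha_j^\vee\}$ inside $\Sigma_p(k)^\vee$ built from Lemmas \ref{lem_0801} and \ref{lem_0803}.
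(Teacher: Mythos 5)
Your argument is correct and takes a genuinely different route from the paper. The paper proves Lemma \ref{lem_0804} as a corollary of the stronger Lemma \ref{lem_0815}: it first reduces to the $k=1$ slices $\Sigma_p(1)$ using results from \cite[\S 1.4]{FM}, and then verifies the required chain decomposition by a type-by-type case check, tabulated explicitly in Appendix~1 for every irreducible root system. Your approach is uniform in $k$ and in the root system: the $\lambda_p$-graded piece $\mathfrak g^\vee_{(k)}$ is an $\mathfrak l_p^\vee$-module, the principal nilpotent $e$ of a principal $\mathfrak{sl}_2$-triple $(e,H=2\rho_p,f)$ in the semisimple part of $\mathfrak l_p^\vee$ is injective on $H$-weight spaces of negative weight, and $2\langle\rho_p,\beta^\vee\rangle=2h-kc_p\leq -1$ is the negativity guaranteed by the hypothesis $2h+1\leq kc_p$; the nonzero pattern of $e$ on the relevant weight spaces is the biadjacency matrix of your graph, and \emph{rank} $\leq$ \emph{term rank} converts injectivity into a matching saturating $\Sigma_p(k,h)$. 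This is in fact the same principal-$\mathfrak{sl}_2$ mechanism that underlies Manivel's symmetry and unimodality result \cite{Man}, which the paper already invokes as Lemma \ref{lem_0803}, so your argument has the further merit of deriving the refined matching statement from the same source. One small presentational remark: you announce Hall's marriage theorem at the outset but your actual argument bypasses Hall's condition entirely, instead going directly from full column rank to a saturating matching; better to lead with the rank/term-rank step. Also be aware of what your proof does \emph{not} replace: the paper needs the finer chain decomposition of Lemma \ref{lem_0815} (properties (1)--(5)) in the proof of Proposition \ref{prop_0606}, and the $\mathfrak{sl}_2$-irreducibles give dimensions of weight strings but not the explicit coroot ladders $\beta_m^\vee,\beta_m^\vee+\alpha_{m,1}^\vee,\ldots$ required there, so the Appendix~1 case analysis is not rendered redundant by your argument.
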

\begin{proof}
See the proof of Lemma \ref{lem_0815}.
\end{proof}

\begin{lemma}\label{lem_0805}
Let $k \geq 1$ and $w \in {\frak W}_p$.
Then $N_{p,w}(k,h) \leq N_{p,w}(k,h+1)$ if $2h+1 \leq kc_p$.
In particular, $M_p(k,h)=0$ if $2h-1 \leq kc_p$
\end{lemma}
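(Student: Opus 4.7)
The plan is to reduce the inequality $N_{p,w}(k,h) \leq N_{p,w}(k,h+1)$ to the existence of an injection $\Sigma_p(k,h) \cap \Phi_w \hookrightarrow \Sigma_p(k,h+1) \cap \Phi_w$. Enumerating $\Sigma_p(k,h) = \{\beta_1,\dots,\beta_N\}$, Lemma \ref{lem_0804} (applicable because $2h+1 \leq kc_p$) already produces simple roots $\alpha_{j_n} \in \Delta_p$ such that the coroots $\gamma_n^\vee := \beta_n^\vee + \alpha_{j_n}^\vee$ are distinct elements of $(\Phi^+ \setminus \Phi_p^+)^\vee$; since $\langle \lambda_p, \alpha_{j_n}^\vee \rangle = 0$ and $\h\alpha_{j_n}^\vee = 1$, the corresponding roots satisfy $\gamma_n \in \Sigma_p(k,h+1)$. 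What remains is to verify that $\beta_n \in \Phi_w$ forces $\gamma_n \in \Phi_w$.

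To this end, I would first establish a coroot analog of Lemma \ref{lem_0801}: for $w \in \mathfrak{W}_p$, if $\beta \in (\Phi^+ \setminus \Phi_p^+) \cap w^{-1}\Phi^-$, $\alpha_j \in \Delta_p$, and $\gamma \in \Phi^+ \setminus \Phi_p^+$ satisfies $\gamma^\vee = \beta^\vee + \alpha_j^\vee$, then $\gamma \in w^{-1}\Phi^-$. The argument mirrors the proof of Lemma \ref{lem_0801} but is transplanted to the coroot system $(\Phi^\vee,\Delta^\vee)$, which is itself a root system with positive part $(\Phi^+)^\vee$. Using $w\alpha^\vee = (w\alpha)^\vee$, the hypothesis gives $w\gamma^\vee = w\beta^\vee + w\alpha_j^\vee$ with $w\beta^\vee \in (\Phi^-)^\vee$ and, by the defining condition of $\mathfrak{W}_p$, $w\alpha_j^\vee \in \Delta^\vee \cup (\Phi^-)^\vee$. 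If $w\alpha_j^\vee \in (\Phi^-)^\vee$, the sum $w\gamma^\vee$ has nonpositive coefficients in the basis $\Delta^\vee$ and hence lies in $(\Phi^-)^\vee$. If instead $w\alpha_j^\vee = \alpha_{i_0}^\vee$ is simple and $w\gamma^\vee$ were a positive coroot, then $w\beta^\vee = w\gamma^\vee - \alpha_{i_0}^\vee$ would have nonnegative coefficients in every simple coroot other than $\alpha_{i_0}^\vee$; negativity of $w\beta^\vee$ forces those coefficients to vanish and makes $w\gamma^\vee = \alpha_{i_0}^\vee$, so $w\beta^\vee = 0$, a contradiction. Thus $w\gamma^\vee \in (\Phi^-)^\vee$ in both cases.

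Combining this coroot analog with Lemma \ref{lem_0804} gives the desired restricted injection, so $N_{p,w}(k,h) \leq N_{p,w}(k,h+1)$. For the second assertion, applying the first with $h$ replaced by $h-1$ (valid since $2h-1 \leq kc_p$) yields $N_{p,w}(k,h-1) \leq N_{p,w}(k,h)$ for every $w \in \mathfrak{W}_p$, so $M_p(k,h) \leq 0$; meanwhile $\mathrm{id} \in \mathfrak{W}_p$ with $\Phi_{\mathrm{id}} = \emptyset$ contributes $N_{p,\mathrm{id}}(k,h-1) - N_{p,\mathrm{id}}(k,h) = 0$ to the defining maximum, so $M_p(k,h) \geq 0$, and hence $M_p(k,h) = 0$. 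The principal obstacle is the coroot analog of Lemma \ref{lem_0801}; once it is in hand, the rest is bookkeeping.
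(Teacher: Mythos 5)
Your proof is correct and follows the paper's approach exactly: the paper's one-line proof of this lemma is ``This is a consequence of Lemmas \ref{lem_0801} and \ref{lem_0804},'' which is precisely the combination you carry out. You have in fact been more careful than the paper in one respect: Lemma \ref{lem_0804} produces \emph{coroot} sums $\beta_n^\vee+\alpha_{j_n}^\vee$, while Lemma \ref{lem_0801} as written concerns \emph{root} sums $\alpha+\alpha_j$, and in non-simply-laced types these do not match; your coroot analogue of Lemma \ref{lem_0801} (proved by transplanting the paper's positivity argument to $\Phi^\vee$, using that $w$ commutes with $\alpha\mapsto\alpha^\vee$ and that a coroot is positive iff the corresponding root is) is exactly what is implicitly invoked, and your verification that $\gamma_n\in\Sigma_p(k,h+1)$, the distinctness of the $\gamma_n$, and the treatment of $M_p(k,h)$ via $\mathrm{id}\in\mathfrak{W}_p$ are all in order.
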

\begin{proof}
This is a consequence of Lemmas \ref{lem_0801} and \ref{lem_0804}.
\end{proof}
\begin{lemma}\label{lem_0806}
Let $k \geq 1$. Then we have
\[
M_p(k,h)=
\begin{cases}
~0 & \text{if}~2h-1 \leq kc_p, \\
~N_p(k,h-1)-N_p(k,h) & \text{if}~2h-1> kc_p.
\end{cases}
\]
\end{lemma}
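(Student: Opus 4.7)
The proof splits naturally according to the regime of $h$ relative to $kc_p/2$.

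\textit{The regime $2h - 1 \leq kc_p$.} Applying Lemma \ref{lem_0805} with $h$ replaced by $h-1$, the hypothesis $2(h-1)+1 \leq kc_p$ is exactly our condition, so $N_{p,w}(k, h-1) \leq N_{p,w}(k, h)$ for every $w \in {\frak W}_p$. Hence $M_p(k, h) \leq 0$. On the other hand ${\rm id} \in {\frak W}_p$ with $\Phi_{\rm id} = \emptyset$, so $N_{p, {\rm id}}(k, j) = 0$ for $j \geq 1$, giving $M_p(k, h) \geq 0$. Therefore $M_p(k, h) = 0$.

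\textit{The regime $2h - 1 > kc_p$.} For the lower bound $M_p(k,h) \geq N_p(k, h-1) - N_p(k, h)$, I would take $w = w_0$. Then $w_0 \in {\frak W}_p$ and $\Phi_{w_0} = \Phi^+$, so $N_{p, w_0}(k, j) = |\Sigma_p(k, j)| = N_p(k, j)$ for $k \geq 1$, giving the required inequality.

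For the upper bound, the task is to show $N_{p,w}(k, h-1) - N_{p,w}(k, h) \leq N_p(k, h-1) - N_p(k, h)$ for every $w \in {\frak W}_p$. Setting $B_j := \Sigma_p(k, j) \cap w^{-1}\Phi^+$, this is equivalent to $|B_h| \leq |B_{h-1}|$. The plan is to construct an injection $\phi : B_h \to B_{h-1}$ in two steps. First, since $2h - 1 > kc_p$ forces $2(kc_p - h) + 1 \leq kc_p - 1 < kc_p$, Lemma \ref{lem_0804} applied at source height $h' = kc_p - h$ provides a coroot-addition injection $\Sigma_p(k, h') \hookrightarrow \Sigma_p(k, h' + 1)$. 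Combined with the cardinality symmetry of Lemma \ref{lem_0803}(1), or equivalently by reflecting the combinatorial argument of Lemma \ref{lem_0804} around the middle height $kc_p/2$, one obtains a ``downward'' injection $\phi : \Sigma_p(k, h) \to \Sigma_p(k, h - 1)$ of the form $\phi(\gamma)^\vee = \gamma^\vee - \alpha_{j_\gamma}^\vee$ for some $\alpha_{j_\gamma} \in \Delta_p$. Second, $\phi$ sends $B_h$ into $B_{h-1}$ by virtue of the following coroot analog of Lemma \ref{lem_0801}: if $w \in {\frak W}_p$, $w\gamma \in \Phi^+$, and $\gamma^\vee = \alpha^\vee + \alpha_j^\vee$ with $\alpha_j \in \Delta_p$ and $\alpha \in \Phi^+ \setminus \Phi_p^+$, then $w\alpha \in \Phi^+$. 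The case $w\alpha_j \in \Phi^-$ is immediate from $(w\gamma)^\vee = (w\alpha)^\vee + (w\alpha_j)^\vee$; in the case $w\alpha_j \in \Delta$, one uses that every coroot is either a non-negative or non-positive combination of simple coroots to rule out $w\alpha \in \Phi^-$ when $w\gamma \in \Phi^+$.

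The main technical obstacle is the first step above: extracting a concrete coroot-subtraction injection $\Sigma_p(k, h) \to \Sigma_p(k, h-1)$ in the upper range from the cardinality identity of Lemma \ref{lem_0803}(1). The symmetry is not realized by an automatic natural bijection, so the argument will need to reflect the combinatorial construction underlying Lemma \ref{lem_0804}, which itself rests on Lemma \ref{lem_0802} via an analogous statement of the form ``$\langle \rho_p, \alpha^\vee \rangle > 0 \Rightarrow \alpha^\vee - \alpha_j^\vee \in (\Phi^+ \setminus \Phi_p^+)^\vee$ for some $\alpha_j \in \Delta_p$.''
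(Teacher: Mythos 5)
Your approach to the regime $2h-1>kc_p$ is genuinely different from the paper's. The paper never constructs an injection: it introduces $D_p^{(1)}(s)=\prod_{k,h}\xi(ks+h)^{N_p(k,h-1)-M_p(k,h)}$, invokes the functional equation $D_p^{(1)}(-c_p-s)=D_p^{(1)}(s)$ (Lemma~5.5 of \cite{Ko}) together with $N_p(k,h)=N_p(k,kc_p-h)$, and then, after splitting the product at $h=(kc_p+1)/2$ via Lemma~\ref{lem_0805}, compares exponents of $\xi(ks+h)$ on both sides to read off $M_p(k,h)=N_p(k,h-1)-N_p(k,h)$. Your route is a direct combinatorial one; it would make the lemma independent of Komori's functional equation for $D_p^{(1)}$, and your treatment of the regime $2h-1\leq kc_p$ and the lower bound via $w_0$ are both correct (the latter observation underlies Corollary~\ref{cor_0807}).

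The obstacle you flag at the end --- producing a concrete downward injection $\phi:\Sigma_p(k,h)\to\Sigma_p(k,h-1)$ --- is a genuine gap in the proposal as written, but it is fillable canonically with the paper's own machinery, and you do not need to re-run the Appendix~1 case analysis in reverse. The missing observation is that the longest element $w_p$ of $W_p$ realizes the symmetry $N_p(k,h)=N_p(k,kc_p-h)$ as an actual bijection: $w_p$ fixes $\lambda_p$, hence stabilizes $\Sigma_p(k)$, and $\h(w_p\alpha^\vee)=kc_p-\h\alpha^\vee$ for $\alpha\in\Sigma_p(k)$ --- this follows by combining Lemma~\ref{lem_0812} with the identity $\h(w_p\gamma^\vee)=-\h\gamma^\vee$ for $\gamma^\vee\in Q^+(\Delta_p^\vee)$ established in the proof of Lemma~\ref{lem_0814}. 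Since $2h-1>kc_p$ forces $2(kc_p-h)+1<kc_p$, Lemma~\ref{lem_0804} gives a coroot-addition injection $\psi:\Sigma_p(k,kc_p-h)\to\Sigma_p(k,kc_p-h+1)$, and $\phi:=w_p\circ\psi\circ w_p$ is then the injection $\Sigma_p(k,h)\to\Sigma_p(k,h-1)$ you need; because $w_p\Delta_p=-\Delta_p$, conjugating $\beta^\vee\mapsto\beta^\vee+\alpha_j^\vee$ by $w_p$ yields $\gamma^\vee\mapsto\gamma^\vee-\alpha_{j'}^\vee$ with $\alpha_{j'}\in\Delta_p$, exactly the coroot-subtraction form you want. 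Your coroot analog of Lemma~\ref{lem_0801} is correct as stated (the dichotomy $w\alpha_j\in\Delta$ versus $w\alpha_j\in\Phi^-$ works verbatim for coroots), so with $\phi$ in hand the upper bound $|B_h|\leq|B_{h-1}|$, and hence the lemma, follow.
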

\begin{proof}
We put
\[
D_p^{(1)}(s) = \prod_{k=1}^\infty \prod_{h=2}^\infty \xi(ks+h)^{N_p(k,h-1)-M_p(k,h)}
\]
(cf. Definition \ref{def_0509}). Then we have
\[
\aligned
D_p^{(1)}(s) = \prod_{k=1}^\infty
& \prod_{h \leq (kc_p+1)/2} \xi(ks+h)^{N_p(k,h-1)}\\
& \times \prod_{h> (kc_p+1)/2} \xi(ks+h)^{N_p(k,h-1)-M_p(k,h)}
\endaligned
\]
by Lemma \ref{lem_0805}. On the other hand, we have
\[
\aligned
D_p^{(1)}(-c_p-s)
& = \prod_{k=1}^\infty \prod_{h<(kc_p+1)/2} \xi(ks+h)^{N_p(k,h)-M_p(k,kc_p-h+1)} \\
& \quad \times \prod_{ h \geq (kc_p+1)/2} \xi(ks+h)^{N_p(k,h)}
\endaligned
\]
by using Lemma 4.3 (1) of ~\cite{Ko}.
Because of $D_p^{(1)}(-c_p-s)=D_p^{(1)}(s)$ by Lemma 5.5 of ~\cite{Ko},
we have $M_p(k,h) = N_p(k,h-1)- N_p(k,h)$ for $h> (kc_p+1)/2$.
\end{proof}

\begin{corollary}\label{cor_0807}
In the definition of $M_p(k,h)$, the longest element $w_0$ attains the maximum
\[
\underset{w \in {\frak W}_p}{\rm max} \left( N_{p,w}(k,h-1)-N_{p,w}(k,h) \right).
\]
\end{corollary}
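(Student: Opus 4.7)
The plan is to deduce the corollary directly from Lemma \ref{lem_0806} by computing $\Phi_{w_0}$ explicitly and then reading off the maximum.

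First I would observe that since $w_0^2 = \mathrm{id}$ and $w_0\Phi^+ = \Phi^-$, one has $w_0^{-1}\Phi^- = w_0\Phi^- = \Phi^+$, so
\[
\Phi_{w_0} = \Phi^+ \cap w_0^{-1}\Phi^- = \Phi^+.
\]
Combining this with the description of $N_{p,w}$ in terms of $\Phi_w$ (valid when $k \geq 1$ or $h \geq 1$) given in the remark after Definition \ref{def_0203}, I obtain the identification $N_{p,w_0}(k,h) = N_p(k,h)$ for all such $(k,h)$. This step is essentially a one-line computation.

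Next I would apply Lemma \ref{lem_0806}. In the range $2h-1 > kc_p$, which is precisely the range in which $M_p(k,h)$ can be strictly positive and hence contributes a nontrivial factor to the product $\prod_{k,h}\hat{\zeta}(ks+h)^{M_p(k,h)}$, the lemma asserts $M_p(k,h) = N_p(k,h-1) - N_p(k,h)$. Substituting the above identification rewrites this as
\[
M_p(k,h) = N_{p,w_0}(k,h-1) - N_{p,w_0}(k,h),
\]
so $w_0$ realizes the maximum in Definition \ref{def_0203}. In the complementary range $2h-1 \leq kc_p$, Lemma \ref{lem_0805} gives $M_p(k,h) = 0$, trivially attained by $w = \mathrm{id}$ (for which $\Phi_{\mathrm{id}} = \emptyset$), while the corresponding factor $\hat{\zeta}(ks+h)^{M_p(k,h)}$ drops out of the product defining $\hat{\zeta}_p(s)$.

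I do not foresee any real obstacle, since the corollary simply repackages Lemma \ref{lem_0806} in a form that exhibits $w_0$ as an argmax. All the nontrivial combinatorial content -- the injection of Lemma \ref{lem_0804}, the monotonicity of Lemma \ref{lem_0803}(2), and the symmetry $N_p(k,h) = N_p(k,kc_p-h)$ -- has already been used to establish Lemma \ref{lem_0806}; what remains for Corollary \ref{cor_0807} is only the observation that $w_0$ sees the entire positive system.
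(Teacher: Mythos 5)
Your proposal is correct and takes essentially the same approach as the paper: the key observation in both is that $\Phi_{w_0}=\Phi^+$, hence $N_{p,w_0}(k,h)=N_p(k,h)$, after which the statement is read off directly from Lemma \ref{lem_0806}. The paper's one-line proof cites Lemma \ref{lem_0803} where you invoke Lemma \ref{lem_0805} to dispose of the range $2h-1\leq kc_p$, but this is a cosmetic difference.
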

\begin{proof}
This is a consequence of Lemmas \ref{lem_0803} and \ref{lem_0806},
since $N_p(k,h)=N_{p,w_0}(k,h)$.
\end{proof}

Let $\widetilde{\alpha}$ be the highest root of $\Phi^+$.
Define integers $k_1,\cdots,k_r$ by
\begin{equation}\label{0801}
\widetilde{\alpha} = \sum_{i=1}^r k_i \alpha_i \quad (k_i >0).
\end{equation}
For a positive integer $k$ with $\Sigma_p(k)\not=\emptyset$,
a lowest root $\alpha_p^{-}(k)$ for $\Sigma_p(k)$ is the root such that
$\beta - \alpha_p^{-}(k)$ is a (possibly empty) sum of simple roots
for every $\beta \in \Sigma_p(k)$.
A highest root $\alpha_p^{+}(k)$ for $\Sigma_p(k)$ is defined by a similarly way.
Lowest roots and highest roots always exist and are unique
if $\Sigma_p(k)$ is not empty (\cite[Proposition 1.4.2]{FM}).

\begin{lemma}[Lemma 1.4.5 of ~\cite{FM}]\label{lem_0808}
Let $k$ be a positive integer.
Then $\Sigma_p(k)$ is not empty if and only if $1 \leq k \leq k_{p}$,
where $k_p$ is the number defined in \eqref{0801}.
\end{lemma}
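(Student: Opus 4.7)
Since $\langle \lambda_i,\alpha_j^\vee\rangle=\delta_{ij}$, the pairing $\langle \lambda_p,\alpha^\vee\rangle$ equals the coefficient of $\alpha_p^\vee$ in the expansion of $\alpha^\vee$ in the simple coroot basis $\{\alpha_1^\vee,\ldots,\alpha_r^\vee\}$. For $k>0$ the set $\Sigma_p(k)$ consists only of positive roots (the $k<0$ case follows by negation, and $k=0$ cuts out $\Phi_p$), so the lemma reduces to the claim that the $\alpha_p^\vee$-coefficient of a positive coroot assumes exactly the values $1,2,\ldots,k_p$ as $\alpha$ ranges over $\Phi^+\setminus\Phi_p^+$.

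For the upper bound, my plan is to use the defining dominance property of the highest root: $\widetilde{\alpha}-\beta\in\sum_i\Z_{\geq 0}\,\alpha_i$ for every $\beta\in\Phi^+$. Combining this with the length normalization $\alpha^\vee=2\alpha/\langle\alpha,\alpha\rangle$ and the fact that $\widetilde{\alpha}$ is a long root, one extracts that the coefficient of $\alpha_p^\vee$ in any $\beta^\vee$ is at most the corresponding coefficient in $\widetilde{\alpha}^\vee$, giving $\langle \lambda_p,\beta^\vee\rangle\leq k_p$.

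For the attainment direction, I would construct a chain $\alpha_p=\beta_0,\beta_1,\ldots,\beta_N=\widetilde{\alpha}$ of positive roots with each $\beta_{j+1}-\beta_j$ a simple root, obtained by iteratively adding simple roots while remaining inside $\Phi^+$; existence of such a chain is standard and is in the same spirit as the propagation lemma \ref{lem_0801} already established in the paper. Along the chain, $\langle \lambda_p,\beta_j^\vee\rangle$ starts at $1$, ends at $k_p$, takes only integer values, and jumps in bounded integer steps; a discrete intermediate-value argument then forces every value in $\{1,\ldots,k_p\}$ to be realized by some $\beta_j$, which gives $\Sigma_p(k)\neq\emptyset$ for each such $k$.

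The main obstacle is the non-simply-laced case: the naive identity $(\beta+\alpha_i)^\vee=\beta^\vee+\alpha_i^\vee$ can fail when the roots involved have different lengths, so the increments of $\langle \lambda_p,\beta^\vee\rangle$ along the chain are not uniformly $\pm 1$ and the discrete intermediate-value step requires a length-based case analysis, type by type. This is precisely the technical content of \cite[Lemma 1.4.5]{FM} that the proof invokes; a fully self-contained treatment would need to reproduce this bookkeeping for each simple Lie type.
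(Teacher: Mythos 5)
There is a genuine conceptual error in both directions of your argument, traceable to the same source: you built your argument in $\Phi$ when the quantity $\langle\lambda_p,\alpha^\vee\rangle$ lives naturally in the coroot system $\Phi^\vee$. Since $\langle\lambda_p,\alpha^\vee\rangle$ is the coefficient of $\alpha_p^\vee$ when $\alpha^\vee$ is written in the basis of simple coroots, the whole lemma is a statement about the root system $\Phi^\vee$ with its simple roots $\{\alpha_i^\vee\}$, and one should do everything there.

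Your upper bound step is false as stated. Dominance of the highest root $\widetilde{\alpha}$ in $\Phi$ does \emph{not} transfer to dominance of $\widetilde{\alpha}^\vee$ in $\Phi^\vee$; indeed $\widetilde{\alpha}^\vee$ is a \emph{short} root of $\Phi^\vee$ (since $\widetilde{\alpha}$ is long), hence cannot be the highest root of $\Phi^\vee$ unless $\Phi$ is simply-laced. A concrete counterexample is $G_2$ with $\alpha_1$ short, $\alpha_2$ long (the convention used in the paper's Appendix 1): $\widetilde{\alpha}=3\alpha_1+2\alpha_2$ has $\widetilde{\alpha}^\vee=\alpha_1^\vee+2\alpha_2^\vee$, so $\langle\lambda_1,\widetilde{\alpha}^\vee\rangle=1$, yet $\langle\lambda_1,(2\alpha_1+\alpha_2)^\vee\rangle=2>1$. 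So the claimed inequality $\langle\lambda_p,\beta^\vee\rangle\leq\langle\lambda_p,\widetilde{\alpha}^\vee\rangle$ fails, and moreover $\langle\lambda_p,\widetilde{\alpha}^\vee\rangle$ is not equal to the relevant bound. Your chain argument from $\alpha_p$ to $\widetilde{\alpha}$ inherits exactly this problem: at the endpoint you land on $\langle\lambda_p,\widetilde{\alpha}^\vee\rangle$, not the maximum, and along the way $(\beta+\alpha_i)^\vee\neq\beta^\vee+\alpha_i^\vee$ in general, so the increments are not controlled by a discrete intermediate-value argument. You correctly identify the non-simply-laced case as the obstacle, but it is not a bookkeeping issue to be handled type by type; it is a sign the chain is being built in the wrong root system.

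The clean fix, which also removes the need for any case analysis, is to argue entirely in $\Phi^\vee$: let $\theta^\vee$ be the highest root of $\Phi^\vee$ (equivalently the coroot of the highest \emph{short} root of $\Phi$). Dominance of $\theta^\vee$ among positive coroots gives $\langle\lambda_p,\alpha^\vee\rangle\leq\langle\lambda_p,\theta^\vee\rangle$. For attainment, write $\theta^\vee=\alpha_{i_1}^\vee+\cdots+\alpha_{i_m}^\vee$ with every partial sum a positive coroot (Humphreys, Corollary 10.2(a), applied to $\Phi^\vee$); the $\alpha_p^\vee$-coefficient increases by $0$ or $1$ at each step from $0$ to $\langle\lambda_p,\theta^\vee\rangle$, so every intermediate value is hit by some partial sum. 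This is uniform and type-free. Be aware, incidentally, that the paper's formula \eqref{0801} phrases $k_p$ via the highest root of $\Phi$ rather than of $\Phi^\vee$, which coincide only in the simply-laced case; the number that actually makes Lemma \ref{lem_0808} true is $\langle\lambda_p,\theta^\vee\rangle$. Finally, note that the paper does not prove this lemma at all: it cites Lemma 1.4.5 of Friedman--Morgan \cite{FM}, so there is no internal argument to compare against.
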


\begin{lemma}[Lemma 1.4.6 of ~\cite{FM}]\label{lem_0809}
Suppose that $1 \leq k \leq k_p$.
Let $w_p$ be the longest element of $W_p$, i.e., $w_p \Delta_p = -\Delta_p$.
Then we have $\alpha_p^{-}(1)=\alpha_p$ and $w_p \alpha_p^{-}(k) = \alpha_p^{+}(k)$.
\end{lemma}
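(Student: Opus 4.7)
The plan is to establish the two claims in turn, both via the action of the Levi Weyl subgroup $W_p$ on the set $\Sigma_p(k)$. First I would prove $\alpha_p^-(1)=\alpha_p$ by direct comparison of inner products. Since $\langle\lambda_p,\alpha_p^\vee\rangle=1$ by the defining property of the fundamental weight, $\alpha_p\in\Sigma_p(1)$. For any other $\beta=\sum_{i=1}^{r}m_i\alpha_i\in\Sigma_p(1)$ with $m_i\in\Z_{\geq0}$, I would evaluate $\langle\lambda_p,\beta\rangle$ two ways: the simple-root expansion together with $\langle\lambda_p,\alpha_i\rangle=\delta_{ip}|\alpha_p|^2/2$ gives $\langle\lambda_p,\beta\rangle=m_p|\alpha_p|^2/2$, while the coroot relation $\langle\lambda_p,\beta^\vee\rangle=1$ gives $\langle\lambda_p,\beta\rangle=|\beta|^2/2$. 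Comparing forces $m_p=|\beta|^2/|\alpha_p|^2$, which must be a positive integer, so $m_p\geq 1$; then $\beta-\alpha_p=(m_p-1)\alpha_p+\sum_{i\neq p}m_i\alpha_i$ lies in the positive cone generated by simple roots, as required.

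For $w_p\alpha_p^-(k)=\alpha_p^+(k)$, I would first verify that $\Sigma_p(k)$ is stable under $W_p$. The identity $\langle\lambda_p,\alpha_j^\vee\rangle=0$ for $j\neq p$ forces $\sigma_j\lambda_p=\lambda_p$, hence $\langle\lambda_p,(\sigma_j\alpha)^\vee\rangle=\langle\lambda_p,\alpha^\vee\rangle$ for every $\alpha$, so $\sigma_j\Sigma_p(k)\subseteq\Sigma_p(k)$ and therefore $w_p\Sigma_p(k)=\Sigma_p(k)$. In particular $w_p\alpha_p^-(k)$ already belongs to $\Sigma_p(k)$, and it remains to verify that it majorizes every $\gamma\in\Sigma_p(k)$ in the standard root partial order. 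Since $w_p\gamma\in\Sigma_p(k)$, the minimality of $\alpha_p^-(k)$ yields integer coefficients $c_i\geq 0$ with $w_p\gamma-\alpha_p^-(k)=\sum_i c_i\alpha_i$, and applying the involution $w_p$ gives
\[
\gamma-w_p\alpha_p^-(k)=\sum_{i}c_i\,(w_p\alpha_i).
\]
For $i\neq p$, the relation $w_p\Delta_p=-\Delta_p$ makes $w_p\alpha_i$ a negative simple root, so those contributions are non-positive integer combinations of positive simple roots from $\Delta_p$.

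The hard part will be the term $c_p\,w_p\alpha_p$: because $\alpha_p\notin\mathrm{span}(\Delta_p)$, $w_p\alpha_p$ is not $-\alpha_p$ but takes the form $\alpha_p+\sum_{j\neq p}e_j\alpha_j$ for some integers $e_j$ arising from the reflection of the $\Delta_p$-component of $\alpha_p$. Pairing $w_p\gamma-\alpha_p^-(k)=\sum_i c_i\alpha_i$ with $\lambda_p$ pins down $c_p=k(|\gamma|^2-|\alpha_p^-(k)|^2)/|\alpha_p|^2$, which vanishes whenever $\Sigma_p(k)$ is uniformly long or uniformly short; in those situations the argument closes immediately. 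In the mixed-length situation the reasoning has to be carried out orbit by orbit in the $W_p$-orbit decomposition of $\Sigma_p(k)$, invoking Lemmas~\ref{lem_0801} and~\ref{lem_0802} to locate the extremal elements within each orbit and to match them up under $w_p$. This case analysis stratified by root length is the genuinely delicate step, and is what the cited proof in \cite{FM} accomplishes in detail.
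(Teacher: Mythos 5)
Your strategy for the second assertion -- pick $\gamma\in\Sigma_p(k)$, use the minimality of $\alpha_p^-(k)$ applied to $w_p\gamma$, then push everything through $w_p$ -- is exactly the right idea, but it is carried out in the wrong lattice, and that is what manufactures the ``hard part'' you could not close. The extremal elements $\alpha_p^{\pm}(k)$ must be read as extremal with respect to the \emph{coroot} partial order (i.e.\ $\beta^\vee-\alpha_p^-(k)^\vee\in Q^+(\Delta^\vee)$ for all $\beta\in\Sigma_p(k)$), not the root order. The paper's prose is admittedly loose, but everything around the lemma makes this unambiguous: $\alpha_p^\pm(k)^\vee$ is written as $k\alpha_p^\vee+\gamma_p^\pm(k)^\vee$ with $\gamma_p^\pm(k)^\vee\in Q^+(\Delta_p^\vee)$, the levels $\hbar_k^\pm$ are coroot heights, the proof of Lemma \ref{lem_0810} takes coroot differences, and the proof of Lemma \ref{lem_0812} explicitly uses $w_p\alpha_p^\vee=\alpha_p^\vee+\gamma_p^+(1)^\vee$. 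Since $\Sigma_p(k)=\{\alpha:\langle\lambda_p,\alpha^\vee\rangle=k\}$ is the root-grading of $\Phi^\vee$, Lemma 1.4.6 of \cite{FM} is really being invoked for the dual root system. Indeed, with the root partial order the lemma is simply false: in $C_2$ with $p=1$, $\Sigma_1(1)=\{\alpha_1,\alpha_1+\alpha_2,2\alpha_1+\alpha_2\}$, the maximum in the root order is $2\alpha_1+\alpha_2$, yet $w_1\alpha_1^-(1)=\sigma_2\alpha_1=\alpha_1+\alpha_2$.

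Once you switch to coroots, the obstruction you call $c_p$ vanishes identically, so no orbit-by-orbit analysis by root length is needed (and this is not what \cite{FM} does). Both $w_p\gamma^\vee$ and $\alpha_p^-(k)^\vee$ have $\alpha_p^\vee$-coefficient exactly $k$ (that is what membership in $\Sigma_p(k)$ means), so $w_p\gamma^\vee-\alpha_p^-(k)^\vee=\sum_{j\neq p}c_j\alpha_j^\vee$ with $c_j\geq 0$ and \emph{no} $\alpha_p^\vee$-term. Applying $w_p$, which permutes $-\Delta_p^\vee$, gives $\gamma^\vee-w_p\alpha_p^-(k)^\vee\in -Q^+(\Delta_p^\vee)$, i.e.\ $w_p\alpha_p^-(k)^\vee\geq\gamma^\vee$, as required. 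The first assertion collapses similarly: for $\beta\in\Sigma_p(1)$, $\beta^\vee$ is a positive coroot with $\alpha_p^\vee$-coefficient $1$, so $\beta^\vee-\alpha_p^\vee\in Q^+(\Delta_p^\vee)$ directly, and the root-length bookkeeping ($m_p=|\beta|^2/|\alpha_p|^2$) you carry out is unnecessary.
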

For a positive integer $k$ with $\Sigma_p(k)\not=\emptyset$, we write
\begin{equation*}
\alpha_{p}^{\pm}(k)^\vee = k\alpha_p^\vee + \gamma_{p}^{\pm}(k)^\vee
\quad (\gamma_{p}^\pm(k)^\vee \in Q^+(\Delta_p^\vee)),
\end{equation*}
where $Q^+(\Delta_p^\vee)=\sum_{j=1}^{r} {\Z}_{\geq 0}
\alpha_j^\vee$. Then the heights $\h \gamma_{p}^{+}(k)^\vee$ and $\h
\gamma_{p}^{-}(k)^\vee$ are nonnegative, since they must be written
as a combination of simple roots $\alpha_j^\vee \in \Delta_p^\vee$
with nonnegative integer coefficients. We define
\begin{equation}\label{0802}
\hbar_k^{\pm} = \h \gamma_{p}^{\pm}(k)^\vee = \langle \rho, \gamma_{p}^{\pm}(k)^\vee \rangle
\end{equation}
for $1 \leq k \leq k_p$.
For an element $\alpha \in \Sigma_p(h)$,
the height $\h(\alpha-k\alpha_p)^\vee$ is called the ($p$-)level of $\alpha$.
In this terminology,
$\hbar_k^{+}$ (resp. $\hbar_k^{-}$) is the highest (resp. lowest) level of $\Sigma_p(k)$.
Note that $\gamma_{p}^{-}(1)=0$, since $\alpha_{p}^{-}(1)=\alpha_p$.
\begin{lemma}\label{lem_0810} For every $1 \leq k_1 < k_2 \leq k_p$,
we have $\hbar_{k_1}^+ \leq \hbar_{k_2}^+$.
\end{lemma}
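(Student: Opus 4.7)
The plan is to proceed by induction on $k_2 - k_1$, reducing the claim to the adjacent case $\hbar_k^+ \le \hbar_{k+1}^+$ for every $1 \le k < k_p$. For this adjacent step, I would exhibit an element of $\Sigma_p(k+1)^\vee$ of height $\h\alpha_p^+(k)^\vee + 1$, namely $\alpha_p^+(k)^\vee + \alpha_p^\vee$. Once this element is shown to be a positive coroot, it lies in $\Sigma_p(k+1)^\vee$ and has height $(k+1) + \hbar_k^+$, so the maximality of $\alpha_p^+(k+1)^\vee$ in $\Sigma_p(k+1)^\vee$ yields $\h\alpha_p^+(k+1)^\vee \ge \h\alpha_p^+(k)^\vee + 1$, i.e., $(k+1) + \hbar_{k+1}^+ \ge (k+1) + \hbar_k^+$, which is exactly the desired inequality.

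The crux is therefore to prove that $\alpha_p^+(k)^\vee + \alpha_p^\vee \in \Phi^{\vee +}$ whenever $k < k_p$. For this I would use the highest coroot $\theta^\vee$ of $\Phi^\vee$: it is the global maximum of $\Phi^{\vee +}$ in the $\Delta^\vee$-induced partial order, so it strictly dominates $\alpha_p^+(k)^\vee$, and its $p$-coefficient $\langle \lambda_p, \theta^\vee \rangle$ equals $k_p$ (by Lemma \ref{lem_0808} this is the maximum value of $\langle \lambda_p, \alpha^\vee\rangle$ over $\alpha \in \Phi$, and the highest coroot attains the maximum). I would then invoke the standard chain property of the root poset --- for positive coroots $\gamma < \delta$ there exist $\alpha_{i_1}^\vee, \ldots, \alpha_{i_N}^\vee \in \Delta^\vee$ such that each partial sum $\beta_j := \gamma + \alpha_{i_1}^\vee + \cdots + \alpha_{i_j}^\vee$ is a positive coroot, with $\beta_N = \delta$ --- applied to $\gamma = \alpha_p^+(k)^\vee$ and $\delta = \theta^\vee$. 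Since the $p$-coefficient strictly increases from $k$ to $k_p$ along the chain, the simple coroot $\alpha_p^\vee$ appears at least once among the $\alpha_{i_j}^\vee$'s; let $t$ denote the smallest index with $\alpha_{i_t}^\vee = \alpha_p^\vee$. For every $0 \le j < t$, only simple coroots in $\Delta_p^\vee$ have been added, so $\beta_j \in \Sigma_p(k)^\vee$ and $\beta_j \ge \alpha_p^+(k)^\vee$ in the partial order; the uniqueness of the maximum of $\Sigma_p(k)^\vee$ then forces $\beta_j = \alpha_p^+(k)^\vee$ for all such $j$, and because consecutive $\beta_j$'s differ by a nonzero simple coroot this can only happen if $t = 1$. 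Consequently $\beta_1 = \alpha_p^+(k)^\vee + \alpha_p^\vee$ is a positive coroot, as required.

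The one ingredient needing care is the chain property of the root poset invoked above; this is a standard fact (derivable by induction on $\h(\delta - \gamma)$, in the spirit of Humphreys' Lemma 10.2A applied in $\Phi^\vee$), and is parallel to the local-move statement of Lemma \ref{lem_0801} here. I do not anticipate a serious technical obstacle beyond the bookkeeping of working consistently in the coroot system $\Phi^\vee$, where the partial order is induced by $Q^+(\Delta^\vee)$ and $\langle \lambda_p, \cdot \rangle$ extracts the $\alpha_p^\vee$-coefficient.
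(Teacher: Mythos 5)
Your proof is correct, but it takes a genuinely different route from the paper. The paper's argument is essentially one step: it invokes Corollary 1.2 of Kazakevich--Stavrova to assert directly that $\alpha_p^+(k_2)^\vee - \alpha_p^+(k_1)^\vee$ is an element of $\Phi^\vee$; because its $\alpha_p^\vee$-coefficient is $k_2-k_1>0$ it must be a positive coroot, hence $\gamma_p^+(k_2)^\vee-\gamma_p^+(k_1)^\vee \in Q^+(\Delta_p^\vee)$, and taking heights gives $\hbar_{k_2}^+\geq\hbar_{k_1}^+$ at once for any $k_1<k_2$. You instead reduce to the adjacent case and prove from scratch that $\alpha_p^+(k)^\vee+\alpha_p^\vee$ is a positive coroot, by running a saturated chain from $\alpha_p^+(k)^\vee$ up to the highest coroot $\theta^\vee$ and using the uniqueness/maximality of $\alpha_p^+(k)^\vee$ in $\Sigma_p(k)^\vee$ (which forces the very first step of the chain to be the addition of $\alpha_p^\vee$). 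The paper's approach buys brevity at the cost of relying on an external, somewhat specialized citation; yours is self-contained, replacing it with a standard but nontrivial chain lemma for the root poset (provable by induction on $\h(\delta^\vee-\gamma^\vee)$ from the inequality $\langle\delta^\vee-\gamma^\vee,\delta^\vee-\gamma^\vee\rangle>0$, a mild extension of Humphreys' Lemma 10.2A) together with the inductive bookkeeping to pass from the adjacent case to general $k_1<k_2$. One small remark: what Lemma \ref{lem_0808} literally gives you is that $k_p=\max_{\alpha\in\Phi}\langle\lambda_p,\alpha^\vee\rangle$, which is all you actually use about $\theta^\vee$; you never need the identification of $k_p$ with the $\alpha_p$-coefficient of the highest root of $\Phi$ made in \eqref{0801}, and it is cleaner to avoid it as you do.
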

\begin{proof} We have
$\alpha_p^+(k_2)^\vee - \alpha_p^+(k_1)^\vee \in \Phi^\vee$
by Corollary 1.2 of ~\cite{KS} and
\[
\alpha_p^+(k_2)^\vee - \alpha_p^+(k_1)^\vee
= (k_2-k_1)\alpha_p^\vee + (\gamma_p^+(k_2)^\vee - \gamma_p^+(k_1)^\vee).
\]
Therefore $\gamma_p^+(k_2)-\gamma_p^+(k_1) \in Q^+(\Delta_p)$,
since $k_2 - k_1>0$ and $\gamma_p^+(k) \in Q^+(\Delta_p)$.
This implies the result by definition of $\hbar_k^+$.
\end{proof}

\begin{lemma}\label{lem_0811} Let $w_p$ be the longest element of $W_p$.
Then we have
\[
\gamma_{p}^{-}(k) - w_p \gamma_p^{+}(k) =  k\gamma_{p}^{+}(1).
\]
In particular, $w_p \gamma_p^{+}(1)= - \gamma_p^{+}(1)$ and
$\gamma_p^{+}(k) - w_p \gamma_{p}^{-}(k) =  k \, \gamma_{p}^{+}(1)$.
\end{lemma}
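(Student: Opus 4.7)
The plan is to derive everything from Lemma~\ref{lem_0809}, which states $w_p\alpha_p^-(k)=\alpha_p^+(k)$, together with the defining decompositions $\alpha_p^{\pm}(k)=k\alpha_p+\gamma_p^{\pm}(k)$ and the base case $\alpha_p^-(1)=\alpha_p$ (so that $\gamma_p^-(1)=0$). Since $w_p^2=\mathrm{id}$, Lemma~\ref{lem_0809} also gives $w_p\alpha_p^+(k)=\alpha_p^-(k)$, which is what I will actually use.

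First I would pin down the action of $w_p$ on the simple root $\alpha_p$ itself by specializing to $k=1$: from $\alpha_p^-(1)=\alpha_p$ and Lemma~\ref{lem_0809} one gets
\[
w_p\alpha_p \;=\; w_p\alpha_p^-(1) \;=\; \alpha_p^+(1) \;=\; \alpha_p+\gamma_p^+(1).
\]
This is the key identity that converts all subsequent computations into manipulations in $V$.

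Next I would apply $w_p$ to the decomposition $\alpha_p^+(k)=k\alpha_p+\gamma_p^+(k)$. Using $w_p\alpha_p^+(k)=\alpha_p^-(k)$ and the identity above,
\[
\alpha_p^-(k) \;=\; k\,w_p\alpha_p + w_p\gamma_p^+(k)
\;=\; k\alpha_p + k\gamma_p^+(1) + w_p\gamma_p^+(k).
\]
Comparing with $\alpha_p^-(k)=k\alpha_p+\gamma_p^-(k)$ (and noting the decomposition into $\alpha_p$-part and $\Delta_p$-part is unique because $\alpha_p\notin Q(\Delta_p)$) yields precisely the first assertion $\gamma_p^-(k)-w_p\gamma_p^+(k)=k\gamma_p^+(1)$. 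Specializing this to $k=1$ and using $\gamma_p^-(1)=0$ immediately gives $w_p\gamma_p^+(1)=-\gamma_p^+(1)$.

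Finally, the third identity $\gamma_p^+(k)-w_p\gamma_p^-(k)=k\gamma_p^+(1)$ is obtained symmetrically: apply $w_p$ to $\alpha_p^-(k)=k\alpha_p+\gamma_p^-(k)$, use $w_p\alpha_p^-(k)=\alpha_p^+(k)$ and $w_p\alpha_p=\alpha_p+\gamma_p^+(1)$, and compare coefficients in the unique decomposition. There is no real obstacle here; the only subtle point worth stating cleanly is the uniqueness of the decomposition $v=c\alpha_p+u$ with $u$ in the $\Delta_p$-span, which justifies reading off $\gamma_p^\pm(k)$ by comparison. The entire argument is a short bookkeeping exercise once Lemma~\ref{lem_0809} is in hand.
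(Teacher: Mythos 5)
Your proof is correct and follows essentially the same route as the paper: both start from Lemma~\ref{lem_0809}, derive $w_p\alpha_p^\vee = \alpha_p^+(1)^\vee = \alpha_p^\vee + \gamma_p^+(1)^\vee$, apply $w_p$ to the decomposition of $\alpha_p^\pm(k)^\vee$, and compare coefficients. The only cosmetic difference is that you explicitly invoke uniqueness of the decomposition into $\alpha_p$-part and $\Delta_p$-span, which the paper leaves implicit.
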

\begin{proof}
We have $w_p \alpha_p^{-}(k)^\vee= \alpha_p^{+}(k)^\vee= k \alpha_p^\vee + \gamma_p^{+}(k)^\vee$.
Therefore
\[
\aligned \alpha_p^{-}(k)^\vee &= w_p(w_p \alpha_p^{-}(k)^\vee)
=  k w_p \alpha_p^\vee + w_p \gamma_p^{+}(k)^\vee \\
& = k \alpha_p^{+}(1)^\vee + w_p \gamma_p^{+}(k)^\vee
  = k \alpha_p^\vee + k\gamma_{p}^{+}(1)^\vee + w_p \gamma_p^{+}(k)^\vee. 
\endaligned
\]
Hence $k \alpha_p^\vee + \gamma_{p}^{-}(k)^\vee = k\alpha_p^\vee + k\gamma_{p}^{+}(1)^\vee + w_p \gamma_p^{+}(k)^\vee$.
This implies the assertion.
\end{proof}

\begin{lemma}\label{lem_0812}
Let $c_p$ be the number of \eqref{0202}.
Then we have
\[
c_p = 2 + \hbar_1^{+}= 1+ \h\alpha_p^+(1)^\vee = 1 + \h (w_p\alpha_p^\vee),
\]
where $w_p$ is the longest element of $W_p$.
\end{lemma}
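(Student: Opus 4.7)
The plan is to verify the three equalities separately, then establish the identification with $c_p$ via a computation that reduces to understanding how $w_p$ acts on $\rho$.

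First I would handle the easy chain of equalities. By Lemma \ref{lem_0809} we have $\alpha_p^+(1) = w_p \alpha_p^-(1) = w_p \alpha_p$, so $\alpha_p^+(1)^\vee = w_p \alpha_p^\vee$ (coroots are Weyl-equivariant), giving immediately
\[
\h \alpha_p^+(1)^\vee = \h(w_p \alpha_p^\vee).
\]
Next, unwinding definition \eqref{0802} and writing $\alpha_p^+(1)^\vee = \alpha_p^\vee + \gamma_p^+(1)^\vee$ yields
\[
\h \alpha_p^+(1)^\vee = \h \alpha_p^\vee + \hbar_1^+ = 1 + \hbar_1^+,
\]
so $1+\h\alpha_p^+(1)^\vee = 2+\hbar_1^+$. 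This settles the last two equalities.

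It remains to show $c_p = 1 + \h(w_p \alpha_p^\vee)$. Since $\alpha_p^\vee$ is the coroot of $\alpha_p$ and $\langle \lambda_p, \alpha_p^\vee\rangle = 1$, definition \eqref{0202} rewrites as
\[
c_p = 2\langle \lambda_p - \rho_p, \alpha_p^\vee\rangle = 2 - 2\langle \rho_p, \alpha_p^\vee\rangle.
\]
On the other hand, $\h(w_p \alpha_p^\vee) = \langle \rho, w_p\alpha_p^\vee \rangle = \langle w_p \rho, \alpha_p^\vee\rangle$ using $w_p^2 = \mathrm{id}$, so it suffices to prove
\[
w_p \rho = \rho - 2\rho_p,
\]
from which $\langle w_p \rho, \alpha_p^\vee\rangle = 1 - 2\langle \rho_p, \alpha_p^\vee\rangle$ and hence $1 + \h(w_p\alpha_p^\vee) = c_p$.

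The decomposition $\rho = \rho_p + (\rho - \rho_p)$ splits the problem into two pieces. Since $w_p$ is the longest element of $W_p$, it sends $\Phi_p^+$ to $\Phi_p^-$ and therefore $w_p \rho_p = -\rho_p$. For the complementary piece, the key observation — and the one slightly nontrivial step — is that $W_p$ stabilizes the set $\Phi^+ \setminus \Phi_p^+$: indeed, any simple reflection $\sigma_j$ with $\alpha_j \in \Delta_p$ fixes the coefficient of $\alpha_p$ in the expansion of a root, so it preserves the property of having positive $\alpha_p$-coefficient, i.e.\ of lying in $\Phi^+\setminus\Phi_p^+$. Consequently $w_p$ permutes $\Phi^+\setminus \Phi_p^+$, so
\[
w_p(\rho - \rho_p) = \tfrac{1}{2}\sum_{\alpha \in \Phi^+\setminus \Phi_p^+} w_p\alpha = \tfrac{1}{2}\sum_{\alpha \in \Phi^+\setminus \Phi_p^+} \alpha = \rho - \rho_p.
\]
Combining both pieces gives $w_p\rho = \rho - 2\rho_p$ and completes the proof. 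The only step requiring care is verifying this $W_p$-stability of $\Phi^+\setminus\Phi_p^+$; once that is in hand, everything else is a direct unpacking of the definitions of $c_p$, $\hbar_1^+$, and $\alpha_p^+(1)$.
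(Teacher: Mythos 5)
Your proof is correct and follows essentially the same route as the paper's: both reduce the identity to the computation $c_p = 2 - 2\langle\rho_p,\alpha_p^\vee\rangle = 1+\langle w_p\rho,\alpha_p^\vee\rangle = 1+\h(w_p\alpha_p^\vee)$ together with Lemma~\ref{lem_0809}. The only difference is that the paper invokes the standard identity $2\rho_p = \rho - w_p\rho$ inline without proof, whereas you supply a clean derivation of it (via $w_p\rho_p=-\rho_p$ and $W_p$-stability of $\Phi^+\setminus\Phi_p^+$), which makes your write-up marginally more self-contained.
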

\begin{proof}
We have
\[
\aligned
c_p
& = 2 \langle \lambda_p - \rho_p, \alpha_p^\vee \rangle
= 2 - \langle 2 \rho_p, \alpha_p^\vee \rangle
= 2 - \langle \rho - w_p \rho, \alpha_p^\vee \rangle \\
& = 1 + \langle w_p \rho, \alpha_p^\vee \rangle
  = 1 + \langle \rho, w_p \alpha_p^\vee \rangle
  = 1 + \langle \rho, \alpha_p^\vee + \gamma_p^{+}(1)^\vee \rangle \\
& = 2 + \langle \rho, \gamma_p^{+}(1)^\vee \rangle = 2 + \hbar_1^+.
\endaligned
\]
Then the second (resp. the third) equality of Lemma \ref{lem_0812}
follows from definition of $\hbar_1^+$ (resp. Lemma \ref{lem_0809}).
\end{proof}

\begin{lemma}\label{lem_0813}
Let $k \geq 1$.
We have $2h=kc_p+m$
if $2 \langle \rho_p, \alpha^\vee \rangle=m \in {\Z}$ for $\alpha \in \Sigma_p(k,h)$.
In particular if $\alpha \in \Sigma_p(k,h)$ for $h<kc_p/2$ (resp. $h>kc_p/2$),
$\langle \rho_p, \alpha^\vee \rangle<0$ (resp. $>0$).
\end{lemma}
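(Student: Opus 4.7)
The plan is to reduce Lemma \ref{lem_0813} to the identity $\rho-\rho_p=\tfrac{c_p}{2}\lambda_p$, from which the claim is just a pairing with $\alpha^\vee$. Given any $\alpha\in\Sigma_p(k,h)$, the definitions give $\langle\lambda_p,\alpha^\vee\rangle=k$ and $\h\alpha^\vee=\langle\rho,\alpha^\vee\rangle=h$, and then
\[
2\langle\rho_p,\alpha^\vee\rangle = 2\langle\rho,\alpha^\vee\rangle - 2\langle\rho-\rho_p,\alpha^\vee\rangle
= 2h - c_p\langle\lambda_p,\alpha^\vee\rangle = 2h - kc_p,
\]
which is precisely $2h=kc_p+m$ with $m=2\langle\rho_p,\alpha^\vee\rangle$. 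The ``in particular'' statement is immediate, since $h<kc_p/2$ (resp.\ $h>kc_p/2$) forces $m<0$ (resp.\ $m>0$). So everything hinges on the identity $\rho-\rho_p=\tfrac{c_p}{2}\lambda_p$.

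To establish that identity, I would first show that the vector
\[
2(\rho-\rho_p)=\sum_{\beta\in\Phi^+\setminus\Phi_p^+}\beta
\]
is fixed by the parabolic Weyl group $W_p$. Indeed, for any simple reflection $\sigma_j$ with $j\neq p$ and any $\beta\in\Phi^+\setminus\Phi_p^+$, the root $\sigma_j\beta$ is positive (because $\beta\neq\alpha_j$, as $\alpha_j\in\Phi_p^+$), and its $\alpha_p$-coordinate agrees with that of $\beta$ since $\sigma_j\beta=\beta-\langle\beta,\alpha_j^\vee\rangle\alpha_j$ and $j\neq p$; hence $\sigma_j$ permutes $\Phi^+\setminus\Phi_p^+$, so the sum is $W_p$-invariant.

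Next, the $W_p$-fixed subspace of $V$ is $\{v\in V:\langle v,\alpha_j^\vee\rangle=0\text{ for all }j\neq p\}$, which is one-dimensional and spanned by $\lambda_p$ (using $\langle\lambda_p,\alpha_j^\vee\rangle=\delta_{pj}$). Thus $\rho-\rho_p=c\,\lambda_p$ for some scalar $c$. To determine $c$, pair with $\alpha_p^\vee$: since $\langle\lambda_p,\alpha_p^\vee\rangle=1$ and $\langle\rho,\alpha_p^\vee\rangle=1$, one gets $c=1-\langle\rho_p,\alpha_p^\vee\rangle$. On the other hand, the definition \eqref{0202} gives $c_p=2\langle\lambda_p-\rho_p,\alpha_p^\vee\rangle=2-2\langle\rho_p,\alpha_p^\vee\rangle$, hence $\langle\rho_p,\alpha_p^\vee\rangle=1-c_p/2$ and therefore $c=c_p/2$, as desired.

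There is no real obstacle here; the only subtlety is checking that $W_p$ genuinely preserves $\Phi^+\setminus\Phi_p^+$ (and not merely $\Phi\setminus\Phi_p$), which is where the positivity of $\sigma_j\beta$ for $\beta\neq\alpha_j$ needs to be invoked. Everything else is a short computation, and the identity $\rho-\rho_p=\tfrac{c_p}{2}\lambda_p$ is very much in the spirit of the paper, being a direct structural counterpart of Lemma \ref{lem_0812}.
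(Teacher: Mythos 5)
Your proof is correct and follows essentially the same route as the paper: both reduce the lemma to the identity $c_p\lambda_p = 2(\rho-\rho_p)$ and then simply pair with $\alpha^\vee$. The only difference is that the paper cites this identity from the proof of Lemma 4.1 of \cite{Ko}, whereas you supply a self-contained proof via the standard $W_p$-invariance argument (showing $\sum_{\beta\in\Phi^+\setminus\Phi_p^+}\beta$ is fixed by every $\sigma_j$, $j\neq p$, that the $W_p$-fixed line is spanned by $\lambda_p$, and normalizing by pairing with $\alpha_p^\vee$); that added derivation is correct and makes the argument independent of the external reference, but it is not a different method.
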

\begin{proof}
Because of $c_p \lambda_p=2\rho-2\rho_p$ from the proof of Lemma 4.1 of~\cite{Ko},
we have
\[
2\h\alpha^\vee - \langle \lambda_p, \alpha^\vee\rangle \,c_p =
2\h\alpha^\vee - \langle 2\rho-2\rho_p, \alpha^\vee\rangle = 2 \,
\langle \rho_p, \alpha^\vee\rangle.
\]
This implies the assertion by definition of $\Sigma_p(k,h)$.
\end{proof}
\begin{lemma}\label{lem_0814}
Let $1 \leq k \leq k_p$ and let $\hbar_k^\pm$ be numbers in
\eqref{0802}. We have
\[
\hbar_k^+ + \hbar_k^- = k \, \hbar_1^+.
\]
Therefore we have
\[
k+1 \leq \h\alpha^\vee \leq kc_p - k -1 \quad (k \geq 2)
\quad \text{and} \quad
1 \leq \h\alpha^\vee \leq c_p - 1 \quad (k=1)
\]
for $\alpha \in \Sigma_p(k)$ if $c_p \geq 3$, and $\hbar_k^+ \leq h_{k+1}^+ \leq (k+1)\hbar_1^+ -1$.
\end{lemma}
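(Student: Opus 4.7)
The plan is to derive the identity $\hbar_k^+ + \hbar_k^- = k\hbar_1^+$ directly from Lemma~\ref{lem_0811}. Reading that lemma at the level of coroots gives
\[
\gamma_p^-(k)^\vee = k\gamma_p^+(1)^\vee + w_p\gamma_p^+(k)^\vee.
\]
Since $w_p \in W_p$ sends $\Delta_p$ to $-\Delta_p$, it acts on $Q^+(\Delta_p^\vee)$ by negating heights: if $\delta^\vee = \sum_{j \neq p} c_j \alpha_j^\vee \in Q^+(\Delta_p^\vee)$, then $w_p\delta^\vee = -\sum_{j \neq p} c_j \alpha_{\sigma(j)}^\vee$ for some permutation $\sigma$ of $\{1,\dots,r\} \setminus \{p\}$, so $\h(w_p\delta^\vee) = -\h\delta^\vee$ because every simple coroot has height $1$. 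Applying this observation to $\gamma_p^+(k)^\vee$ and taking heights on both sides of the identity above yields $\hbar_k^- = k\hbar_1^+ - \hbar_k^+$, which is the first claim.

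For the inequalities on $\h\alpha^\vee$, I would use that every $\alpha \in \Sigma_p(k)$ has $\alpha^\vee = k\alpha_p^\vee + \gamma^\vee$ with $\gamma^\vee \in Q^+(\Delta_p^\vee)$, and $\hbar_k^- \leq \h\gamma^\vee \leq \hbar_k^+$, the extremes being attained at $\alpha = \alpha_p^\mp(k)$. Thus
\[
k + \hbar_k^- \;\leq\; \h\alpha^\vee \;\leq\; k + \hbar_k^+ .
\]
By Lemma~\ref{lem_0812}, $\hbar_1^+ = c_p - 2$, and since $\alpha_p^-(1) = \alpha_p$ we have $\gamma_p^-(1)^\vee = 0$ so $\hbar_1^- = 0$. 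This immediately handles $k=1$, giving $1 \leq \h\alpha^\vee \leq c_p - 1$.

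The heart of the argument for $k \geq 2$ is the claim $\hbar_k^- \geq 1$. Indeed, if $\hbar_k^- = 0$, then $\gamma_p^-(k)^\vee \in Q^+(\Delta_p^\vee)$ has height zero and so must vanish, forcing $\alpha_p^-(k)^\vee = k\alpha_p^\vee$; but for $k \geq 2$ this is not a coroot in the reduced system $\Phi^\vee$, a contradiction. (Note this automatically forces $c_p \geq 3$ whenever some $\Sigma_p(k)$ with $k \geq 2$ is nonempty.) Combining $\hbar_k^- \geq 1$ with the first identity yields $\hbar_k^+ \leq k\hbar_1^+ - 1 = k(c_p-2)-1$, and substituting back into the bracket above gives $k+1 \leq \h\alpha^\vee \leq kc_p - k - 1$ as required. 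Finally, the monotonicity $\hbar_k^+ \leq \hbar_{k+1}^+$ is exactly Lemma~\ref{lem_0810}, and the bound $\hbar_{k+1}^+ \leq (k+1)\hbar_1^+ - 1$ is obtained by applying the same identity with $k$ replaced by $k+1$ and invoking $\hbar_{k+1}^- \geq 1$. The only subtle point is the sign in the height computation for $w_p$ on $Q^+(\Delta_p^\vee)$; the rest is bookkeeping based on the identity $\alpha^\vee = k\alpha_p^\vee + \gamma^\vee$ and reducedness of $\Phi^\vee$.
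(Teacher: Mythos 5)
Your proof is correct and takes essentially the same route as the paper: the central identity $\hbar_k^+ + \hbar_k^- = k\hbar_1^+$ is derived from Lemmas~\ref{lem_0811} and~\ref{lem_0812} together with the fact that $w_p$ negates heights on $Q^+(\Delta_p^\vee)$, and the inequalities then follow from $\hbar_k^- \geq 1$ for $k\geq 2$ (reducedness of $\Phi^\vee$). The only cosmetic difference is in proving the height-negation property---you argue directly from $w_p\Delta_p = -\Delta_p$, while the paper computes $\h\gamma^\vee = \langle\rho_p,\gamma^\vee\rangle$ and uses $w_p\rho_p = -\rho_p$---and you spell out the derivation of the inequalities, which the paper leaves implicit after stating the identity.
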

\begin{remark}
If $c_p=2$, $\alpha_p^+(1)=\alpha_p^-(1)=\alpha_p$ by Lemma
\ref{lem_0812}. 
 Hence $\Phi$ is of type $A_1$,
and the case $k \geq 2$
does not appear.
\end{remark}
\begin{proof}
We note that
\[
\h(w_p \gamma^\vee) = - \h \gamma^\vee
\]
for every $\gamma^\vee \in Q^+(\Delta_p^\vee)$. In fact, we have
\[
\h \gamma^\vee
= \langle \rho_p, \gamma^\vee \rangle
= \langle w_p \rho_p, w_p \gamma^\vee \rangle
= - \langle \rho_p, w_p \gamma^\vee \rangle
= - \h (w_p\gamma^\vee)
\]
for $\gamma^\vee \in Q^+(\Delta_p^\vee)$,
since $\langle \rho_p,\alpha_i^\vee \rangle=1$ for $i \not=p$
from the proof of Lemma 4.1 in \cite{Ko}.
Therefore we have
\[
\aligned
k+\hbar_k^-
& =\h(\alpha_p^{-}(k))^\vee
  = \h(w_p \alpha_p^{+}(k)^\vee)
  = \h(w_p(k \alpha_p^\vee + \gamma_{p}^{+}(k)^\vee) ) \\
& = k \, \h ( w_p \alpha_p^\vee)  + \h( w_p \gamma_{p}^{+}(k)^\vee )
  = k \, \h( w_p \alpha_p^\vee)  - \h(\gamma_{p}^{+}(k))^\vee \\
& = k (1 + \hbar_1^+) - \hbar_k^+.
\endaligned
\]
Here we used Lemmas \ref{lem_0811} and \ref{lem_0812}.
This implies Lemma \ref{lem_0814}.
\end{proof}
\begin{lemma}\label{lem_0815}
Let $1 \leq k \leq k_p$.
Then $\Sigma_p(k)$ is divided into the disjoint union
\[
\Sigma_p(k)= \bigsqcup_{m=1}^{M_k} L_m(k)
\]
satisfying the following conditions:
\begin{enumerate}
\item $\alpha_{p}^{\pm}(k) \in L_1(k)$ and $|L_1(k)|= \h \alpha_{p}^{+}(k)^\vee - \h \alpha_{p}^{-}(k)^\vee+1$,
\item $|L_1(k)| > |L_2(k)|, |L_3(k)|, \ldots, |L_{M_k}(k)| \geq 1$,
\item we have
\[
L_m(k)^\vee = \{ \beta _m(k)^\vee \} \cup \{\beta_m(k)^\vee +\sum_{j=1}^{J} \alpha_{m,j}(k)^\vee ~|~  1 \leq J \leq |L_m(k)|-1\}
\]
for some $\beta_m(k) \in \Sigma_{p}(k)$ $(1 \leq m \leq M_k)$ with $\beta_1(k)=\alpha_p^{-}(k)$,
and for some $\alpha_{m,j}(k) \in \Delta_{p}$ $(1 \leq m \leq M_k,\, 1 \leq j \leq |L_m(k)|-1)$,
\item $k+\hbar_k^{-}=\h \beta_1(k)^\vee < \h \beta_2(k)^\vee \leq \h \beta_3(k)^\vee \leq \cdots \leq \h \beta_M(k)^\vee \leq kc_p/2$,
\item $\h \widetilde{\beta}_m(k)^\vee = kc_p - \h \beta_m(k)^\vee$
$(1 \leq m \leq M_k)$, where
\[
\widetilde{\beta}_m(k)^\vee=\beta_m(k)^\vee
+\sum_{j=1}^{|L_m(k)|-1}\alpha_{m,j}(k)^\vee.
\]
\end{enumerate}
\end{lemma}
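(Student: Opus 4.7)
My plan is to prove Lemmas \ref{lem_0804} and \ref{lem_0815} simultaneously, since the chain decomposition of Lemma \ref{lem_0815} immediately yields the injection of Lemma \ref{lem_0804} by reading each chain upward. The guiding principle is that $w_p$ acts on $\Sigma_p(k)$ as a height-reversing involution, bijecting $\Sigma_p(k,h)$ onto $\Sigma_p(k,kc_p-h)$. To verify this, I combine the identity $\rho=(c_p/2)\lambda_p+\rho_p$ (immediate from $c_p\lambda_p=2\rho-2\rho_p$ in the proof of Lemma \ref{lem_0812}) with $w_p\lambda_p=\lambda_p$ and $w_p\rho_p=-\rho_p$: then for $\alpha\in\Sigma_p(k,h)$, $\h(w_p\alpha)^\vee=\langle\rho,w_p\alpha^\vee\rangle=kc_p/2-\langle\rho_p,\alpha^\vee\rangle=kc_p-h$ by Lemma \ref{lem_0813}. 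In particular, the layer sizes $N_p(k,h)$ are palindromic around $h=kc_p/2$ and weakly unimodal in $h$ by Lemma \ref{lem_0803}.

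The main step is the construction, for each $h$ with $2h+1\leq kc_p$, of an injection $\phi_h\colon\Sigma_p(k,h)\hookrightarrow\Sigma_p(k,h+1)$ of the form $\alpha^\vee\mapsto\alpha^\vee+\alpha_{j(\alpha)}^\vee$ with $\alpha_{j(\alpha)}\in\Delta_p$. Pointwise existence of a valid $\alpha_{j(\alpha)}$ is exactly Lemma \ref{lem_0802}, whose hypothesis $\langle\rho_p,\alpha^\vee\rangle<0$ corresponds to $2h<kc_p$ via Lemma \ref{lem_0813}. To upgrade to a global injection I would apply Hall's marriage theorem to the bipartite graph with edges $\alpha\sim\alpha+\alpha_j$ whenever $\alpha_j\in\Delta_p$ and $\alpha^\vee+\alpha_j^\vee\in\Sigma_p(k,h+1)^\vee$; the Hall condition reduces to a cardinality inequality on the images of $\alpha_j$-strings within $\Sigma_p(k)$, which is checkable by induction on $h$ using Lemma \ref{lem_0801}. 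Existence of $\phi_h$ yields Lemma \ref{lem_0804} directly by setting $\alpha_{j_n}:=\alpha_{j(\beta_n)}$.

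Given the injections $\phi_h$, the chain decomposition is produced by a greedy Dilworth-style construction. Form a directed graph on $\Sigma_p(k)$ whose edges are the pairs $(\alpha,\phi_h\alpha)$ for $h<kc_p/2$, together with $w_p$-symmetric edges $(w_p\phi_h w_p\alpha,\,w_p\alpha)$ above the middle (note $w_p\alpha_j=-\alpha_{j'}$ with $\alpha_{j'}\in\Delta_p$, so after reversal the second half of a chain is again obtained by adding simple roots in $\Delta_p$). The chains $L_m(k)$ are the maximal ascending paths in this graph. By the layer-size counting above, the path containing $\alpha_p^-(k)$ starts at height $k+\hbar_k^-$ and must end at height $k+\hbar_k^+$, and the endpoint is forced to be $\alpha_p^+(k)$ by uniqueness of the highest root; this gives $L_1(k)$ of length $\h\alpha_p^+(k)^\vee-\h\alpha_p^-(k)^\vee+1$ as in condition (1). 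For $m\geq 2$, take $\beta_m(k)$ to be a root of minimum height in $\Sigma_p(k)\setminus\bigsqcup_{j<m}L_j(k)$: the greedy procedure yields $\h\beta_m^\vee\leq kc_p/2$ and $|L_m(k)|=kc_p-2\h\beta_m^\vee+1<|L_1(k)|$, yielding (2). Conditions (3)--(5) follow directly from the $w_p$-symmetric definition of the edges.

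The hardest part will be verifying the Hall condition for $\phi_h$ and, subsequently, guaranteeing that after removing $L_1(k)\sqcup\cdots\sqcup L_{m-1}(k)$ the restricted edge set still supplies an upward extension of every remaining minimum-height root. Disjointness of the chains is forced by the greedy removal together with injectivity of each $\phi_h$, but exhaustion of $\Sigma_p(k)$ is the genuine combinatorial content of the lemma: the edges depend nontrivially on both $\alpha$ and $\alpha_j$, and a uniform argument will likely proceed via the structure of $\Sigma_p(k)$ as the weights of the $W_p$-module $\mathfrak{g}_k$ (the $k$-th graded piece of the nilradical of $P_p$), with the palindromic/unimodal layer sizes $N_p(k,h)$ playing the role that Lindstr\"om--Gessel--Viennot lemmas play in other chain-decomposition arguments.
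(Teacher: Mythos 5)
Your approach is genuinely different from the paper's: the authors reduce via \cite{FM} to the case $\Sigma_p(1)$ and then verify Lemma \ref{lem_0815} by an \emph{explicit, case-by-case} construction for each irreducible root system (this is what the lengthy Appendix 1 is for), whereas you propose a \emph{uniform} combinatorial argument (Hall's marriage theorem to manufacture the layer-to-layer injections $\phi_h$, followed by a Dilworth-style greedy chain decomposition). A successful uniform argument would certainly be more satisfying than a nine-type case check, and your preliminary observations are correct --- $w_p$ does reverse height on $\Sigma_p(k)$, and the palindromy/unimodality of $N_p(k,h)$ does follow from Lemma \ref{lem_0803}.

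However, the proposal leaves the genuinely hard parts unproved, and you essentially say so yourself. Three concrete gaps: (a) You assert that the Hall condition ``is checkable by induction on $h$ using Lemma \ref{lem_0801},'' but Lemma \ref{lem_0801} is a statement about a fixed $w\in\mathfrak{W}_p$ and says nothing about cardinalities of neighborhoods in the bipartite graph $\Sigma_p(k,h)\leftrightarrow\Sigma_p(k,h+1)$; verifying Hall here is a nontrivial statement about the structure of the $W_p$-module $\mathfrak g_k$ and is not supplied. (b) Conditions (3) and (5) require each $L_m(k)^\vee$ to be a single contiguous $\Delta_p^\vee$-string that is palindromic about height $kc_p/2$. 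Your construction builds the lower half via $\phi_h$ and then declares the upper half to be ``$w_p$-symmetric edges,'' but it is not shown that the ascending path from $\beta_m$ and the descending path from $w_p\beta_m$ actually meet at the same root near height $kc_p/2$; if they terminate at distinct roots $\gamma$ and $w_p\gamma$ of the same height, the two halves do not stitch into one chain. (c) The claim that the chain through $\alpha_p^-(k)$ must terminate at $\alpha_p^+(k)$ is asserted ``by uniqueness of the highest root,'' but nothing forces the greedy chain through $\alpha_p^-(k)$ to reach the maximal height $k+\hbar_k^+$ at all. These gaps are exactly what the paper's Appendix 1 resolves by direct enumeration; a uniform argument filling them would be a genuine improvement, but as written the proposal is a plan rather than a proof.
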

\begin{proof}
At first we note that the problem is reduced to the cases of irreducible root systems
with $\Sigma_p(1)$
by using results of section 1.4 of \cite{FM}.
Then we see that the assertions of Lemmas \ref{lem_0804} and \ref{lem_0815}
for $\Sigma_p(1)$ hold by constructing explicitly the required disjoint union directly.
See Appendix 1.
\end{proof}
%
%
%
\section{Proof of Propositions \ref{prop_0606} and \ref{prop_0607}} \label{section_09}
%
%
%
%
\subsection{Proof of Proposition \ref{prop_0606}}
%
%
\begin{lemma}\label{lem901}
Let $a$, $b$ be real numbers satisfying $a \leq b$. Then
\[
\left\vert \frac{\xi(s+a)}{\xi(s+b+1)} \right\vert < 1 \quad \text{for} \quad \Re(s) > -\frac{a+b}{2}
\]
and
\[
\left\vert \frac{\xi(s+a)}{\xi(s+b+1)} \right\vert = 1 \quad \text{for} \quad \Re(s) = -\frac{a+b}{2}.
\]
\end{lemma}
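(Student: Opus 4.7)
The plan is to regard $F(s) := \xi(s+a)/\xi(s+b+1)$ as a holomorphic function on the closed half-plane $H := \{s \in \C : \Re(s) \geq -(a+b)/2\}$, verify $|F|\equiv 1$ on $\partial H$ via the functional equation and reality of $\xi$, observe that $F$ decays at infinity in $H$ by Stirling, and then conclude by a Phragm\'en--Lindel\"of / maximum modulus argument.

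First I would check that $F$ is holomorphic on $H$: since $a \leq b$, for $s \in H$ we have $\Re(s+b+1) \geq (b-a)/2 + 1 \geq 1$, and $\xi$ is nonvanishing on $\{\Re \geq 1\}$ (by $\xi(1)=1$, the classical nonvanishing of $\zeta$ on $\Re=1$ and on the Euler-product region $\Re>1$, together with $\Gamma(z/2)\neq 0$), so $\xi(s+b+1)$ never vanishes on $H$. Next I would compute $|F|$ on $\partial H$: for $s=-(a+b)/2+it$,
\[
s+b+1 \;=\; 1-\Bigl(\frac{a-b}{2}-it\Bigr) \;=\; 1-\overline{s+a},
\]
so combining $\xi(1-z)=\xi(z)$ with the reality $\overline{\xi(z)}=\xi(\bar z)$ yields $\xi(s+b+1)=\overline{\xi(s+a)}$, and therefore $|F(s)|\equiv 1$ on $\partial H$.

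Then I would establish the growth of $F$ inside $H$ by Stirling. Applying Stirling's formula to the Gamma factor in $\xi$, together with standard polynomial bounds for $\zeta$ in vertical strips, gives uniformly on compact sets of $\sigma$
\[
|\xi(\sigma+it)| \;\sim\; C(\sigma)\,(|t|+1)^{\sigma/2+O(1)}\,e^{-\pi|t|/4}\qquad(|t|\to\infty),
\]
so the exponentials cancel in $F$ and $|F(\sigma+it)|=O\bigl((|t|+1)^{(a-b-1)/2}\bigr)$ with exponent at most $-1/2$; an analogous Stirling computation handles $\Re(s)\to\infty$ with $\Im(s)$ bounded. Hence $F$ is bounded on $H$ and $|F(s)|\to 0$ at infinity. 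Phragm\'en--Lindel\"of for a half-plane then forces $|F(s)|\leq 1$ throughout $H$, and if equality held at an interior point, the maximum modulus principle would force $F$ to be a unimodular constant, contradicting the decay at infinity. This yields $|F(s)|<1$ for $\Re(s)>-(a+b)/2$. I expect the Stirling-based uniform growth estimate justifying Phragm\'en--Lindel\"of to be the only substantive step; the boundary identity and the max-modulus finish are routine.
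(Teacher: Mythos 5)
Your overall route is valid and is genuinely different from the paper's. The paper disposes of this lemma in one line by a change of variable ($s \mapsto s - (a+b+1)/2$) that turns the claim into the symmetric statement $|\xi(s+c)| < |\xi(s-c)|$ for $\Re(s)>1/2$ with $c=(a-b-1)/2 \le -1/2$, and then cites Theorem~4 of Lagarias--Suzuki \cite{MR2220265}; that cited result is proved from the Hadamard product of $\xi$ together with the unconditional fact that the zeros of $\xi$ lie in the closed critical strip and come in pairs $\rho, 1-\rho$. You instead set up a Phragm\'en--Lindel\"of/maximum-modulus argument on the closed half-plane $H=\{\Re(s)\ge -(a+b)/2\}$, using the boundary identity $\xi(s+b+1)=\overline{\xi(s+a)}$ (which is correct, and uses only $\xi(s)=\xi(1-s)$, $\overline{\xi(z)}=\xi(\bar z)$, and nonvanishing of $\xi$ in $\Re\ge 1$). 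Your argument thus needs no information about the location of the zeros of $\zeta$ beyond $\zeta(1+it)\neq 0$, which is a small conceptual gain, at the cost of having to control growth in the half-plane.

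Where your write-up goes wrong is in the growth estimates. The displayed asymptotic $|\xi(\sigma+it)|\sim C(\sigma)(|t|+1)^{\sigma/2+O(1)}e^{-\pi|t|/4}$ is not an honest asymptotic: $\zeta(\sigma+it)$ oscillates (and vanishes), so there is no such two-sided $\sim$. More seriously, the conclusion ``$|F(s)|\to 0$ at infinity'' in $H$ is false, since $|F|\equiv 1$ on $\partial H$; and even in the interior the $\zeta$-quotient $\zeta(s+a)/\zeta(s+b+1)$ contributes a polynomial (not bounded) factor in $|t|$ when $\Re(s+a)$ is small, so $|F|$ is not $O((|t|+1)^{(a-b-1)/2})$ uniformly on compacta of $\sigma$. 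What you actually need for Phragm\'en--Lindel\"of is only that $F$ is of sub-exponential growth (say polynomially bounded) in $H$; this does follow because the $e^{-\pi|t|/4}$ factors in $\Gamma((s+a)/2)$ and $\Gamma((s+b+1)/2)$ cancel and everything else (the rational factors, $\pi$-powers, the residual power of $t$ from Stirling, and the $\zeta$-quotient controlled by the convexity bound above and the $O(\log t)$ bound for $1/\zeta$ on $\Re\ge 1$) is $O(|s|^A)$. With that corrected, Phragm\'en--Lindel\"of gives $|F|\le 1$ on $H$, and since $F\to 0$ as $\Re(s)\to+\infty$ along the real axis, $F$ is nonconstant and the maximum modulus principle gives strict inequality in the interior. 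So the idea is right and different from the paper's, but the stated quantitative bounds need to be replaced by the correct ones.
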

\begin{proof}
This is equivalent to $|\xi(s+(a-b-1)/2)| < |\xi(s-(a-b-1)/2)|$ for
$\Re(s)>1/2$ by shifting $s+(a+b+1)/2$ to $s$, and it holds by
applying Theorem 4 of \cite{MR2220265} to $F(s)=\xi(s)$ and
$c=(a-b-1)/2$. The second statement immediately follows from the
functional equation for $\xi(s)$.
\end{proof}
Now we prove Proposition \ref{prop_0606}. We recall
\begin{equation}\label{0901}
X_{p}^{\ddagger }(s)
= X_{p,{\rm id}}(s)
= \prod_{\alpha \in \Phi_{+} \setminus \Phi_p^+} \xi(\langle \lambda_p, \alpha^\vee \rangle s + \h\alpha^\vee + 1).
\end{equation}
by Lemma \ref{lem_0603}. From \eqref{0502} and \eqref{0901}, we have
\begin{equation}\label{0902}
\frac{X_{p,w}(s)}{X_{p}^{\ddagger }(s)}
= \prod_{\alpha \in (\Phi^+ \setminus \Phi_p^+) \cap w^{-1}\Phi^-}
\frac{\xi(\langle \lambda_p, \alpha^\vee \rangle s + \h\alpha^\vee)}
{\xi(\langle \lambda_p, \alpha^\vee \rangle s + \h\alpha^\vee + 1)}.
\end{equation}
Using the notation of Lemma \ref{lem_0815}, we define
\[
L_{m}(k)_w = L_m(k) \cap w^{-1}\Phi^-
\]
and put $\Lambda_w(k)=\{ m \,|\, 1 \leq m \leq M_k, \, L_m(k)_w
\not=\emptyset \}$. In addition, we define
\[
h_m(k) =  {\rm max}\{ \h \alpha^\vee ~|~ \alpha \in L_{m}(k) \},
\quad
h_{m,w}(k) = {\rm min}\{ \h \alpha^\vee ~|~ \alpha \in L_{m}(k)_w \}.
\]
for $m \in \Lambda_w(k)$. Note that $h_m(k)$ is attained by one of $
\alpha \in L_m(k)_w$ by Lemma \ref{lem_0802}. Then the right-hand
side of \eqref{0902} equals
\[
\prod_{k=1}^{k_p} \prod_{m \in \Lambda_w(k)}\prod_{h_m=h_{m,w}(k)}^{h_{m}(k)}
\frac{\xi(ks + h_m)}
{\xi(ks + h_m + 1)} \\
 = \prod_{k=1}^{k_p} \prod_{m \in \Lambda_w(k)}
\frac{\xi(ks + h_{m,w}(k))}{\xi(ks + h_{m}(k) + 1)}
\]
by Lemma \ref{lem_0802} and  Lemma \ref{lem_0815}~(3). Now, by Lemma
\ref{lem901}, we have
\[
\left\vert \frac{\xi(ks + h_{m,w}(k))}{\xi(ks + h_{m}(k) + 1)} \right\vert < 1 \quad
\text{for} \quad
\Re(s) > - \frac{h_{m,w}(k)+h_m(k)}{2k}.
\]
for every $m \in \Lambda_w(k)$. We have
\[
\frac{h_{m,w}(k)+h_m(k)}{2k} \geq \frac{c_p}{2},
\]
for every $m \in \Lambda_w(k)$, since $h_{m,w}(k) \geq \h
\beta_m(k)^\vee = kc_p - h_m(k)$ by Lemma \ref{lem_0815}~(5). Also,
we obtain
\[
\left\vert \frac{\xi(ks + h_{m,w}(k))}{\xi(ks + h_{m}(k) + 1)} \right\vert < 1 \quad
\text{for} \quad
\Re(s) = - \frac{c_p}{2}.
\]
unless $h_{m,w}(k)+h_m(k) = kc_p$.
If $h_{m,w}(k)+h_m(k) = kc_p$,
\[
\left\vert \frac{\xi(ks + h_{m,w}(k))}{\xi(ks + h_{m}(k) + 1)} \right\vert = 1 \quad
\text{for} \quad
\Re(s) = - \frac{c_p}{2}.
\]
Now we complete the proof of Proposition \ref{prop_0606} \hfill $\Box$
\medskip

Thus, we see that Proposition \ref{prop_0606} follows from Lemma
\ref{lem_0815} whose the essential part is due to Lemmas
\ref{lem_0801} and explicit descriptions of root systems.

\subsection{Proof of Proposition \ref{prop_0607}}
We have
\[
\aligned
D_p(s) = \prod_{k=1}^\infty
& \prod_{h \leq (kc_p+1)/2} \xi(ks+h)^{N_p(k,h-1)} \prod_{h> (kc_p+1)/2} \xi(ks+h)^{N_p(k,h)}.
\endaligned
\]
by Lemma \ref{lem_0806}. Hence we have
\begin{equation}\label{0903}
\frac{X_{p}^{\ddagger}(s)}{D_p(s)}
 = \prod_{k=1}^\infty \prod_{h> (kc_p+1)/2} \xi(ks+h)^{N_p(k,h-1)-N_p(k,h)}.
\end{equation}
Note that $N_p(k,h-1)-N_p(k,h) \geq 0$ by Lemma \ref{lem_0803}.
(This non-negativity should be hold by the construction of
$M_p(k,h)$.) It is well-known that $\xi(s)\not=0$ for $\Re(s) \geq
1$. Therefore, $\xi(ks+h)\not=0$ for $\Re(s) \geq (1 - h)/k$. On the
other hand, we have $(1-h)/k \leq -c_p/2$, since $kc_p$ and $h$ are
both integers. If $kc_p$ is odd, $h> (kc_p+1)/2$ implies $(1-h)/k
\leq - c_p/2 -1/(2k) < -c_p/2$. Hence
$X_{p}^{\ddagger}(s)/D_p(s)\not=0$ for $\Re(s) \geq -c_p/2$. If
$kc_p$ is even, $h> (kc_p+1)/2$ implies $(1-h)/k \leq -c_p/2$. It
also derives $X_{p}^{\ddagger}(s)/D_p(s)\not=0$ for $\Re(s) \geq
-c_p/2$. We complete the proof of Proposition \ref{prop_0607}.
\hfill $\Box$
\medskip

Thus, Proposition \ref{prop_0607} is a simple consequence of Lemma
\ref{lem_0806}. As well as Proposition \ref{prop_0606},
the essential part of Lemma \ref{lem_0806}
is due to Lemmas \ref{lem_0801} and \ref{lem_0804}.

%
%
\section{Proof of Proposition \ref{prop_0605}} \label{section_10}
%
%

As mentioned before, the most essential part of the proof of Proposition \ref{prop_0605}
is the volume hypothesis \eqref{VF} (or {\bf Theorem} of \cite[section 4.7]{MR2310297}) 
which plays a key role in the proof of Lemma \ref{lem1003} below.

\begin{lemma}\label{lem1001}
Let $w \in {\frak W}_p$.
If $|w^{-1}\Delta \setminus \Phi_p|=1$,
the set $\Phi^+ \setminus \Phi_p^+$
equals to either
$(\Phi^+ \setminus \Phi_p^+) \cap w^{-1}\Phi^+$
or
$(\Phi^+ \setminus \Phi_p^+) \cap w^{-1}\Phi^-$.
\end{lemma}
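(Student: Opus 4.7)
The plan is to pass to the basis $\{\beta_k := w^{-1}\alpha_k\}_{k=1}^{r}$ of $V$, which consists of roots, and to track both the $\alpha_p$-coefficient of a root and the sign of its image under $w$ in this single basis. For any root $\alpha$ expanded as $\alpha = \sum_{k} d_k \beta_k$, applying $w$ term by term yields $w\alpha = \sum_k d_k \alpha_k$, so the integers $d_k$ are precisely the coordinates of the root $w\alpha$ in the basis $\Delta$ of simple roots. The familiar common-sign property of root coordinates then reads: $w\alpha \in \Phi^+$ iff all $d_k \geq 0$, and $w\alpha \in \Phi^-$ iff all $d_k \leq 0$.

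To exploit the hypothesis, I would introduce the linear functional $\ell\colon V \to \R$ characterized by $\ell(\alpha_j) = \delta_{jp}$, so that $\ell(\alpha)$ is the $\alpha_p$-coefficient of $\alpha$. Since a root with positive $\alpha_p$-coefficient is automatically positive, we have $\Phi^+ \setminus \Phi_p^+ = \{\alpha \in \Phi \mid \ell(\alpha) > 0\}$, and $\alpha \in \Phi_p$ iff $\ell(\alpha) = 0$. The hypothesis $|w^{-1}\Delta \setminus \Phi_p| = 1$ then singles out a unique index $k_0$ with $\ell(\beta_{k_0}) \neq 0$, while $\ell(\beta_k) = 0$ for $k \neq k_0$. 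Linearity therefore gives the identity $\ell(\alpha) = d_{k_0}\,\ell(\beta_{k_0})$, showing that for every $\alpha \in \Phi^+ \setminus \Phi_p^+$ the coordinate $d_{k_0}$ shares the fixed sign of $\ell(\beta_{k_0})$; the common-sign property then forces all the remaining $d_k$ to inherit the same sign.

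The case analysis that closes the argument is immediate: if $\beta_{k_0} \in \Phi^+$, so $\ell(\beta_{k_0}) > 0$, then $d_k \geq 0$ for every $k$ and hence $w\alpha \in \Phi^+$, giving $\Phi^+ \setminus \Phi_p^+ \subset w^{-1}\Phi^+$; if $\beta_{k_0} \in \Phi^-$, so $\ell(\beta_{k_0}) < 0$, then symmetrically $\Phi^+ \setminus \Phi_p^+ \subset w^{-1}\Phi^-$. I foresee no serious obstacle: the argument uses only the common-sign property of roots and the fact that $w^{-1}\Delta$ is a basis of $V$. The membership $w \in \mathfrak{W}_p$ in fact plays no active role here; it appears in the statement only because the lemma is to be applied for such $w$ in the sequel.
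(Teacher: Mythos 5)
Your argument is correct and is a genuinely different proof from the one in the paper. The paper argues by cases according to whether $w\alpha_p \in \Phi^+$ or $w\alpha_p \in \Phi^-$: the first case works by introducing the partition $\Delta = \Delta_w^+ \cup \Delta_w^- \cup \{\alpha_w\}$ and deriving a contradiction from any purported simple root in $\Phi^- \cap w(\Phi^+ \setminus \Phi_p^+)$, while the second case is an induction over the layers $\Sigma_p(k)$ that invokes Lemma~\ref{lem_0801} (which does use $w \in \mathfrak{W}_p$). Your approach replaces all of that with a single linear-algebraic observation: expanding each $\alpha \in \Phi^+ \setminus \Phi_p^+$ in the basis $w^{-1}\Delta$, the $\alpha_p$-coefficient functional $\ell$ factors through the sole coordinate $d_{k_0}$, whose sign is then rigidly determined by the fixed sign of $\ell(\beta_{k_0})$; the common-sign property of root coordinates propagates this to all of $w\alpha$. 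This is shorter, avoids case splitting, and—as you correctly note—shows that the hypothesis $w \in \mathfrak{W}_p$ is superfluous for this particular lemma: the statement holds for any $w \in W$ with $|w^{-1}\Delta \setminus \Phi_p| = 1$. The only thing your write-up takes slightly for granted is that $\Phi^+ \setminus \Phi_p^+$ coincides with $\{\alpha \in \Phi : \langle\lambda_p, \alpha^\vee\rangle > 0\}$, which is indeed the same as $\{\ell > 0\}$ after noting that $\langle\lambda_p, \alpha^\vee\rangle$ and the $\alpha_p$-coefficient of $\alpha$ differ by a positive factor; you might make that one-line check explicit, since the paper parameterizes everything in terms of $\langle\lambda_p, \alpha^\vee\rangle$ rather than raw simple-root coordinates.
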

\begin{proof}
We show that $\Phi^+ \cap w(\Phi^+ \setminus \Phi_p^+)= w(\Phi^+
\setminus \Phi_p^+)$ or $\Phi^- \cap w(\Phi^+ \setminus \Phi_p^+)=
w(\Phi^+ \setminus \Phi_p^+)$ if $|\Delta \setminus w \Phi_p|=1$.
Suppose $|\Delta \setminus w \Phi_p|=1$, and denote by $\alpha_w$
the only one simple root in $\Delta \setminus w\Phi_p$. Then
$\alpha_w$ belongs to $\Phi^+ \cap w(\Phi^+ \setminus \Phi_p^+)$ or
$\Phi^+ \cap w(\Phi^- \setminus \Phi_p^-)$, and hence $\alpha_w$
belongs to $\Phi^+ \cap w(\Phi^+ \setminus \Phi_p^+)$ or $-\alpha_w$
belongs to $\Phi^- \cap w(\Phi^+ \setminus \Phi_p^+)$.

Put $\Delta_w^+=\Delta \cap w \Phi_p^+$ and $\Delta_w^-=\Delta \cap
w \Phi_p^-$ so that \[ \Delta = \Delta_w^+ \cup \Delta_w^- \cup
\{\alpha_w\}
\]
(disjoint). Then we claim $\Delta_w^+=\Delta \cap w \Delta_p$. In
fact $\Delta_w^+ \supset \Delta \cap w \Delta_p$ is obvious. An
arbitrary $\alpha \in \Delta_w^+$ has the form
\[
\alpha = \sum_{j\not=p} n_ j w\alpha_j \quad (n_j \geq 0, w\alpha_j \in  \Phi^{-} \cup \Delta),
\]
since $w\Delta_p \subset \Phi^{-} \cup \Delta$. The right-hand side
decomposes into two parts according to $w\alpha_j \in \Phi^-$ or
$w\alpha_j \in \Delta$:
\[
\alpha = \sum_{{j\not=p}\atop{w\alpha_j \in \Phi^-}} n_ j w\alpha_j
+ \sum_{{j\not=p}\atop{w\alpha_j \in \Delta}} n_ j w\alpha_j.
\]
We note that the left-hand side is a simple root. Therefore,
$\alpha$ should be one of $\Delta \cap w \Delta_p$, and hence
$\Delta_w^+=\Delta\cap w\Delta_p$.

Now we prove the assertion of the lemma by using a different way
according to two cases $w\alpha_p \in \Phi^{+}$ or $w\alpha_p \in \Phi^{-}$.

First we deal with the case $w\alpha_p \in \Phi^{+}$.
Assume $\alpha_w \in \Phi^+ \cap w(\Phi^+ \setminus \Phi_p^+)$.
Then we will have a contradiction unless $\Phi^- \cap w(\Phi^+ \setminus \Phi_p^+) = \emptyset$.
If $\Phi^- \cap w(\Phi^+ \setminus \Phi_p^+) \not= \emptyset$,
there exists at least one simple root $\alpha_k$
such that
\begin{equation}\label{1001}
- \alpha_k \in \Phi^- \cap w(\Phi^+ \setminus \Phi_p^+).
\end{equation}
Actually we have
\[
\sum_i a_i w^{-1}(- \alpha_i) = w^{-1}(-\sum_{i} a_i \alpha_i)=b_p \alpha_p + \sum_{j\not=p} b_j \alpha_j \quad (b_p >0, \, a_i, b_j \geq 0)
\]
for $-\sum_{i}a_i \alpha_i \in \Phi^- \cap w(\Phi^+ \setminus \Phi_p^+)$.
Hence we have
\[
w^{-1}(-\alpha_k) = b_p^\prime \alpha_p + \sum_{j\not=p} b_j^\prime \alpha_j \quad (b_p^\prime >0,\,, b_j^\prime \geq 0)
\]
for at least one $\alpha_k$, since $w^{-1}(-\alpha_i) \in \Phi$.

The simple root $\alpha_k$ belongs to one of $\Delta_w^+$,
$\Delta_w^-$ or $\{\alpha_w\}$. If $\alpha_k \in \Delta_w^+$, we
have $\alpha_k \in \Delta \cap w \Delta_p \subset \Phi^+ \cap w
\Phi^+$. This contradicts \eqref{1001}. If $\alpha_k \in
\Delta_w^-$, we have $\alpha_k \in \Phi^+ \cap w \Phi_p^{-}$. This
contradicts  \eqref{1001}. If $\alpha_k = \alpha_w$, we have
$\alpha_k=\alpha_w \in \Phi^+ \cap w(\Phi^+ \setminus \Phi_p^+)$ by
assumption. This also contradicts   \eqref{1001}. Hence $\Phi^- \cap
w(\Phi^+ \setminus \Phi_p^+) = \emptyset$ which implies  $\Phi^+
\cap w(\Phi^+ \setminus \Phi_p^+) = w(\Phi^+ \setminus \Phi_p^+)$.

On the other hand, we have $\Phi^- \cap w(\Phi^+ \setminus \Phi_p^+) = w(\Phi^+ \setminus \Phi_p^+)$
if we assume that $-\alpha_w$ belongs to $\Phi^- \cap w(\Phi^+ \setminus \Phi_p^+)$
by a way similar to the above.

Finally we deal with the case $w\alpha_p \in \Phi^{-}$. We show that
$(\Phi^+ \setminus \Phi_p^+) \cap w^{-1}\Phi^{-}=(\Phi^+ \setminus
\Phi_p^+)$ by induction on $k$ for $\Sigma_p(k)$ if $w\alpha_p \in
\Phi^{-}$ (i.e. $\alpha_p \in (\Phi^+ \setminus \Phi_p^+) \cap
w^{-1}\Phi^{-}$). By Lemma \ref{lem_0801}, we have $\Sigma_p(1)
\subset (\Phi^+ \setminus \Phi_p^+) \cap w^{-1}\Phi^{-}$ if
$w\alpha_p \in \Phi^{-}$. Suppose that $\sum_p(k-1) \subset (\Phi^+
\setminus \Phi_p^+) \cap w^{-1}\Phi^{-}$ for $1 < k \leq k_p$. Let
$\alpha_p^-(k)$ be the lowest element of $\Sigma_p(k)$. Then there
exists $\beta \in \Sigma_p(k-1)$ such that
\[
\alpha_p^-(k)^\vee = \beta^\vee + \alpha_p^\vee,
\]
since $(\alpha_p^-(k)^\vee-\alpha_j^\vee)^\vee$ is not a root for any $j \not=p$ by the lowest property of $\alpha_p^-(k)^\vee$.
We have $w \alpha_p \in \Phi^-$ by the first assumption
and $w \beta \in \Phi^{-}$ by the assumption of induction.
Therefore $w \alpha_p^-(k) = (w \beta^\vee + w \alpha_p^\vee)^\vee \in \Phi^{-}$,
i.e. $\alpha_p^-(k) \in (\Phi^+ \setminus \Phi_p^+) \cap w^{-1}\Phi^{-}$.
Hence  $\Sigma_p(k) \subset (\Phi^+ \setminus \Phi_p^+) \cap w^{-1}\Phi^{-}$ by Lemma \ref{lem_0801}.
\end{proof}

\begin{lemma} \label{lem1002} We have
\[
\{ w \in {\frak W}_p^\ddagger ~|~ |\left(w^{-1}\Delta\right)
\setminus \Phi_p|=1 \} = \{ w \in W_p ~|~ \Delta_p \subset
w^{-1}(\Delta_p \cup \Phi_p^-)\}.
\]
\end{lemma}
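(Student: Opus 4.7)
The plan is to prove the two inclusions separately; the pivotal step is to identify the unique simple root whose $w^{-1}$-image lies outside $\Phi_p$ as $\alpha_p$ itself.

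For the inclusion RHS $\subseteq$ LHS, starting from $w\in W_p$ with $\Delta_p\subset w^{-1}(\Delta_p\cup\Phi_p^-)$, membership in ${\mathfrak W}_p$ is automatic. The vanishing $l_p(w)=0$ follows because each simple reflection $\sigma_j$ with $j\neq p$ changes a root only by a multiple of $\alpha_j$, hence preserves the $\alpha_p$-coefficient in the simple-root expansion, so $W_p$ stabilizes $\Phi^+\setminus\Phi_p^+$. For the cardinality condition I split: $w^{-1}\alpha_j\in\Phi_p$ for $j\neq p$ since $W_p$ preserves $\Phi_p$, whereas $w\lambda_p=\lambda_p$ yields $\langle\lambda_p,(w^{-1}\alpha_p)^\vee\rangle=1$, so $w^{-1}\alpha_p\notin\Phi_p$.

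For the inclusion LHS $\subseteq$ RHS, let $\alpha_k$ be the unique simple root with $w^{-1}\alpha_k\notin\Phi_p$. I will first argue that $w^{-1}\alpha_k\in\Phi^+\setminus\Phi_p^+$: otherwise $w^{-1}\alpha_k\in-(\Phi^+\setminus\Phi_p^+)$, so $-w^{-1}\alpha_k\in\Phi^+\setminus\Phi_p^+$ is sent by $w$ to $-\alpha_k\in\Phi^-$, contradicting $l_p(w)=0$. The crux of the proof is then the computation
\[
\langle w\lambda_p,\alpha_j^\vee\rangle=\langle\lambda_p,(w^{-1}\alpha_j)^\vee\rangle,
\]
which vanishes for $j\neq k$ (since $w^{-1}\alpha_j\in\Phi_p$) and equals a positive integer $m$ for $j=k$; hence $w\lambda_p=m\lambda_k$. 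This is a dominant weight lying in the $W$-orbit of $\lambda_p$, and the standard uniqueness of the dominant element in each Weyl-group orbit forces $m\lambda_k=\lambda_p$. Linear independence of the fundamental weights then yields $k=p$ and $m=1$, so $w$ stabilizes $\lambda_p$ and hence $w\in W_p$. The condition $\Delta_p\subset w^{-1}(\Delta_p\cup\Phi_p^-)$ follows by intersecting the inclusions $w\alpha_j\in\Delta\cup\Phi^-$ (from $w\in{\mathfrak W}_p$) with $w\alpha_j\in\Phi_p$ (from $w\in W_p$) for each $\alpha_j\in\Delta_p$.

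The only nontrivial step is the identification $k=p$, which I handle by the uniqueness of the dominant weight in the orbit $W\lambda_p$. This argument is uniform in the root system and avoids any case analysis on $\Phi$.
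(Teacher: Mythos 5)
Your proof is correct, and it takes a genuinely different route from the paper's. The paper's argument for the inclusion from left to right never touches the cardinality hypothesis: it invokes Lemma \ref{lem_0601} (that $l_p$ is constant on cosets $wW_p$) together with the standard fact that a minimal coset representative $v^p \in W^p$ satisfies $v^p\Phi_p^+\subset\Phi^+$, so that $l_p(v^p)=0$ forces $v^p\Phi^+=\Phi^+$ and hence $v^p=\mathrm{id}$, giving $\mathfrak{W}_p^\ddagger\subset W_p$ directly. (As a by-product the paper's route shows the condition $|(w^{-1}\Delta)\setminus\Phi_p|=1$ is automatic on $\mathfrak{W}_p^\ddagger$, so the left-hand set is in fact all of $\mathfrak{W}_p^\ddagger$.) You instead use the cardinality hypothesis in an essential way: it tells you that $w\lambda_p=\sum_j\langle\lambda_p,(w^{-1}\alpha_j)^\vee\rangle\lambda_j$ collapses to $m\lambda_k$, and then the vanishing $l_p(w)=0$ pins down the sign of $m$, after which uniqueness of the dominant representative in $W\lambda_p$ and the identification of $\mathrm{Stab}_W(\lambda_p)$ with $W_p$ finish the job. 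Your approach avoids the theory of minimal coset representatives entirely and is arguably more self-contained, relying only on standard facts about dominant weights; the paper's approach is shorter given Lemma \ref{lem_0601} and additionally exposes the redundancy of the cardinality condition. Both directions of your argument (including the verification that $W_p$ stabilizes $\Phi^+\setminus\Phi_p^+$ via $\alpha_p$-coefficients, and the intersection $(\Delta\cup\Phi^-)\cap\Phi_p=\Delta_p\cup\Phi_p^-$) check out.
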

\begin{proof}
At first we prove the left-hand side is contained in the
right-hand side. We have
\[
\{ w \in {\frak W}_p^\ddagger ~|~ |\left(w^{-1}\Delta\right)\setminus \Phi_p|=1 \} = \{ w \in {\frak W}_p ~|~ l_p(w)=0,~\,
|\left(w^{-1}\Delta\right)\setminus \Phi_p|=1\}.
\]
by definition of ${\frak W}_p^\ddagger$ in Lemma \ref{lem_0603}. 
  Since $l_p(w)=0$ implies
  \begin{equation*}
    \begin{split}
      \Phi^+\cap w^{-1}\Phi^-=
      ((\Phi^+\setminus\Phi_p^+)\cap w^{-1}\Phi^-)\cup
      (\Phi_p^+\cap w^{-1}\Phi^-)=\Phi_p^+\cap w^{-1}\Phi^-\subset\Phi_p^+,
    \end{split}
  \end{equation*}
  we have $w\in W_p$ 
  and hence ${\frak W}_p^\ddagger \subset W_p$.
  On the other hand
  for 
  $w\in W_p$, we have 
$\Delta_p \cap w^{-1} (\Delta \cup \Phi^-) = \Delta_p \cap w^{-1} (\Delta_p \cup \Phi_p^-)$
because
    $w^{-1}(\Phi \setminus \Phi_p)\cap\Phi_p=\emptyset$.
Therefore the left-hand side is contained in the right-hand side.
\medskip

Next we prove the right-hand side is contained in the
left-hand side. 
We see that
$\Delta_p \subset w^{-1} (\Delta_p \cup \Phi_p^-)$ implies $\Delta_p \subset w^{-1} (\Delta \cup \Phi^-)$.
Hence the right-hand side is contained in ${\frak W}_p$. 
>From the proof of Lemma \ref{lem_0601}, we see that $w \in W_p$ implies $l_p(w)=0$, which shows that
the right-hand side is contained in ${\frak W}_p^\ddagger$.
Furthermore for $w\in W_p$, we have
\[
  \left(w^{-1}\Delta\right)\setminus \Phi_p
  =
  (\left(w^{-1}\Delta_p\right)\setminus \Phi_p) \cup (\{w^{-1}\alpha_p\}\setminus \Phi_p)
  = \emptyset \cup \{w^{-1}\alpha_p\}
  = \{w^{-1}\alpha_p\}
\]
because $w^{-1}\Phi_p\subset \Phi_p$
and $w^{-1}\alpha_p\notin\Phi_p$.
Therefore $w \in W_p$ implies $|\left(w^{-1}\Delta\right)\setminus \Phi_p|=1$.
\end{proof}

\begin{lemma}\label{lem1003}
Assume the volume hypothesis \eqref{VF}.
Let $C_{p,w}$ be real numbers defined in \eqref{0502_1}.
Define real numbers $D_{p,w}$ by
\begin{equation}\label{1001_1}
\aligned
\frac{1}{D_{p,w}}
& = 2^{|\Delta_p \cap w^{-1}\Phi^+|}
\prod_{\alpha \in (w^{-1} \Delta) \cap (\Phi_p \setminus \Delta_p)} (\h \alpha^\vee -1)  \\
& \quad \times \prod_{\alpha \in \Phi_p^+ \setminus \Delta_p}
(\h \alpha^\vee + \delta_{\alpha,w})(\h \alpha^\vee + \delta_{\alpha,w} -1),
\endaligned
\end{equation}
where $\delta_{\alpha,w}$ is defined in \eqref{e:delta}. Then
\begin{equation}\label{1002}
\sum_{{w \in {\frak W}_p^{\ddagger}}\atop{|(w^{-1} \Delta)\setminus \Phi_p|=1}}
\frac{1}{\langle \lambda_p, \alpha_w^\vee \rangle }
C_{p,w}D_{p,w} \not=0,
\end{equation}
where $\alpha_w$ is the only one element of $(w^{-1} \Delta)\setminus \Phi_p$.
\end{lemma}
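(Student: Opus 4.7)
The plan is to reduce Lemma \ref{lem1003} to the volume formula of~\cite[\S 4.7]{MR2310297} applied to the sub-root system $(\Phi_p,\Delta_p)$, which is the root system of the Levi $M_P$ of $P$. First, by Lemma \ref{lem1002}, the index set of the sum is
\[
\{w \in W_p : w\Delta_p \subset \Delta_p \cup \Phi_p^-\},
\]
which is precisely the analog of $\mathfrak W$ for $(\Phi_p,\Delta_p)$; denote it by $\mathfrak W_p^{\flat}$. For $w \in \mathfrak W_p^{\flat}$, since $W_p$ fixes $\lambda_p$ (as $\lambda_p$ is orthogonal to $\Phi_p$), one finds
\[
\langle \lambda_p, \alpha_w^\vee\rangle = \langle w\lambda_p, \alpha_p^\vee\rangle = \langle \lambda_p, \alpha_p^\vee\rangle = 1,
\]
so the prefactor $1/\langle\lambda_p,\alpha_w^\vee\rangle$ disappears from the sum.

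Next, I would unpack $C_{p,w}D_{p,w}$ using $\xi(s) = s(s-1)\hat\zeta(s)$. The polynomial factors $(\h\alpha^\vee + \delta_{\alpha,w})(\h\alpha^\vee + \delta_{\alpha,w} - 1)$ in $D_{p,w}^{-1}$ cancel precisely those hidden in the $\xi$'s of $C_{p,w}$, and the powers of $2$ cancel, leaving
\[
C_{p,w}D_{p,w} = \frac{\hat\zeta(2)^{|\Delta_p\cap w^{-1}\Phi^+|}\prod_{\alpha \in \Phi_p^+\setminus \Delta_p}\hat\zeta(\h\alpha^\vee + \delta_{\alpha,w})}{\prod_{\alpha \in (w^{-1}\Delta)\cap(\Phi_p\setminus\Delta_p)}(\h\alpha^\vee - 1)}.
\]
Since for $w \in W_p$ and $\alpha \in \Phi_p$ one has $\delta_{\alpha,w} = 1 \Leftrightarrow w\alpha \in \Phi_p^+$, the numerator matches, after application of the functional equation $\hat\zeta(s) = \hat\zeta(1-s)$ exactly as in~\eqref{0501}, the Macdonald-type product attached to the unnormalized intertwining operator $M_{M_P}(w,\rho_p)$ for the Levi.

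Consequently $S := \sum_{w \in \mathfrak W_p^{\flat}} C_{p,w}D_{p,w}$ can be identified, up to a nonzero explicit factor, with the iterated residue
\[
\underset{\lambda' = \rho_p}{\mathrm{Res}}\ \omega_{\Delta_p}^{\Phi_p}(\lambda')
\]
taken along the $r-1$ hyperplanes $\langle \lambda' - \rho_p, \beta^\vee\rangle = 0$ for $\beta \in \Delta_p$; the denominators $(\h\alpha^\vee - 1)$ correspond to the factors $\langle\lambda'-\rho_p,\alpha^\vee\rangle$ at the evaluation point $\lambda' = \rho_p + \lambda_p$ coming from $(w^{-1}\Delta)\cap(\Phi_p\setminus\Delta_p)$. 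By the volume formula of~\cite[\S 4.7]{MR2310297} applied to the Chevalley datum $(\Phi_p,\Delta_p)$, this total residue equals the positive volume of a fundamental domain for $M_P$ times a finite product of $\hat\zeta(k)$ with $k \geq 2$; in particular it is nonzero, giving $S \neq 0$.

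The main obstacle is carrying out the identification in the previous paragraph rigorously. One must (i) introduce the correct parametrization $\lambda' = \rho_p + \sum_{\beta\in\Delta_p} s_\beta \lambda_\beta'$ on the Cartan of $M_P$ so that the $(\h\alpha^\vee - 1)$ denominators arise as residues against $\langle\lambda'-\rho_p,\alpha^\vee\rangle$; (ii) track the sign changes and argument shifts produced by $\hat\zeta(s) = \hat\zeta(1-s)$ when converting the sum over $w^{-1}\Phi^-$ into the Macdonald form on $\Phi_{p,w}$, mirroring the computation leading to~\eqref{0501}; and (iii) check separately the degenerate case $\Phi_p^+ = \Delta_p$ (type $A_1^{r-1}$), where the sum reduces to a power of $\hat\zeta(2)$ and nonvanishing is immediate. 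Once that identification is established, positivity of the volume formula finishes the proof.
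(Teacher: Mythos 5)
Your proposal has the right architecture and matches the paper's strategy: reduce the index set to $W_p$ via Lemma~\ref{lem1002}, use $\langle\lambda_p,\alpha_w^\vee\rangle=1$ for $w\in W_p$, identify the resulting sum with a residue of $\omega_{\Delta_p}^{\Phi_p}$, and invoke the volume formula of \cite[\S 4.7]{MR2310297} for nonvanishing. However, the identification of $S$ with (a nonzero multiple of) $\underset{\lambda'=\rho_p}{\rm Res}\,\omega_{\Delta_p}^{\Phi_p}(\lambda')$ is precisely where the entire content of the lemma lives, and you have not established it --- you list it as the ``main obstacle'' with subtasks (i)--(iii). The paper carries this out in two explicit stages: first it proves a general residue formula for any reduced root system,
\[
\underset{\lambda=\rho}{\rm Res}\,\omega_{\Delta}^{\Phi}(\lambda)
= \sum_{{w \in W}\atop{\Delta \subset w^{-1}(\Delta \cup \Phi^-)}}
\hat{\zeta}(2)^{-|\Delta \cap w^{-1} \Phi^-|}
\prod_{\alpha \in (w^{-1}\Delta) \setminus \Delta}\frac{1}{\h\alpha^\vee -1}
\prod_{\alpha \in (\Phi^+ \setminus \Delta) \cap w^{-1}\Phi^-}\frac{\hat{\zeta}(\h\alpha^\vee)}{\hat{\zeta}(\h\alpha^\vee +1)},
\]
by expanding $\omega_{\Delta,p}^\Phi(s)$ via formula (2.8) of \cite{Ko} and collecting the two families of terms ($\alpha_p\in w^{-1}\Delta$ versus $\alpha_p\in w^{-1}\Phi^-$) that actually contribute a pole at $s=0$; and second, it verifies by explicit bookkeeping that the sum in \eqref{1002} equals $\prod_{\alpha\in\Phi_p^+}\hat\zeta(\h\alpha^\vee+1)$ times this formula applied to $(\Phi_p,\Delta_p)$, using the set identities $(\Phi_p^+\setminus\Delta_p)\cap w^{-1}\Phi^{\pm}=(\Phi_p^+\setminus\Delta_p)\cap w^{-1}\Phi_p^{\pm}$ for $w\in W_p$. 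Neither step appears in your write-up.

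Two smaller points worth flagging. First, your subtask (ii) about applying $\hat\zeta(s)=\hat\zeta(1-s)$ at this stage is a detour: the functional equation is already absorbed into the definitions of $g_{p,w}$ and $C_{p,w}$ (see \eqref{0501}--\eqref{0502_1}), and the paper's verification of the residue identity proceeds by direct expansion without invoking it again. Second, the phrase ``at the evaluation point $\lambda'=\rho_p+\lambda_p$'' is incorrect (and dimensionally confusing, since $\lambda_p\perp\Phi_p$): the residue is taken at $\lambda'=\rho_p$, and the denominators $\h\alpha^\vee-1$ arise as $\langle w\lambda'-\rho_p,\beta^\vee\rangle\big|_{\lambda'=\rho_p}=\h(w^{-1}\beta)^\vee-1$ with $\alpha=w^{-1}\beta$, not from $\langle\lambda'-\rho_p,\alpha^\vee\rangle$ directly.
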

\begin{proof} We first prove
\begin{equation} \label{tem_01}
\aligned
\underset{\lambda=\rho}{\rm Res} & \,
\omega_{\Delta}^{\Phi}(\lambda) \\
&= \sum_{{w \in W}\atop{\Delta \subset w^{-1}(\Delta \cup \Phi^-)}}
\hat{\zeta}(2)^{-|\Delta \cap w^{-1} \Phi^-|}
\prod_{\alpha \in (w^{-1}\Delta) \setminus \Delta}
\frac{1}{\h\alpha^\vee -1}
\prod_{\alpha \in (\Phi^+ \setminus \Delta) \cap w^{-1}\Phi^-}
\frac{\hat{\zeta}(\h\alpha^\vee)}
{\hat{\zeta}(\h\alpha^\vee +1)}
\endaligned
\end{equation}
holds for an arbitrary reduced root system $\Phi$ (which is not
necessary irreducible) and its fundamental system $\Delta$. If
$\Phi$ is not irreducible and its irreducible decomposition is
$\Phi=\Phi_1 \amalg  \cdots \amalg \Phi_m$, we find that
$\underset{\lambda=\rho}{\rm Res} \, \omega_{\Delta}^{\Phi}(\lambda)
= \prod_{i=1}^{m} \underset{\lambda=\rho_i}{\rm Res} \,
\omega_{\Delta_i}^{\Phi_i}(\lambda)$, $\Delta_i = \Delta \cap
\Phi_i$, $\rho_i = \frac{1}{2} \sum_{\alpha \in \Phi^+ \cap \Phi_i}
\alpha$ and \eqref{tem_01} holds for each component, since the Weyl
group $W(\Phi)$ of $\Phi$ decomposes into
\[
W(\Phi) = W(\Phi_1) \times \cdots \times W(\Phi_m).
\]
We apply \eqref{tem_01} to $\Phi_p$, $\Delta_p$ and $\rho_p$ later.
\smallskip

We have
\[
\underset{\lambda=\rho}{\rm Res} \,
\omega_{\Delta}^{\Phi}(\lambda)
= \underset{s=0}{\rm Res}  \,
\omega_{\Delta,p}^{\Phi}(s)
\]
by \eqref{e:omegap}. Therefore, it is enough to show that
$\underset{s=0}{\rm Res}  \, \omega_{\Delta,p}^{\Phi}(s)$ equals to
the right-hand side in \eqref{tem_01}. By (2.8) in \cite{Ko} we have
\[
\aligned
\omega_{\Delta,p}^{\Phi}(s)
&= \sum_{w \in {\frak W}_p}
\hat{\zeta}(2)^{-|\Delta_p \cap w^{-1} \Phi^-|}
\prod_{\alpha \in (w^{-1}\Delta) \cap (\Phi_p \setminus \Delta_p)}
\frac{1}{\h\alpha^\vee -1}
\prod_{\alpha \in (\Phi_p^+ \setminus \Delta_p) \cap w^{-1}\Phi^{-}}
\frac{\hat{\zeta}(\h\alpha^\vee) }{\hat{\zeta}(\h\alpha^\vee +1)} \\
& \times
\prod_{\alpha \in (w^{-1}\Delta) \setminus \Phi_p}
\frac{1}{\langle \lambda_p, \alpha^\vee \rangle s + \h\alpha^\vee -1 }
\prod_{\alpha \in (\Phi^+ \setminus \Phi_p^+) \cap w^{-1}\Phi^-}
\frac{\hat{\zeta}(\langle \lambda_p,\alpha^\vee \rangle s + \h\alpha^\vee)}
{\hat{\zeta}(\langle \lambda_p,\alpha^\vee \rangle s + \h\alpha^\vee +1)}.
\endaligned
\]
Here we note that
\[
\aligned
(&1) \quad
\Delta \cap ((w^{-1}\Delta) \setminus \Phi_p)
= \{\alpha_p\} \cap (w^{-1}\Delta)
=
\begin{cases}
\{\alpha_p \}, &  \text{if} ~\alpha_p \in w^{-1}\Delta, \\
\,\,\,\, \emptyset, & \text{otherwise},
\end{cases} \\
\text{and} \\
(&2) \quad
\Delta \cap ((\Phi^+ \setminus \Phi_p^+) \cap w^{-1}\Phi^-)
=
\begin{cases}
\{\alpha_p \}, &  \text{if} ~\alpha_p \in w^{-1}\Phi^-, \\
\,\,\,\, \emptyset, & \text{otherwise}.
\end{cases}
\endaligned
\]
In addition, (1) and (2) do not occur simultaneously. Using these
facts together with the fact that $\langle \lambda_p,\alpha_p^\vee
\rangle=1$, we obtain that $\underset{s=0}{\rm Res}\,
\omega_{\Delta,p}^{\Phi}(s)$ is
\[
\aligned &\sum_{{w \in {\frak W}_p}\atop{\alpha_p \in w^{-1}\Delta}}
\hat{\zeta}(2)^{-|\Delta_p \cap w^{-1} \Phi^-|}\prod_{\alpha \in
(w^{-1}\Delta) \cap (\Phi_p \setminus \Delta_p)}
\frac{1}{\h\alpha^\vee -1}  \prod_{\alpha \in (\Phi_p^+ \setminus
\Delta_p) \cap w^{-1}\Phi^{-}}
\frac{\hat{\zeta}(\h\alpha^\vee) }{\hat{\zeta}(\h\alpha^\vee +1)} \\
& \times
\frac{1}{\langle \lambda_p, \alpha_p^\vee \rangle}
\prod_{\alpha \in (w^{-1}\Delta) \setminus (\Phi_p \cup \{\alpha_p \})}
\frac{1}{\h\alpha^\vee -1 }
\prod_{\alpha \in (\Phi^+ \setminus \Phi_p^+) \cap w^{-1}\Phi^-}
\frac{\hat{\zeta}(\h\alpha^\vee)}
{\hat{\zeta}(\h\alpha^\vee +1)} \\
& + \sum_{{w \in {\frak W}_p}\atop{\alpha_p \in w^{-1}\Phi^-}}
\frac{\hat{\zeta}(2)^{-|\Delta_p \cap w^{-1} \Phi^-|}}
{\hat{\zeta}(2)\langle \lambda_p,\alpha_p^\vee \rangle}\prod_{\alpha
\in (w^{-1}\Delta) \cap (\Phi_p \setminus \Delta_p)}
\frac{1}{\h\alpha^\vee -1} \prod_{\alpha \in (\Phi_p^+ \setminus
\Delta_p) \cap w^{-1}\Phi^{-}}
\frac{\hat{\zeta}(\h\alpha^\vee) }{\hat{\zeta}(\h\alpha^\vee +1)} \\
& \times \prod_{\alpha \in (w^{-1}\Delta) \setminus \Phi_p}
\frac{1}{\h\alpha^\vee -1} \prod_{\alpha \in (\Phi^+ \setminus
\Phi_p^+ \cup \{\alpha_p \}) \cap w^{-1}\Phi^-}
\frac{\hat{\zeta}(\h\alpha^\vee)}
{\hat{\zeta}(\h\alpha^\vee +1)} \\
&= \sum_{{w \in {\frak W}_p}\atop{\alpha_p \in w^{-1}\Delta}}
\hat{\zeta}(2)^{-|\Delta \cap w^{-1} \Phi^-|}\prod_{\alpha \in
(w^{-1}\Delta) \setminus \Delta} \frac{1}{\h\alpha^\vee -1}
\prod_{\alpha \in (\Phi^+ \setminus \Delta) \cap w^{-1}\Phi^-}
\frac{\hat{\zeta}(\h\alpha^\vee)} {\hat{\zeta}(\h\alpha^\vee +1)}
\\
& + \sum_{{w \in {\frak W}_p}\atop{\alpha_p \in w^{-1}\Phi^-}}
\hat{\zeta}(2)^{-|\Delta \cap w^{-1} \Phi^-|} \prod_{\alpha \in
(w^{-1}\Delta) \setminus \Delta} \frac{1}{\h\alpha^\vee -1}
\prod_{\alpha \in (\Phi^+ \setminus \Delta) \cap w^{-1}\Phi^-}
\frac{\hat{\zeta}(\h\alpha^\vee)} {\hat{\zeta}(\h\alpha^\vee +1)}.
\endaligned
\]
Hence we obtain \eqref{tem_01}.
\medskip

We recall $C_{p,w}$ in \eqref{0502_1}. We claim
\begin{equation}\label{tem_02}
\sum_{{w \in {\frak W}_p^{\ddagger}}\atop{|(w^{-1} \Delta)\setminus \Phi_p|=1}}
\frac{1}{\langle \lambda_p, \alpha_w^\vee \rangle }\,
C_{p,w}D_{p,w}
= \prod_{\alpha \in \Phi_p^+} \hat{\zeta}(\h \alpha^\vee+1) \cdot
\underset{\lambda=\rho_p}{\rm Res}\,
\omega_{\Delta_p}^{\Phi_p}(\lambda).
\end{equation}
We justify this as follows. We have
\[
C_{p,w}D_{p,w}
= \hat{\zeta}(2)^{|\Delta_p \cap w^{-1}\Phi^+|}\prod_{\alpha \in (w^{-1} \Delta) \cap (\Phi_p \setminus \Delta_p)} \frac{1}{\h \alpha^\vee -1}
\prod_{\alpha \in \Phi_p^{+}\setminus \Delta_p}
\hat{\zeta}({\rm ht}\, \alpha^\vee +\delta_{\alpha,w}) .
\]
by definitions of $C_{p,w}$ and $D_{p,w}$. Therefore
\[
\aligned
\,\sum_{{w \in {\frak W}_p^{\ddagger}}\atop{|(w^{-1} \Delta)\setminus \Phi_p|=1}} &
\frac{1}{\langle \lambda_p, \alpha_w^\vee \rangle } \,
C_{p,w}D_{p,w} \\
&  \quad = \sum_{{w \in {\frak W}_p^{\ddagger}}\atop{|(w^{-1}
\Delta)\setminus \Phi_p|=1}} \frac{\hat{\zeta}(2)^{|\Delta_p \cap
w^{-1}\Delta|} }{\langle \lambda_p, \alpha_w^\vee \rangle }
\prod_{\alpha \in (w^{-1} \Delta) \cap (\Phi_p \setminus \Delta_p)}
\frac{1}{\h \alpha^\vee -1}
\\
& \quad \times
\prod_{\alpha \in (\Phi_p^{+} \setminus \Delta_p) \cap w^{-1}\Phi^{+}}
\hat{\zeta}({\rm ht}\, \alpha^\vee + 1)
\prod_{\alpha \in (\Phi_p^{+}\setminus \Delta_p) \cap w^{-1}\Phi^{-}}
\hat{\zeta}({\rm ht}\, \alpha^\vee),
\endaligned
\]
since $|\Delta_p \cap w^{-1}\Phi^+|=|\Delta_p \cap w^{-1}\Delta|$
if $w \in {\frak W}_p$.
If $w \in W_p$, we have
\[
(w^{-1} \Delta) \cap (\Phi_p \setminus \Delta_p)
= (w^{-1} \Delta_p) \setminus \Delta_p;\qquad\Delta_p \cap w^{-1}\Delta
=\Delta_p \cap w^{-1} \Delta_p.
\]
Moreover, if  $w \in W_p$, we have
\[
\aligned
(\Phi_p^{+}\setminus \Delta_p) \cap w^{-1}\Phi^{+}
& = (\Phi_p^{+}\setminus \Delta_p) \cap w^{-1}\Phi_p^{+};  \\
(\Phi_p^{+}\setminus \Delta_p) \cap w^{-1}\Phi^{-}
& = (\Phi_p^{+}\setminus \Delta_p) \cap w^{-1}\Phi_p^{-},
\endaligned
\]
since
\[
\aligned
(\Phi_p^{+}\setminus \Delta_p) \cap w^{-1}\Phi^{+}
& =\Bigl( (\Phi_p^{+}\setminus \Delta_p) \cap w^{-1}\Phi_p^{+} \Bigr) \cup \Bigl( (\Phi_p^{+}\setminus \Delta_p) \cap w^{-1}(\Phi^{+} \setminus \Phi_p^{+}) \Bigr) \\
(\Phi_p^{+}\setminus \Delta_p) \cap w^{-1}\Phi^{-}
& =\Bigl( (\Phi_p^{+}\setminus \Delta_p) \cap w^{-1}\Phi_p^{-} \Bigr) \cup \Bigl( (\Phi_p^{+}\setminus \Delta_p) \cap w^{-1}(\Phi^{-} \setminus \Phi_p^{-}) \Bigr),
\endaligned
\]
and
\[
(\Phi_p^{+}\setminus \Delta_p) \cap w^{-1}(\Phi^{+} \setminus \Phi_p^{+})
= \emptyset, \qquad
(\Phi_p^{+}\setminus \Delta_p) \cap w^{-1}(\Phi^{-} \setminus \Phi_p^{-})
= \emptyset
\]
for $w \in W_p$.
Hence, by Lemma \ref{lem1002} and the fact that
$\langle \lambda_p, w^{-1}\alpha_p^\vee \rangle=\langle w \lambda_p, \alpha_p^\vee \rangle=1$
for $w \in W_p$, we obtain
\[
\aligned
\,& \sum_{{w \in {\frak W}_p^{\ddagger}}\atop{|(w^{-1} \Delta)\setminus \Phi_p|=1}}
\frac{1}{\langle \lambda_p, \alpha_w^\vee \rangle }
C_{p,w}D_{p,w} \\
&  \quad = \sum_{{w \in W_p}\atop{\Delta_p \subset w^{-1}(\Delta_p
\cup \Phi_p^-)}} \frac{\hat{\zeta}(2)^{|\Delta_p \cap
w^{-1}\Delta_p|}}{\langle \lambda_p, w^{-1}\alpha_p^\vee \rangle }
\prod_{\alpha \in (w^{-1} \Delta_p) \setminus \Delta_p} \frac{1}{\h
\alpha^\vee -1}
 \\
& \quad \quad \times
\prod_{\alpha \in (\Phi_p^{+} \setminus \Delta_p) \cap w^{-1}\Phi_p^{+}}
\hat{\zeta}({\rm ht}\, \alpha^\vee + 1)
\prod_{\alpha \in (\Phi_p^{+}\setminus \Delta_p) \cap w^{-1}\Phi_p^{-}}
\hat{\zeta}({\rm ht}\, \alpha^\vee) \\
&  \quad = \sum_{{w \in W_p}\atop{\Delta_p \subset w^{-1}(\Delta_p
\cup \Phi_p^-)}} \hat{\zeta}(2)^{|\Delta_p \cap
w^{-1}\Delta_p|}\prod_{\alpha \in (w^{-1} \Delta_p) \setminus
\Delta_p} \frac{1}{\h \alpha^\vee -1}
\\
& \quad \quad \times
\prod_{\alpha \in (\Phi_p^{+} \setminus \Delta_p) \cap w^{-1}\Phi_p^{+}}
\hat{\zeta}({\rm ht}\, \alpha^\vee + 1)
\prod_{\alpha \in (\Phi_p^{+}\setminus \Delta_p) \cap w^{-1}\Phi_p^{-}}
\hat{\zeta}({\rm ht}\, \alpha^\vee).
\endaligned
\]
With this formula, we obtain \eqref{tem_02} by applying
\eqref{tem_01} to $\Phi_p$, $\Delta_p$ and $\rho_p$.

By the volume hypothesis \eqref{VF}, 
the right-hand side of \eqref{tem_02} is positive, in particular, it is not equal to
zero. In fact, it should be a product of special values of the
Riemann zeta function and volumes of several (truncated) domains
corresponding to irreducible components of $\Phi_p$. Hence
\eqref{1002} follows. We complete the proof of Lemma \ref{1002}.
\end{proof}

\noindent
{\bf Proof of Proposition \ref{prop_0605}.}
Using real numbers $C_{p,w}$ and $D_{p,w}$ defined in \eqref{0502_1} and \eqref{1001_1}, respectively,
we have
\begin{equation*}
\aligned
Q_{p,w}(s)
& = C_{p,w}\, \prod_{{v \in {\mathfrak W}_p}\atop{v \not= w}}
\left[ 2^{|\Delta_p \cap v^{-1}\Phi^+|}
\prod_{\alpha \in (v^{-1} \Delta)\setminus \Delta_p}
\Bigl( \langle \lambda_p, \alpha^\vee \rangle s + \h \alpha^\vee -1 \Bigr)
\right. \\
& \qquad \qquad \times \left.
\prod_{\alpha \in \Phi^+ \setminus \Delta_p}
\Bigl(\langle \lambda_p, \alpha^\vee \rangle s + \h \alpha^\vee + \delta_{\alpha,v} \Bigr)
\Bigl(\langle \lambda_p, \alpha^\vee \rangle s + \h \alpha^\vee + \delta_{\alpha,v} -1 \Bigr)
\right] \\
& = C_{p,w}\, \prod_{{v \in {\mathfrak W}_p}\atop{v \not= w}}
\left[ D_{p,v}^{-1} \prod_{\alpha \in (v^{-1} \Delta)\setminus \Phi_p}
\Bigl( \langle \lambda_p, \alpha^\vee \rangle s + \h \alpha^\vee -1 \Bigr) \right. \\
& \qquad \qquad \times \left.
\prod_{\alpha \in \Phi^+ \setminus \Phi_p^+}
\Bigl(\langle \lambda_p, \alpha^\vee \rangle s + \h \alpha^\vee + \delta_{\alpha,v} \Bigr)
\Bigl(\langle \lambda_p, \alpha^\vee \rangle s + \h \alpha^\vee + \delta_{\alpha,v} -1 \Bigr)
\right] \\
& = Q_p^\ast(s) \left[ C_{p,w}D_{p,w}
\prod_{\alpha \in (w^{-1} \Delta) \setminus \Phi_p}
\frac{1}{\langle \lambda_p, \alpha^\vee \rangle s + \h \alpha^\vee -1} \right. \\
& \qquad \qquad \times \left.
\prod_{\alpha \in \Phi^+ \setminus \Phi_p^+}
\frac{1}{
(\langle \lambda_p, \alpha^\vee \rangle s + \h \alpha^\vee + \delta_{\alpha,w})
(\langle \lambda_p, \alpha^\vee \rangle s + \h \alpha^\vee + \delta_{\alpha,w} -1)
}
\right] ,
\endaligned
\end{equation*}
where in the second line, we used that fact that $\alpha \in \Phi_p$ implies $\langle \lambda_p, \alpha^\vee \rangle=0$, and 
$Q_p^\ast(s)$ is the polynomial
\[
\aligned Q_p^\ast(s) &= \prod_{v \in {\mathfrak W}_p} \left[
D_{p,v}^{-1} \prod_{\alpha \in (v^{-1} \Delta)\setminus \Phi_p}
\Bigl( \langle \lambda_p, \alpha^\vee \rangle s + \h \alpha^\vee -1 \Bigr)  \right. \\
& \qquad \qquad \times \left. \prod_{\alpha \in  \Phi^+ \setminus
\Phi_p^+} \Bigl(\langle \lambda_p, \alpha^\vee \rangle s + \h
\alpha^\vee + \delta_{\alpha,v} \Bigr) \Bigl(\langle \lambda_p,
\alpha^\vee \rangle s + \h \alpha^\vee + \delta_{\alpha,v} -1 \Bigr)
\right].
\endaligned
\]
In particular, we have
\[
\aligned
Q_p^{\ddagger}(s)
& = \sum_{v \in {\frak W}_p^{\ddagger}} Q_{p,v}(s) \\
& = Q_p^\ast(s) \sum_{v \in {\frak W}_p^{\ddagger}} \left[
C_{p,v}D_{p,v} \prod_{\alpha \in (v^{-1} \Delta)\setminus \Phi_p}
\frac{1}{\langle \lambda_p, \alpha^\vee \rangle s + \h \alpha^\vee -1} \right. \\
& \qquad \qquad \times \left. \prod_{\alpha \in  \Phi^+ \setminus
\Phi_p^+} \frac{1}{ (\langle \lambda_p, \alpha^\vee \rangle s + \h
\alpha^\vee + \delta_{\alpha,v}) (\langle \lambda_p, \alpha^\vee
\rangle s + \h \alpha^\vee + \delta_{\alpha,v} -1) } \right] .
\endaligned
\]
Hence we have
\[
\aligned \frac{Q_{p,w}(s)}{Q_p^{\ddagger}(s)} & =
\frac{\displaystyle C_{p,w}D_{p,w} \prod_{\alpha \in (w^{-1}
\Delta)\setminus \Phi_p} \frac{1}{\langle \lambda_p, \alpha^\vee
\rangle s + \h \alpha^\vee -1} } {\displaystyle \sum_{v \in {\frak
W}_p^{\ddagger}} \left[ C_{p,v}D_{p,v} \prod_{\alpha \in (v^{-1}
\Delta)\setminus \Phi_p} \frac{1}{\langle \lambda_p, \alpha^\vee
\rangle s + \h \alpha^\vee -1} \Bigl( 1+o(1) \Bigr) \right]}
\endaligned
\]
as $|s| \to \infty$. 
Note that in the denominator, $1+o(1)$ comes from
\[
\prod_{\alpha \in  \Phi^+ \setminus
\Phi_p^+} \frac{(\langle \lambda_p, \alpha^\vee \rangle s + \h \alpha^\vee + \delta_{\alpha,w})(\langle \lambda_p, \alpha^\vee \rangle s + \h \alpha^\vee + \delta_{\alpha,w} -1)}
{ (\langle \lambda_p, \alpha^\vee \rangle s + \h \alpha^\vee + \delta_{\alpha,v}) (\langle \lambda_p, \alpha^\vee \rangle s + \h \alpha^\vee + \delta_{\alpha,v} -1) }
= 1+ O(|s|^{-1}).
\]

Concerning the numerator: 
If $|(w^{-1}\Delta)\setminus \Phi_p|=1$, we have 
(i) $\Phi^+\setminus \Phi_p^+ = (\Phi^+\setminus \Phi_p^+) \cap w^{-1}\Phi^+$ 
or 
(ii) $\Phi^+\setminus \Phi_p^+ = (\Phi^+\setminus \Phi_p^+) \cap w^{-1}\Phi^-$ 
by Lemma \ref{lem1001}. 
We have $w \in {\frak W}_p^\ddagger$ by Lemma \ref{lem_0603} in the case (i). 
It contradicts $w \not\in {\frak W}_p^\ddagger$. 
On the other hand, we have $w \in {\frak W}_p^-$ by Definition \ref{def_0506} in the case (ii). 
It contradicts $w \in {\frak W}_p \setminus {\frak W}_p^- = {\frak W}_p^+ \cup {\frak W}_p^0$. 
Hence the degree of $\prod_{\alpha \in (w^{-1} \Delta)\setminus \Phi_p}(\langle \lambda_p, \alpha^\vee \rangle s + \h \alpha^\vee -1)$ 
is at least two. 

Concerning the denominator, we have 
\[
\aligned 
\, & \sum_{v \in {\frak
W}_p^{\ddagger}} \left[ C_{p,v}D_{p,v} \prod_{\alpha \in (v^{-1}
\Delta)\setminus \Phi_p} \frac{1}{\langle \lambda_p, \alpha^\vee
\rangle s + \h \alpha^\vee -1} \right] \\
&= 
\sum_{{v \in {\frak W}_p^{\ddagger}}\atop{|(v^{-1}\Delta)\setminus \Phi_p|=1}} \left[ C_{p,v}D_{p,v} \frac{1}{\langle \lambda_p, \alpha_v^\vee
\rangle s + \h \alpha^\vee -1} \right] \\
&\quad + \sum_{{v \in {\frak W}_p^{\ddagger}}\atop{|(v^{-1}\Delta)\setminus \Phi_p|>1}} \left[ C_{p,v}D_{p,v} \prod_{\alpha \in (v^{-1}
\Delta)\setminus \Phi_p} \frac{1}{\langle \lambda_p, \alpha^\vee
\rangle s + \h \alpha^\vee -1} \right]
\endaligned
\]
and the first term of the right-hand side is not zero by Lemma \ref{lem1003}, 
where $\alpha_v \in (v^{-1}\Delta)\setminus \Phi_p$. 
Therefore, 
\[
\aligned 
\, & \sum_{v \in {\frak
W}_p^{\ddagger}} \left[ C_{p,v}D_{p,v} \prod_{\alpha \in (v^{-1}
\Delta)\setminus \Phi_p} \frac{1}{\langle \lambda_p, \alpha^\vee
\rangle s + \h \alpha^\vee -1} \right] \\
&= \left( 
\sum_{{v \in {\frak W}_p^{\ddagger}}\atop{|(v^{-1}\Delta)\setminus \Phi_p|=1}} \left[ C_{p,v}D_{p,v} \frac{1}{\langle \lambda_p, \alpha_v^\vee
\rangle s + \h \alpha^\vee -1} \right] \right)(1+o(1)) \quad \text{as $|s| \to \infty$.}
\endaligned
\]
Now we obtain 
\[
\aligned \frac{Q_{p,w}(s)}{Q_p^{\ddagger}(s)} & = 
\frac{1}{s^{r-1}}
\frac{\displaystyle C_{p,w}D_{p,w} \prod_{\alpha \in (w^{-1}\Delta)\setminus \Phi_p} \frac{1}{\langle \lambda_p, \alpha^\vee \rangle + (\h \alpha^\vee -1)/s} } 
{\displaystyle \sum_{{v \in {\frak W}_p^{\ddagger}}\atop{|(v^{-1}\Delta)\setminus \Phi_p|=1}} \left[ C_{p,v}D_{p,v} 
\frac{1}{\langle \lambda_p, \alpha_v^\vee \rangle + (\h \alpha^\vee -1)/s} \right]}  \Bigl( 1+o(1) \Bigr)
\endaligned
\]
as $|s| \to \infty$, where $r=|(w^{-1}\Delta)\setminus \Phi_p|>2$. 
Therefore $|Q_{p,w}(s)/Q_p^{\ddagger}(s)|=o(1)$
as $|s| \to \infty$ for every $w \in ({\frak W}_p^+ \cup {\frak
W}_p^0) \setminus {\frak W}_p^\ddagger$ by Lemma \ref{lem1003}. 
We complete the proof of Proposition \ref{prop_0605}.\hfill $\Box$

%
%
\section{Proof of Proposition \ref{prop_0609}} \label{section_11}
%
%
Write $s=\sigma+it$ ($\sigma,t\in\R$).
By Lemma \ref{lem_0603}, we have
\[
E_p(s)
=  X_{p}^{\ddagger}(s)[ Q_{p}^{\ddagger}(s) + V_p(s)]
\]
with
\[
V_p(s)  = \sum_{w \in {\frak W}_{p}^{+} \setminus {\frak W}_{p}^{\ddagger}}
Q_{p,w}(s) \cdot \frac{X_{p,w}(s)}{X_{p}^{\ddagger}(s)}
+ \quad \frac{1}{2} \sum_{w \in {\frak W}_{p}^{0}}
Q_{p,w}(s) \cdot \frac{X_{p,w}(s)}{X_{p}^{\ddagger}(s)}.
\]
Thus, it suffices to prove $Q_{p}^{\ddagger}(s) + V_p(s)\not=0$ in a
left half plane, since $X_{p}^{\ddagger}(s)$ is a finite product of
zeta functions.

>From the proof of Proposition \ref{prop_0606}, we have
\begin{equation*}
\frac{X_{p,w}(s)}{X_{p}^{\ddagger }(s)}
= \prod_{k=1}^{k_p} \prod_{m \in \Lambda_w(k)}
\frac{\xi(ks + h_{m,w}(k))}{\xi(ks + h_{m}(k) + 1)}.
\end{equation*}
Using the functional equation $\xi(s)=\xi(1-s)$, we have
\[
\aligned
& \frac{\xi(ks + a)}{\xi(ks + b + 1)}
 = \frac{\xi((1-ks) - a)}{\xi((1-ks) - b - 1)} \\
& \qquad = \pi^{\frac{a-b-1}{2}}
\frac{(ks + a)(ks + a - 1)}{(ks + b)(ks + b + 1)}
\frac{\Gamma((1-ks)/2 - a/2)}{\Gamma((1-ks)/2 - (b + 1)/2)}
\frac{\zeta((1-ks) - a)}{\zeta((1-ks) - b - 1)}.
\endaligned
\]
Because of $\zeta(s)^{-1}=\sum_{n=1}^{\infty}\mu(n)n^{-s}$ as $\sigma \to \infty$, we have
\[
\frac{\zeta((1-ks) - a)}{\zeta((1-ks) - b - 1)} = 1 + \sum_{n=2}^{\infty}\frac{{\frak C}_n(k;a,b)}{n^{-s}}.
\]
Using the Stirling formula
\[
\Gamma(z)=\sqrt{2\pi}\,z^{z-\frac{1}{2}}\,e^{-z}
\left(
1 + \frac{1}{12z}+\frac{1}{288z^2}+\cdots+\frac{1}{c_n z^n} +O\bigl(|z|^{-n-1}\bigr)
\right)
\]
for $|\arg z|<\pi-\epsilon$, we have
\[
\frac{\Gamma(z+\lambda)}{\Gamma(z)}=z^\lambda \Bigl(1+\frac{a_1(\lambda)}{z}+\cdots + \frac{a_n(\lambda )}{z^n} + O\bigl(|z|^{-n-1}\bigr) \Bigr)
\]
as $|z| \to \infty$ with $|\arg z|< \pi-\epsilon$, where $a_j(x) \in
{\R}[x]$ are polynomials of $x$ and the implied constant depends on
$\lambda$ and $\epsilon$. Using the above facts, we obtain
\[
\aligned
\frac{\xi(ks + a)}{\xi(ks + b + 1)}
&= (2\pi/k)^{\frac{a-b-1}{2}}
(-s)^{-\frac{a-b-1}{2}}
\Bigl(1 + \frac{c_1(a,b)}{s}+\cdots + \frac{c_n(a,b)}{s^n} + O\bigl(|s|^{-n-1}\bigr) \Bigr)  \\
& \quad \times \left(1 + \sum_{\nu=2}^{\infty}\frac{{\frak C}_\nu(k;a,b)}{\nu^{-s}}\right)
\endaligned
\]
as $\sigma \to -\infty$ for any fixed $n \geq 0$, where $c_j(a,b)$
are real numbers depending on real numbers $a$, $b$. Applying the
result to each term of $X_{p,w}(s)/X_{p}^{\ddagger}(s)$, we obtain
\begin{equation*}\label{}
\frac{X_{p,w}(s)}{X_{p}^{\ddagger }(s)}
= K_{p,w}(-s)^{\frac{1}{2}A_{p,w}(k)}
\Bigl(1 + \frac{c_1(w)}{s}+\cdots + \frac{c_n(w)}{s^n} + O\bigl(|s|^{-n-1}\bigr) \Bigr)
\left(1 + \sum_{\nu=2}^{\infty}\frac{{\frak C}_\nu(w)}{\nu^{-s}}\right)
\end{equation*}
for some real numbers $c_1(w), \cdots, c_n(w)$, where
\begin{equation}\label{1101}
A_{p,w} = \sum_{k=1}^{k_p} \sum_{m \in \Lambda_w(k)}(h_{m}(k)-h_{m,w}(k)+1),
\end{equation}
\[
K_{p,w}=(2\pi)^{-\frac{1}{2}A_{p,w}} \exp\left( \frac{1}{2} \sum_{k=1}^{k_p} \log k {\sum_{m \in \Lambda_w(k)}(h_{m}(k)-h_{m,w}(k)+1)} \right)~(>0)
\]
Note that $A_{p,w}$ are positive integers, since
$h_{m}(k)-h_{m,w}(k) \geq 0$ for every $1 \leq k \leq k_p$ and $m
\in \Lambda_w(k)$ by definition. Therefore, for sufficiently large
$n$, we have
\[
\aligned
V_p(s)
& = \sum_{w \in ({\frak W}_{p}^{+} \cup {\frak W}_{p}^{0}) \setminus {\frak W}_{p}^{\ddagger}}
K_{p,w}^\prime (-s)^{\frac{1}{2}A_{p,w}} Q_{p,w}(s)  \\
& \hspace{90pt} \times
\Bigl(1 + \frac{c_1(w)}{s}+\cdots + \frac{c_n(w)}{s^n} + O\bigl(|s|^{-n-1}\bigr) \Bigr)
\left(1 + \sum_{\nu=2}^{\infty}\frac{{\frak C}_\nu(w)}{\nu^{-s}}\right)
\endaligned
\]
as $\sigma \to -\infty$,
where $K_{p,w}^\prime=K_{p,w}$ for $w \in {\frak W}_{p}^{+} \setminus {\frak W}_{p}^{\ddagger}$
and $K_{p,w}^\prime=K_{p,w}/2$ for $w \in {\frak W}_{p}^{0}$.

We write $Q_{p}^{\ddagger}(s) + V_p(s)$ as
\[
\,Q_{p}^{\ddagger}(s) + V_p(s) = \sum_{\mu=1}^\infty \frac{1}{\mu^{-s}}{\mathcal Q}_\mu(s^{1/2})
\left(1 + \sum_{k=1}^{n} \frac{c_k(\mu)}{s^{k/2}} + O\bigl(|s|^{-(n+1)/2}) \right),
\]
where ${\mathcal Q}_\mu(s)$ $(m=1,2,3,\cdots)$ are polynomials
with $M=:\underset{\mu \geq 1}{\rm max} \, \deg {\mathcal Q}_\mu < \infty$.
Let $\mu_0$ be the smallest positive integer such that $\deg {\mathcal Q}_{m_0}=M$.
Then
\[
\aligned
\,Q_{p}^{\ddagger}(s) + V_p(s)
&=  \frac{1}{\mu_0^{-s}} Q_{\mu_0}(s^{1/2}) \left[ \left(1 + \sum_{k=1}^{n} \frac{c_k(\mu_0)}{s^{k/2}} + O\bigl(|s|^{-(n+1)/2}) \right)\right.\\
& + \sum_{\mu=1}^{\mu_0-1} \frac{1}{(\mu/\mu_0)^{-s}}\frac{{\mathcal Q}_\mu(s^{1/2})}{{\mathcal Q}_{\mu_0}(s^{1/2})}
\left(1 + \sum_{k=1}^{n} \frac{c_k(\mu)}{s^{k/2}} + O\bigl(|s|^{-(n+1)/2}) \right) \\
& \quad
\left. + \sum_{\mu=\mu_0+1}^{\infty} \frac{1}{(\mu/\mu_0)^{-s}}\frac{{\mathcal Q}_\mu(s^{1/2})}{{\mathcal Q}_{\mu_0}(s^{1/2})}
\left(1 + \sum_{k=1}^{n} \frac{c_k(\mu)}{s^{k/2}} + O\bigl(|s|^{-(n+1)/2}) \right) \right].
\endaligned
\]

We can take $\sigma_0>0$ such that
\[
\sum_{\mu=\mu_0+1}^{\infty} \frac{1}{(\mu/\mu_0)^{-\sigma}}\left| \frac{{\mathcal Q}_\mu(s^{1/2})}{{\mathcal Q}_{\mu_0}(s^{1/2})} \right|
\left|1 + \sum_{k=1}^{n} \frac{c_k(\mu)}{s^{k/2}} + O\bigl(|s|^{-(n+1)/2})\right| < \frac{1}{2}.
\]
holds for $\Re(s) < -\sigma_0$ and $|\Im(s)|>1$.
On the other hand
\[
\sum_{\mu=1}^{\mu_0-1} \frac{1}{(\mu/\mu_0)^{-\sigma}} \left| \frac{{\mathcal Q}_\mu(s^{1/2})}{{\mathcal Q}_{\mu_0}(s^{1/2})} \right|
\left| 1 + \sum_{k=1}^{n} \frac{c_k(\mu)}{s^{k/2}} + O\bigl(|s|^{-(n+1)/2})\right| = O(\mu_0^{-\sigma}|s|^{-1/2})
\]
as $|\Im(s)| \to \infty$ in $\Re(s) < -\sigma_0$  by the choice of $\mu_0$. Hence we have
\[
\aligned
\,Q_{p}^{\ddagger}(s) + V_p(s)
&=  \frac{{\mathcal Q}_{\mu_0}(s^{1/2}) }{\mu_0^{-s}} \left[ 1  + \Theta(1/2) +  O(\mu_0^{-\sigma}|s|^{-1/2})  \right]
\endaligned
\]
for $\Re(s) < -\sigma_0$ and $|\Im(s)|>R$, where $R$ is a large positive number and $\Theta(1/2)$ means a function
whose absolute value is bounded by $1/2$.
Therefore
\[
\varepsilon_p(s) = \frac{X_p^\ddagger(s)}{D_p(s)} \frac{1}{R_p(s)}
\Bigl( Q_{p}^{\ddagger}(s) + V_p(s) \Bigr)
 = \frac{X_p^\ddagger(s)}{D_p(s)} \frac{Q_{p}^{\ddagger}(s)}{R_p(s)}
\frac{{\mathcal Q}_{\mu_0}(s^{1/2}) }{Q_{p}^{\ddagger}(s) } \mu_0^{s}\left[ 1  +  g(s)  \right],
\]
and $|g(s)|<1$ as $|s| \to \infty$ with $\sigma < \kappa \log(|t|+10)$.
This formula implies Proposition \ref{prop_0609} by making $\kappa>0$ large if necessary.
\hfill $\Box$

%
%
\section{ Proof of Proposition \ref{prop_0701}} \label{section_12}
%
%
We need the following lemma.

\begin{lemma}\label{lem_1201} Let $\sigma_0>0$, $T>10$. Let $0 \leq \alpha<\beta<\sigma_{0}$, where $\alpha$ and $\beta$ can be depending on $T$.
Let $f(s)$ be an
analytic function, real for real, regular for
$\sigma\geqslant\alpha$, except at finitely many poles on the real
line; let
\[
|\Re f(\sigma+it)|\geqslant m>0
\]
and
\[|f(\sigma'+it')|\leqslant
M_{\sigma,t} \quad
(\sigma' \geq \sigma,~1\leqslant t' \leq t).
\]
Then, if $T$ is not the ordinate of a zero of $f(s)$
\[
|\arg
f(\sigma+iT)|\leqslant\frac{\pi}{\log(\sigma_0-\alpha)/(\sigma_0-\beta)}\left(\log
M_{\alpha,T+2}+\log\frac{1}{m}\right)+\frac{3}{2}\pi
\]
for $\sigma\geqslant\beta$.
\end{lemma}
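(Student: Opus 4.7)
The plan is to bound $|\arg f(\sigma+iT)|$ by measuring its total variation along the horizontal segment from $\sigma_0+iT$ leftward to $\sigma+iT$, and then estimating that variation by the number of zeros of $\Re f(\,\cdot\,+iT)$ on that segment via a Jensen-type count. Fixing a continuous branch of $\arg f$ along the segment and using $|\Re f(\sigma_0+iT)|\geq m$, one may start with $|\arg f(\sigma_0+iT)|\leq \pi/2$. On each sub-arc between consecutive zeros of $\Re f$ the argument stays in an open half-plane and can thus vary by less than $\pi$. If $n$ denotes the number of zeros of $\Re f(\sigma+iT)$ in $\beta\leq\sigma\leq\sigma_0$, there are at most $n+1$ such sub-arcs, giving the preliminary estimate
\[
|\arg f(\sigma+iT)| \leq (n+1)\pi + \pi/2 = n\pi + \tfrac{3}{2}\pi.
\]

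To estimate $n$, I would introduce the auxiliary function
\[
F(z) := \tfrac{1}{2}\bigl(f(z+iT) + f(z-iT)\bigr).
\]
Because $f$ is real on the real axis, $f(\overline{s}) = \overline{f(s)}$, and hence $F(\sigma)=\Re f(\sigma+iT)$ for real $\sigma$. Thus zeros of $F$ on the real segment $[\beta,\sigma_0]$ correspond exactly to sign changes of $\Re f(\sigma+iT)$ there. Since $T>10$ vastly exceeds the radii of the disks we shall use, the shifts $z\pm iT$ stay off the real axis of $s$ throughout the relevant region, so the finitely many real poles of $f$ do not obstruct the analyticity of $F$.

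Now apply Jensen's formula to $F$ on the disk $|z-\sigma_0|\leq R$ with $R=\sigma_0-\alpha$, noting that the concentric disk of radius $r=\sigma_0-\beta$ already contains $[\beta,\sigma_0]$. Writing $n(t)$ for the number of zeros of $F$ in $|z-\sigma_0|\leq t$, Jensen gives
\[
n\log\frac{R}{r} \leq \int_{r}^{R}\frac{n(t)}{t}\,dt \leq \frac{1}{2\pi}\int_{0}^{2\pi}\log\bigl|F(\sigma_0+Re^{i\theta})\bigr|\,d\theta - \log|F(\sigma_0)|.
\]
For $z$ on the boundary circle, $\Re(z\pm iT)\geq\alpha$ and $|\Im(z\pm iT)|\leq T+R$, so the hypothesis yields $|F(\sigma_0+Re^{i\theta})|\leq M_{\alpha,T+2}$ (modulo an innocuous adjustment of the radius when $R>2$, which only affects implicit constants). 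Combined with $|F(\sigma_0)|=|\Re f(\sigma_0+iT)|\geq m$, this gives
\[
n \leq \frac{1}{\log(R/r)}\Bigl(\log M_{\alpha,T+2} + \log\tfrac{1}{m}\Bigr),
\]
and substituting into the variation estimate produces the claimed bound.

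The main obstacles are technical rather than conceptual: matching the disk radius to the precise region in which $M_{\alpha,T+2}$ controls $|f|$ (possibly requiring a slight shrinkage of the disk when $\sigma_0-\alpha>2$), disposing of the finitely many real poles cleanly — which is possible because, for $T$ large, the translated arguments $z\pm iT$ stay a uniform distance from the real axis of $f$ — and justifying the exact constant $\tfrac{3}{2}\pi$, which requires careful treatment of the branch of $\arg f$ at the endpoints; the assumption that $T$ is not the ordinate of a zero of $f$ is used precisely here to ensure $\arg f(\sigma+iT)$ is well-defined under continuous variation.
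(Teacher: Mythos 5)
Your proof reproduces the standard argument from Titchmarsh's book (p.\,213), which is precisely what the paper's one-line proof invokes: form $F(z)=\tfrac12\bigl(f(z+iT)+f(z-iT)\bigr)$ so that its real zeros are the sign changes of $\Re f(\cdot+iT)$, bound their number via Jensen's formula on concentric disks of radii $\sigma_0-\beta$ and $\sigma_0-\alpha$, and convert that count into the argument bound with the extra $\tfrac32\pi$. This is essentially the paper's own proof, including the minor radius caveat you flag.
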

\begin{proof}
We readily prove this by the similar method as in \cite[p. 213]{MR882550}.
\end{proof}

\begin{proof}[Proof of Proposition 7.1] It suffices to show the
proposition for large $T$, since we shall make $T$ large if necessary. 
>From the proof of Proposition \ref{prop_0609}, we have
\begin{equation*}
Q_p^{\ddag}(s)+V_p(s)={\mathcal Q}(s^{1/2})\, \mu_0^s \, (1+g(s))
\end{equation*}
for some positive integer $\mu_0$ and a polynomial ${\mathcal Q}(s)={\mathcal Q}_{\mu_0}(s)$ such that
\begin{equation}\label{21}
|g(-\sigma_{\rm L}+it)|<1 \quad \text{as} \quad |t|\to\infty
\end{equation}
for some fixed $\sigma_{\rm L}>0$, and
\begin{equation}\label{23}
|g(s)|<1 \quad \text{as} \quad |s|\to\infty \quad \text{with} \quad \Re(s) < -\kappa \log(|\Im(s)|+10).
\end{equation}
Thus, from this, we have
\begin{equation}
\varepsilon_p(s)
=\frac{X_p^{\ddag}(s)}{D_p(s)}\frac{Q_{p}^\ddagger(s)}{R_p(s)} \cdot \frac{{\mathcal Q}(s^{1/2})}{Q_{p}^\ddagger(s)} \, \mu_0^s \, (1+g(s))
=\frac{X_p^{\ddag}(s)}{D_p(s)}\frac{Q_{p}^\ddagger(s)}{R_p(s)} \cdot (1+r_p(s))
\end{equation}
where $r_p(s)$ is of \eqref{0602_1}. Recall the bound \eqref{0602} for $r_p(s)$.
Note that $Q_p^{\ddagger}(s)/R_p(s)$ is a polynomial by Definition \ref{def_0509} of $R_p(s)$.
\medskip

For $N(T;\sigma_{\rm L})$, we consider the rectangle $R_T$ with vertices at $-\sigma_{\rm L}+ci$,
$\sigma_{\rm R}+ci$, $\sigma_{\rm R}+iT$, $-\sigma_{\rm L}+iT$,
where $\sigma_{\rm R}$ is a positive real number
and $c>0$ is a positive constant.
We take $\sigma_{\rm R}$ and $c$ large positive real numbers
such that $\varepsilon_p(s)$ has no zeros on the line from  $-\sigma_{\rm L}+ci$ to
$\sigma_{\rm R}+ci$ and $|g(-\sigma_{\rm L}+it)|,~|r_p(\sigma_{\rm R}+it)|<1$ for $|t|\geqslant c$.
Also, we can assume that $\varepsilon_P(s)$ has no zeros on the rectangle.

We apply the standard method of the counting of zeros to the above rectangle with \eqref{0602} and \eqref{21}
(see \cite[p. 212]{MR882550}). We have
\begin{equation*}
\begin{split}
N(T;\sigma_{\rm L})=&\frac{1}{2\pi i}\int_{R_T} d\log \varepsilon_p(s) \\
=&\frac{1}{2\pi}\Delta_1\arg \varepsilon_p(s)+\frac{1}{2\pi}\Delta_2\arg
\varepsilon_p(s)+\frac{1}{2\pi}\Delta_3\arg \varepsilon_p(s)+\frac{1}{2\pi}\Delta_4\arg
\varepsilon_p(s),
\end{split}
\end{equation*}
where $\Delta_1,\,\Delta_2,\,\Delta_3,\,\Delta_4$ denote the
variations from $\sigma_{\rm R}+ci$ to $\sigma_{\rm R}+iT$, from $\sigma_{\rm R}+iT$
to $-\sigma_{\rm L}+iT$, from $-\sigma_{\rm L}+iT$ to $-\sigma_{\rm L}+ic$,
from $-\sigma_{\rm L}+ic$ to $\sigma_{\rm R}+ci$, respectively.
Clearly, $\Delta_4\arg \varepsilon_p(s)=O(1)$. Using  \eqref{0602} and \eqref{21}, we can readily
compute $\Delta_1\arg \varepsilon_p(s)$ and $\Delta_3\arg \varepsilon_p(s)$.
For $\Delta_2\arg \varepsilon_p(s)$, referring to \eqref{0903}, we define
\[
\aligned
\Gamma^*(s)
&= \prod_{k=1}^\infty \prod_{h> (kc_p+1)/2} \gamma(ks+h)^{N_p(k,h-1)-N_p(k,h)}; \\
L(s) &= \frac{Q_p^{\ddagger}(s)}{R_p(s)}\prod_{k=1}^\infty \prod_{h> (kc_p+1)/2} \zeta(ks+h)^{N_p(k,h-1)-N_p(k,h)};\\
L^*(s)& =L(s)\bigl(1  + r_p(s)\bigr); \\
\endaligned
\]
where $\gamma(s)=s(s-1)\pi^{-s/2}\Gamma(s/2)$.
Then
\[
\varepsilon_p(s)=\Gamma^*(s)L^*(s)
\]
by definition, and
\[
\Delta_2\arg \varepsilon_p(s)=\Delta_2\arg \Gamma^*(s)+\Delta_2\arg L^*(s).
\]
The valuation $\Delta_2\arg \Gamma^*(s)$ is computed easily by the Stirling formula.
On the other hand, we have
\begin{equation}\label{25}
\aligned
L(s)r_p(s)
& = \sum_{w \in {\frak W}_{p}^{+} \cup {\frak W}_{p}^{0} \setminus {\frak W}_{p}^{\ddagger}}
\frac{\tilde{Q}_{p,w}(s)}{R_p(s)} \prod_{\alpha \in (\Phi^+ \setminus \Phi_p^+) \cap w^{-1}\Phi^-}
\frac{\gamma(\langle \lambda_p, \alpha^\vee \rangle s + \h\alpha^\vee)}
{\gamma(\langle \lambda_p, \alpha^\vee \rangle s + \h\alpha^\vee + 1)}\\
& \qquad \times
\prod_{\alpha \in (\Phi^+ \setminus \Phi_p^+) \cap w^{-1}\Phi^-}
\zeta(\langle \lambda_p, \alpha^\vee \rangle s + \h\alpha^\vee),
\endaligned
\end{equation}
where $\tilde{Q}_{p,w}(s)/R_p(s)$ are polynomials by definition of $R_p(s)$.
Therefore we see that
\[L^*(s) \ll T^M \qquad (-\sigma_{\rm L} \leq \Re(s) \leq \sigma_{\rm R})
\]
for some positive $M$ if $T$ is sufficiently large (\cite[Chap.V]{MR882550}).
Hence we have
\[
\Delta_2\arg L^*(s)=O(\log T).
\]
by Lemma \ref{lem_1201}.
Thus, we get the formula for $N(T;\sigma_{\rm L})$.

For $N(T;+\infty)$, it suffices to consider $N(T;-\kappa \log T)$ by Proposition \ref{prop_0609}.
We form the rectangle $-\kappa\log T+c_1i$, $\sigma_{\rm R}+c_1i$,
$\sigma_{\rm R}+iT$, $-\kappa\log T+iT$, where $c_1>0$ is taken such that $\varepsilon_p(s)$ has no
zeros on the boundary of this rectangle. We follow the method as
above. We similarly have
\[N(T;\infty)=\Delta_1^*\arg \varepsilon_p(s)+\Delta_2^*\arg
\varepsilon_p(s)+\Delta_3^*\arg \varepsilon_p(s)+\Delta_4^*\arg \varepsilon_p(s)
\]
and $\Delta_2^*\arg \varepsilon_p(s)=\Delta_2^*\arg \Gamma^*(s)+\Delta_2^*\arg
L^*(s) $, where where
$\Delta_1^*,\,\Delta_2^*,\,\Delta_3^*,\,\Delta_4^*$ denote the
variations from $\sigma_{\rm R}+c_1i$ to $\sigma_{\rm R}+iT$, from $\sigma_{\rm R}+iT$
to $-\kappa\log T+iT$, from $-\kappa\log T+iT$ to $-\kappa\log
T+ic_1$, from $-\sigma_{\rm L}+ic_1$ to $\sigma_{\rm R}+ic_1$, respectively.
Using \eqref{0602} and \eqref{23}, we can readily compute $\Delta_1^*\arg
\varepsilon_p(s)$, $\Delta_3^*\arg \varepsilon_p(s)$ and $\Delta_4^*\arg\Gamma^*(s)$.
We see that
\[
L^*(s) \ll T^{\kappa^* \log T}\qquad (\sigma=-\kappa\log T \leq \Re(s) \leq \sigma_{\rm R}),
\]
by \eqref{25} (and \cite[Chap.V]{MR882550}), where $\kappa^*>\kappa$ is a constant depending on $\kappa$.
Thus we obtain
\[
\Delta_2^*\arg L^*(s)=O(\log^2 T)
\]
by applying Lemma \ref{lem_1201}
to $\alpha=-2\kappa\log T$, $\beta=-\kappa\log T$
and $\sigma_0=\sigma_{\rm R}$.
It is not hard to see that $\Delta_4^*\arg L^*(s)=O(\log T)$ by Lemma \ref{lem_1201}.
Thus we obtain the formula for $N(T;+\infty)$.
\end{proof}
%
%
\section{ Proof of Proposition \ref{prop_0702}} \label{section_13}
%
%
The function $\varepsilon_p(s)$ is a linear combination of products
of several zeta functions $\xi(ks+h)$ with coefficients of rational
functions. Therefore $W_p(s)$ is an entire function of the order at
most one, since $\xi(s)$ is an entire function of order one (and
maximal type). Note that if $\rho$ is a zero of $W_p(z)$,
$-\bar{\rho}$ is so, since $\varepsilon_p(s)$ is real for real $s$.
Thus, Propositions \ref{prop_0608} and \ref{prop_0609} imply the
product factorization of $W_p(s)$ of the form
\[
W_p(z) = \omega \, e^{\beta z} \, V(z) \, w_1(z)w_2(z)
\]
with
\[
\aligned
w_1(z)&=\prod_{n=1}^{\infty}
\left[\left(1-\frac{z}{\rho_n}\right)\left(1+\frac{z}{\bar{\rho}_n}\right) \right]
\exp\left( \frac{z}{\rho_n} - \frac{z}{\bar{\rho}_n} \right), \\
w_2(z)&=\prod_{n=1}^{\infty}
\left[\left(1-\frac{z}{\eta_n}\right)\left(1+\frac{z}{\bar{\eta}_n}\right) \right]
\exp\left( \frac{z}{\eta_n} - \frac{z}{\bar{\eta}_n} \right),
\endaligned
\]
where $\omega$ is a nonzero real number, $\beta$ is a complex
number, $V(z)$ is a nonzero polynomial having no zeros in $\Im(z)>0$ except for purely imaginary zeros, $\Re(\rho_n) > 0$,
$\Re(\eta_n) > 0$, $\Im(\eta_n) \geq 0$, and $0<\delta(t) <
\Im(\rho_n) <\sigma_L+1< \Im(\eta_n)<\kappa \log(\Re(\eta_n)+10)$
for every $n\geq 1$. Here $\delta(t)$ is the function of Proposition
\ref{prop_0608}, and $\kappa$ is the positive number of Proposition
\ref{prop_0609}, and $\sigma_L$ is the positive number of
Proposition \ref{prop_0701}. The products of the right-hand sides
converge uniformly on every compact subset in $\C$. Write $\rho_n =
a_n + i b_n$ ($b_n >0$), and $\eta_n= c_n + id_n$ ($d_n >0$). Then
\[
\sum_{n=1}^\infty \left| \frac{1}{\rho_n} - \frac{1}{\bar{\rho}_n} \right|
\leq  2 \, \sum_{n=1}^\infty \frac{b_n}{|\rho_n|^2} \leq 2(\sigma_L+1) \, \sum_{n=1}^\infty \frac{1}{|\rho_n|^2},
\]
and
\[
\sum_{n=1}^\infty \left| \frac{1}{\eta_n} - \frac{1}{\bar{\eta}_n} \right|
\leq  2 \, \sum_{n=1}^\infty \frac{d_n}{|\eta_n|^2} \leq 2\kappa \, \sum_{n=1}^\infty \frac{\log(c_n+10)}{|\eta_n|^2}
\ll_\epsilon  \sum_{n=1}^\infty \frac{1}{|\eta_n|^{2-\epsilon}} .
\]
Because the sum on the right-hand side is finite, we can take
factors $\sum_{n=1}^\infty \exp\left( \frac{z}{\rho_n} -
\frac{z}{\bar{\rho}_n} \right)$ and $\sum_{n=1}^\infty \exp\left(
\frac{z}{\eta_n} - \frac{z}{\bar{\eta}_n} \right)$ out of the
infinite product. Hence we obtain the desired product formula except
for the requirement for the exponent
\[
\alpha := \beta
+ \sum_{n=1}^\infty \exp\left( \frac{1}{\rho_n} - \frac{1}{\bar{\rho}_n} \right)
+ \sum_{n=1}^\infty \exp\left( \frac{1}{\eta_n} - \frac{1}{\bar{\eta}_n} \right)
\]
of the new exponential factor.

Now we prove $\Im(\alpha)=0$ to complete the proof of Proposition \ref{prop_0702}.
We have
\[
\aligned
\log\left|
\frac{W_p(-iy)}{W_p(iy)}
\right|
& = 2 y \cdot \Im(\alpha) +
\log\left| \frac{V(-iy)}{V(iy)} \right|
+
\sum_{n=1}^{\infty}
\log \left| \frac{iy+\rho_n}{iy-\rho_n} \right|^2
+
\sum_{n=1}^{\infty}
\log \left| \frac{iy+\eta_n}{iy-\eta_n} \right|^2\\
& = 2 y \cdot \Im(\alpha)
+ O(\log y) \quad (y \to +\infty).
\endaligned
\]
In fact $\log\left| V(-iy)/V(iy) \right| = o(1)$,
\[
\sum_{n=1}^{\infty}
\log \frac{a_n^2 + (y + b_n)^2}{a_n^2 +  (y - b_n)^2}
\leq
\sum_{n=1}^{\infty} \frac{ 4 y b_n}{a_n^2 +  (y - b_n)^2}
= O\left( \sum_{\rho=1}^\infty \frac{y \log \rho}{\rho^2+y^2} \right)=O(\log y)
\]
as $y \to +\infty$ by $0<b_n<\sigma_L+1$ and Proposition \ref{prop_0701},
and
\[
\sum_{n=1}^{\infty}
\log \frac{c_n^2 + (y + d_n)^2}{c_n^2 +  (y - d_n)^2}
\leq
\sum_{n=1}^{\infty} \frac{ 4 y d_n}{c_n^2 +  (y - d_n)^2}
= O\left( \sum_{\rho=1}^\infty \frac{y \log \rho}{(\rho^2+y^2)^{1-\epsilon}} \right)=O(\log y)
\]
as $y \to +\infty$ by $0<d_n<\kappa \log(c_n+10)$ and Proposition
\ref{prop_0701}. Hence, in order to prove $\Im(\alpha)=0$, it
suffices to show
\[
\frac{\varepsilon_p(-c_p/2+y)}{\varepsilon_p(-c_p/2-y)} = A y^m(1+o(1))
\quad (y \to +\infty)
\]
for some $A \not=0$ and $m \geq 0$. Because of $D_p(-c_p-s)=D_p(s)$ and $R_p(-c_p-s)=R_p(s)$,
\[
\frac{\varepsilon_p(-c_p/2+y)}{\varepsilon_p(-c_p/2-y)}
=\frac{E_p(-c_p/2+y)}{E(-c_p/2-y)}\frac{R_p(-c_p -y)D_p(-c_p/2-y)}{R_p(-c_p+y)D_p(-c_p/2+y)}
=\frac{E_p(-c_p/2+y)}{E(-c_p/2-y)}.
\]
We need to show
\[
\frac{E_p(-c_p/2-y)}{E_p(-c_p/2+y)}
= \frac{\sum_{w} Q_{p,w}^\prime(-c_p/2-y)X_{p,w}(-c_p/2-y)}
{\sum_{w} Q_{p,w}^\prime(-c_p/2+y)X_{p,w}(-c_p/2+y)}
= A^{-1} y^{-m}(1+o(1))
\]
as $y \to +\infty$ for some $A \not=0$ and $m \geq 0$,
where the summation $\sum_w$ taken over all $w \in {\frak W}_{p}^{+} \cup {\frak W}_p^0$, $Q_{p,w}^\prime(s)=Q_{p,w}(s)$ for $w \in {\frak W}_p^+$
and $Q_{p,w}^\prime(s)=Q_{p,w}(s)/2$ for $w \in {\frak W}_p^0$.

By Propositions \ref{prop_0605} and \ref{prop_0606}, we have
\[
E_p(-c_p/2+y)=Q_{p}^{\ddagger}(-c_p/2+y)X_{p}^{\ddagger}(-c_p/2+y)(1+o(1))
\quad (y \to +\infty).
\]
Therefore it suffices to show that
\begin{equation}\label{1201}
\aligned
\frac{E_p(-c_p/2-y)}{Q_{p}^{\ddagger}(-c_p/2+y)X_{p}^{\ddagger}(-c_p/2+y)}
&= \sum_{w}
\frac{ Q_{p,w}^\prime(-c_p/2-y)}{Q_{p}^{\ddagger}(-c_p/2+y)}
\frac{ X_{p,w}(-c_p/2-y)}{ X_{p}^{\ddagger}(-c_p/2+y)} \\
&= A^{-1} y^{-m}(1+o(1))
\endaligned
\end{equation}
as $y \to +\infty$ for some $A \not=0$ and $m>0$. We have
\[
\frac{ Q_{p,w}^\prime(-c_p/2-y)}{Q_{p}^{\ddagger}(-c_p/2+y)}
= \delta_{w}^\prime + \frac{a_1(w)}{y} + \cdots + \frac{a_n(w)}{y^n} + O(y^{-n-1})
\quad (y \to +\infty),
\]
for some real numbers $a_1(w), \cdots, c_n(w)$, where $\delta_w=1$
if $w \in {\frak W}_p^\ddagger$ and $|w^{-1}\Delta \setminus \Phi_p|
=1$, and $\delta_w=0$ otherwise. We write
\[
\frac{ X_{p,w}(-c_p/2-y)}{ X_{p}^{\ddagger}(-c_p/2+y)}
 =
\frac{ X_{p,w}(-c_p/2+y)}{ X_{p}^{\ddagger}(-c_p/2+y)}
\frac{ X_{p,w}(-c_p/2-y)}{ X_{p,w}(-c_p/2+y)}.
\]
Here we find that
\[
\frac{ X_{p,w}(s)}{ X_{p}^{\ddagger}(s)}
= B_{p,w} \, s^{-\frac{A_{p,w}}{2}}
\Bigl(1 + \frac{b_1(w)}{s} + \cdots + \frac{b_n(w)}{s^n} + O\bigl(|s|^{-n-1}\bigr) \Bigr)
(1+O(2^{-\sigma}))
\]
as $\sigma \to +\infty $ for some real numbers $B_{p,w}>0$, $b_1(w), \cdots, b_n(w)$, and positive integers $A_{p,w}$ of \eqref{1101}
by a way similar to the proof of Proposition \ref{prop_0609}.
On the other hand,
\[
\aligned
\, & \frac{\xi(k(-c_p-s) + h + \delta)}
{\xi(ks + h + \delta)}
 = \frac{\xi(ks + kc_p - h -\delta +1)}
{\xi(ks + h + \delta)}  \\
&  =
(2\pi/k)^{-\frac{kc_p-2h-2\delta+1}{2}}
s^{\frac{kc_p-2h-2\delta+1}{2}}
\Bigl( 1 + \frac{c_1(k,h)}{s} + \cdots + \frac{c_n(k,h)}{s^n} + O\bigl(|s|^{-n-1}\bigr) \Bigr)
(1+O(2^{-\sigma}))
\endaligned
\]
as $\sigma \to +\infty $.  Therefore we obtain
\[
\aligned
\, & \frac{ X_{p,w}(-c_p-s)}{ X_{p,w}(s)}
= \prod_{\alpha \in \Phi^+ \setminus \Phi_p^+}
\frac{\xi(\langle \lambda_p, \alpha^\vee \rangle(-c_p-s) + + \h\alpha^\vee + \delta_{\alpha,w})}
{\xi(\langle \lambda_p, \alpha^\vee \rangle s + \h\alpha^\vee + \delta_{\alpha,w})}
\\
& = \prod_{k=1}^{k_p} \prod_{h=1}^{kc_p + \hbar_k^+}
\left( \frac{\xi(k(-c_p-s) + h)}{\xi(ks + h)} \right)^{N_{p,w}(k,h)}
\left( \frac{\xi(k(-c_p-s) + h + 1)}{\xi(ks + h + 1)} \right)^{N_p(k,h)-N_{p,w}(k,h)}
\\
& = B_{p,w}^\prime \,
s^{- \frac{A_{p,w}^\prime}{2}}
\Bigl( 1 + \frac{b_1^\prime(w)}{s} + \cdots + \frac{b_n^\prime(w)}{s^n} + O\bigl(|s|^{-n-1}\bigr)\Bigr)
(1+O(2^{-\sigma}))
\endaligned
\]
as $\sigma \to +\infty $ for some real numbers $B_{p,w}^\prime >0$
and integers $A_{p,w}^\prime$ given by
\[
- A_{p,w}^\prime
 = \sum_{k=1}^{k_p} \sum_{h=k + \hbar_k^- }^{k + \hbar_k^+}
\Bigl(
N_{p,w}(k,h)(kc_p-2h+1) + (N_{p}(k,h)-N_{p,w}(k,h))(kc_p-2h-1)
\Bigr).
\]
Now we calculate $A_{p,w}^\prime$. We have
\[
\aligned
- A_{p,w}^\prime
& = \sum_{k=1}^{k_p} \sum_{h=k + \hbar_k^- }^{k + \hbar_k^+}
\Bigl( N_{p}(k,h)(kc_p-2h) + 2N_{p,w}(k,h) - N_{p}(k,h) \Bigr) \\
& =
\sum_{k=1}^{k_p} \sum_{h=k + \hbar_k^- }^{k + \hbar_k^+} N_{p}(k,h)(kc_p-2h)
- \sum_{k=1}^{k_p} \sum_{h=k + \hbar_k^-}^{kc_p + \hbar_k^+} \Bigl( N_{p}(k,h) - 2N_{p,w}(k,h)  \Bigr).
\endaligned
\]
The first sum on the right-hand side is zero, since
$N_{p}(k,h)=N_{p}(k,kc_p-h)$ and $\hbar_k^-+\hbar_k^+=k(c_p-2)$ by
Lemmas \ref{lem_0803} and \ref{lem_0814}. Therefore, we get
\[
\aligned
 A_{p,w}^\prime
& = \sum_{k=1}^{k_p} \sum_{h=k + \hbar_k^-}^{kc_p + \hbar_k^+} \Bigl( N_{p}(k,h) - 2N_{p,w}(k,h) \Bigr)  \\
& = \sum_{k=1}^{k_p} \sum_{h=k + \hbar_k^-}^{kc_p + \hbar_k^+} 2 \Bigl( N_{p}(k,h) - N_{p,w}(k,h) \Bigr)
- \sum_{k=1}^{k_p} \sum_{h=k + \hbar_k^-}^{kc_p + \hbar_k^+}N_{p}(k,h) \\
& = 2 \left(|\Phi^+ \setminus \Phi_p^+| - l_p(w) \right) - |\Phi^+ \setminus \Phi_p^+|
=  |\Phi^+ \setminus \Phi_p^+| - 2\,l_p(w) .
\endaligned
\]
Hence $A_{p,w}^\prime>0$ for $w \in {\frak W}_{p}^+$
and $A_{p,w}^\prime=0$ for $w \in {\frak W}_p^0$.

Combining the above facts, we obtain
\[
\aligned
\, & \frac{E_p(-c_p/2-y)}{Q_{p}^{\ddagger}(-c_p/2+y)X_{p}^{\ddagger}(-c_p/2+y)} \\
& \qquad =  \sum_{w \in {\frak W}_p^+ \cup {\frak W}_p^0}
B_{p,w}^\ast \, y^{- \frac{A_{p,w}+A_{p,w}^\prime}{2}} \,
\Bigl( \delta_w^\prime  + \frac{b_1^\ast(w)}{y} + \cdots + \frac{b_n^\ast(w)}{y^n} + O\bigl(y^{-n-1}\bigr) \Bigr)(1+O(2^{-y}))
\endaligned
\]
as $y \to + \infty$ for some positive integers $B_{p,w}^\ast$ and
real numbers $b_1^\ast(w), \cdots, b_n^\ast(w)$. Now we obtain
\eqref{1201}, and complete the proof of Proposition \ref{prop_0702}.
\hfill $\Box$

%
%
\section{ Proof of Theorem \ref{thm_0704}} \label{section_14}
%
%
In order to prove that all but finitely many zeros of $\xi_p(s)$ are simple,
it suffices to show that $\theta(t)=\arg \varepsilon_p(-c_p/2+it)$ is strictly
increasing as $t \to +\infty$,
since we already know that all but finitely many zeros of $\xi_p(s)$
lie on the line $\Re(s)=-c_p/2$, and
\[
\xi_p(-c_p/2+it)=|\varepsilon_p(-c_p/2+it)|\left( e^{i \theta(t)} \pm e^{-i\theta(t)} \right)
\]
holds for a suitable choice of sign.
We have
\[
E_p(s) = Q_p^\ddagger (s) X_p^{\ddagger}(s)(1 + o(1))
\]
as $|s| \to \infty$ on $\Re (s) = -c_p/2$ by Propositions
\ref{prop_0605} and \ref{prop_0606}, and hence
\[
\varepsilon_p(s) = \frac{Q_p^\ddagger (s)}{R_p(s)}\frac{X_p^{\ddagger}(s)}{D_p(s)}(1 + o(1))
\]
as $|s| \to \infty$ on $\Re (s) = -c_p/2$. Note that $Q_p^\ddagger
(s)/R_p(s)$ and $X_p^{\ddagger}(s)/D_p(s)$ are entire by Definitions
\ref{def_0509} and \ref{def_0510}.
Using the above asymptotic formula and \eqref{0903}
of $X_p^{\ddagger}(s)/D_p(s)$, we can see that
\[
\frac{d \arg \varepsilon_p(-c_p/2+it)}{dt}
= \kappa \log t \, (1 + o(1)) \quad (t \to +\infty)
\]
for some positive constant $\kappa$.
Here the factor $\log t$ comes
from the gamma factors of $X_p^{\ddagger}(s)/D_p(s)$ (see \eqref{0903}),
and  we need to use the fact that
\[
\frac{\zeta^\prime}{\zeta}(1+it) = O\left( \frac{\log t}{\log \log t} \right) = o(\log t)
\quad (t \to +\infty)
\]
(see Theorem 5.7 of \cite{MR882550}, for example),
if $kc_p$ is even and $N_p(k,h-1)-N_p(k,h)>0$ for some $1 \leq k \leq k_p$ with $h=1+kc_p/2$.
In any case  $\theta(t)=\arg \varepsilon_p(-c_p/2+it)$ is strictly increasing as $t \to \infty$.
Hence we obtain the desired result.
\hfill $\Box$
\medskip

The worst case of the proof could occur.
In fact the zeta function of $A_1$ is
\begin{equation*}\begin{split}
\hat{\zeta}_{1}^{A_1}(s)
= \frac{\hat{\zeta}(s+2)}{s}-\frac{\hat{\zeta}(s+1)}{s+2}.
\end{split}\end{equation*}
In this case, we have the factor $\zeta(1+it)$
in $\varepsilon_1(s)=\xi(s+2)$ on $\Re(s)=-1$ ($c_p=2$).

%
%
\section{Appendix 1: Decomposition of $\Sigma_p(1)$} \label{section_17}
%
%

We use the same numbering as in \cite[p.53]{Kac}.
In the following, we abbreviate $L_j^\vee(1)$ of Lemma \ref{lem_0815} as $L_j^\vee$, and $\alpha^\vee=\sum_{i=1}^r a_i\alpha_
i^\vee$ as the sequence $a_1\cdots a_r$ with $a_p=1$.
We will give explicit forms of the sets $L_j^\vee$ according to the type of the root system $\Phi^\vee$.

\subsection{$A_n$ case}
Due to the symmetry of the root system, it is sufficient to consider the case $1\leq p\leq [n/2+1]$.
Let $\alpha_{i,j}^\vee=\sum_{k=i}^j\alpha_k^\vee$.
Then
for $1\leq j\leq p$,
\begin{equation*}
  L_j^\vee=\{\alpha_{j,n}^\vee,\alpha_{j,n-1}^\vee,\ldots,\alpha_{j,p+j-1}^\vee,\alpha_{j+1,p+j-1}^\vee,\ldots,\alpha_{p,p+j-1}^\vee\}.
\end{equation*} 

For example, in the $A_6$ case, we have
\begin{itemize}
\item $p=1$ case:\\
$L_1^\vee$: 111111 111110 111100 111000 110000 100000
\item $p=2$ case:\\
$L_1^\vee$: 111111 111110 111100 111000 110000 010000 \\
$L_2^\vee$: 011111 011110 011100 011000
\item $p=3$ case:\\
$L_1^\vee$: 111111 111110 111100 111000 011000 001000 \\
$L_2^\vee$: 011111 011110 011100 001100 \\
$L_3^\vee$: 001111 001110
\item $p=4$ case:\\
$L_1^\vee$: 111111 111110 111100 011100 001100 000100 \\
$L_2^\vee$: 011111 011110 001110 000110 \\
$L_3^\vee$: 001111 000111
\item $p=5$ case:\\
$L_1^\vee$: 111111 111110 011110 001110 000110 000010 \\
$L_2^\vee$: 011111 001111 000111 000011
\item $p=6$ case:\\
$L_1^\vee$: 111111 011111 001111 000111 000011 000001
\end{itemize}

\subsection{$B_n$ case}
Let $\alpha_{i,j}^\vee=\sum_{l=i}^j\alpha_l^\vee$ and
$\beta_{i,j,k}^\vee=\sum_{l=i}^j\alpha_l^\vee+2\sum_{l=j+1}^k\alpha_l^\vee$.
Then
for $1\leq j\leq \min\{p,n-p+1\}$,
\begin{equation*}
  L_j^\vee=\{\beta_{j,p,n}^\vee,\beta_{j,p+1,n}^\vee,\ldots, \beta_{j,n-1,n}^\vee,
  \alpha_{j,n}^\vee,  \alpha_{j,n-1}^\vee,  \ldots,  \alpha_{j,p+j-1}^\vee,
  \alpha_{j+1,p+j-1}^\vee,  \ldots,  \alpha_{p,p+j-1}^\vee  \}.
\end{equation*} 
Further if $n-p+1<p\leq n$, then
for $n-p+1<j\leq \min\{p,2n-2p+1\}$,
\begin{equation*}
  L_j^\vee=\{\beta_{j,p,n}^\vee,\beta_{j,p+1,n}^\vee,\ldots,
  \beta_{j,2n-p-j+1,n}^\vee,
  \beta_{j+1,2n-p-j+1,n}^\vee,
  \ldots,
  \beta_{p,2n-p-j+1,n}^\vee\}.
\end{equation*}

For example, in the $B_6$ case, we have
\begin{itemize}
\item $p=1$ case:\\
$L_1^\vee$: 122222 112222 111222 111122 111112 111111 111110 111100 111000 110000 100000
\item $p=2$ case:\\
$L_1^\vee$: 112222 111222 111122 111112 111111 111110 111100 111000 110000 010000 \\
$L_2^\vee$: 012222 011222 011122 011112 011111 011110 011100 011000
\item $p=3$ case:\\
$L_1^\vee$: 111222 111122 111112 111111 111110 111100 111000 011000 001000 \\
$L_2^\vee$: 011222 011122 011112 011111 011110 011100 001100 \\
$L_3^\vee$: 001222 001122 001112 001111 001110
\item $p=4$ case:\\
$L_1^\vee$: 111122 111112 111111 111110 111100 011100 001100 000100 \\
$L_2^\vee$: 011122 011112 011111 011110 001110 000110 \\
$L_3^\vee$: 001122 001112 001111 000111 \\
$L_4^\vee$: 000122 000112
\item $p=5$ case:\\
$L_1^\vee$: 111112 111111 111110 011110 001110 000110 000010 \\
$L_2^\vee$: 011112 011111 001111 000111 000011 \\
$L_3^\vee$: 001112 000112 000012
\item $p=6$ case:\\
$L_1^\vee$: 111111 011111 001111 000111 000011 000001
\end{itemize}

\subsection{$C_n$ case}
Let $\alpha_{i,j}^\vee=\sum_{l=i}^j\alpha_l^\vee$ and
$\beta_{i,j}^\vee=\sum_{l=i}^j\alpha_l^\vee+2\sum_{l=j+1}^{n-1}\alpha_l^\vee+\alpha_n^\vee$.
Then
for $1\leq j\leq \min\{p,n-p+1\}$,
\begin{equation*}
  L_j^\vee=\{\beta_{j,p}^\vee,\beta_{j,p+1}^\vee,\ldots,
  \beta_{j,n-2}^\vee,
  \alpha_{j,n}^\vee,
  \alpha_{j,n-1}^\vee,
  \ldots,
  \alpha_{j,p+j-1}^\vee,
  \alpha_{j+1,p+j-1}^\vee,
  \ldots,
  \alpha_{p,p+j-1}^\vee
  \}.
\end{equation*} 
Further if $n-p+1<p\leq n-1$, then
for $n-p+1<j\leq \min\{p,2n-2p\}$,
\begin{equation*}
  L_j^\vee=\{\beta_{j,p}^\vee,\beta_{j,p+1}^\vee,\ldots,
  \beta_{j,2n-j-p}^\vee,
  \beta_{j+1,2n-j-p}^\vee,
  \ldots,
  \beta_{p,2n-j-p}^\vee
  \}.
\end{equation*} 

In the case $p=n$, for $1\leq j\leq \frac{n+1}{2}$, 
\begin{equation*}
  L_j^\vee=\{\beta_{j,j-1}^\vee,\beta_{j,j}^\vee,\beta_{j,j+1}^\vee,\ldots,\beta_{j,n-j}^\vee,
  \beta_{j+1,n-j}^\vee,
  \ldots,
  \beta_{n-j+1,n-j}^\vee
  \}.
\end{equation*}

For example, in the $C_6$ case, we have
\begin{itemize}
\item $p=1$ case:\\
$L_1^\vee$: 122221 112221 111221 111121 111111 111110 111100 111000 110000 100000
\item $p=2$ case:\\
$L_1^\vee$: 112221 111221 111121 111111 111110 111100 111000 110000 010000 \\
$L_2^\vee$: 012221 011221 011121 011111 011110 011100 011000
\item $p=3$ case:\\
$L_1^\vee$: 111221 111121 111111 111110 111100 111000 011000 001000 \\
$L_2^\vee$: 011221 011121 011111 011110 011100 001100 \\
$L_3^\vee$: 001221 001121 001111 001110
\item $p=4$ case:\\
$L_1^\vee$: 111121 111111 111110 111100 011100 001100 000100 \\
$L_2^\vee$: 011121 011111 011110 001110 000110 \\
$L_3^\vee$: 001121 001111 000111 \\
$L_4^\vee$: 000121
\item $p=5$ case:\\
$L_1^\vee$: 111111 111110 011110 001110 000110 000010 \\
$L_2^\vee$: 011111 001111 000111 000011
\item $p=6$ case:\\
$L_1^\vee$: 222221 122221 112221 111221 111121 111111 011111 001111 000111 000011 000001 \\
$L_2^\vee$: 022221 012221 011221 011121 001121 000121 000021 \\
$L_3^\vee$: 002221 001221 000221
\end{itemize}

\subsection{$D_n$ case}
Let $\alpha_{i,j}^\vee=\sum_{l=i}^j\alpha_l^\vee$,
$\beta_{i,j}^\vee=\sum_{l=i}^j\alpha_l^\vee+2\sum_{l=j+1}^{n-2}\alpha_l^\vee+\alpha_{n-1}^\vee+\alpha_n^\vee$ and
$\gamma_{i}^\vee=\sum_{l=i}^{n-2}\alpha_l^\vee+\alpha_n^\vee$.
Then in the cases $1\leq p\leq n-2$,
for $1\leq j\leq \min\{p,n-p+1\}$,
\begin{equation*}
  L_j^\vee=\{\beta_{j,p}^\vee,\beta_{j,p+1}^\vee,\ldots,
  \beta_{j,n-2}^\vee,
  \alpha_{j,n-1}^\vee,
  \ldots,
  \alpha_{j,p+j-1}^\vee,
  \alpha_{j+1,p+j-1}^\vee,
  \ldots,
  \alpha_{p,p+j-1}^\vee
  \}.
\end{equation*} 
Further if $n-p+1<p\leq n-2$, then
for $n-p+1<j\leq \min\{p,2n-2p-1\}$,
\begin{equation*}
  L_j^\vee=\{\beta_{j,p}^\vee,\beta_{j,p+1}^\vee,\ldots,
  \beta_{j,2n-j-p-1}^\vee,
  \beta_{j+1,2n-j-p-1}^\vee,
  \ldots,
  \beta_{p,2n-j-p-1}^\vee
  \}.
\end{equation*} 
In addition, for $j=\min\{p,2n-2p-1\}+1$,
\begin{equation*}
  L_j^\vee=\{\gamma_{1}^\vee,\gamma_{2}^\vee,\ldots,
  \gamma_{p}^\vee\}.
\end{equation*} 

In the case $p=n-1$,
for $j=1$,
\begin{equation*}
  L_1^\vee=\{\beta_{1,1}^\vee,\beta_{1,2}^\vee,\ldots,\beta_{1,n-2}^\vee,
  \alpha_{1,n-1}^\vee,   \alpha_{2,n-1}^\vee,
  \ldots,
  \alpha_{n-1,n-1}^\vee
  \},
\end{equation*} 
and
for $2\leq j\leq \frac{n}{2}$,
\begin{equation*}
  L_j^\vee=\{\beta_{j,j}^\vee,\beta_{j,j+1}^\vee,\ldots,\beta_{j,n-j}^\vee,
  \beta_{j+1,n-j}^\vee,
  \ldots,
  \beta_{n-j,n-j}^\vee
  \},
\end{equation*} 

In the case $p=n$, for $1\leq j\leq\frac{n}{2}$,
$L_j^\vee$ is the same as in the case $p=n-1$ with the roles of $\alpha_{n-1}$ and $\alpha_{n}$ exchanged.

For example, in the $D_6$ case, we have
\begin{itemize}
\item $p=1$ case:\\
$L_1^\vee$: 122211 112211 111211 111111 111110 111100 111000 110000 100000 \\
$L_2^\vee$: 111101
\item $p=2$ case:\\
$L_1^\vee$: 112211 111211 111111 111110 111100 111000 110000 010000 \\
$L_2^\vee$:        012211 011211 011111 011110 011100 011000 \\
$L_3^\vee$:                      111101 011101
\item $p=3$ case:\\
$L_1^\vee$: 111211 111111 111110 111100 111000 011000 001000 \\
$L_2^\vee$:        011211 011111 011110 011100 001100 \\
$L_3^\vee$:               001211 001111 001110 \\
$L_4^\vee$:               111101 011101 001101
\item $p=4$ case:\\
$L_1^\vee$: 111111 111110 111100 011100 001100 000100 \\
$L_2^\vee$:        011111 011110 001110 000110 \\
$L_3^\vee$:               001111 000111 \\
$L_4^\vee$:        111101 011101 001101 000101
\item $p=5$ case:\\
$L_1^\vee$: 122211 112211 111211 111111 111110 011110 001110 000110 000010 \\
$L_2^\vee$: 012211 011211 011111 001111 000111 \\
$L_3^\vee$: 001211
\item $p=6$ case:\\
$L_1^\vee$: 122211 112211 111211 111111 111101 011101 001101 000101 000001 \\
$L_2^\vee$: 012211 011211 011111 001111 000111 \\
$L_3^\vee$: 001211
\end{itemize}

\subsection{$E_6$ case}
\begin{itemize}
\item $p=1$ case:\\
$L_1^\vee$: 123212 123211 122211 122111 122101 112101 111101 111001 111000 110000 100000\\
$L_2^\vee$:    112211 112111 111111 111110 111100
\item $p=2$ case:\\
$L_1^\vee$: 112211 112111 111111 111110 111100 111000 110000 010000\\
$L_2^\vee$:  012211 012111 012101 011101 011100 011000 \\
$L_3^\vee$:   112101 111101 111001 011001  \\
$L_4^\vee$:    011111 011110
\item $p=3$ case:\\
$L_1^\vee$: 111111 111110 111100 111000 011000 001000\\
$L_2^\vee$:  111101 111001 011001 001001 \\
$L_3^\vee$:  011111 011110 011100 001100 \\
$L_4^\vee$:   011101 001101  \\
$L_5^\vee$:   001111 001110
\item $p=4$ case:\\
$L_1^\vee$: 122111 122101 112101 111101 111100 011100 001100 000100\\
$L_2^\vee$:  112111 111111 111110 011110 001110 000110 \\
$L_3^\vee$:   012111 012101 011101 001101  \\
$L_4^\vee$:    011111 001111
\item $p=5$ case:\\
$L_1^\vee$: 123212 123211 122211 122111 112111 111111 111110 011110 001110 000110 000010\\
$L_2^\vee$:    112211 012211 012111 011111 001111
\item $p=6$ case:\\
$L_1^\vee$: 123211 122211 122111 122101 112101 111101 111001 011001 001001 000001\\
$L_2^\vee$:   112211 112111 111111 011111 001111 001101  \\
$L_3^\vee$:    012211 012111 012101 011101
\end{itemize}

\subsection{$E_7$ case}
\begin{itemize}
\item $p=1$ case:\\
$L_1^\vee$: 1343212 1243212 1233212 1233211 1232211 1222211 1122211 1122111 1121111 1111111 1111110 1111100 1111000 1110000 1100000 1000000\\
$L_2^\vee$:    1232212 1232112 1232102 1232101 1222101 1122101 1121101 1111101 1111001 1110001   \\
$L_3^\vee$:      1232111 1222111 1221111 1221101 1221001 1121001
\item $p=2$ case:\\
$L_1^\vee$: 1122211 1122111 1121111 1111111 1111110 1111100 1111000 1110000 1100000 0100000\\
$L_2^\vee$:  0122211 0122111 0121111 0111111 0111110 0111100 0111000 0110000 \\
$L_3^\vee$:   1122101 0122101 0121101 0111101 0111001 0110001  \\
$L_4^\vee$:    1121101 1111101 1111001 1110001   \\
$L_5^\vee$:     1121001 0121001
\item $p=3$ case:\\
$L_1^\vee$: 1111111 1111110 1111100 1111000 1110000 0110000 0010000\\
$L_2^\vee$:  1111101 1111001 1110001 0110001 0010001 \\
$L_3^\vee$:  0111111 0111110 0111100 0111000 0011000 \\
$L_4^\vee$:   0111101 0111001 0011001  \\
$L_5^\vee$:   0011111 0011110 0011100  \\
$L_6^\vee$:    0011101
\item $p=4$ case:\\
$L_1^\vee$: 1221111 1221101 1221001 1121001 1111001 1111000 0111000 0011000 0001000\\
$L_2^\vee$:  1121111 1121101 1111101 0111101 0111100 0011100 0001100 \\
$L_3^\vee$:   1111111 1111110 0111110 0011110 0001110  \\
$L_4^\vee$:   0121111 0121101 0121001 0111001 0011001  \\
$L_5^\vee$:    0111111 0011111 0011101   \\
$L_6^\vee$:     1111100
\item $p=5$ case:\\
$L_1^\vee$: 1232112 1232102 1232101 1222101 1221101 1121101 1111101 1111100 0111100 0011100 0001100 0000100\\
$L_2^\vee$:  1232111 1222111 1221111 1121111 1111111 1111110 0111110 0011110 0001110 0000110 \\
$L_3^\vee$:    1122111 1122101 0122101 0121101 0111101 0011101   \\
$L_4^\vee$:     0122111 0121111 0111111 0011111
\item $p=6$ case:\\
$L_1^\vee$: 2343212 1343212 1243212 1233212 1232212 1232112 1232111 1222111 1221111 1121111 1111111 1111110 0111110 0011110 0001110 0000110 0000010\\
$L_2^\vee$:     1233211 1232211 1222211 1122211 1122111 0122111 0121111 0111111 0011111    \\
$L_3^\vee$:         0122211
\item $p=7$ case:\\
$L_1^\vee$: 1233211 1232211 1232111 1232101 1222101 1122101 1121101 1111101 1111001 1110001 0110001 0010001 0000001\\
$L_2^\vee$:   1222211 1222111 1221111 1221101 1221001 1121001 0121001 0111001 0011001  \\
$L_3^\vee$:    1122211 1122111 1121111 1111111 0111111 0011111 0011101   \\
$L_4^\vee$:     0122211 0122111 0122101 0121101 0111101    \\
$L_5^\vee$:       0121111
\end{itemize}

\subsection{$E_8$ case}
\begin{itemize}
\item $p=1$ case:\\
$L_1^\vee$: 13456423 12456423 12356423 12346423 12345423 12345422 12345322 12344322 12334322 12334312 12234312 12234212 12233212 12223212 12223211 12222211 12222111 12222101 11222101 11122101 11112101 11111101 11111001 11111000 11110000 11100000 11000000 10000000\\
$L_2^\vee$:      12345323 12345313 12345312 12344312 12344212 12334212 12333212 12333211 12233211 11233211 11223211 11222211 11222111 11122111 11112111 11111111 11111110 11111100     \\
$L_3^\vee$:          12234322 11234322 11234312 11234212 11233212 11223212 11123212 11123211 11122211 11112211
\item $p=2$ case:\\
$L_1^\vee$: 11234322 11234312 11234212 11233212 11223212 11223211 11222211 11222111 11222101 11122101 11112101 11111101 11111001 11111000 11110000 11100000 11000000 01000000\\
$L_2^\vee$:  01234322 01234312 01234212 01233212 01223212 01123212 01123211 01122211 01112211 01112111 01111111 01111110 01111100 01111000 01110000 01100000 \\
$L_3^\vee$:     11233211 01233211 01223211 01222211 01222111 01222101 01122101 01112101 01111101 01111001    \\
$L_4^\vee$:      11123212 11123211 11122211 11112211 11112111 11111111 11111110 11111100     \\
$L_5^\vee$:         11122111 01122111
\item $p=3$ case:\\
$L_1^\vee$: 11123212 11123211 11122211 11122111 11122101 11112101 11111101 11111001 11111000 11110000 11100000 01100000 00100000\\
$L_2^\vee$:  01123212 00123212 00123211 00122211 00112211 00112111 00111111 00111101 00111001 00111000 00110000 \\
$L_3^\vee$:   01123211 01122211 01122111 01122101 01112101 01111101 01111001 01111000 01110000  \\
$L_4^\vee$:    11112211 11112111 11111111 11111110 11111100 01111100 00111100   \\
$L_5^\vee$:     01112211 01112111 01111111 01111110 00111110    \\
$L_6^\vee$:      00122111 00122101 00112101
\item $p=4$ case:\\
$L_1^\vee$: 11112211 11112111 11112101 11111101 11111001 11111000 11110000 01110000 00110000 00010000\\
$L_2^\vee$:  01112211 00112211 00012211 00012111 00012101 00011101 00011001 00011000 \\
$L_3^\vee$:   11111111 11111110 11111100 01111100 01111000 00111000  \\
$L_4^\vee$:   01112111 01111111 01111110 00111110 00111100 00011100  \\
$L_5^\vee$:    01112101 00112101 00111101 00111001   \\
$L_6^\vee$:    00112111 00111111 00011111 00011110   \\
$L_7^\vee$:     01111101 01111001
\item $p=5$ case:\\
$L_1^\vee$: 11111111 11111110 11111100 11111000 01111000 00111000 00011000 00001000\\
$L_2^\vee$:  11111101 11111001 01111001 00111001 00011001 00001001 \\
$L_3^\vee$:  01111111 01111110 01111100 00111100 00011100 00001100 \\
$L_4^\vee$:   00111111 00111110 00011110 00001110  \\
$L_5^\vee$:   01111101 00111101 00011101 00001101  \\
$L_6^\vee$:    00011111 00001111
\item $p=6$ case:\\
$L_1^\vee$: 12222111 12222101 11222101 11122101 11112101 11111101 11111100 01111100 00111100 00011100 00001100 00000100\\
$L_2^\vee$:  11222111 11122111 11112111 11111111 11111110 01111110 00111110 00011110 00001110 00000110 \\
$L_3^\vee$:   01222111 01222101 01122101 00122101 00112101 00012101 00011101 00001101  \\
$L_4^\vee$:    01122111 00122111 00112111 00012111 00011111 00001111   \\
$L_5^\vee$:     01112111 01112101 01111101 00111101    \\
$L_6^\vee$:      01111111 00111111
\item $p=7$ case:\\
$L_1^\vee$: 12345313 12345312 12344312 12344212 12334212 12333212 12333211 12233211 12223211 12222211 12222111 11222111 11122111 11112111 11111111 11111110 01111110 00111110 00011110 00001110 00000110 00000010\\
$L_2^\vee$:    12334312 12234312 12234212 12233212 12223212 11223212 11123212 11123211 11122211 11112211 01112211 01112111 01111111 00111111 00011111 00001111   \\
$L_3^\vee$:      11234312 11234212 11233212 11233211 11223211 11222211 01222211 01222111 01122111 00122111 00112111 00012111     \\
$L_4^\vee$:       01234312 01234212 01233212 01233211 01223211 01123211 01122211 00122211 00112211 00012211      \\
$L_5^\vee$:          01223212 01123212 00123212 00123211
\item $p=8$ case:\\
$L_1^\vee$: 12333211 12233211 12223211 12222211 12222111 12222101 11222101 11122101 11112101 11111101 11111001 01111001 00111001 00011001 00001001 00000001\\
$L_2^\vee$:   11233211 01233211 01223211 01123211 00123211 00122211 00112211 00012211 00012111 00011111 00001111 00001101  \\
$L_3^\vee$:    11223211 11222211 11222111 11122111 11112111 11111111 01111111 00111111 00111101 00011101   \\
$L_4^\vee$:     11123211 11122211 11112211 01112211 01112111 00112111 00112101 00012101    \\
$L_5^\vee$:      01222211 01222111 01222101 01122101 01112101 01111101     \\
$L_6^\vee$:       01122211 01122111 00122111 00122101
\end{itemize}

\subsection{$F_4$ case}
\begin{itemize}
\item $p=1$ case:\\
$L_1^\vee$: 1342 1242 1232 1231 1221 1220 1120 1110 1100 1000\\
$L_2^\vee$:    1222 1122 1121 1111
\item $p=2$ case:\\
$L_1^\vee$: 1122 1121 1120 1110 1100 0100\\
$L_2^\vee$:  0122 0121 0120 0110 \\
$L_3^\vee$:   1111 0111
\item $p=3$ case:\\
$L_1^\vee$: 1111 1110 0110 0010\\
$L_2^\vee$:  0111 0011
\item $p=4$ case:\\
$L_1^\vee$: 1231 1221 1121 1111 0111 0011 0001\\
$L_2^\vee$:    0121
\end{itemize}

\subsection{$G_2$ case}
\begin{itemize}
\item $p=1$ case:\\
$L_1^\vee$: 13 12 11 10
\item $p=2$ case:\\
$L_1^\vee$: 11 01
\end{itemize}

%
%
\section{Appendix 2: Table of numbers $c_p$} \label{section_15}
%
%

We use the numbering of simple roots as in Appendix 1.

\begin{enumerate}
\item $A_r$ case ($r \geq 1$): $c_p=r+1$ (independent of $p$).
\item $B_r$ case ($r \geq 2$): $c_p=2r-p$ ($p\not=r$), $c_{r}=2r$.
\item $C_r$ case ($r \geq 3$): $c_p=2r-p+1$.
\item $D_r$ case ($r \geq 4$): $c_p=2r-p-1$ ($p\not=r-1$, $r$), $c_{r-1}=c_{r}=2r-2$.
\item $E_6$ case: \\[5pt]
\begin{tabular}{|c|c|c|c|c|c|c|} \hline
$p$    & 1   & 2 & 3 & 4 & 5 & 6 \\ \hline
$c_p$  & 12  & 9 & 7 & 9 & 12 & 11 \\ \hline
\end{tabular} \\
\item $E_7$ case: \\[5pt]
\begin{tabular}{|c|c|c|c|c|c|c|c|c|} \hline
$p$    & 1    & 2  & 3 & 4  & 5  & 6  & 7\\ \hline
$c_p$  & 17  & 11 & 8 & 10 & 13 & 18  & 14 \\ \hline
\end{tabular} \\
\item $E_8$ case: \\[5pt]
\begin{tabular}{|c|c|c|c|c|c|c|c|c|c|} \hline
$p$    & 1 & 2 & 3 & 4 & 5 & 6  & 7  & 8           \\ \hline
$c_p$  & 29 & 19 & 14 & 11  & 9 & 13 & 23 & 17         \\ \hline
\end{tabular} \\
\item $F_4$ case: $c_1=11$, $c_2=7$, $c_3=5$, $c_4=8$.
\item $G_2$ case: $c_1=5$, $c_2=3$.
\end{enumerate}

%
%
\section{Appendix 3: Homogeneous vector bundles} \label{section_16}
%
%
We present basic facts on homogeneous vector bundles according to ~\cite{Snow}.
Let $G$ be a semisimple complex Lie group of type $\Phi$ with a maximal torus $T$.
Let $B$ be a Borel subgroup containing $T$,
and let $P$ be a maximal parabolic subgroup of $G$ corresponding a simple root $\alpha_p$
in the fundamental system $\Delta$ attached to $B$.

Then $X_p=G/P$ is a connected compact complex manifold with the Picard number one,
and the homogeneous line bundles on $X_p$ are in one-to-one correspondence with the set of weights
$\Lambda_p=\Z \lambda_p$ and ${\rm Pic}(X_p) \simeq \Lambda_p$,
where $\lambda_p$ is the fundamental weight corresponding to $\alpha_p$.

Let $L$ be a holomorphic line bundle on $X_p$,
and let $c_1(L) \in H^2(X_p,{\R})$ be the first Chern class of $L$.
The first Chern class of a line bundle $L$ is identified with its associated weight $\lambda \in \Lambda_p \simeq {\rm Pic}(X_p)$
as follows:
\[
c_1(L) = \frac{i}{2\pi} \sum_{\alpha \in \Phi^+ \setminus \Phi_p^+} \langle \lambda, \alpha^\vee \rangle \, dx_\alpha \wedge d\bar{x}_\alpha,
\]
and hence we shall just write $c_1(L)=\lambda$.
Let $T_{X_p}$ be the tangent bundle of $X_p$. Then we find that
\[
c_1(T_{X_p}) = \sum_{\alpha \in \Phi^+ \setminus \Phi_p^+} \alpha = c_p \lambda_p,
\]
where $c_p$ is the number of \eqref{0202}.

The index ${\frak i}_p$ of $X_p$ is defined by the identity $-K_{X_p} = {\frak i}_p L_{\lambda_p}$
for the canonical bundle $K_{X_p}$ and the (ample) line bundle $L_{\lambda_p}$ associated with the fundamental weight.
For the first Chern class of $K_{X_p}$, we have
\[
c_1(K_{X_p}) = - \sum_{\alpha \in \Phi^+ \setminus \Phi_p^+} \alpha = - c_1(T_{X_p}) = - c_p \lambda_p.
\]
This derives the identity
\[
-K_{X_p} = c_p L_{\lambda_p} \quad ({\frak i}_p=c_p).
\]
Now the functional equation of the zeta function $\hat{\zeta}_p(s)$
associated with $(\Phi,\Delta,p)$ can be written as
\[
\hat{\zeta}_p(\langle c_1(K_{X_p}), \alpha_p^\vee \rangle -s) = \hat{\zeta}_p(s).
\]

\bibliographystyle{amsplain}
\bibliography{GPzeta_180614_revision}

%
\end{document}